\newtheorem{lemma}{Lemma}
\newtheorem{theorem}{Theorem}
\newtheorem{corollary}{Corollary}
\newtheorem{definition}{Definition}
\newtheorem{proposition}{Proposition}
\newtheorem{claim}{Claim}
\DeclareMathOperator{\rank}{rank}
\title{Network Coding meets Decentralized Control: Network Linearization and Capacity-Stabilizablilty Equivalence} 
\author{Se Yong Park (separk@eecs.berkeley.edu), Anant Sahai (sahai@eecs.berkeley.edu)}
\begin{document}
\maketitle
\abstract
We take a unified view of network coding and decentralized control. Precisely speaking, we consider both as linear time-invariant systems by appropriately restricting channels and coding schemes of network coding to be linear time-invariant, and the plant and controllers of decentralized control to be linear time-invariant as well. First, we apply linear system theory to network coding. This gives a novel way of converting an arbitrary relay network to an equivalent acyclic single-hop relay network, which we call \textit{Network Linearization}. Based on network linearization, we prove that the fundamental design limit, mincut, is achievable by a linear time-invariant network-coding scheme regardless of the network topology.

Then, we use the network-coding to view decentralized linear systems. We argue that linear time-invariant controllers in a decentralized linear system ``communicate" via linear network coding to stabilize the plant. To justify this argument, we give an algorithm to ``externalize" the implicit communication between the controllers that we believe must be occurring to stabilize the plant. Based on this, we show that the stabilizability condition for decentralized linear systems comes from an underlying communication limit, which can be described by the algebraic mincut-maxflow theorem. With this re-interpretation in hand, we also consider \textit{stabilizability over LTI networks} to emphasize the connection with network coding. In particular, in broadcast and unicast problems, unintended messages at the receivers will be modeled as secrecy constraints.

\section{Introduction}
This paper is inspired by the similarity between the algebraic characterization of fixed modes~\cite{Anderson_Algebraic} in decentralized control problems and the min-cut bound in information theory~\cite{Cover}.

Consider a standard decentralized linear system
\begin{align}
&x[n+1]=Ax[n]+B_1 u_1[n] + \cdots + B_v u_v[n] \\
&y_1[n]=C_1 x[n]\\
&\vdots\\
&y_v[n]=C_v x[n].
\end{align}
Then, the algebraic condition for $\lambda$ to be a fixed mode~\cite[Theorem 4.1]{Anderson_Algebraic} is
\begin{align}
\min_{V \subseteq \{1,2,\cdots,v \}} \rank
\begin{bmatrix}
A-\lambda I & B_V \\
C_{V^c}& 0 \\
\end{bmatrix} \geq \dim(A). \label{eqn:intro:1}
\end{align}
Consider a communication relay network shown in \cite[Theorem 15.10.1]{Cover} where the input to the channel at the relay node $i$ is $X_i$ and the output from the channel at the relay node $i$ is $Y_i$. Then, the information-theoretic min-cut bound~\cite[Theorem 15.10.1]{Cover} is
\begin{align}
\min_{V \subseteq \{1,2,\cdots,v \}} I(X_V;Y_{V^c} | X_{V^c}) \geq  \sum_{i \in V ,j \in V^c} R_{ij}. \label{eqn:intro:2}
\end{align}
We can see that the left-hand sides of both \eqref{eqn:intro:1} and
\eqref{eqn:intro:2} have a minimization over all subsets $V$. Moreover, in
noiseless relay networks the mutual information is essentially equal
to the rank of an appropriate channel matrix\footnote{Information is
  traditionally measured in bits and the rate of bits that a channel
  can carry is computed by the mutual information $I(X;Y)$. However,
  in continuous-alphabet channels like the AWGN (additive white
  Gaussian noise) channel, the mutual information depends crucially on
  the signal-to-noise ratio and scales as $\log$ SNR. It was noticed
  that when the channel has multiple-inputs and multiple-outputs (MIMO) ---
  like when there are multiple antennas involved in wireless
  communication ---  the mutual information increases as the rank
  of the channel matrix {\bf times} $\log$ SNR. This fact inspired the
  creation of the finite-field noiseless MIMO channel model, within
  which the mutual information is equal to the rank of the channel
  matrix multiplied by the $\log$ of the field size. Therefore, the
  rank can be considered another measure for information, as measured
  in units of dimensions or degrees-of-freedom. We refer the reader to \cite{Tse} for further details.}~\cite{Tse}. Therefore, the left-hand sides of
\eqref{eqn:intro:1} and \eqref{eqn:intro:2} can be considered to be
exactly the same. Identifying the right hand sides of \eqref{eqn:intro:1} and
\eqref{eqn:intro:2} with each other, we can see that the dimension of
$A$ corresponds to a rate of total information flow. Moreover, fixed
modes are closely connected to stabilizability. Thus, we can
conjecture that a decentralized system is stabilizable if and only if
enough information flow can be supported to stabilize the plant, and vice versa. In this
paper, we make this conjecture rigorous.

First, let's review perspectives on information flow in communication
networks. Historically, information in a network was believed to
behave like a physical commodity. The network was modeled using a graph,
and the information was thought of as commodities to be transported from
the source to the destination by routing them through the nodes. The
most important result is the celebrated mincut-maxflow
theorem~\cite{Shannon_Maxflow,Ford_Maxflow}, which reveals that the
maximum amount of commodity flow through a graph is equal to the
minimum cut of the graph. Moreover, this maximum flow is achievable by
a routing scheme. For decades, this optimality result made researchers stick to
routing solutions even for information.

However, in \cite{Ahlswede_Network} it was found that information flow
in networks does not really behave like physical commodities do.
Obviously, we can copy information. But going further, we can also process and mix information. The famous \textit{butterfly example} shows that for multiple-source multiple-destination cases,
there is a gain by allowing relays to mix their incoming signals instead of just routing them.

Even if physical commodity flows (which we can only route) and
information flows (which we can copy, process and mix) are different, the
graph-theoretic concepts and insights originally developed for
commodity flows continue to be helpful. The main difference is that
the amount of flow, which is naturally measured by the number (or
weight or volume) of commodities in physical commodity flows, must
instead be measured in ``dimensions'' of the signal for information
flows. However, the mincut-maxflow theorem remains the main tool to
understand network information flows. For example, in the multicast
problem the relevant mincut is the minimum of the
mincut to each destination, and the mincut-maxflow theorem still
holds~\cite{Ahlswede_Network}. Moreover, this maximum flow is
achievable by linear time-invariant network
coding~\cite{Koetter_Algebraic}.

Once information-theorists had the freedom to mix and process signals
inside the nodes that they could design, they also started to consider
such operations as potentially existing outside these
nodes~\cite{Bobak_Computation}. The signals from the relay nodes could
be broadcast to multiple receiving nodes or superposed with other
signals at a receiving node. In fact, such extensions were a natural
fit to wireless communication~\cite{Salman_Wireless}. The operations
outside the nodes modeled communication channels and such wireless
channel models had long been valuable even when restricted to be
linear time-invariant.

At this point, we can see the similarities between network-coding
problems~\cite{Salman_Wireless} and decentralized-linear-control
problems~\cite{Wang_Stabilization}. The network channels (which
we cannot design) can be considered as the linear plant. The source,
relays and destination nodes (which we can design) can be considered as
decentralized controllers. Just as decentralized controllers process and
combine their observations to generate their control inputs, the relay
nodes process and combine their incoming signals from the channel to
generate their outgoing signals.

Despite these similarities, many differences between the communication
and control problems had been preventing a firm connection being made
between them. First of all, network-coding information-theorists work
in finite fields, whereas control-theorists default to infinite fields
like the reals or complex numbers. Moreover, information-theorists
tend not to have any explicit state in the system, preferring an
input-output perspective. Most importantly, the information-theorists
have a clearly specified source and destination, and their goal is
to push information from one to the other. The control-theorists tend
not to have explicit sources and destinations, and instead there is a
dynamic evolution that needs to be controlled or stabilized.

The main goal of this paper is to bridge these differences and make a
concrete connection between network coding and decentralized linear
control. We first apply linear-system-theoretic ideas to network coding to propose \textit{network linearization} as an algorithm to convert an arbitrary-topology network to an equivalent acyclic single-hop relay network. Based on this, we prove an algebraic mincut-maxflow theorem, Theorem~\ref{thm:mincut}.

Then, we apply network coding ideas to decentralized linear systems. As shown in Theorem~\ref{thm:equivalence1} and
\ref{thm:equivalence2}, we prove that if a decentralized linear
system is LTI\footnote{It is in our focus on stabilizability using
only linear {\em time-invariant} control laws that the results in this
paper differ from the results in \cite{Yuksel_Decentralized} where
{\em time-varying} control laws are permitted. The overall
perspectives however are compatible in that we are also
interested in cutsets and information flows.}-stabilizable, then there
must exist a corresponding implicit information flow sufficient to
stabilize the system.

The rest of the paper is organized as follows: In Section~\ref{sec:LTI_networks}, we introduce the definition of LTI networks and prove an algebraic mincut-maxflow theorem based on network linearization. We also compare network linearization with the known idea of network unfolding. In Section~\ref{sec:prelim2}, we introduce some preliminary facts about decentralized linear systems. Section~\ref{sec:example} shows a representative example that clearly illustrates the implicit information flows in
decentralized systems. Section~\ref{sec:external} gives the capacity-stabilizability equivalence
theorem. In Section~\ref{sec:stablilizationoverLTI}, we consider the stabilizability problem with an explicit communication network, and convert networking results to the equivalent stabilizability results.

\section{LTI Communication Networks}
\label{sec:LTI_networks}
\subsection{Definitions and Algebraic Mincut-Maxflow Theorem}
\label{sec:prelim}

An LTI communication network is a collection of transmitters, relays, and receivers --- which will be called nodes.\footnote{The LTI networks considered here are essentially the same as the
linear deterministic model studied in \cite{Salman_Wireless} except that
our LTI networks restrict the relay design to be linear time-invariant
and the underlying field can also be real or complex as well as a finite field.}

Each node has input and output ports. These connect to the channels. Each node generates a signal and sends it to the channels through its output ports, which are simultaneously the input to the channels. In this paper, we model signals elements from a field $\mathbb{F}$ and time is discrete. The transmitted signals go through the channels and arrive the channel outputs, which are simultaneously the input ports of the nodes. We take a channel-centric perspective in this paper's notation.

The relationship between the input and output signals of the channels is given by nature. In LTI communication networks, the input-output relationships of the channels are linear time-invariant. Thus, they can be described by transfer functions. Furthermore, since we will focus on discrete-time systems, by $z$-transform the transfer functions can be represented by rational functions in $z$. 

Even though the channels are given by nature, we still have design freedom for the nodes. Each node can choose the input signals to the channels as arbitrary causal functions on the output signals from the channels. In LTI networks, the node operation is restricted to be linear time-invariant. In other words, the nodes can be thought as causal linear time-invariant filters between the output signals from the channels into the input signals to the channels. To reflect this design freedom, we will assign different variables $k_i$ for the transfer functions inside the nodes.

We focus on LTI point-to-point communication networks with one transmitter and one receiver, and we denote the network as $\mathcal{N}(z)$. Let's formally define LTI point-to-point networks using graph notation. The input and output ports of the nodes can be modeled as vertices. The transfer functions connecting them can be thought as directed edges. Consider a digraph $(W,E)$ with a totally ordered set of vertices (ports) $W$ and a set of edges $E$. $W$ is partitioned according to which node that port belongs to. 

In other words, for LTI network with $v$ relays, $W$ can be partitioned into the sets $N_{tx}, N_1, \cdots, N_{v}, N_{rx}$, i.e. $N_i \subseteq W$, $N_i \cap N_j = \emptyset$ for $i \neq j$, and $\bigcup_{i  \in \{tx, 1, \cdots, v, rx \}} N_i = W$. Thus, a set of vertices $N_i$ corresponds to a node. 

To simplify the notations, we will use the subscript $``tx"$ and $-1$ interchangeably. 
Likewise, we will also use the subscript $v+1$ for the subscript $``rx"$, i.e. $N_{tx}=N_0$ and $N_{rx}=N_{v+1}$. 

For a given node $N_i$, the elements of $N_i$ are again partitioned into two subsets $N_{i,in}$ and $N_{i,out}$ which are called input and output vertices of the node $i$. The inputs and the outputs are defined in a channel-centric perspective. So input vertex is an output port of a node, and an output vertex is an input port of the node. $N_{i,in}$ represent the signals going out from the node $i$ \emph{into} the channels and $N_{i,out}$ represent the signals coming \emph{out} from the channels into the node $i$. 

The transmitter node does not receive signals and the receiver node does not transmit signals, so $N_{tx,out}=\emptyset$ and $N_{rx,in}=\emptyset$. We denote the number of the input and output vertices of the node $i$ as $d_{i,in}$ and $d_{i,out}$, i.e. $d_{i,in}:=|N_{i,in}|$, and $d_{i,out}:=|N_{i,out}|$.

Let the signals take values from a field $\mathbb{F}$, $z$ be the dummy variable for $z$-transforms, and $K=\{ k_1, k_2, k_3, \cdots \}$ be a set of variables to represent the gains inside the nodes. 
We also define $\mathbb{F}[z]$, $\mathbb{F}[K]$, $\mathbb{F}[z,K]$ as the field of all rational functions in variables $z$, $K$, $\{z\} \cup K$ with coefficients in $\mathbb{F}$ respectively.

Each edge which connects the ports of the nodes can be written as a triplet $(w',w'',h_{w',w''}(z,K)) \in E$ where $w',w'' \in W$ and $h_{w',w''}(z,K) \in \mathbb{F}[z] \cup K$. Here, $w'$ is the starting port of the edge, $w''$ is called the end of the edge, and $h_{w',w''}(z,K)$ is the gain of the connection.


Since a lack of physical connection between two vertices $w'$ and $w''$ can be represented as $h_{w',w''}(z,K)=0$, we assume that every input vertex is connected to every output vertex, including ``self-loops" connecting the input vertices to its own output vertices. There are two kinds of edges. One kind of edges is the transfer functions connecting the input vertices to the output vertices ---channel transfer functions. They are given by nature and described by z-transforms ---rational functions on $z$. Formally, for all $i,j\in \{ 0,\cdots, v+1\}$ and $w' \in N_{in,i} , w'' \in N_{out,j}$,
\begin{align}
(w',w'',h_{w',w''}(z,K)) \in E \mbox{ and } h_{w',w''}(z,K) \in \mathbb{F}[z]. \nonumber
\end{align}

The other kind of edge is inside each node. There we have design freedom. To reflect this, for each node let there exist edges fully connecting its output vertices to its input vertices. The transfer functions associated with these edges are in the form of $k_i \in K$ and distinct. Since the transmitter and receiver has only one kind of ports, $N_{tx}$ and $N_{rx}$ do not have internal edges. 

This distinct transfer function assumption guarantees enough design freedom at the relays since we can assign different transfer functions to different edges. Formally, for all $i \in \{ 1,\cdots, v \}$ and $w' \in N_{out,i},w'' \in N_{in,i}$,
$(w',w'',h_{w',w''}(z,K)) \in E \mbox{ and } h_{w',w''}(z,K)=k_{w',w''}$ where $k_{w',w''} \in K$. If $(w_1',w_1'')$ and $(w_2',w_2'')$ are distinct internal edges, $h_{w_1',w_1''} \neq h_{w_2',w_2''}$. These internal edges represent the potential LTI communication schemes. In a fully realized network with a specific communication scheme, each element of the $K_i$ will be replaced with a specific element in $\mathbb{F}[z]$.

At each vertex and edge, the signal is processed as follows: Each vertex $w \in W$ adds all the signals coming from the edges whose head is $w$ and transmits to the edges whose tail is $w$. Each edge $e \in E$ multiplies the signal coming from its tail with its transfer function and transmits to its head.

Denote a transfer function matrix from the input vertices of the node $N_i$ to the output vertices of the node $N_j$ as $H_{i,j}(z)$. In the same way, we denote a transfer function from a set (ordered set) of nodes $A$ to a set (ordered set) of nodes $B$ as $H_{A,B}(z)$. We also denote the transfer function matrix from the output vertices (input ports) of $N_i$ to the input vertices (output vertices) of $N_i$ as $K_i$. Then, $H_{i,j}(z) \in \mathbb{F}[z]^{d_{j,out} \times d_{i,in}}$ and $K_i \in \mathbb{F}[K]^{d_{i,in} \times d_{i,out}}$. For briefness, we write $H_{i,j}(z)$ as $H_{i,j}$ when it does not cause confusion. $K_i$ are given in forms of $\begin{bmatrix} k_{i_1} & k_{i_2} & \cdots \\ k_{i_1} & k_{i_2} & \cdots \\ \vdots & \vdots & \ddots \end{bmatrix}$.

As mentioned above, by considering the transfer functions of the internal edges as different bare dummy variables in $K$, we reflect the design freedom of the relay nodes. Moreover, the capacity of network ---the rank of the transfer function matrix--- will be maximized by considering the transfer functions of the internal edges as variables in $K$. Precisely, let $K_i(z) \in \mathbb{F}[z]^{d_{i,in} \times d_{i,out}}$ be a matrix whose size is the same as $K_i$ but the elements of the matrix belong to $\mathbb{F}[z]$.
Denote the transfer functions from the transmitter to the receiver of $\mathcal{N}(z)$ as $G(z, K)$ and $G(z, K(z))$ in each case. Then, we have the  following relationship:
\begin{lemma}
Let $G(z, K)$ be given as above. Then, we have the following relationship between the rank of $G(z, K)$ and $G(z, K(z))$.
\begin{align}
\rank G(z, K)= \underset{K_i(z) \in \mathbb{F}[z]^{d_{i,in} \times d_{i,out}}}{\max} \rank G(z, K(z)).
\end{align}
\label{lem:keylemma}
\end{lemma}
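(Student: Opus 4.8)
The plan is to read this as the standard ``generic rank equals maximal specialized rank'' fact underlying algebraic network coding: $G(z,K)$ is a matrix over the rational function field $\mathbb{F}[z,K]$, its rank there is the ``generic'' rank, and because the coefficient field $\mathbb{F}[z]$ is infinite this rank is attained by some choice of the $k_i$ as rational functions of $z$. First I would pin down the algebraic shape of $G$. Using the signal-flow semantics fixed above --- each vertex sums the signals on its incoming edges, each edge scales by its gain --- stack the channel-input signals (at all input vertices) into a vector $\mathbf{x}$ and the channel-output signals (at all output vertices) into $\mathbf{y}$; then $\mathbf{y}=H(z)\mathbf{x}$, where $H(z)$ is the block matrix of the $H_{i,j}(z)$, and $\mathbf{x}=\hat K\mathbf{y}+E\mathbf{u}$, where $\hat K$ is block diagonal assembled from the relay blocks $K_i$, $\mathbf{u}$ is the free source vector, and $E,F$ are constant $0/1$ selection matrices. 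Eliminating $\mathbf{x}$ gives $G(z,K)=F\,H(z)\,(I-\hat K H(z))^{-1}E$, so every entry of $G$ lies in $\mathbb{F}[z,K]$ with common denominator dividing $Q(z,K):=\det(I-\hat K H(z))$. The decisive observation is that $Q$ is \emph{not} the zero polynomial in $K$: setting all $k_i=0$ gives $Q=\det I=1$. By Cramer's rule $G=Q^{-1}\widetilde G$ with $\widetilde G:=F\,H\,\mathrm{adj}(I-\hat K H)\,E$, a matrix whose entries are polynomials in the variables $K$ with coefficients in $\mathbb{F}[z]$; and $r:=\rank G(z,K)=\rank\widetilde G(z,K)$ (ranks over $\mathbb{F}[z,K]$, equal because the two matrices differ by the nonzero scalar $Q$).

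Next I would establish the inequality $\rank G(z,K)\ge\rank G(z,K(z))$ for every specialization. Fix a point $a=(K_i(z))_i\in\mathbb{F}[z]^{|K|}$ with $Q(z,a)\ne 0$ in $\mathbb{F}[z]$; this is precisely the condition under which the closed loop is well posed and $G(z,K(z))=Q(z,a)^{-1}\widetilde G(z,a)$ is an honest matrix over $\mathbb{F}[z]$. Any $(r{+}1)\times(r{+}1)$ minor of $G(z,K(z))$ equals the corresponding minor of $\widetilde G$, evaluated at $a$, divided by $Q(z,a)^{r+1}$; since $\rank\widetilde G=r$, every such minor of $\widetilde G$ is the zero polynomial in $K$ and so vanishes at $a$. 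Hence $\rank G(z,K(z))\le r$ for every well-posed specialization.

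Then I would prove achievability. Since $\rank\widetilde G=r$, some $r\times r$ minor $P(z,K)$ of $\widetilde G$ is a nonzero polynomial in $K$ over $\mathbb{F}[z]$, so $D(z,K):=P(z,K)\,Q(z,K)$ is a nonzero element of the integral domain of such polynomials. Now $\mathbb{F}[z]$ is an infinite field --- it contains the infinitely many distinct elements $1,z,z^2,\dots$ --- and a nonzero polynomial over an infinite field is not the zero function (induction on the number of variables, using that a nonzero univariate polynomial over an infinite field has only finitely many roots). So there is $a\in\mathbb{F}[z]^{|K|}$ with $D(z,a)\ne 0$, which forces both $Q(z,a)\ne 0$ (the specialization is well posed) and $P(z,a)\ne 0$ (so $\widetilde G(z,a)$, hence $G(z,K(z))$, has an invertible $r\times r$ submatrix), giving $\rank G(z,K(z))\ge r$. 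Taking $K_i(z)$ to be the blocks of this $a$ and combining with the previous paragraph yields $\max_{K_i(z)}\rank G(z,K(z))=r=\rank G(z,K)$.

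The signal-flow derivation of $G=F\,H\,(I-\hat K H)^{-1}E$ and the infinite-field lemma are routine. The step I expect to be the real obstacle is the bookkeeping around the loop determinant $Q(z,K)$: an arbitrary specialization of the $k_i$ may make $Q$ vanish, so one cannot simply ``plug in generic values'' and must simultaneously guarantee well-posedness ($Q(z,a)\ne 0$) and nonvanishing of a maximal minor ($P(z,a)\ne 0$); this is why I fold both into the single product $D=P\cdot Q$ before invoking the infinite-field argument, and why for the upper bound I restrict to specializations with $Q(z,a)\ne 0$. A secondary but important point is that the argument uses the infinitude of $\mathbb{F}[z]$, not of $\mathbb{F}$, so it stays valid when $\mathbb{F}$ is finite --- consistent with the network-coding setting the paper targets.
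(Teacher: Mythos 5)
Your proof is correct and follows essentially the same route as the paper's (the Koetter--M\'edard generic-rank argument: any specialization of the $K_i$ cannot increase the rank, and a nonzero polynomial over the infinite field $\mathbb{F}[z]$ admits a nonvanishing point, so the generic rank is attained). The one refinement you add --- folding the loop determinant $Q(z,K)=\det(I-\hat K H)$ into the product $D=PQ$ so that well-posedness of the specialization and attainment of the maximal minor are secured simultaneously --- is a point the paper's brief proof leaves implicit, and it tightens the argument without changing its substance.
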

\begin{proof}
The proof is essentially the same as \cite[Lemma~1]{Koetter_Algebraic}. For all $K_i(z) \in \mathbb{F}[z]^{d_{i,in} \times d_{i,out}}$ the independent columns in $G(z,K(z))$ are still independent even if we consider the elements of $K_i$ as variables. Therefore, for all $K_i(z) \in \mathbb{F}[z]^{d_{i,in} \times d_{i,out}}$, $\rank G(z, K) \geq \rank G(z, K(z))$. 

Moreover, the rational function field $\mathbb{F}[z]$ has infinite number of elements and the dimension of the algebraic variety that makes $G(z,K)$ lose its rank is strictly smaller than the dimension of $K_i$'s. Therefore, there exists $K_i(z) \in \mathbb{F}[z]^{d_{i,in} \times d_{i,out}}$ such that $\rank G(z, K) = \rank G(z,K(z))$. Thus, the lemma is true.
\end{proof}

Figure~\ref{fig:LN_ptop} shows the graphical representation of LTI communication network. The squares represent the nodes of the LTI networks. The empty circles attached to the squares represents the input vertices (output ports) from the nodes to the channels. The circles with plus represents the output vertices (input ports) from the channels to the nodes. The arrows outside the nodes (connecting empty circles to plus circles) represent the communication channels, and the arrows inside the nodes (connecting plus circles to empty circles) represent the communication schemes. The scalars (or matrices) written on the arrows represent the transfer functions (or transfer function matrices). We also denote $I_m$ as a $m \times m$ identity matrix. 

Let $G(z,K)$ be the transfer function from the input vertices of the transmitter node to the output vertices of the receiver node. $G(z,K)$ can be written in terms of $H_{i,j}$ and $K_i$~\cite{Oppenheim}.
\begin{theorem}
With the above definitions, the transfer function matrix $G(z,K)$ is given as
\begin{align}
&G(z,K)=
\begin{bmatrix}
H_{1,rx}K_1 & \cdots & H_{v,rx}K_v
\end{bmatrix} \nonumber \\
&\left(
I -
\begin{bmatrix}
H_{1,1} K_1 & \cdots & H_{v,1} K_v  \\
\vdots & \ddots & \vdots \\
H_{1,v} K_1  & \cdots & H_{v,v} K_v
\end{bmatrix}
\right)^{-1}
\begin{bmatrix}
H_{tx,1}\\
\vdots \\
H_{tx,v}
\end{bmatrix}+H_{tx,rx}
. \nonumber
\end{align}
\label{thm:transfer}
\end{theorem}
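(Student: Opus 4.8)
The plan is to derive the input-output transfer function by tracking the signals through the network in the $z$-domain and then solving the resulting linear fixed-point equation. Let me set up notation: let $x_{tx} \in \mathbb{F}[z,K]^{d_{tx,in}}$ denote the (transformed) signal that the transmitter places on its output vertices, let $u_i \in \mathbb{F}[z,K]^{d_{i,out}}$ denote the signal arriving at the output vertices (input ports) of relay $i$, and let $w_i \in \mathbb{F}[z,K]^{d_{i,in}}$ denote the signal that relay $i$ places on its input vertices (output ports). The channel-centric accounting rule stated in the excerpt — each vertex sums the signals on incoming edges, each edge scales by its transfer function — gives two families of equations. First, the channel equations: the signal arriving at the output ports of node $j$ is the superposition of what every node sent through the channels, so for $j = 1, \dots, v$ we have $u_j = H_{tx,j}\, x_{tx} + \sum_{i=1}^v H_{i,j}\, w_i$, and similarly the received signal is $y = H_{tx,rx}\, x_{tx} + \sum_{i=1}^v H_{i,rx}\, w_i$. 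Second, the relay equations encoding the design freedom: $w_i = K_i\, u_i$ for each $i = 1, \dots, v$.

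Next I would stack these into block-matrix form. Writing $u = (u_1, \dots, u_v)^\top$, $w = (w_1, \dots, w_v)^\top$, and letting $\mathcal{K} = \mathrm{diag}(K_1, \dots, K_v)$ be the block-diagonal relay matrix, the channel equations become
\begin{align}
u = \begin{bmatrix} H_{tx,1} \\ \vdots \\ H_{tx,v} \end{bmatrix} x_{tx} + \begin{bmatrix} H_{1,1} & \cdots & H_{v,1} \\ \vdots & \ddots & \vdots \\ H_{1,v} & \cdots & H_{v,v} \end{bmatrix} w, \nonumber
\end{align}
and substituting $w = \mathcal{K} u$ (so that $[H_{i,j}]\, w = [H_{j,1}K_1 \ \cdots]$-type blocks appear, matching the statement's middle matrix) yields $u = H_{tx,\bullet}\, x_{tx} + M u$ where $M$ is the $v\times v$ block matrix with $(j,i)$ block $H_{i,j}K_i$. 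Solving, $u = (I - M)^{-1} H_{tx,\bullet}\, x_{tx}$, and feeding this into the receiver equation $y = H_{tx,rx}\, x_{tx} + [H_{1,rx}K_1 \ \cdots \ H_{v,rx}K_v]\, \mathcal{K}^{-1}\cdot(\text{no}$ — more cleanly, $y = H_{tx,rx}\,x_{tx} + [H_{1,rx}K_1 \ \cdots \ H_{v,rx}K_v]\, u$ after again using $w=\mathcal{K}u$) gives exactly the claimed formula for $G(z,K)$ with $y = G(z,K)\,x_{tx}$.

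The one substantive point to address — and the main obstacle — is the invertibility of $I - M$, since the formula is vacuous otherwise. I would argue that $M$, viewed as a matrix over the field $\mathbb{F}[z,K]$ of rational functions, satisfies $\det(I - M) \neq 0$ as an element of that field: this follows because the $k_i$ are algebraically independent indeterminates, so one can evaluate at $K = 0$ (or any point making the expansion converge), where $I - M$ specializes to $I$, which is invertible; hence $\det(I-M)$ is a nonzero rational function. This is essentially the observation that a well-posed LTI feedback interconnection has a generically invertible return-difference matrix, and it is the reason the problem is stated over the symbolic field $\mathbb{F}[z,K]$ rather than over specialized transfer functions. The remaining steps — verifying the block bookkeeping lines up with the ordering of vertices fixed in the definitions, and checking the $H_{tx,rx}$ feedthrough term — are routine, so I expect the write-up to be short once the fixed-point equation is in place.
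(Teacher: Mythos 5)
Your proposal is correct and takes essentially the same route as the paper: the paper likewise writes the linear fixed-point relation for the signals seen at the relay outputs, $X = M X + H_{tx,\bullet}U$ with $M$ the block matrix of $H_{i,j}K_i$, solves it, substitutes into the receiver equation $Y = H_{tx,rx}U + [H_{1,rx}K_1 \cdots H_{v,rx}K_v]X$, and handles invertibility of $I-M$ by observing (via Lemma~\ref{lem:keylemma}) that its rank with $K$ symbolic is at least its rank at the specialization $K_i=0$, where it reduces to $I$. Your evaluation-at-$K=0$ argument that $\det(I-M)$ is a nonzero element of the rational-function field is the same justification, so the write-up is complete.
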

\begin{proof}
As illustrated in Fig.~\ref{fig:LN_ptop}, let $U$, $X_i$ and $Y$ be vectors of signals at the input vertices of the transmitter, the output vertices visible at node $i$, and the output vertices visible at the receiver. Then, we have the following relations between $U$, $X_i$ and $Y$:
\small{
\begin{align}
&\begin{bmatrix}
X_1 \\
\vdots \\
X_v
\end{bmatrix}=
\begin{bmatrix}
H_{1,1} K_1 & \cdots & H_{v,1} K_v \\
\vdots & \ddots & \vdots \\
H_{1,v} K_1 & \cdots & H_{v,v} K_v
\end{bmatrix}
\begin{bmatrix}
X_1 \\
\vdots \\
X_v
\end{bmatrix}
+
\begin{bmatrix}
H_{tx,1} \\
\vdots \\
H_{tx,v}
\end{bmatrix}U \nonumber \\
&Y=\begin{bmatrix} H_{1,rx} K_1 & \cdots &  H_{v,rx} K_v \end{bmatrix} \begin{bmatrix}
X_1 \\ \vdots \\ X_v
\end{bmatrix}+H_{tx,rx}U. \nonumber
\end{align}
}
Simple algebra then gives the theorem. Here, the invertibility of the matrix can be shown as follows: As shown in Lemma~\ref{lem:keylemma}, the rank of $(I-\begin{bmatrix}
H_{1,1} K_1 & \cdots & H_{v,1} K_v  \\
\vdots & \ddots & \vdots \\
H_{1,v} K_1  & \cdots & H_{v,v} K_v
\end{bmatrix})$ is the largest rank over all $K_i(z)$. Furthermore, by putting $K_i(z)=0$, the matrix becomes invertible.
\end{proof}

Therefore, from end-to-end perspective, the point-to-point LTI network $\mathcal{N}(z)$ can be thought as a MIMO (multiple-input multiple-output) channel whose channel matrix is $G(z,K)$. It is well-known that the capacity of MIMO channels is closely related to the rank of the channel matrix~\cite{Tse}.
\begin{definition}[Degree of Freedom Capacity]
For a given LTI network $\mathcal{N}(z)$, we say that the degree of freedom (d.o.f.) capacity of the network $\mathcal{N}(z)$ is $k$ if its transfer matrix $G(z,K)$ is rank $k$, i.e. $\rank(G(z,K))=k$. 
\end{definition}
On the other hand, when we ``cut" the nodes into two disjoint sets $V=\{ tx, i_1, \cdots , i_k\}$ and $V^c=\{ rx, i_{k+1}, \cdots, i_v \}$, the channel matrix between these two is defined as
\begin{align}
H_{V,V^c}=
\begin{bmatrix}
H_{tx,rx} & H_{i_1,rx} & \cdots & H_{i_k,rx} \\
H_{tx,i_{k+1}} & H_{i_1,i_{k+1}} & \cdots & H_{i_k,i_{k+1}} \\
\vdots & \vdots & \ddots & \vdots \\
H_{tx,i_v} & H_{i_1,i_v} & \cdots & H_{i_k,i_v} \\
\end{bmatrix}. \nonumber
\end{align}
\begin{definition}[Degree of Freedom Mincut]
For a given LTI network $\mathcal{N}(z)$, we say that the degree of freedom (d.o.f.) mincut of the network $\mathcal{N}(z)$ is $k$ if the minimum rank of cuts is equal to $k$, i.e. $\min_{V:V \subseteq \{0,\cdots,{v+1}\}, V \ni {tx}, V \not\ni {rx} } \rank H_{V,V^c}(z)=k$.
\end{definition}

One key fact about LTI networks is that the well-known mincut-maxflow
theorem~\cite{Ford_Maxflow,Shannon_Maxflow} can be extended to
them. This is one of the main theorem of the paper.
\begin{theorem}[Algebraic Mincut-Maxflow Theorem]
With the above definitions,
\begin{align}
&\rank G(z,K) \nonumber\\
&= \min_{V:V \subseteq \{0,\cdots,{v+1}\}, V \ni {tx}, V \not\ni {rx} } \rank H_{V,V^c}(z). \nonumber
\end{align}\label{thm:mincut}
\end{theorem}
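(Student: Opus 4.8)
The plan is to prove the two inequalities separately, combining a standard ``cut upper bound'' with an achievability argument based on network linearization and Lemma~\ref{lem:keylemma}. For the upper bound $\rank G(z,K) \le \min_V \rank H_{V,V^c}(z)$, I would fix an arbitrary cut $V \ni tx$, $V \not\ni rx$, and observe that any signal travelling from the transmitter's input vertices to the receiver's output vertices must, at some point, traverse an edge from a node in $V$ to a node in $V^c$ (the internal $K_i$ edges stay within a node, so they never cross the cut). Concretely, one can reorder the nodes so that $V = \{tx, i_1, \dots, i_k\}$ and write the closed-loop transfer function $G(z,K)$ of Theorem~\ref{thm:transfer} as a product that factors through the matrix $H_{V,V^c}(z)$: the flow from $V$ to the receiver is $(\text{stuff inside } V^c) \cdot H_{V,V^c} \cdot (\text{stuff inside } V)$, where ``stuff inside $V$'' and ``stuff inside $V^c$'' are the respective closed-loop maps obtained by the same Neumann-series/inverse manipulation restricted to those node sets. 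Since $\rank(ABC) \le \rank B$, this yields $\rank G(z,K) \le \rank H_{V,V^c}(z)$ for every $V$, hence $\le$ the minimum.

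For the reverse inequality (achievability), I would use Lemma~\ref{lem:keylemma} to reduce the claim to exhibiting, for each realization target, an instantiation $K_i(z) \in \mathbb{F}[z]^{d_{i,in}\times d_{i,out}}$ with $\rank G(z,K(z)) \ge \min_V \rank H_{V,V^c}(z)$; since $\rank G(z,K)$ is the max over all such instantiations, this suffices. The natural tool here is \emph{network linearization}: I expect the paper to show that $\mathcal{N}(z)$ can be converted to an equivalent acyclic single-hop relay network whose end-to-end transfer matrix has the same rank, and for which the combinatorial mincut-maxflow theorem of Ford--Fulkerson (applied ``dimension-wise'', i.e., to the matroid/generic-rank structure, exactly as in Koetter--Médard style arguments) gives a routing/coding assignment achieving the min-cut. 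Alternatively, and perhaps more directly, one can induct on the number of relays $v$: pick a min-cut, peel off a relay, and argue that generic choices of the gains $k_i$ preserve rank, invoking the Schwartz--Zippel-type genericity already used in the proof of Lemma~\ref{lem:keylemma} (the bad set where rank drops is a proper subvariety, and $\mathbb{F}[z]$ is infinite).

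The main obstacle I anticipate is the achievability direction, specifically making the ``network linearization preserves the rank and turns the min-cut into an honest graph-theoretic min-cut'' step fully rigorous. In the classical acyclic case, Koetter--Médard's algebraic framework lets one treat the determinant of the relevant submatrix as a nonzero polynomial in the coding variables (nonzero because some routing solution realizes it), and then genericity finishes the job. Here the complications are: (i) the network may be cyclic, so the transfer function involves the matrix inverse $(I - \text{loop})^{-1}$, and one must ensure the chosen $K_i(z)$ keep this well-defined — this is exactly why working over $\mathbb{F}[z]$ rather than a finite field helps, and why Theorem~\ref{thm:transfer}'s invertibility remark (take $K_i(z)=0$ as a base point) matters; (ii) one must check that the combinatorial min-cut of the linearized network equals $\min_V \rank H_{V,V^c}(z)$ of the original, which is really the content of the linearization construction. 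I would structure the write-up so that network linearization is invoked as a black box (established earlier in Section~\ref{sec:LTI_networks}), reducing Theorem~\ref{thm:mincut} to the acyclic single-hop case, where it becomes the generic-rank version of classical Menger/Ford--Fulkerson together with Lemma~\ref{lem:keylemma}. The cut upper bound, by contrast, should be a short and self-contained linear-algebra computation directly from Theorem~\ref{thm:transfer}.
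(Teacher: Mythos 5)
Your overall architecture --- linearize the network, reduce to the acyclic single-hop case, and use the genericity of the symbolic gains via Lemma~\ref{lem:keylemma} --- is the same route the paper takes, and your cut upper bound is essentially sound: the closed-loop map from $U$ to $Y$ does factor as $M_2\,H_{V,V^c}\,M_1$ where $M_1$ is the closed-loop map from $U$ to the signals transmitted across the cut and $M_2$ solves the dynamics inside $V^c$ given its cross-cut inputs, so $\rank G \le \rank H_{V,V^c}$ (all inverses exist over $\mathbb{F}[z,K]$ because they equal $I$ at $K=0$). Also note that, as stated, Theorem~\ref{thm:mincut} is an identity with the $K_i$ symbolic, so your obstacle (i) about instantiating $K_i(z)$ while keeping $(I-\mathrm{loop})^{-1}$ well defined is not part of this theorem's proof at all; the paper defers that design question to Section~\ref{sec:linvsun}.

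The genuine gap is in the achievability direction for the linearized single-hop network. You propose to finish by invoking ``classical Menger/Ford--Fulkerson applied dimension-wise, exactly as in Koetter--M\'edard,'' but no such off-the-shelf theorem applies here: after linearization the ``channels'' $A$, $B_i$, $C_i$ are arbitrary matrices over $\mathbb{F}[z]$ with overlapping column and row spaces, not unit-capacity orthogonal edges, so there is no routing solution to witness a nonzero determinant and the reduction to a graph (or hypergraph) is precisely what the paper's remark after Theorem~\ref{thm:mincut} flags as unclear in general. The missing content is the purely linear-algebraic identity $\rank\bigl(A+\sum_i B_iK_iC_i\bigr)=\min_V \rank\begin{bmatrix} A & B_V \\ C_{V^c} & 0\end{bmatrix}$ (Theorem~\ref{thm:LIN:thm1}, the Anderson--Clements-type statement), whose hard half rests on a rank-update lemma: if $\rank[A\ \ b]=\rank\begin{bmatrix}A\\ c\end{bmatrix}=\rank A+1$ then $\rank(A+bc)=\rank A+1$ (Claim~\ref{claim:mincut:1}), extended to matrix blocks by selecting columns of $B_0$ and rows of $C_0$ independent of $A$ and instantiating a $0$--$1$ gain (Claim~\ref{claim:mincut:2}), and then an induction over relays in which $A$ is replaced by $[A\ \ B_{w+1}]$ and $\begin{bmatrix}A\\ C_{w+1}\end{bmatrix}$. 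Your alternative ``peel off a relay and argue genericity'' gestures at this induction but does not identify the rank-update step that makes it work. Separately, treating network linearization as a black box is circular here: the maxflow and mincut equivalences with offset $d$ (Lemmas~\ref{lem:LIN:maxflow} and~\ref{lem:LIN:mincut}, including the circulation arc with $K_{tx},K_{rx}$ and the three-case analysis of cuts through $tx'$, $rx$, and the interior) are themselves part of the proof of Theorem~\ref{thm:mincut}; you correctly flag this as obstacle (ii) but supply none of it.
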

\begin{proof}
See Section~\ref{sec:networklin}.
\end{proof}
In this theorem, $K_i$ are considered as dummy variables which are independent from $z$ and each other. However, what this theorem really implies is the existence of mincut-achieving coding schemes, i.e. there exists $z$-transforms that we can plug in for $K_i$ without changing the equality of Theorem~\ref{thm:mincut}. In Section~\ref{sec:linvsun}, we will discuss this point in further detail.

The above notations for LTI point-to-point networks can be naturally generalized to those for LTI networks with multiple sources and destinations.
\begin{figure}[top]
\begin{center}
\includegraphics[width = 3in]{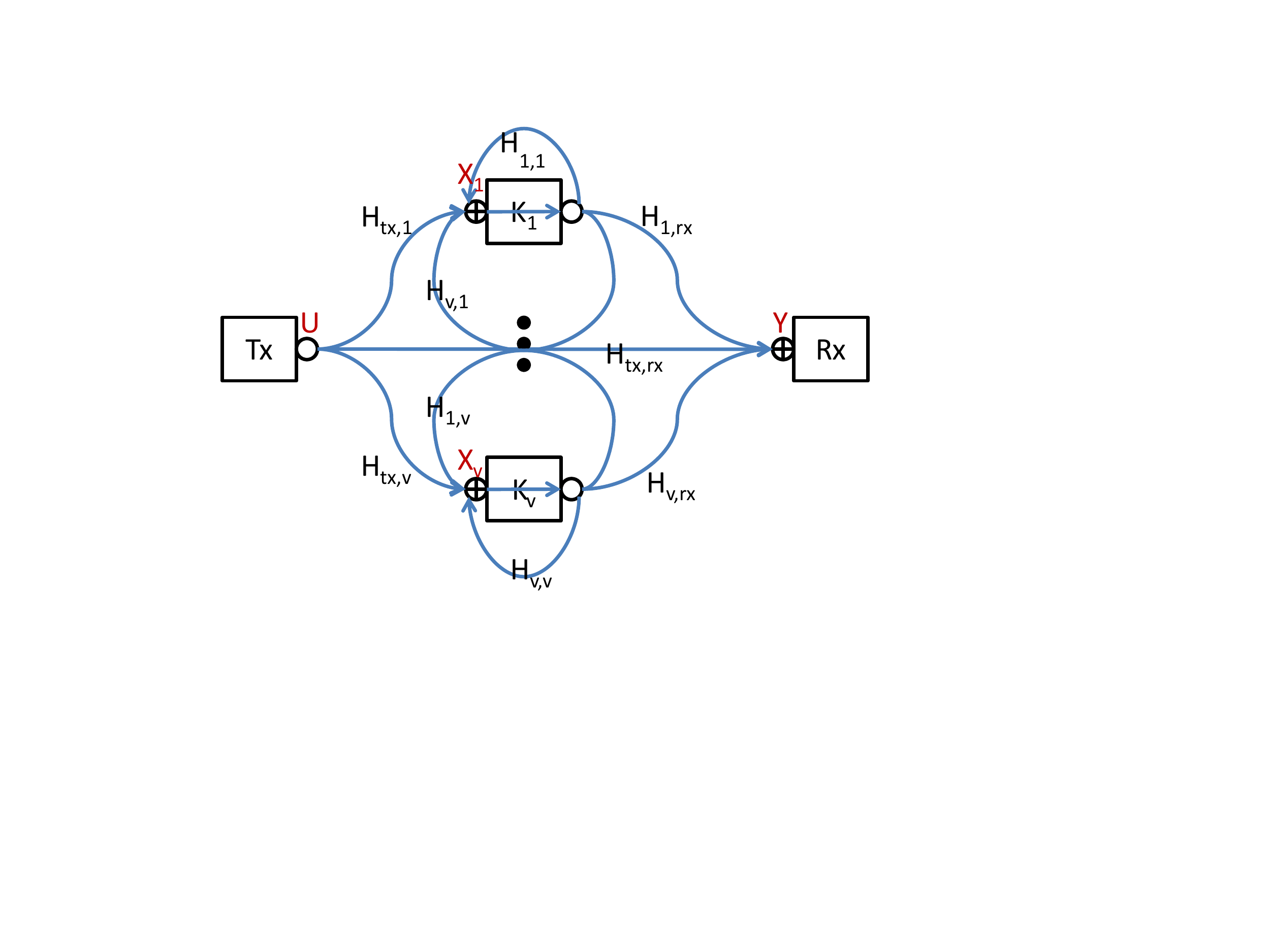}
\caption{point-to-point LTI network $\mathcal{N}(Z)$}
\label{fig:LN_ptop}
\end{center}
\end{figure}
\begin{figure}[top]
\begin{center}
\includegraphics[width = 2in]{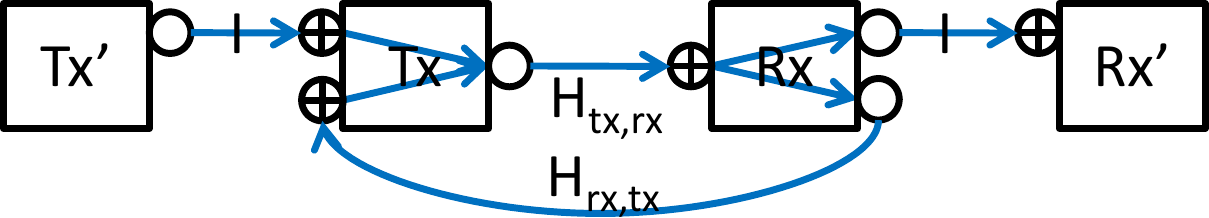}
\caption{We can model feedback by introducing an outer transmitter $Tx'$ and receiver $Rx'$}
\label{fig:feedback}
\end{center}
\end{figure}

One may think the LTI networks above do not cover channels with feedback since we did not include any channel from the receiver to the transmitter. However, as shown in Fig.~\ref{fig:feedback} the channel with feedback can be modeled by introducing an outer transmitter and receiver. In a similar way, we can also include cooperation between transmitters and receivers in cases with multiple sources and destinations.

\subsection{State-Space Representation and Network Linearization}
\label{sec:networklin}
In this section, we prove Theorem~\ref{thm:mincut} using the idea of network linearization. Network linearization is the counterpart of the following fact of linear system theory: Every causal linear time-invariant system with an input $u[n]$ and an output $y[n]$ can be written in state-space form~\cite{Chen}, \emph{i.e.} can be realized as a linear system equation:
\begin{align}
&x[n+1] = A x[n] + B u[n]\\
&y[n] = C x[n] + D u[n]
\end{align}
by introducing proper internal states $x[n]$. Similarly, network linearization tells us that every LTI network with an arbitrary topology can be converted to an acyclic single-hop relay network by introducing proper internal states.

First, we illustrate two key ideas for network linearization.

\begin{figure}[top]
\includegraphics[width = 3in]{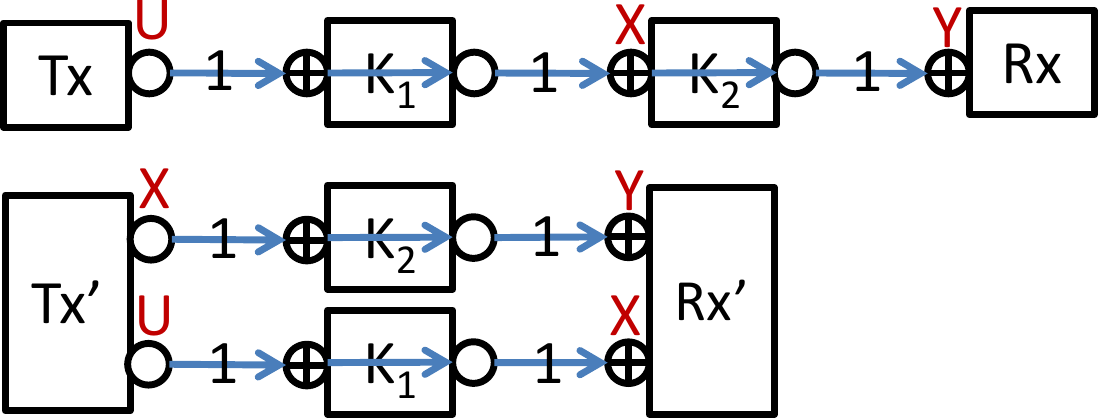}
\caption{LTI network example and its equivalent network with linearized transfer function}
\label{fig:internalexample}
\end{figure}

\begin{figure}[top]
\begin{center}
\includegraphics[width = 3in]{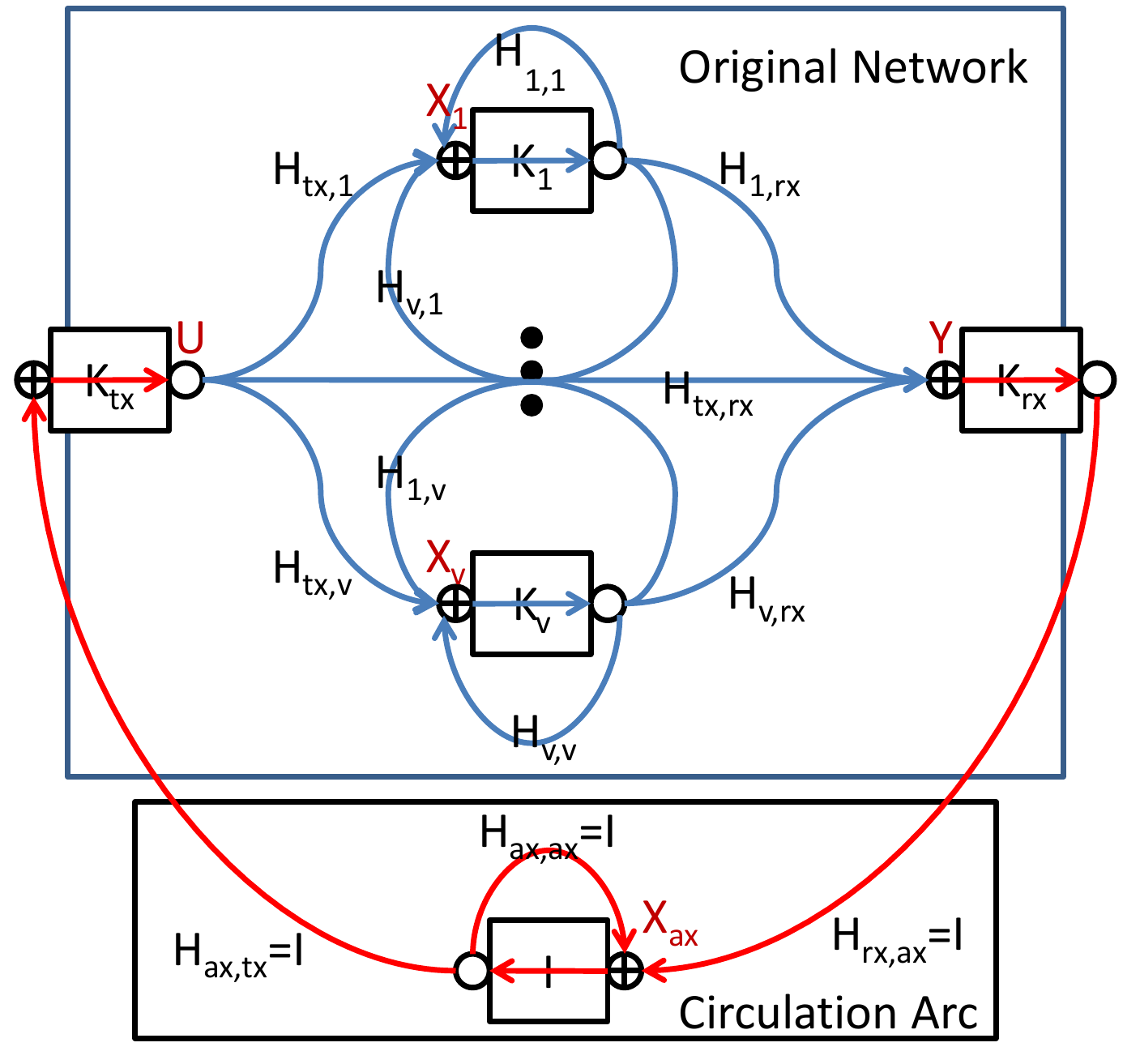}
\caption{LTI network $\mathcal{N}(Z)$ with circulation arc added in}
\label{fig:LN_ptop}
\end{center}
\end{figure}

(1) Internal States:
Consider the two-hop relay network shown in the top figure of Fig.~\ref{fig:internalexample}. The transfer function from $U$ to $Y$ is $k_2 k_1$, which is not linear in $k_1, k_2$. To write the transfer function in a linear form, we introduce an internal state $X$ at the output of the second node. Then, the transfer function matrix from $X,U$ to $Y,X$ is
$\begin{bmatrix}
Y \\ X
\end{bmatrix}=
\begin{bmatrix} k_2 & 0 \\ 0 & k_1\end{bmatrix}
\begin{bmatrix} X \\ U \end{bmatrix}
$, which is linear in $k_1, k_2$. Moreover, since
\begin{align}
\begin{bmatrix}
k_2 & 0 \\
0 & k_1
\end{bmatrix}
=
\begin{bmatrix}
0 \\ 1
\end{bmatrix}
k_1
\begin{bmatrix}
0 & 1
\end{bmatrix}
+
\begin{bmatrix}
1 \\ 0
\end{bmatrix}
k_2
\begin{bmatrix}
1 & 0
\end{bmatrix},
\end{align}
it corresponds to the transfer function of the acyclic single-hop relay network shown in the bottom figure of Fig.~\ref{fig:internalexample}.

(2) Circulation Arc: Even if the transfer function can be written in a linear matrix form by introducing internal states, there has to be a relationship between the rank of the original transfer function and the rank of the linearized transfer function. 

After all, in general the rank of  the linearized transfer function matrix will be bigger as the above example illustrates. So we need a way to relate the ranks of the transfer function matrices.

To make this connection, we borrow the circulation arc idea from the integer programming context~\cite[p.86]{hochbaum}. The problem that they had was that when they tried to write the maxflow problem in linear programming form, the flow conservation law did not hold at the source and the destination. The flow at the source is negative and the flow at the destination is positive. To patch this, they introduced a circulation arc with infinite capacity from the destination to the source. Since the amount of the negative flow at the source is the same as the amount of the positive flow at the destination, the flow conservative law can be recovered as a universal. Moreover, the flow across the network can be easily measured by measuring the flow in the circulation arc.

To apply this idea to LTI networks, we use an underdetermined system. Let's consider $x=x+K_{rx}G(z,K)K_{tx}x$ with unknown vector $x$. Here, $K_{rx}G(z,K)K_{tx}$ is a transfer function with a preprocessing matrix $K_{tx}$ and a postprocessing matrix $K_{rx}$. If the rank of $K_{rx}G(z,K)K_{tx}$ is smaller than the dimension of $x$, the equation is underdetermined. Otherwise, it is not. Thus, we can see that the rank of the transfer function can be measured by the underdeterminedness of the system.\\


Now, we will combine these ideas for network linearization. We first formally introduce the circulation arc. As shown in Fig.~\ref{fig:LN_ptop}, an auxiliary node $N_{ax}$  with $d_{ax}$ input ports and $d_{ax}$ output ports is added to the original network. We also introduce $d_{ax}$ input vertices at the receiver node and $d_{ax}$ output vertices at the transmitter node. Let $H_{rx,ax}=H_{ax,tx}=H_{ax,ax}=K_{ax}=I_{d_{ax}}$. 
As discussed in Section~\ref{sec:prelim}, to reflect the design freedom of the transmitter and receiver, let 
$K_{tx} \in \mathbb{F}[K]^{d_{tx} \times d_{ax}}$ and $K_{rx} \in \mathbb{F}[K]^{d_{ax} \times d_{rx}}$, and each element of $K_{tx}$, $K_{rx}$ is the form of $k_i \in K$ and they are all distinct and also distinct from the elements in $K_1, \cdots, K_v$ inside the relays.

Now, we introduce labels for the internal states. As shown in Fig.~\ref{fig:LN_ptop}, let $X_{ax}$, $X_i$, and $Y$ be the vectors of the signals of the output vertices seen at the auxiliary node, the node $i$, and the receiver respectively.


From the system diagram, Fig.~\ref{fig:LN_ptop}, we can see the following relation has to hold.
\begin{align}
&\begin{bmatrix}
X_{ax} \\
Y \\
X_1 \\
\vdots \\
X_v
\end{bmatrix}
=
\begin{bmatrix}
I_{d_{ax}} & K_{rx} &  0 & \cdots & 0 \\
H_{tx,rx}K_{tx} & 0 & H_{1,rx} K_1 & \cdots & H_{v,rx}K_v \\
H_{tx,1} K_{tx} & 0 & H_{1,1} K_1 & \cdots & H_{v,1} K_v \\
\vdots & \vdots & \vdots & \ddots & \vdots \\
H_{tx,v}K_{tx} & 0 & H_{1,v}K_1 & \cdots & H_{v,v}K_v
\end{bmatrix}
\begin{bmatrix}
X_{ax} \\
Y \\
X_1 \\
\vdots \\
X_v
\end{bmatrix} \nonumber \\
&
(\Leftrightarrow)
\underbrace{\begin{bmatrix}
0 & -K_{rx} & 0 & \cdots & 0 \\
-H_{tx,rx}K_{tx} & I_{d_{rx}} & -H_{1,rx} K_1 & \cdots & -H_{v,rx}K_v \\
-H_{tx,1} K_{tx} & 0 & I_{d_{1,out}}-H_{1,1} K_1 & \cdots & -H_{v,1} K_v \\
\vdots & \vdots & \vdots & \ddots & \vdots \\
-H_{tx,v}K_{tx} & 0 & -H_{1,v}K_1 & \cdots & I_{d_{v,out}}-H_{v,v}K_v
\end{bmatrix}}
_{:=G_{lin}(z,K)}
\begin{bmatrix}
X_{ax} \\
Y \\
X_1 \\
\vdots \\
X_v
\end{bmatrix}
=
\begin{bmatrix}
0 \\ 0 \\ 0 \\ \vdots \\  0
\end{bmatrix} 
\label{eqn:transfer}
\end{align}
The matrix $G_{lin}(z,K)$ here is filled with entries linear in $K_i$. Thus, $G_{lin}(z,K)$ can be rewritten as 
\begin{align}
G_{lin}(z,K)&=
\underbrace{\begin{bmatrix}
0 & 0 & 0 & \cdots & 0 \\
0 & I & 0 & \cdots & 0 \\
0 & 0 & I & \cdots & 0 \\
\vdots & \vdots & \vdots & \ddots & \vdots \\
0 & 0 & 0 & \cdots & I \\
\end{bmatrix}}_{:=A}
+
\underbrace{
\begin{bmatrix}
0 \\ H_{tx,rx} \\ H_{tx,1} \\ \vdots \\ H_{tx,v}
\end{bmatrix}}_{:=B_{tx}}
K_{tx}
\underbrace{
\begin{bmatrix}
-I_{d_{ax}} & 0 & 0 & \cdots & 0
\end{bmatrix}}_{:=C_{tx}}
+
\underbrace{
\begin{bmatrix}
0 \\ H_{1,rx} \\ H_{1,1} \\ \vdots \\ H_{1,v}
\end{bmatrix}}_{:=B_1}
K_1
\underbrace{
\begin{bmatrix}
0 & 0 & -I_{d_{1,out}} & \cdots & 0
\end{bmatrix}}_{:=C_1}+\cdots\\
&+
\underbrace{
\begin{bmatrix}
0 \\ H_{v,rx} \\ H_{v,1} \\ \vdots \\ H_{v,v}
\end{bmatrix}}_{:=B_v}
K_v
\underbrace{
\begin{bmatrix}
0 & 0 & 0 & \cdots & -I_{d_{v,out}}
\end{bmatrix}}_{:=C_v}
+
\underbrace{
\begin{bmatrix}
I_{d_{ax}} \\ 0 \\ 0 \\ \vdots \\ 0
\end{bmatrix}}_{:=B_{rx}}
K_{rx}
\underbrace{
\begin{bmatrix}
0 & -I_{d_{rx}} & 0 & \cdots & 0
\end{bmatrix}}_{:=C_{rx}}. \label{eqn:lintran}
\end{align}
$A, B_{tx}, C_{tx}, B_i, C_i, B_{rx}, C_{rx}$ are defined as above.

Because $G_{lin}(z,K)$ looks like a transfer function matrix, we can formally ask what is the LTI network whose transfer fumtion matrix is $G_{lin}(z,K)$. Then, we can easily see that $G_{lin}(z,K)$ corresponds to the transfer function of the linearized LTI network $\mathcal{N}_{lin}(z)$ of Fig.~\ref{fig:LN_linearized}. The linearized network $\mathcal{N}_{lin}(z)$ has a new transmitter $tx'$ and receiver $rx'$, and is an acyclic single-hop relay network with a direct link between $tx'$ and $rx'$. We also use the subscript $``tx'"$ and $-1$ alternatively, and likewise $``rx'"$ and $v+2$ alternatively. 

Let $d:= \dim \begin{bmatrix} Y \\ X_1 \\ \vdots \\ X_v \end{bmatrix}=d_{rx}+\sum_{1 \leq i \leq v} d_{i,out}$ where $Y, X_1, \cdots, X_v$ are given as \eqref{eqn:transfer}. 
Then, we will prove that the maxflow of $\mathcal{N}_{lin}(z)$ is the same as the maxflow of $\mathcal{N}(z)$ by an offset $d$.

Furthermore, for sets (ordered sets) $V=\{v_1,\cdots, v_i \}$ and $W=\{w_1,\cdots,w_j \}$ we denote
\begin{align}
&B_{V}:=\begin{bmatrix} B_{v_1} & \cdots & B_{v_{i}} \end{bmatrix}\\
&C_{V}:=\begin{bmatrix} C_{v_1} \\ \vdots \\ C_{v_{i}} \end{bmatrix}\\
&D_{V,W}:=\begin{bmatrix} D_{v_1 w_1} & \cdots & D_{v_1 w_{j}} \\ \vdots & \ddots & \vdots \\ D_{v_{i} w_1} & \cdots & D_{v_{i} w_{j}} \end{bmatrix}
\end{align}
whenever this shorthand does not cause confusion.

We also denote the channel matrices from the node $i$ to the node $j$ in the linearized LTI network $\mathcal{N}_{lin}(z)$ as $H^{lin}_{i,j}$. Then, we can easily see that the channel matrix for the cut $V \subseteq \{ 0, \cdots , v+1 \}$ is 
\begin{align}
H^{lin}_{V \cup \{tx'\},V^c \cup \{rx'\}}=\begin{bmatrix}
A & B_V \\ C_{V^c} & 0
\end{bmatrix}. \label{eqn:cutset}
\end{align}
\begin{figure}[top]
\includegraphics[width = 3in]{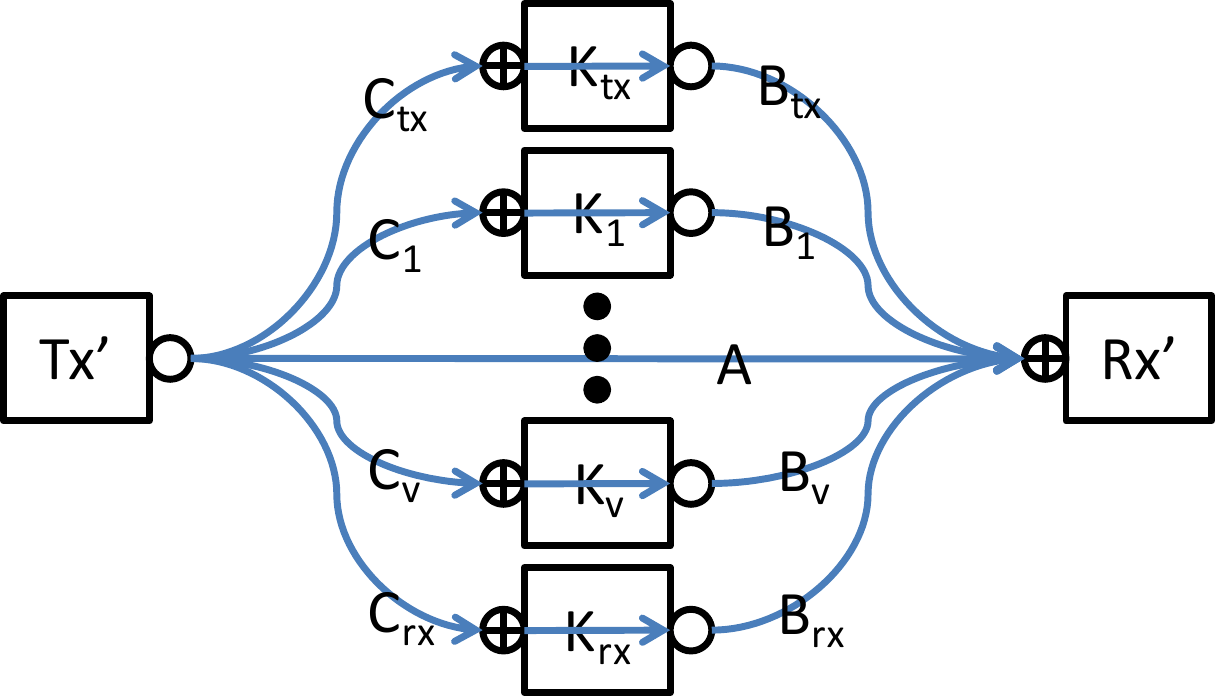}
\caption{Linearized LTI network $\mathcal{N}_{lin}(z)$}
\label{fig:LN_linearized}
\end{figure}
We will prove the essential equivalence between the original network $\mathcal{N}(z)$ and the linearized network $\mathcal{N}_{lin}(z)$. First, we prove a lemma on matrix rank.
\begin{lemma}
For a field $\mathbb{F}$ and $n_1,n_2 \in \mathbb{Z}^+$, let $A \in \mathbb{F}^{n_1 \times n_1}, B \in \mathbb{F}^{n_2 \times n_1}, C \in \mathbb{F}^{n_1 \times n_2}, D \in \mathbb{F}^{n_2 \times n_2}$. If $D$ is invertible, the following rank equality holds.
\begin{align}
\rank \begin{bmatrix} A & B \\ C & D \end{bmatrix} = \rank D + \rank(A-BD^{-1}C) \nonumber
\end{align}
\label{lem:LIN:rank}
\end{lemma}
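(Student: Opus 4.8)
The plan is to prove the Schur-complement rank identity
$\rank \begin{bmatrix} A & B \\ C & D \end{bmatrix} = \rank D + \rank(A - BD^{-1}C)$
by performing block Gaussian elimination, realized as multiplication by invertible block matrices on the left and right. Since multiplying by an invertible matrix does not change the rank, it suffices to reduce the block matrix to a block-diagonal form whose blocks are $D$ and the Schur complement $A - BD^{-1}C$.

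**First I would** write down the two elementary block operations explicitly. Left-multiply by
$\begin{bmatrix} I_{n_1} & -BD^{-1} \\ 0 & I_{n_2} \end{bmatrix}$,
which is invertible (determinant $1$), and right-multiply by
$\begin{bmatrix} I_{n_1} & 0 \\ -D^{-1}C & I_{n_2} \end{bmatrix}$,
also invertible. A direct block computation gives
\begin{align}
\begin{bmatrix} I_{n_1} & -BD^{-1} \\ 0 & I_{n_2} \end{bmatrix}
\begin{bmatrix} A & B \\ C & D \end{bmatrix}
\begin{bmatrix} I_{n_1} & 0 \\ -D^{-1}C & I_{n_2} \end{bmatrix}
=
\begin{bmatrix} A - BD^{-1}C & 0 \\ 0 & D \end{bmatrix}. \nonumber
\end{align}
Then I would invoke the fact that the rank of a block-diagonal matrix is the sum of the ranks of its blocks, so the right-hand side has rank $\rank(A - BD^{-1}C) + \rank D$, and since the left and right multipliers are invertible, the left-hand side has the same rank as the original matrix $\begin{bmatrix} A & B \\ C & D \end{bmatrix}$. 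Combining these two facts yields the claimed equality.

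**The only thing requiring care** — hardly an obstacle — is verifying the block multiplication: one should confirm that the $(1,1)$ block comes out to $A - BD^{-1}C$ (it does: $A + (-BD^{-1})C$ from the left multiplication, then the subsequent right multiplication adds $(A - BD^{-1}C)\cdot 0 = 0$ and leaves it unchanged, while the $(1,2)$ block becomes $B + (-BD^{-1})D = 0$ and the $(2,1)$ block becomes $C + D(-D^{-1}C) = 0$), and that the $(2,2)$ block is exactly $D$. Everything here lives over an arbitrary field $\mathbb{F}$, so no positivity or ordering is used, and $D^{-1}$ exists by hypothesis. One subtlety worth a sentence: the rank of a block-diagonal matrix equalling the sum of ranks of the diagonal blocks should be stated as a small standalone observation, since it is the one place where a genuine (if elementary) argument about column spaces is needed; it follows because the column space of the block-diagonal matrix is the direct sum of the two blocks' column spaces embedded in complementary coordinate subspaces.
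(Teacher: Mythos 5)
Your proposal is correct and follows essentially the same route as the paper: block Gaussian elimination with the left multiplier $\begin{bmatrix} I_{n_1} & -BD^{-1} \\ 0 & I_{n_2} \end{bmatrix}$, whose invertibility preserves rank. The only cosmetic difference is that you also clear the $(2,1)$ block with a right multiplier to reach a block-diagonal form, whereas the paper stops at the block-triangular matrix $\begin{bmatrix} A-BD^{-1}C & 0 \\ C & D \end{bmatrix}$ and uses the invertibility of $D$ directly to split the rank; both steps are valid and equivalent in substance.
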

\begin{proof}
\begin{align}
&\rank \begin{bmatrix} A & B \\ C & D \end{bmatrix} = \rank \left( \begin{bmatrix}I_{n_1} & -BD^{-1} \\ 0 & I_{n_2} \end{bmatrix} \begin{bmatrix} A & B \\ C & D \end{bmatrix} \right) \nonumber \\
&= \rank \begin{bmatrix} A-BD^{-1}C & 0 \\ C & D \end{bmatrix} \nonumber \\
&= \rank D + \rank (A-BD^{-1}C) \nonumber
\end{align}
where the first equality comes from the fact that $\begin{bmatrix}I_{n_1} & -BD^{-1} \\ 0 & I_{n_2} \end{bmatrix} $ is invertible, and the last equality is a consequence of $D$ being invertible.
\end{proof}

Now, we prove that the maxflow of the two networks $\mathcal{N}(z)$ and $\mathcal{N}_{lin}(z)$ are equivalent with an offset $d$.
\begin{lemma}[Maxflow Equivalence Lemma]
Given the above notations,
\begin{align}
\rank(K_{rx} G(z,K) K_{tx})+d=\rank G_{lin}(z,K).
\end{align}
\label{lem:LIN:maxflow}
\end{lemma}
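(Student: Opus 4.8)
The plan is to apply the rank identity of Lemma~\ref{lem:LIN:rank} directly to $G_{lin}(z,K)$ after identifying the right block structure. Recall from \eqref{eqn:transfer} that $G_{lin}(z,K)$ acts on the stacked vector $\begin{bmatrix} X_{ax} \\ Y \\ X_1 \\ \cdots \\ X_v\end{bmatrix}$; the first block-row/column corresponds to $X_{ax}$ (of dimension $d_{ax}$) and the remaining block corresponds to $\begin{bmatrix} Y \\ X_1 \\ \cdots \\ X_v\end{bmatrix}$ (of dimension $d$). Reading off \eqref{eqn:transfer}, write $G_{lin}(z,K) = \begin{bmatrix} \widetilde{A} & \widetilde{B} \\ \widetilde{C} & \widetilde{D}\end{bmatrix}$ where $\widetilde{A} = 0 \in \mathbb{F}[z,K]^{d_{ax}\times d_{ax}}$ is the top-left block, $\widetilde{B} = \begin{bmatrix} -K_{rx} & 0 & \cdots & 0 \end{bmatrix}$, $\widetilde{C} = \begin{bmatrix} -H_{tx,rx}K_{tx} \\ -H_{tx,1}K_{tx} \\ \vdots \\ -H_{tx,v}K_{tx} \end{bmatrix} = -B_{tx}K_{tx}$ (using the definition of $B_{tx}$ in \eqref{eqn:lintran}), and $\widetilde{D}$ is the $d\times d$ lower-right block, namely $I_d$ minus the internal feedback matrix $\begin{bmatrix} 0 & H_{1,rx}K_1 & \cdots & H_{v,rx}K_v \\ 0 & H_{1,1}K_1 & \cdots & H_{v,1}K_v \\ \vdots & & \ddots & \vdots \\ 0 & H_{1,v}K_1 & \cdots & H_{v,v}K_v \end{bmatrix}$.

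The first step is to establish that $\widetilde{D}$ is invertible over the rational function field $\mathbb{F}[z,K]$. This follows exactly as in the proof of Theorem~\ref{thm:transfer}: by Lemma~\ref{lem:keylemma} the rank of $\widetilde{D}$ as a matrix over $\mathbb{F}[z,K]$ equals the maximal rank over all substitutions $K_i(z)$, and substituting $K_i(z)=0$ makes $\widetilde{D}=I_d$, which has full rank $d$; hence $\widetilde{D}$ is invertible. The second step is to apply Lemma~\ref{lem:LIN:rank} with this block decomposition, giving
\begin{align}
\rank G_{lin}(z,K) = \rank \widetilde{D} + \rank\bigl(\widetilde{A} - \widetilde{B}\widetilde{D}^{-1}\widetilde{C}\bigr) = d + \rank\bigl(\widetilde{B}\widetilde{D}^{-1}\widetilde{C}\bigr), \nonumber
\end{align}
since $\widetilde{A}=0$ and $\rank\widetilde{D}=d$. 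The third step is to identify $-\widetilde{B}\widetilde{D}^{-1}\widetilde{C}$ with $K_{rx}G(z,K)K_{tx}$: expanding $\widetilde{D}^{-1}$ and using $\widetilde{B} = -K_{rx}\begin{bmatrix} I & 0 & \cdots & 0\end{bmatrix}$, $\widetilde{C} = -\begin{bmatrix} H_{tx,rx} \\ H_{tx,1} \\ \vdots \\ H_{tx,v}\end{bmatrix}K_{tx}$, the product reproduces precisely the closed-form expression for $G(z,K)$ from Theorem~\ref{thm:transfer} (the direct term $H_{tx,rx}$ appears because the $Y$-row of $\widetilde{D}^{-1}$ picks up the $(1,1)$ identity contribution, and the Neumann-series expansion of the lower-right $(I - \text{feedback})^{-1}$ block yields the remaining terms). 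Since rank is unaffected by the overall sign, $\rank(\widetilde{B}\widetilde{D}^{-1}\widetilde{C}) = \rank(K_{rx}G(z,K)K_{tx})$, and combining gives the claim.

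The main obstacle is the bookkeeping in the third step: one must carefully carry out the block matrix inversion of $\widetilde{D}$ — in particular handling the first block-column of the feedback matrix being zero, which decouples $Y$ from the relay states on the feedback side — and verify that the resulting algebraic expression matches Theorem~\ref{thm:transfer} term for term, including the additive $H_{tx,rx}$ direct-link term. This is routine linear algebra but error-prone; I would organize it by first permuting so that $Y$ is the last coordinate of the $d$-block, writing $\widetilde{D}$ in $2\times 2$ block form with the relay-feedback block $I - \bigl[H_{i,j}K_i\bigr]$ in one corner, inverting that sub-block via its own Neumann series, and then performing one more Schur-complement step to extract the $Y$-to-$X_{ax}$ transfer. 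Everything else — invertibility and the rank arithmetic — is immediate from the lemmas already in hand.
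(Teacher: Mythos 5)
Your proposal is correct and follows essentially the same route as the paper's proof: the same block partition of $G_{lin}(z,K)$ with the zero $d_{ax}\times d_{ax}$ corner, invertibility of the lower-right $d\times d$ block via Lemma~\ref{lem:keylemma} with $K_i(z)=0$, the Schur-complement rank identity of Lemma~\ref{lem:LIN:rank}, and identification of $\widetilde{B}\widetilde{D}^{-1}\widetilde{C}$ with $K_{rx}G(z,K)K_{tx}$ through the block-triangular structure and Theorem~\ref{thm:transfer}. The only cosmetic difference is that the paper inverts the triangular block exactly via $\begin{bmatrix} I & B \\ 0 & D \end{bmatrix}^{-1}=\begin{bmatrix} I & -BD^{-1} \\ 0 & D^{-1} \end{bmatrix}$ rather than invoking a Neumann-series expansion, which is unnecessary here.
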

\begin{proof}
\begin{align}
&\rank G_{lin}(z, K)\\
&\overset{(A)}{=}
\rank 
\begin{bmatrix}
I & -H_{1,rx}K_1 & \cdots & -H_{v,rx}K_v \\
0 & I-H_{1,1}K_1 & \cdots & -H_{v,1}K_v \\
\vdots & \vdots & \ddots & \vdots\\
0 & -H_{1,v}K_1 & \cdots & I-H_{v,v}K_v \\
\end{bmatrix}\\
&+\rank\left( - \begin{bmatrix} -K_{rx} & 0 & \cdots & 0 \end{bmatrix}
\begin{bmatrix}
I & -H_{1,rx}K_1 & \cdots & -H_{v,rx}K_v \\
0 & I-H_{1,1}K_1 & \cdots & -H_{v,1}K_v \\
\vdots & \vdots & \ddots & \vdots\\
0 & -H_{1,v}K_1 & \cdots & I-H_{v,v}K_v \\
\end{bmatrix}^{-1}
\begin{bmatrix}
-H_{tx,rx}K_{tx} \\
-H_{tx,1}K_{tx} \\
\vdots \\
-H_{tx,v}K_{tx}
\end{bmatrix}
\right)\\
&\overset{(B)}{=} d+\rank\left( - \begin{bmatrix} -K_{rx} & 0 & \cdots & 0 \end{bmatrix}
\begin{bmatrix}
I & -H_{1,rx}K_1 & \cdots & -H_{v,rx}K_v \\
0 & I-H_{1,1}K_1 & \cdots & -H_{v,1}K_v \\
\vdots & \vdots & \ddots & \vdots\\
0 & -H_{1,v}K_1 & \cdots & I-H_{v,v}K_v \\
\end{bmatrix}^{-1}
\begin{bmatrix}
-H_{tx,rx}K_{tx} \\
-H_{tx,1}K_{tx} \\
\vdots \\
-H_{tx,v}K_{tx}
\end{bmatrix}
\right)\\
&\overset{(C)}{=} d+\rank\left(
K_{rx}
\begin{bmatrix}
I &
\begin{bmatrix}
H_{1,rx}K_1 & \cdots & H_{v,rx}K_v
\end{bmatrix}
\begin{bmatrix}
I-H_{1,1}K_1 & \cdots & -H_{v,1}K_v \\
\vdots & \ddots & \vdots\\
-H_{1,v}K_1 & \cdots & I-H_{v,v}K_v \\
\end{bmatrix}^{-1}
\end{bmatrix}
\begin{bmatrix}
-H_{tx,rx}K_{tx} \\
-H_{tx,1}K_{tx} \\
\vdots \\
-H_{tx,v}K_{tx}
\end{bmatrix}
\right) \\
&\overset{(D)}{=} d+\rank\left( K_{rx}\left( H_{tx,rx} +
\begin{bmatrix}
H_{1,rx}K_1 & \cdots & H_{v,rx}K_v
\end{bmatrix}
\begin{bmatrix}
I-H_{1,1}K_1 & \cdots & -H_{v,1}K_v \\
\vdots & \ddots & \vdots\\
-H_{1,v}K_1 & \cdots & I-H_{v,v}K_v \\
\end{bmatrix}^{-1}
\begin{bmatrix}
H_{tx,1}\\
\vdots \\
H_{tx,v}
\end{bmatrix}
\right) K_{tx}\right)\\
&\overset{(E)}{=} d+\rank(K_{rx}G(z, K)K_{tx})
\end{align}
(A): This comes from Lemma~\ref{lem:LIN:rank} by considering $0_{d_rx}$ as $A$, $\begin{bmatrix} -K_{rx} & 0 & \cdots & 0 \end{bmatrix}$ as $B$, $\begin{bmatrix}
-H_{tx,rx}K_{tx} \\
-H_{tx,1}K_{tx} \\
\vdots \\
-H_{tx,v}K_{tx}
\end{bmatrix}$ as $C$, and $\begin{bmatrix}
I_{d_{rx}} & -H_{1,rx}K_1 & \cdots & -H_{v,rx}K_v \\
0 & I_{d_{1,out}} - H_{1,1}K_1 & \cdots & -H_{v,1}K_v \\
\vdots & \vdots & \ddots & \vdots\\
0 & -H_{1,v}K_1 & \cdots & I_{d_{v,out}} - H_{v,v}K_v \\
\end{bmatrix}$ as $D$. Here, $D$ is invertible, since by Lemma~\ref{lem:keylemma} the rank of $D$ is the maximum rank over all $K_i(z)$ and by putting $K_i(z)=0$ the matrix $D$ becomes full rank.\\
(B): Since each element of $K_i$ is a dummy variable, $\rank \begin{bmatrix}
I_{d_{rx}} & -H_{1,rx}K_1 & \cdots & -H_{v,rx}K_v \\
0 & I_{d_{1,out}} - H_{1,1}K_1 & \cdots & -H_{v,1}K_v \\
\vdots & \vdots & \ddots & \vdots\\
0 & -H_{1,v}K_1 & \cdots & I_{d_{v,out}} - H_{v,v}K_v \\
\end{bmatrix}  \geq \rank \begin{bmatrix} I_{d_{ax}} & 0 & \cdots & 0 \\
0 & I_{d_{1,out}} & \cdots & 0\\
\vdots & \vdots & \ddots & \vdots \\
0 & 0 & \cdots & I_{d_{v,out}} 
\end{bmatrix}  = d $. Moreover, because the dimension of the matrix is $d \times d$, the rank is also upper bounded by $d$.\\
(C): We can easily show $\begin{bmatrix} I & B \\ 0 & D \end{bmatrix}^{-1}=\begin{bmatrix}I & -B D^{-1} \\ 0 & D^{-1} \end{bmatrix}$. Thus, by considering $\begin{bmatrix} - H_{1,rx}K_{rx} & \cdots & - H_{v,rx}K_v \end{bmatrix}$ as $B$ and $\begin{bmatrix}
I_{d_{rx}} & -H_{1,rx}K_1 & \cdots & -H_{v,rx}K_v \\
0 & I_{d_{1,out}} - H_{1,1}K_1 & \cdots & -H_{v,1}K_v \\
\vdots & \vdots & \ddots & \vdots\\
0 & -H_{1,v}K_1 & \cdots & I_{d_{v,out}} - H_{v,v}K_v \\
\end{bmatrix}$ as $D$, and multiplying with the matrix $\begin{bmatrix} -K_{rx} & 0 & \cdots & 0 \end{bmatrix}$, we can prove this step.\\
(D): This comes from direct computation.\\
(E): This comes from the definition of $G(z, K)$ shown in Theorem~\ref{thm:transfer}.
\end{proof}

The mincut of $\mathcal{N}_{lin}(z)$ is also the same as the mincut of $\mathcal{N}(z)$, except for an offset $d$.
\begin{lemma}[Mincut Equivalence Lemma]
Given the above notation, 
\begin{align}
\min\{ \rank K_{tx} , \rank K_{rx} , \min_{W \subseteq \{0,\cdots,v+1 \}, W \ni tx, W \not\ni rx} \rank H_{W,W^c}\}+d=\min_{V \subseteq \{-1,\cdots,v+2 \}, V \ni tx', V \not\ni rx'} \rank H_{V,V^c}^{lin}.
\label{eqn:mincutequi1}
\end{align}
\label{lem:LIN:mincut}
\end{lemma}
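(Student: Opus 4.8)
The plan is to analyze the cutsets of $\mathcal{N}_{lin}(z)$ directly using the block formula \eqref{eqn:cutset}, namely $H^{lin}_{V\cup\{tx'\},V^c\cup\{rx'\}} = \begin{bmatrix} A & B_V \\ C_{V^c} & 0 \end{bmatrix}$, and to relate each such rank to the corresponding cut in the original network. First I would enumerate which cuts $V$ of the linearized network are admissible: since $N_{ax}$ sits on the circulation arc and $tx'$ feeds only through $B_{tx},\dots$ while $rx'$ receives only through $C_{rx}$, an admissible cut $V \subseteq \{-1,\dots,v+2\}$ must contain $tx'$ and exclude $rx'$; the remaining freedom is the placement of the original nodes $tx,1,\dots,v,rx$ and of the auxiliary node $ax$. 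I expect the minimum to split into three regimes according to where $ax$ and the transmitter/receiver fall, producing exactly the three terms $\rank K_{tx}$, $\rank K_{rx}$, and $\min_W \rank H_{W,W^c}$ on the left side of \eqref{eqn:mincutequi1}.

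The key computational step is to evaluate $\rank \begin{bmatrix} A & B_V \\ C_{V^c} & 0 \end{bmatrix}$ for a ``generic'' cut $V$ that keeps $ax$ and $rx$ on the appropriate sides. Here I would use the structure of $A$ from \eqref{eqn:lintran}: $A$ is block-diagonal with a zero block (the $d_{ax}$ coordinates of $X_{ax}$) and an identity block of size $d$ on the $Y,X_1,\dots,X_v$ coordinates. Because the identity block is invertible, I can apply Lemma~\ref{lem:LIN:rank} with that identity block playing the role of ``$D$'': this peels off the offset $d$ and reduces the rank of the cutset matrix to $\rank$ of a Schur complement involving only the $d_{ax}\times d_{ax}$ corner, the columns $B_V$ restricted to the $X_{ax}$-rows, and the rows $C_{V^c}$ restricted to the $X_{ax}$-columns. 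Tracing through which $B_i$ and $C_i$ have nonzero $X_{ax}$-components — only $B_{rx}$ and $C_{tx}$ do, via the identity blocks attached to the circulation arc — the Schur complement collapses to (a signed copy of) either $K_{tx}$, or $K_{rx}$, or the original cut matrix $H_{W,W^c}$ pre/post-multiplied by the relevant $K$'s, depending on how $tx$, $rx$, and $ax$ are assigned. The dummy-variable genericity of the $K_i$'s (Lemma~\ref{lem:keylemma}) then lets me replace $\rank(K_{rx} H_{W,W^c} K_{tx})$-type expressions by $\rank H_{W,W^c}$ whenever $K_{tx},K_{rx}$ are ``wide enough,'' and otherwise by $\rank K_{tx}$ or $\rank K_{rx}$ — which is precisely the inner $\min$ on the left of \eqref{eqn:mincutequi1}.

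The main obstacle I anticipate is the careful bookkeeping of the cut cases, especially the degenerate placements: when $ax$ is cut on the transmitter side versus the receiver side, and when the cut isolates $tx'$ or $rx'$ trivially. I would need to check that no admissible cut of $\mathcal{N}_{lin}(z)$ yields a rank smaller than $d$ plus the three-way minimum — i.e., that the offset $d$ is genuinely uniform across all admissible cuts, which hinges on the identity block of $A$ always being fully present in the cutset matrix (true because those $d$ coordinates are internal states appearing on both sides of every cut). Conversely I must exhibit, for each of the three terms on the left, an actual cut $V$ of $\mathcal{N}_{lin}(z)$ achieving it, so the min is attained and not merely bounded. Once the case analysis is organized — transmitter-side cut giving $\rank K_{tx}+d$, receiver-side cut giving $\rank K_{rx}+d$, and interior cuts giving $\rank H_{W,W^c}+d$ — the equality \eqref{eqn:mincutequi1} follows by taking minima on both sides.
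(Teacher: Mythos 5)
Your overall skeleton---a case analysis over admissible cuts of $\mathcal{N}_{lin}(z)$, peeling off the offset $d$ by applying Lemma~\ref{lem:LIN:rank} with the $I_d$ block of $A$ as the invertible corner, exhibiting extremal cuts, and combining minima---is essentially the architecture of the paper's proof (the paper's case (iii) elementary row/column operations are exactly your Schur-complement reduction, and its cases (i)--(ii) are your boundary cuts). However, the mechanism you propose for producing the three terms on the left of \eqref{eqn:mincutequi1} is based on a misconception: the cutset matrices $H^{lin}_{V,V^c}$ of the linearized network, given by \eqref{eqn:cutset}, contain no controller gains at all---they are built from $A$, the $B_i$ and the $C_i$, i.e.\ from the original channel matrices $H_{i,j}$, identities, and zeros. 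Hence no Schur complement can ``collapse to a signed copy of $K_{tx}$ or $K_{rx}$,'' nor to $H_{W,W^c}$ pre/post-multiplied by $K$'s; in the interior case ($tx \in V$, $rx \in V^c$) the Schur complement is exactly $\pm H_{W,W^c}$ with no gains, because only $B_{rx}$ and $C_{tx}$ have nonzero $X_{ax}$-components and neither appears in such a cut. Consequently your claimed per-case values are also wrong: a transmitter-side cut (e.g.\ $V=\{tx'\}$) evaluates to $\dim X_{ax}+d = d_{ax}+d$, not $\rank K_{tx}+d$, and likewise a receiver-side cut gives $d_{ax}+d$, not $\rank K_{rx}+d$. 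Since in general $d_{ax}\neq \min\{d_{tx},d_{ax}\}=\rank K_{tx}$ (indeed the proof of Theorem~\ref{thm:mincut} later takes $d_{ax}\geq \max\{d_{tx},d_{rx}\}$), simply ``taking minima on both sides'' of your case values does not yield \eqref{eqn:mincutequi1}.

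The missing ingredient is the final dimension bookkeeping performed after the case analysis: the right-hand side equals $\min\{\dim X_{ax},\, \min_{W} \rank H_{W,W^c}\}+d$, and this equals the left-hand side because (a) the original mincut is at most $\min\{\dim U, \dim Y\}$ (consider $W=\{tx\}$ and $W^c=\{rx\}$), and (b) the dummy-variable matrices satisfy $\rank K_{tx}=\min\{\dim U, \dim X_{ax}\}$ and $\rank K_{rx}=\min\{\dim Y, \dim X_{ax}\}$, so that $\min\{\rank K_{tx}, \rank K_{rx}, \min_W \rank H_{W,W^c}\}=\min\{\dim X_{ax}, \min_W \rank H_{W,W^c}\}$. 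With this substitution in place, and with the spurious $K$-dependence of the Schur complements removed, your case analysis coincides with the paper's proof. One further small correction: the linearized network has no separate node ``$ax$'' to place on either side of a cut---its node set is $tx'$, the relays carrying $K_{tx}, K_1, \dots, K_v, K_{rx}$, and $rx'$---so the case split is governed solely by whether $tx\in V^c$, $rx\in V$, or neither.
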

\begin{proof}
As we can see in the R.H.S. of \eqref{eqn:mincutequi1}, $V$ is a cut of $\mathcal{N}_{lin}(z)$. We will divide $V$ into three cases: (i) When $tx \in V^c$, (ii) When $rx \in V$, and (iii) When $tx \in V$ and $rx \in V^c$.

For cases (i) and (ii), we will show that the rank of channel matrices is at least $\dim X_{ax} +d$. For case (iii), we will show a one-to-one mapping between the cut $V$ for $\mathcal{N}_{lin}(z)$ and the cut $W$ for $\mathcal{N}(z)$ --- essentially $V$ is a cut of the original network $\mathcal{N}(z)$.

(i) When $tx \in V^c$,

Notice that by definition, we have
\begin{align}
\rank \begin{bmatrix} A \\ C_{tx} \end{bmatrix}=
\begin{bmatrix}
0 & 0 & 0 & \cdots & 0 \\
0 & I_{d_{rx}} & 0 & \cdots & 0\\
0 & 0 & I_{d_{1,out}} & \cdots & 0 \\
\vdots & \vdots & \vdots & \ddots & \vdots \\
0 & 0 & 0 & \cdots & I_{d_{v,out}} \\
-I_{d_{ax}} & 0 & 0 & \cdots & 0
\end{bmatrix}
=\dim X_{ax}+d.
\end{align}

Moreover, whenever $tx \in V^c$, the channel matrix for the cut $H^{lin}_{V,V^c}$ contains $\begin{bmatrix} A \\ C_{tx} \end{bmatrix}$ and so $\rank H^{lin}_{V,V^c} \geq \dim X_{ax}+d$. Thus, we have
\begin{align}
\min_{V \subseteq \{-1,\cdots,v+2 \}, V \ni tx', V \not\ni rx',V^c \ni {tx}} \rank H_{V,V^c}^{lin} \geq \dim X_{ax}+d. \label{eqn:mincutequi2}
\end{align}

Furthermore, by choosing $V=\{ tx'\}$, we have
\begin{align}
\rank H_{tx',\{tx, 1, \cdots, v, rx, rx'\}}^{lin} = \rank \begin{bmatrix} A \\ C_{tx} \\ C_1 \\ \vdots  \\ C_v \\ C_{rx} \end{bmatrix} = \dim X_{ax} +d. \label{eqn:mincutequi3}
\end{align}

Therefore, by \eqref{eqn:mincutequi2} and \eqref{eqn:mincutequi3} we can conclude
\begin{align}
\min_{V \subseteq \{-1,\cdots,v+2 \}, V \ni tx', V \not\ni rx',V^c \ni {tx}} \rank H_{V,V^c}^{lin} = \dim X_{ax}+d.
\end{align}

(ii) When $rx \in V$,

Notice that by definition, we have
\begin{align}
\rank \begin{bmatrix} A & B_{rx} \end{bmatrix}
=
\begin{bmatrix}
0 & 0 & 0 & \cdots & 0 & I_{d_{ax}} \\
0 & I_{d_{rx}} & 0 & \cdots & 0 & 0 \\
0 & 0 & I_{d_{1,out}} & \cdots & 0 & 0 \\
\vdots & \vdots & \vdots & \ddots & \vdots & \vdots \\
0 & 0 & 0 & \cdots & I_{d_{v,out}} & 0 
\end{bmatrix}
=\dim X_{ax}+d.
\end{align}

Moreover, whenever $rx \in V$, the channel matrix for the cut $H^{lin}_{V, V^c}$ contains $\begin{bmatrix} A & B_{rx}\end{bmatrix}$ and so $\rank H^{lin}_{V, V^c} \geq \dim X_{ax} + d$. Thus, we have 
\begin{align}
\min_{V \subseteq \{-1,\cdots,v+2 \}, V \ni tx', V \not\ni rx',V \ni rx} \rank H_{V,V^c}^{lin} \geq \dim X_{ax}+d. \label{eqn:mincutequi100}
\end{align}

Furthermore, by choosing $V = \{tx', tx, 1, \cdots, v, rx\}$, we have
\begin{align}
&\rank H_{\{ tx', tx, 1, \cdots, v, rx \}, rx'}^{lin} =\rank \begin{bmatrix} A & B_{tx} & B_1 & \cdots & B_v & B_{rx} \end{bmatrix} =\dim X_{ax} + d. \label{eqn:mincutequi101}
\end{align}

Therefore, by \eqref{eqn:mincutequi100} and \eqref{eqn:mincutequi101} we can conclude
\begin{align}
\min_{V \subseteq \{-1,\cdots,v+2 \}, V \ni tx', V \not\ni rx',V \ni rx} \rank H_{V,V^c}^{lin}=\dim X_{ax}+d.
\end{align}

(iii) When $tx \in V$ and $rx \in V^c$,

In this case, we will find a one-to-one mapping between the cutset $V$ for $\mathcal{N}^{lin}(z)$ and a cutset $W$ for $\mathcal{N}(z)$, and show that their mincut is the same with an offset of $d$.

Let $W := V\setminus \{ tx' \}$ and $W':=V^c \setminus \{ rx' \}=\{tx', tx, 1, \cdots, v, rx, rx'\} \setminus V \setminus \{ rx'\}$. Now, we will show
\begin{align}
\rank H_{V,V^c}^{lin}=\rank H_{W,W'}+d.\label{eqn:network:1}
\end{align}
However, since the proof of \eqref{eqn:network:1} is not difficult but would be notationally complicated if written out fully, we replace the proof by a representative example.  Let $v=3$ and and $V=\{0,1\}$.
\begin{align}
&\rank H_{V,V^c}^{lin}=\rank \begin{bmatrix} A & B_0 & B_1 \\ C_4 & 0 & 0 \\ C_2 & 0 & 0 \\ C_3 & 0 & 0   \end{bmatrix} \\
&\overset{(A)}{=}\rank
\begin{bmatrix}
0 & 0 & 0 & 0 & 0 & 0 & 0 \\
0 & I_{d_{rx}} & 0 & 0 & 0 & H_{tx,rx} & H_{1,rx} \\
0 & 0 & I_{d_{1,out}} & 0 & 0 & H_{tx,1} & H_{1,1} \\
0 & 0 & 0 & I_{d_{2,out}} & 0 & H_{tx,2} & H_{1,2} \\
0 & 0 & 0 & 0 & I_{d_{3,out}} & H_{tx,3} & H_{1,3} \\
0 & -I_{d_{rx}}& 0 & 0 & 0 & 0 & 0 \\
0 & 0& 0 & -I_{d_{2,out}} & 0 & 0 & 0 \\
0 & 0& 0 & 0 & -I_{d_{3,out}} & 0 & 0 \\
\end{bmatrix}\\
&\overset{(B)}{=}\rank
\begin{bmatrix}
0 & 0 & 0 & 0 & 0 & 0 & 0 \\
0 & 0 & 0 & 0 & 0 & H_{tx,rx} & H_{1,rx} \\
0 & 0 & I_{d_{1,out}} & 0 & 0 & H_{tx,1} & H_{1,1} \\
0 & 0 & 0 & 0 & 0 & H_{tx,2} & H_{1,2} \\
0 & 0 & 0 & 0 & 0 & H_{tx,3} & H_{1,3} \\
0 & -I_{d_{rx}}& 0 & 0 & 0 & 0 & 0 \\
0 & 0& 0 &  -I_{d_{2,out}} & 0 & 0 & 0 \\
0 & 0& 0 & 0 & -I_{d_{3,out}} & 0 & 0 \\
\end{bmatrix}\\
&\overset{(C)}{=}\rank
\begin{bmatrix}
0 & 0 & 0 & 0 & 0 & 0 & 0 \\
0 & 0 & 0 & 0 & 0 & H_{tx,rx} & H_{1,rx} \\
0 & 0 & I_{d_{1,out}} & 0 & 0 & 0 & 0 \\
0 & 0 & 0 & 0 & 0 & H_{tx,2} & H_{1,2} \\
0 & 0 & 0 & 0 & 0 & H_{tx,3} & H_{1,3} \\
0 &-I_{d_{rx}}& 0 & 0 & 0 & 0 & 0 \\
0 & 0& 0 & -I_{d_{2,out}} & 0 & 0 & 0 \\
0 & 0& 0 & 0 & -I_{d_{3,out}} & 0 & 0 \\
\end{bmatrix}\\
&\overset{(D)}{=}\rank
\begin{bmatrix}
0 & 0 & 0 & 0 & 0 & 0 & 0 \\
0 & 0 & 0 & 0 & 0 & H_{tx,rx} & H_{1,rx} \\
0 & 0 & 0 & 0 & 0 & H_{tx,2} & H_{1,2} \\
0 & 0 & 0 & 0 & 0 & H_{tx,3} & H_{1,3} \\
0 & -I_{d_{rx}}& 0 & 0 & 0 & 0 & 0 \\
0 & 0 &  I_{d_{1,out}} & 0 & 0 & 0 & 0 \\
0 & 0& 0 & -I_{d_{2,out}} & 0 & 0 & 0 \\
0 & 0& 0 & 0 &-I_{d_{3,out}} & 0 & 0 \\
\end{bmatrix}\\
&\overset{(E)}{=}
\begin{bmatrix}
H_{tx,rx} & H_{1,rx} \\
H_{tx,2} & H_{1,2} \\
H_{tx,3} & H_{1,3} \\
\end{bmatrix}+d\\
&=\rank H_{W, W'}+d
\end{align}
(A): By the definitions of $A$, $B_i$, $C_i$ shown in \eqref{eqn:lintran}.\\
(B): This comes from elementary row operations to eliminate the $I$'s in the $A$ by using the rows in $C_i$'s. In general, this kind of step will make the $A$ part only have $I$'s at the location corresponding to the set $V$.\\
(C): This comes from elementary column operations to eliminate the $B_i$'s by using the $I$'s in the $A$. In general, this kind of step will make the $B$ part to have $0$'s at the location corresponding to the set $V$.\\
(D): By reordering of the rows so that the $I$'s in the $A$ can be grouped with the $C_i$'s. In general, this kind of step will make the $B$ part to be full-rank.\\
(E): Since we know $\rank \begin{bmatrix} 0 & A \\ B & 0\end{bmatrix}= \rank A  + \rank B$ and by the definitions, $d=d_{rx}+d_{1,out}+d_{2,out}+d_{3,out}$ for this example.

As we can see, we only used elementary row and column operations which hold for general matrices. Thus, we can easily prove that \eqref{eqn:network:1} holds in general by exactly the above argument.

Finally, using (i),(ii) and (iii) we can prove the lemma.
\begin{align}
&\min_{V \subseteq \{-1,\cdots,v+2 \}, V \ni tx', V \not\ni rx'} \rank H_{V,V^c}^{lin}=\min\{ \dim X_{ax}, \min_{W \subseteq \{0,\cdots,v+1 \}, W \ni tx, W \not\ni rx} \rank H_{W, W^c} \}+d\\
&= \min\{ \dim U , \dim Y ,\dim X_{ax}, \min_{W \subseteq \{0,\cdots,v+1 \}, W \ni tx, W \not\ni rx} \rank H_{W, W^c} \}+d \\
&= \min\{ \rank K_{tx}, \rank K_{rx}, \min_{W \subseteq \{0,\cdots,v+1 \}, W \ni tx, W \not\ni rx} \rank H_{W, W^c}  \}+d
\end{align}
Here, the second equality follows from the fact that the mincut of $\mathcal{N}(z)$ is not greater than $\min\{\dim U, \dim Y \}$. The third equality follows from $\rank K_{tx} = \min\{ \dim U , \dim X_{ax} \}$ and $\rank K_{rx} = \min\{ \dim Y , \dim X_{ax} \}$.
\end{proof}

The main advantage of linearized networks is that it is known that the algebraic mincut-maxflow theorem holds for $\mathcal{N}_{lin}(z, K)$~\cite[Theorem 4.1]{Anderson_Algebraic}. Here, we present the theorem with a simpler, self-contained and different proof for completeness.\footnote{The proof of \cite[Theorem 4.1]{Anderson_Algebraic} only uses linear algebraic fact and relates the rank of the matrices with the rank of bigger matrices. However, here by the use of induction we make each step easier to understand.}
\begin{theorem}[Algebraic Mincut-Maxflow Theorem for Linearized Network~\cite{Anderson_Algebraic}]
Given the above notations,
\begin{align}
\rank G_{lin}(z, K) =\min_{V \subseteq \{-1,\cdots,v+2 \}, V \ni tx', V \not\ni rx'} \rank H_{V,V^c}^{lin}  \nonumber
\end{align}
\label{thm:LIN:thm1}
\end{theorem}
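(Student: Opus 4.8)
The plan is to read Theorem~\ref{thm:LIN:thm1} as a linear-algebraic statement about the matrix $G_{lin}(z,K)=A+\sum_{j\in\{tx,1,\dots,v,rx\}}B_jK_jC_j$ of \eqref{eqn:lintran}, in which each $K_j$ is a block of distinct indeterminates, and to prove it by induction on the number of relays. By \eqref{eqn:cutset}, the right-hand side of the theorem is exactly $\min_{V\subseteq\{0,\dots,v+1\}}\rank\begin{bmatrix}A&B_V\\C_{V^c}&0\end{bmatrix}$ with $V^c$ the complement in $\{0,\dots,v+1\}$, so it suffices to prove the more general claim that, for any compatible $A,B_0,\dots,B_{m-1},C_0,\dots,C_{m-1}$ over $\mathbb{F}[z]$ and generic blocks $K_0,\dots,K_{m-1}$,
\begin{align}
\rank\left(A+\sum_{j<m}B_jK_jC_j\right)=\min_{V\subseteq\{0,\dots,m-1\}}\rank\begin{bmatrix}A&B_V\\C_{V^c}&0\end{bmatrix}. \nonumber
\end{align}
The ``$\le$'' half needs no induction: for each $V$ the invertible multiplier $\begin{bmatrix}I&B_{V^c}K_{V^c}\\0&I\end{bmatrix}$ gives $\rank\begin{bmatrix}A&B_V\\C_{V^c}&0\end{bmatrix}=\rank\begin{bmatrix}A+B_{V^c}K_{V^c}C_{V^c}&B_V\\C_{V^c}&0\end{bmatrix}\ge\rank\begin{bmatrix}A+B_{V^c}K_{V^c}C_{V^c}&B_V\end{bmatrix}$, and this last rank is at least $\rank G_{lin}$ because $G_{lin}=\begin{bmatrix}A+B_{V^c}K_{V^c}C_{V^c}&B_V\end{bmatrix}\begin{bmatrix}I\\K_VC_V\end{bmatrix}$.

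The core of the ``$\ge$'' direction is the single-relay case, which I would isolate as a lemma: over a field $\mathbb{K}$ and for a generic block $K$, $\rank(M+BKC)=\min\{\rank\begin{bmatrix}M&B\end{bmatrix},\,\rank\begin{bmatrix}M\\C\end{bmatrix}\}$. The ``$\le$'' half is the computation above; for ``$\ge$'', since $\mathbb{K}$ will be a rational-function field, hence infinite, the generic rank equals the largest rank attained by a specialization of $K$, so it suffices to produce one good specialization. Pre- and post-multiplying by invertible matrices I may assume $M=\begin{bmatrix}I_r&0\\0&0\end{bmatrix}$ (this replaces $B,C$ by $PB,CQ$ and changes none of the three ranks); then $\rank\begin{bmatrix}M&B\end{bmatrix}=r+\rank B_2$ and $\rank\begin{bmatrix}M\\C\end{bmatrix}=r+\rank C_2$, where $B_2$ is the trailing block of rows of $B$ and $C_2$ the trailing block of columns of $C$. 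Put $s=\min\{\rank B_2,\rank C_2\}$; choose $s$ rows of $B_2$ on which $B_2$ has rank $s$ and $s$ columns of $C_2$ on which $C_2$ has rank $s$, let $\beta^{+}$ be a right inverse of the resulting full-row-rank block of $B$ and $\gamma^{+}$ a left inverse of the resulting full-column-rank block of $C$ (both exist over any field), and take $K=\beta^{+}\gamma^{+}$. A short computation then shows that the corresponding $(r+s)\times(r+s)$ submatrix of $M+BKC$ has the form $\begin{bmatrix}I_r+B_1\beta^{+}\gamma^{+}C_1 & B_1\beta^{+}\\\gamma^{+}C_1 & I_s\end{bmatrix}$ (with $B_1,C_1$ the leading blocks of $B,C$), whose Schur complement against the invertible block $I_s$ is exactly $I_r$; the submatrix is invertible, so $\rank(M+BKC)\ge r+s$, which proves the lemma.

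With this lemma the induction is light. Base case $m=0$: $G_{lin}=A$ and the only cut gives $\rank A$. For the step, write the $(m+1)$-relay transfer function as $G'+B_mK_mC_m$ with $G'=A+\sum_{j<m}B_jK_jC_j$, and apply the single-relay lemma over $\mathbb{K}=\mathbb{F}[z,K_0,\dots,K_{m-1}]$ to get $\rank(G'+B_mK_mC_m)=\min\{\rank\begin{bmatrix}G'&B_m\end{bmatrix},\,\rank\begin{bmatrix}G'\\C_m\end{bmatrix}\}$. Now $\begin{bmatrix}G'&B_m\end{bmatrix}=\begin{bmatrix}A&B_m\end{bmatrix}+\sum_{j<m}B_jK_j\begin{bmatrix}C_j&0\end{bmatrix}$ is itself an $m$-relay transfer function (direct link $\begin{bmatrix}A&B_m\end{bmatrix}$, the same $B_j$, zero-padded $C_j$), so the induction hypothesis evaluates its rank as $\min_{V\subseteq\{0,\dots,m-1\}}\rank\begin{bmatrix}A&B_m&B_V\\C_{V^c}&0&0\end{bmatrix}$; reordering columns to absorb $B_m$ into $B_V$ rewrites this as $\min\{\rank\begin{bmatrix}A&B_{V'}\\C_{V'^c}&0\end{bmatrix}:m\in V'\subseteq\{0,\dots,m\}\}$. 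Symmetrically $\begin{bmatrix}G'\\C_m\end{bmatrix}$ is an $m$-relay transfer function with direct link $\begin{bmatrix}A\\C_m\end{bmatrix}$ and zero-padded $B_j$, and the induction hypothesis gives $\rank\begin{bmatrix}G'\\C_m\end{bmatrix}=\min\{\rank\begin{bmatrix}A&B_{V'}\\C_{V'^c}&0\end{bmatrix}:m\notin V'\subseteq\{0,\dots,m\}\}$. The minimum over the two cases runs over all $V'\subseteq\{0,\dots,m\}$, which closes the induction and hence proves the theorem.

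The step I expect to need genuine care, rather than block-matrix bookkeeping, is the ``$\ge$'' half of the single-relay lemma --- choosing one specialization of $K$ that simultaneously preserves the $r$ ``old'' dimensions coming from $M$ and adds all $s$ ``new'' ones; the cancellation in the Schur complement above is exactly what makes the explicit choice $K=\beta^{+}\gamma^{+}$ work. The other thing to be careful with is that the genericity/specialization argument must be run over the correct base field at each stage of the induction --- the rational functions in $z$ together with the relay variables not yet peeled off --- and that the one-sided inverses used in the lemma exist over that field, which they do because only a rank condition, and not finiteness of the field, is invoked.
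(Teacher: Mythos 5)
Your proposal is correct, and its skeleton coincides with the paper's: the same reduction of the cut matrices to $\begin{bmatrix} A & B_V \\ C_{V^c} & 0\end{bmatrix}$ via \eqref{eqn:cutset} (i.e.\ to \eqref{eqn:mincut1}), the same single-relay key lemma (the paper's Claim~\ref{claim:mincut:2}), and the same induction that peels off the last relay and re-reads $\begin{bmatrix} G' & B_m\end{bmatrix}$ and $\begin{bmatrix} G' \\ C_m\end{bmatrix}$ as smaller-network transfer functions, exactly as in \eqref{eqn:lin:claim6}--\eqref{eqn:lin:claim8}. Where you genuinely differ is inside the key lemma: the paper first proves a rank-one-update statement (Claim~\ref{claim:mincut:1}) and then reaches the matrix case by a $0$--$1$ specialization of $K_0$ that adds the independent dyads $b_ic_i$ one at a time, whereas you give a one-shot specialization $K=\beta^{+}\gamma^{+}$ built from one-sided inverses and certify the rank by exhibiting an explicit $(r+s)\times(r+s)$ submatrix whose Schur complement against $I_s$ collapses to $I_r$; both are valid, the paper's being more incremental and elementary, yours being shorter and explicitly constructive (it hands you a mincut-achieving specialization of the last relay in closed form). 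Two further points in your favor: you dispatch the ``$\leq$'' inequality once, uniformly over all cuts, directly on the full matrix $G_{lin}$ (the paper carries it through Claim~\ref{claim:mincut:2} and the induction), and you are more explicit than the paper about the base-field bookkeeping --- Claim~\ref{claim:mincut:2} is stated over $\mathbb{F}[z]$ but is applied to matrices whose entries involve the remaining $K_j$'s, so the genericity/specialization step (the content of Lemma~\ref{lem:keylemma}) really must be run over the rational-function field in $z$ and the not-yet-peeled relay variables, which is precisely the care you flag at the end.
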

\begin{proof}
We saw that the transfer functions and channel matrices of $\mathcal{N}_{lin}(z)$ are given in terms of $A, B_i, C_i$ in \eqref{eqn:lintran} and \eqref{eqn:cutset} respectively. Thus, it is enough to prove that
\begin{align}
\rank(A+\sum_{0 \leq i \leq v+1} B_i K_i C_i)=\min_{V \subseteq \{ 0, \cdots, v+1 \}} \rank \begin{bmatrix} A & B_V  \\ C_{V^c} & 0 \end{bmatrix}. \label{eqn:mincut1}
\end{align}

This is a fact of linear algebra and can be proved in three steps. First, we prove the theorem for networks with a single relay with a scalar input and output, i.e. $v=-1$ and $B_0, C_0$ are vectors (Case (i)). Then, we extend the claim for a single relay with a vector input and output, i.e. $v=-1$ and $B_0, C_0$ are matrices (Case (ii)). Finally, we generalize to multiple relays when $v = 0, 1, 2, \cdots$ (Case (iii)). 

(i) First, consider the case when $v=-1$ and $B_0, C_0$ are vectors i.e. $B_0 \in \mathbb{F}[z]^{m \times 1}$ and $C_0 \in \mathbb{F}[z]^{1 \times m}$. Then, \eqref{eqn:mincut1} reduces to
\begin{align}
&\rank(A+ B_0 K_0 C_0) = \min (\rank \begin{bmatrix} A & B_0 \end{bmatrix}, \rank \begin{bmatrix} A \\ C_0 \end{bmatrix}). \label{eqn:mincut2}
\end{align}
Moreover, since $B_0$ and $C_0$ are vectors, $\min (\rank \begin{bmatrix} A & B_0 \end{bmatrix}, \rank \begin{bmatrix} A \\ C_0 \end{bmatrix})$ is either $\rank (A)$ or $\rank (A) +1$. 

(i-i) When $\min (\rank \begin{bmatrix} A & B_0 \end{bmatrix}, \rank \begin{bmatrix} A \\ C_0 \end{bmatrix})=\rank (A)$. 

In this case, either $\rank \begin{bmatrix}  A & B_0 \end{bmatrix}$ or $\rank \begin{bmatrix} A \\ C \end{bmatrix}$ is equal to $\rank (A)$. Let $\rank \begin{bmatrix} A & B_0 \end{bmatrix}= \rank (A)$. Then, obviously, $\rank(A + B_0 K_0 C_0) \geq \rank (A)$. Moreover, the column space spanned by $B_0$ belongs to the column space spanned by $A$ . Thus, $B_0 K_0 C_0$ cannot increase the rank of the column space and $\rank(A + B_0 K_0 C_0) = \rank (A)$.

When $\rank \begin{bmatrix} A \\ C \end{bmatrix}= \rank (A)$, the proof follows similarly.

(i-ii) When $\min (\rank \begin{bmatrix} A & B_0 \end{bmatrix}, \rank \begin{bmatrix} A \\ C_0 \end{bmatrix})=\rank (A)+1$. 

In this case, $\rank \begin{bmatrix} A & B_0 \end{bmatrix} = \rank \begin{bmatrix} A \\ C_0 \end{bmatrix} = \rank(A) +1$. Moreover, since $B_0$ is a column vector, $\rank (A+B_0 K_0 C_0) \leq \rank (A) +1$. Thus, we only have to prove $\rank (A+ B_0 K_0 C_0) \geq \rank (A) +1$, which is implied by $\rank (A+ B_0  C_0) = \rank (A) +1$. The following claim proves the last statement.

\begin{claim}
Let $A \in \mathbb{F}[z]^{m \times m}$, $b \in \mathbb{F}[z]^{m \times 1}$, and $c \in \mathbb{F}[z]^{1 \times m}$. If
\begin{align}
\rank(A) + 1= \rank \begin{bmatrix} A & b \end{bmatrix}= \rank \begin{bmatrix} A \\ c \end{bmatrix}
\end{align}
then
\begin{align}
\rank(A) +1 = \rank(A+bc).
\end{align}
\label{claim:mincut:1}
\end{claim}
\begin{proof}
Let $\rank(A)=r$. Then, there exist invertible matrices $U$ and $V$ such that
\begin{align}
U A V = \begin{bmatrix} I_r & 0 \\ 0 & 0 \end{bmatrix}.
\end{align}
Denote $\begin{bmatrix} b_1 \\ b_2 \end{bmatrix}:=Ub$ and $\begin{bmatrix} c_1 & c_2 \end{bmatrix}:=cV$ where $b_1$ and $c_1$ are $r \times 1$ column and $1 \times r$ row vectors respectively.

Moreover, since $U$ and $\begin{bmatrix} V & 0 \\ 0 & 1 \end{bmatrix}$ are invertible, we have 
\begin{align}\rank \begin{bmatrix} A & b \end{bmatrix}=\rank(U \begin{bmatrix} A & b \end{bmatrix} \begin{bmatrix} V & 0 \\ 0 & 1 \end{bmatrix})=\rank \begin{bmatrix} UAV & Ub \end{bmatrix}
=\rank \begin{bmatrix} I_r & 0 & b_1 \\ 0 & 0 & b_2 \end{bmatrix}=r+\rank(b_2).
\end{align}
Thus, for $\rank \begin{bmatrix} A & b \end{bmatrix}=\rank (A) +1$ to hold, $b_2$ has to be a non-zero vector. Likewise, $c_2$ also has to be a non-zero vector. 

Finally, we can conclude
\begin{align}
&\rank(A+bc)=\rank(U(A+bc)V)=\rank(\begin{bmatrix}I_r & 0 \\ 0 & 0\end{bmatrix}+\begin{bmatrix}b_1 \\ b_2\end{bmatrix}cV)\\
&=\rank(\begin{bmatrix}I_r & 0 \\ 0 & 0\end{bmatrix}+\begin{bmatrix} 0 \\ b_2 \end{bmatrix} cV ) \label{eqn:lin:claim1}\\
&=\rank(\begin{bmatrix}I_r & 0 \\ 0 & b_2 c_2\end{bmatrix})\label{eqn:lin:claim2}\\
&=\rank(A)+1. \label{eqn:lin:claim3}
\end{align}
\eqref{eqn:lin:claim1}: elementary row operation and $b_2$ is non-zero.\\
\eqref{eqn:lin:claim2}: elementary row operation.\\
\eqref{eqn:lin:claim3}: $b_2$ and $c_2$ are non-zero.
\end{proof}

(ii) Consider the case when $v=-1$ and $B_0, C_0$ are general matrices. 

Like (i), \eqref{eqn:mincut1} reduces to \eqref{eqn:mincut2}. The only difference is now $B_0, C_0 $ can be matrices, and the following claim shows \eqref{eqn:mincut2} still holds. 

\begin{claim}
Let $A \in \mathbb{F}[z]^{m \times m}$, $B_0 \in \mathbb{F}[z]^{m \times r}$, $C_0 \in \mathbb{F}[z]^{q \times m}$, and $K_0 \in \mathbb{F}[K]^{r \times q}$ where each element of $K_0$ is of the form $k_i \in K$ and distinct. Then,
\begin{align}
\rank(A+B_0 K_0 C_0)=\min\{\rank\begin{bmatrix} A & B_0 \end{bmatrix},\rank\begin{bmatrix} A \\ C_0 \end{bmatrix}\}
\end{align}
\label{claim:mincut:2}
\end{claim}
\begin{proof}
Let $x:=\rank \begin{bmatrix} A & B_0 \end{bmatrix}- \rank(A)$ and $y:=\rank \begin{bmatrix} A \\ C_0 \end{bmatrix}-\rank(A)$. 
Then, we can find at least $x$ linearly independent column vectors of $B_0$ which are independent from the columns of $A$, and at least $y$ linearly independent row vectors of $C_0$ which are independent from the rows of $A$. Formally, let $b_1,\cdots,b_x$ and $c_1,\cdots,c_y$ be such vectors, i.e. $b_i$ and $c_j$ are columns and rows of $B_0$ and $C_0$ respectively and $\rank \begin{bmatrix} A & b_1 & \cdots & b_x  \end{bmatrix}= \rank \begin{bmatrix} A & B_0 \end{bmatrix}$, $\rank \begin{bmatrix} A \\ c_1 \\ \vdots \\ c_y \end{bmatrix}=\rank \begin{bmatrix} A \\ C_0 \end{bmatrix}$. Then, we have
\begin{align}
&\rank(A+B_0 K_0 C_0) \geq \rank ( A+ \sum_{1 \leq i \leq \min\{x,y \}}b_i c_i) \label{eqn:lin:claim4}\\
&=\min\{ \rank \begin{bmatrix} A & B_0 \end{bmatrix} , \rank \begin{bmatrix} A \\ C_0 \end{bmatrix} \}. \label{eqn:lin:claim5}
\end{align}
\eqref{eqn:lin:claim4}: We can find a $r \times q$ matrix $K_0'$ such that all the elements of the matrix are $0$ or $1$, and $A+B_0 K_0' C_0=A+\sum_{1 \leq i \leq \min\{x,y \}} b_i c_i$. Moreover, $\rank(A+B_0 K_0 C_0) \geq \rank(A+B_0 K_0' C_0)$ by Lemma~\ref{lem:keylemma}.\\
\eqref{eqn:lin:claim5}: $b_i$ and $c_i$ are independent from the column and row space spanned by $A$ respectively. Furthermore, $b_i$ and $c_i$ are also independent from $b_1, \cdots, b_{i-1}$ and $c_1, \cdots, c_{i-1}$ respectively. Therefore, we can repeatedly apply Claim~\ref{claim:mincut:1} and get the desired result.

Moreover,
\begin{align}
&\rank(A+B_0 K_0 C_0)= \rank ( \begin{bmatrix} A & B_0 \end{bmatrix}\begin{bmatrix} I \\ K_0 C_0 \end{bmatrix} ) \leq \rank \begin{bmatrix} A & B_0 \end{bmatrix} \label{eqn:mincut3}\\
&\rank(A+B_0 K_0 C_0)= \rank ( \begin{bmatrix} I & B_0 K_0 \end{bmatrix}\begin{bmatrix} A \\ C_0 \end{bmatrix} ) \leq \rank \begin{bmatrix} A \\ C_0 \end{bmatrix}. \label{eqn:mincut4}
\end{align}
Therefore, by \eqref{eqn:lin:claim5}, \eqref{eqn:mincut3}, \eqref{eqn:mincut4} the claim is true.
\end{proof}

(iii) The case with multiple relays, i.e. $v = 0, 1, 2, \cdots$ and $B_i, C_i$ are general matrices.

Now, we will prove \eqref{eqn:mincut1} for a general $v$. The proof is an induction in $v = -1, 0, 1, 2, \cdots$. Claim~\ref{claim:mincut:2} shows \eqref{eqn:mincut1} is true for $v=-1$. 
To prove that the theorem also holds for $v= 0, 1, 2, \cdots$, we will assume that the theorem holds for $v=w$ as the induction hypothesis and prove that the theorem holds for $v=w+1$.

First, by applying Claim~\ref{claim:mincut:2} we have
\begin{align}
&\rank(A+\sum_{0 \leq i \leq w+1} B_i K_i C_i)= \rank(A+\sum_{0 \leq i \leq w} B_i K_i C_i + B_{w+1} K_{w+1} C_{w+1})\\
&=\min \{ \rank\begin{bmatrix} A+\sum_{0 \leq i \leq w} B_i K_i C_i & B_{w+1}  \end{bmatrix},
\rank\begin{bmatrix} A+\sum_{0 \leq i \leq w} B_i K_i C_i \\ C_{w+1}  \end{bmatrix} \} \label{eqn:lin:claim6}
\end{align}
Consider the two terms one at a time.
\begin{align}
&\rank\begin{bmatrix} A+\sum_{0 \leq i \leq w} B_i K_i C_i & B_{w+1}  \end{bmatrix}\\
&=\rank ( \begin{bmatrix} A & B_{w+1} \end{bmatrix} + \sum_{0 \leq i \leq w} B_i K_i \begin{bmatrix} C_i & 0 \end{bmatrix} ) \\
&=\min_{W \subseteq \{ 0, \cdots, w\}} \rank \begin{bmatrix} A & B_{w+1} & B_W \\ C_{W^c} & 0 & 0 \end{bmatrix} \label{eqn:lin:claim100}\\
&=\min_{W \subseteq \{ 0, \cdots, w+1 \}, W \ni w+1} \rank \begin{bmatrix} A & B_W \\ C_{W^c} &  0 \end{bmatrix}. \label{eqn:lin:claim7}
\end{align}
where \eqref{eqn:lin:claim100} comes from \eqref{eqn:mincut1} for $v=w$ by replacing $A$ by $\begin{bmatrix} A & B_{v+1}\end{bmatrix}$, $B_i$ by $B_i$, and $C_i$ by $\begin{bmatrix}C_i & 0 \end{bmatrix}$.

Likewise, we can also prove
\begin{align}
&\rank\begin{bmatrix} A+\sum_{0 \leq i \leq w} B_i K_i C_i \\ C_{w+1}  \end{bmatrix}\\
&=\rank\left( \begin{bmatrix} A \\ C_{w+1} \end{bmatrix}+ \sum_{0 \leq i \leq w} \begin{bmatrix}B_i \\ 0 \end{bmatrix} K_i C_i \right) \\
&=\min_{W \subseteq \{ 0, \cdots, w+1 \}, W \not\ni w+1} \rank \begin{bmatrix} A & B_W  \\ C_{W^c} & 0 \end{bmatrix} \label{eqn:lin:claim8}
\end{align}
By plugging \eqref{eqn:lin:claim7} and \eqref{eqn:lin:claim8} to \eqref{eqn:lin:claim6}, we have
\begin{align}
\rank(A+\sum_{0 \leq i \leq w+1} B_i K_i C_i)=\min_{W \subseteq \{ 0, \cdots, w+1 \}} \rank \begin{bmatrix} A & B_W  \\ C_{W^c} & 0 \end{bmatrix}.
\end{align}
Therefore, by induction the theorem is true.
\end{proof}

So far, we discussed how to convert general topology networks into standardized networks --- linearized networks (networks shown in Fig.~\eqref{fig:LN_ptop} to linearized networks shown in Fig.~\ref{fig:LN_linearized}). Moreover, we discovered that the mincuts and maxflows of two networks are equivalent with an offset (Lemma~\ref{lem:LIN:maxflow} and Lemma~\ref{lem:LIN:mincut}). Thus, using the mincut-maxflow theorem for linearized networks (Theorem~\ref{thm:LIN:thm1}), we can prove the algebraic mincut-maxflow theorem for general LTI networks.

\begin{proof}[Proof of Theorem~\ref{thm:mincut}]
Since we can arbitrarily choose $d_{ax}$, let $d_{ax} \geq max\{d_{tx},d_{rx}\}$. Then,
\begin{align}
\rank G(z,K)&=\rank(K_{rx} G(z,K) K_{tx}) \label{eqn:lin:thm0}\\
&=\rank G_{lin}(z,K)-d  \label{eqn:lin:thm1}\\
&=\min_{V \subseteq \{-1,\cdots,v+2 \}, V \ni tx', V \not\ni rx'} \rank H_{V,V^c}^{lin} -d \label{eqn:lin:thm2}\\
&=\min\{  \rank K_{tx} , \rank K_{rx}, \min_{V \subseteq \{0,\cdots,v+1 \}, V \ni tx, V \not\ni rx} \rank H_{V,V^c}  \} \label{eqn:lin:thm3}\\
&=\min_{V \subseteq \{0,\cdots,v+1 \}, V \ni tx, V \not\ni rx} \rank H_{V,V^c}. \label{eqn:lin:thm4}
\end{align}
\eqref{eqn:lin:thm0} is due to the following fact: Select $K_{rx}(z)$ as a $0-1$ matrix that chooses
$\rank G(z,K)$ independent rows of $G(z,K)$ and $K_{tx}(z)$ as a $0-1$ matrix that chooses $\rank G(z, K)$ independent columns of $K_{rx}G(z, K)$. Then, the rank of the resulting matrix $K_{rx}(z) G(z,K) K_{tx}(z)$ is $\rank G(z,K)$. Therefore, \eqref{eqn:lin:thm0} follows from Lemma~\ref{lem:keylemma}.\\
\eqref{eqn:lin:thm1}, \eqref{eqn:lin:thm2} and \eqref{eqn:lin:thm3} follow from Lemma~\ref{lem:LIN:maxflow}, Theorem~\ref{thm:LIN:thm1} and Lemma~\ref{lem:LIN:mincut} respectively.\\
\eqref{eqn:lin:thm4} follows from the fact that the mincut of $\mathcal{N}(z)$ is not greater than $\min \{d_{tx},d_{rx} \}$, $\rank K_{tx}=d_{tx}$ and $\rank K_{rx}=d_{rx}$.
\end{proof}

Remark: Part of Theorem~\ref{thm:mincut} was already known in \cite{Koetter_Algebraic} and \cite{Kim_Algebraic}. In fact, the main insight of the theorem is indebted to Koetter and Medard's algebraic framework of network coding~\cite{Koetter_Algebraic}. However, the scope of the paper~\cite{Kim_Algebraic} is traditional networks with orthogonal links, and the proof of the theorem is a corollary from Ford-Fulkerson algorithm~\cite{Ford_Maxflow}. Later, Kim and Medard~\cite{Kim_Algebraic} extended the algebraic framework to the deterministic model~\cite{Salman_Wireless} using hypergraph ideas, and proved the theorem using Ford-Fulkerson algorithm on hypergraphs~\cite{Medard_personal}. Their idea provides an interesting alternative view to the theorem, and is worth for a formal and rigorous study given that the details in \cite{Kim_Algebraic} were omitted due to space limits. However, the model in \cite{Kim_Algebraic} is still not general enough for LTI networks since it only covers the case when the channel gains are $0$ or $1$ and field sizes are finite. Moreover, sometimes it is not clear how to convert general LTI networks to equivalent graphs (or hypergraphs).

\subsection{Network Linearization vs. Network Unfolding}
\label{sec:linvsun}

\begin{figure}[top]
\includegraphics[width = 3in]{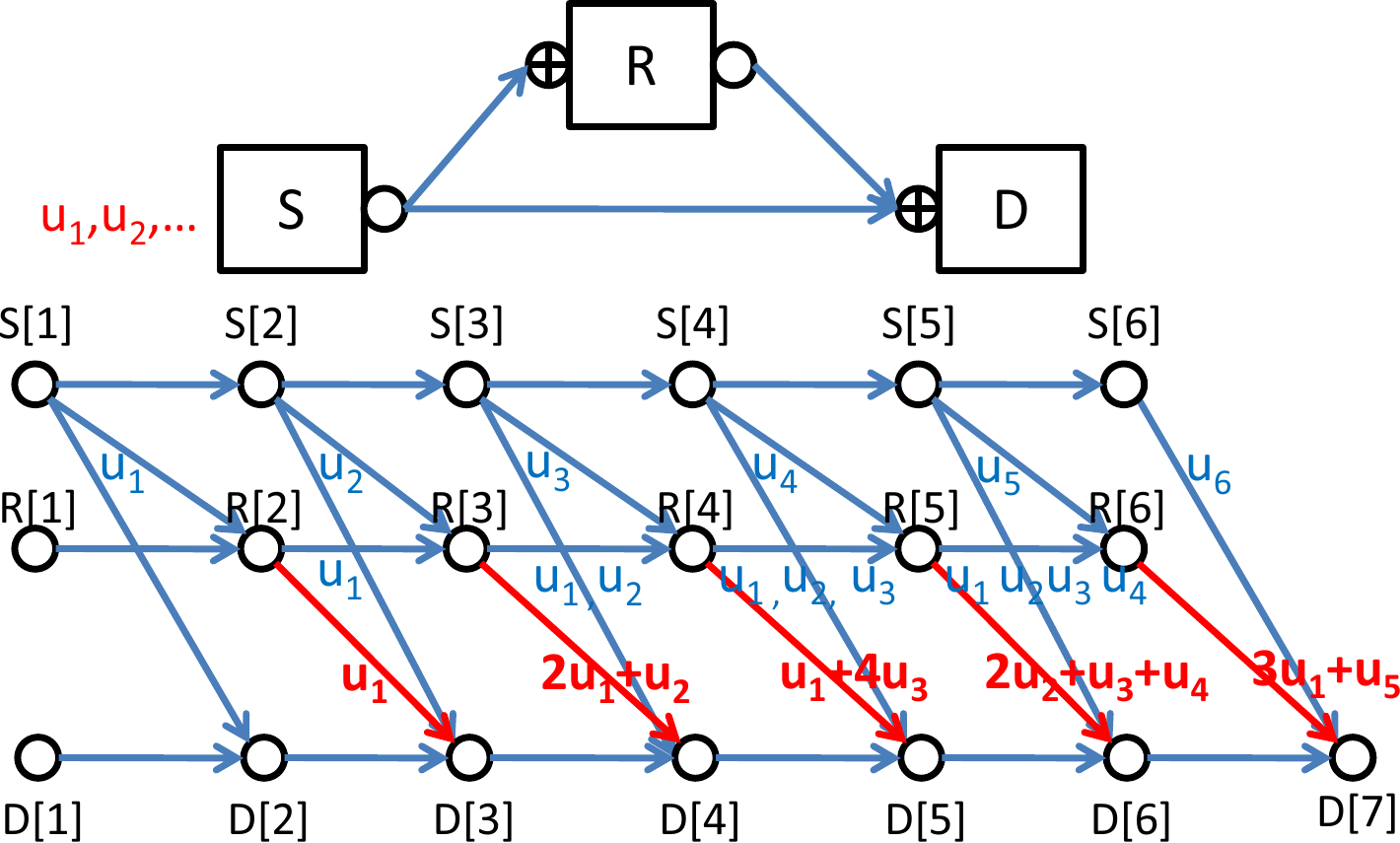}
\caption{Simple Relay Network and the corresponding unfolded network with a mincut-achieving linear scheme}
\label{fig:networkunfolding}
\end{figure}

\begin{figure}[top]
\includegraphics[width = 3in]{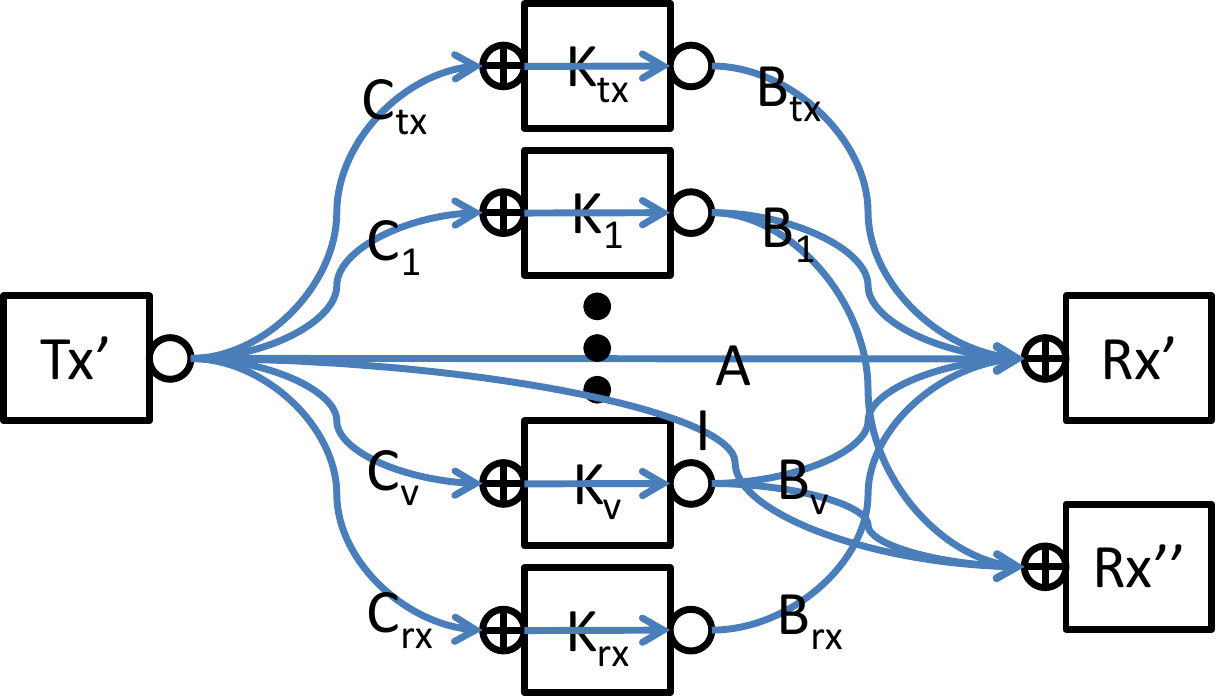}
\caption{Linearized LTI network $\mathcal{N}_{lin}'(Z)$ with an additional destination $Rx''$}
\label{fig:networkunfoldinglin2}
\end{figure}

\begin{figure}[top]
\includegraphics[width = 2in]{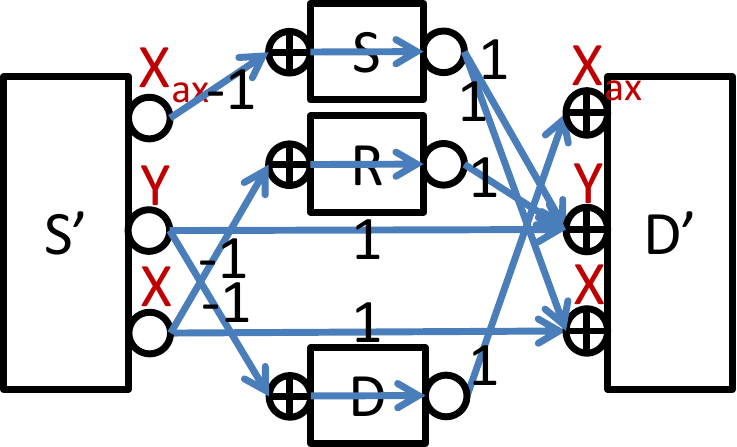}
\caption{Linearized Network of the example in Fig.~\ref{fig:networkunfolding}}
\label{fig:networkunfoldinglin}
\end{figure}

We proposed network linearization as a way of ``converting" an arbitrary relay network to an equivalent acyclic single-hop relay network. In this section, we will compare network linearization with the previously known idea, network unfolding.

Network unfolding is proposed in \cite{Ahlswede_Network} to convert arbitrary networks to layered networks in which the only existing edges are from one layer to the next layer. As we can see in Fig.~\ref{fig:networkunfolding}, by introducing duplicated nodes over the time, any arbitrary network can be thought as a layered network. Moreover, the capacity of the layered network approaches the capacity of the original network as the time expansion gets large. Since layered networks have a quite attractive and simple topology, a series of works~\cite{Goemans_Algorithmic,Amaudruz_Combinatorial,Savari_maxflow} have exclusively focused on them and developed algorithms that find deterministic linear schemes for layered networks.

However, what these papers are overlooking is that when we fold the unfolded network back into its physical topology the time-invariant scheme might become a time-varying scheme. The example shown in Fig.~\ref{fig:networkunfolding} shows that a network-coding design based on an unfolded network can cause significant problems even in the simple network with one source, one relay and one destination. The source transmits $u_1,\cdots,u_6$ to the destination. The letters on the arrows of the unfolded network represent the flows of information. We can easily check that the network-coding scheme shown in the figure is mincut achieving.

However, when we fold it back, we can see problems for implementation. First of all, the scheme is time-varying at the relay. Thus, for the scheme to work every node in the network has to be synchronized to a common clock. Moreover, the transmitted signal at a given time step may depend on all of its previously received signals, which may require a large memory.

On the other hand, from the algebraic mincut-maxflow theorem (Theorem~\ref{thm:mincut}) we can conclude that there exists a mincut achieving LTI scheme by using the same argument used in \cite{Koetter_Algebraic}. By Lemma~1 of \cite{Koetter_Algebraic} when the field size is large enough there exist $K_i$ that achieve the mincut of the network. Moreover, when the underlying field $\mathbb{F}$ are the reals $\mathbb{R}$ or complex $\mathbb{C}$, these fields already have infinite number of elements and there exist channel gain matrices which achieve the mincut of the network. 
When the fields are finite, by extending $\mathbb{F}$ to $\mathbb{F}^m$ we can guarantee a large-enough field size. Furthermore, we even do not have to extend the field when $K_{tx}, K_i, K_{rx}$ are allowed to have memory. $\mathbb{F}[z]$, the field of rational functions in $z$ with coefficient from $\mathbb{F}$, is already an infinite field. Like Lemma~1 of \cite{Koetter_Algebraic} we can prove that there exist mincut-achieving casual\footnote{Notice that even if we put the causal restriction on the design of $K_i$, the dimensions of the algebraic varieties remain the same. Thus, the proof argument for Lemma~1 of \cite{Koetter_Algebraic} still holds.} LTI filters, $K_{tx}, K_i, K_{rx}$, whose elements are from $\mathbb{F}[z]$, i.e. having memory is equivalent to extending a field size.

However, we have to be careful to use the network linearization idea for the actual design of the gain matrices $K_i$, i.e. when we are choosing the elements of $K_i$ from $\mathbb{F}[z]$ and plugging them. 
The reason is we also have to guarantee the existence of the transfer function, which is the invertibility of $\begin{bmatrix} I-H_{1,1}K_1 & \cdots & -H_{v,1}K_v \\ \vdots & \ddots & \vdots \\ -H_{1,v}K_1 & \cdots & I - H_{v,v} K_v \end{bmatrix}$ as shown in Theorem~\ref{thm:transfer}.

Fortunately, this condition can be also posed as a part of the LTI communication network problem.
We can easily see that the condition is equivalent to the invertibility of
$\begin{bmatrix} I_{d_{ax}} & 0 & 0 & \cdots & 0 \\ 0 & I & -H_{1,rx}K_1 & \cdots & -H_{v,rx}K_v \\
0 & 0 & I-H_{1,1}K_1 & \cdots & -H_{v,1}K_v \\ \vdots & \vdots & \vdots & \ddots & \vdots \\
0 & 0 & -H_{1,v}K_1 & \cdots & I-H_{v,v}K_v\end{bmatrix}$.
This matrix further equals to $I+B_1 K_1 C_1 + \cdots + B_v K_v C_v$ by the definitions in \eqref{eqn:lintran}. We can see the maximum rank (and the dimension) of $I+B_1 K_1 C_1 + \cdots + B_v K_v C_v$ over all $K_i$ is $d_{ax}+d$. Therefore, the invertibility of the matrix can be thought as the mincut achieving condition from $Tx'$ to $Rx''$ in Figure~\ref{fig:networkunfoldinglin2}. Finally, we can notice that by choosing $d_{ax}$ as the d.o.f. mincut of $\mathcal{N}(z)$, the maxflow from $Tx'$ to both $Rx'$ and $Rx''$ becomes $d+d_{ax}$.

\begin{theorem}
Given the above definitions of $\mathcal{N}(z)$ and $\mathcal{N}_{lin}'(z)$, let's choose $d_{ax}$ as the d.o.f. mincut of $\mathcal{N}(z)$. Then, all the multicast network gains $K_i(z) \in \mathbb{C}^{d_{i,in} \times d_{i,out}}$ which achieve the mincut of $\mathcal{N}_{lin}'(z)$ to both receivers $Rx'$ and $Rx''$ can also achieve the mincut of $\mathcal{N}(z)$.
\end{theorem}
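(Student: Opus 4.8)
The plan is to let the two multicast mincut constraints on $\mathcal{N}_{lin}'(z)$ play complementary roles: the constraint at $Rx''$ certifies that, for the concrete gains at hand, the end-to-end transfer matrix $G(z,K(z))$ of $\mathcal{N}(z)$ is well-defined (the inversion in Theorem~\ref{thm:transfer} is legitimate), and the constraint at $Rx'$ then forces $\rank G(z,K(z))$ up to the d.o.f.\ mincut of $\mathcal{N}(z)$. I would first record that, with $d_{ax}$ taken to be the d.o.f.\ mincut of $\mathcal{N}(z)$, both target mincuts equal $d_{ax}+d$: for $Rx'$ this is Lemma~\ref{lem:LIN:mincut} and Theorem~\ref{thm:LIN:thm1} together with $\rank K_{tx}=\rank K_{rx}=d_{ax}$ (the d.o.f.\ mincut of $\mathcal{N}(z)$ never exceeds $\min\{d_{tx},d_{rx}\}$), while for $Rx''$ it follows from identifying the $Tx'\to Rx''$ transfer matrix with the single-hop matrix $I+\sum_{1\le i\le v}B_iK_iC_i$, whose cutset matrices $\begin{bmatrix} I & B_W\\ C_{W^c} & 0\end{bmatrix}$ all have rank at least $d_{ax}+d$ (their first block column has full column rank) with equality at $W=\emptyset$.

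Now take any gains $K_i(z)\in\mathbb{C}^{d_{i,in}\times d_{i,out}}$ (the relay gains together with $K_{tx},K_{rx}$) achieving both mincuts. From the $Rx''$ mincut, $\rank\bigl(I+\sum_{1\le i\le v}B_iK_i(z)C_i\bigr)=d_{ax}+d$; being square of that size, this matrix is invertible, which by the computation preceding the theorem is exactly invertibility of the matrix inverted in Theorem~\ref{thm:transfer}, so $G(z,K(z))$ exists. I would then re-run the chain of equalities in the proof of Lemma~\ref{lem:LIN:maxflow} with the $K_i(z)$ in place of the indeterminates: step (A) needs only the displayed $d\times d$ block to be invertible, step (B) needs only that this square invertible block have rank $d$, and steps (C)--(E) are algebraic identities and the definition of $G$, so $\rank\bigl(K_{rx}(z)G(z,K(z))K_{tx}(z)\bigr)+d=\rank G_{lin}(z,K(z))$. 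The $Rx'$ mincut gives $\rank G_{lin}(z,K(z))=d_{ax}+d$, hence $\rank\bigl(K_{rx}(z)G(z,K(z))K_{tx}(z)\bigr)=d_{ax}$ and so $\rank G(z,K(z))\ge d_{ax}$. Conversely, Lemma~\ref{lem:keylemma} and Theorem~\ref{thm:mincut} give $\rank G(z,K(z))\le\rank G(z,K)=d_{ax}$. Hence $\rank G(z,K(z))=d_{ax}$, which is the d.o.f.\ mincut of $\mathcal{N}(z)$, as claimed.

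The step I expect to be the real obstacle is this re-use of Lemma~\ref{lem:LIN:maxflow}, which is stated and proved treating the entries of $K_i$ as independent indeterminates: one must check that the only genericity its proof uses is the invertibility of that $d\times d$ submatrix, so that substituting concrete matrices which satisfy exactly this property --- and the $Rx''$ constraint is designed precisely to supply it --- leaves every step intact. A secondary point to get right is the bookkeeping that makes both target mincuts equal $d_{ax}+d$ exactly because $d_{ax}$ was set to the d.o.f.\ mincut of $\mathcal{N}(z)$; were $d_{ax}$ strictly larger, the $Rx'$ mincut would fall to $d+(\text{d.o.f.\ mincut of }\mathcal{N}(z))$ while the $Rx''$ mincut would stay $d_{ax}+d$, so ``achieving both'' would cease to be the right hypothesis and the argument would break.
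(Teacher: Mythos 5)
Your proposal is correct and takes essentially the same route as the paper: the paper's own proof simply re-runs Lemma~\ref{lem:LIN:maxflow} with the concrete gains $K_i(z)$ substituted for the indeterminates, and obtains the required invertibility (existence of the transfer function in Theorem~\ref{thm:transfer}) from the mincut-achievability at $Rx''$ --- exactly the two points you single out as the crux. The additional bookkeeping you supply (both target mincuts equal $d_{ax}+d$ when $d_{ax}$ is the d.o.f.\ mincut, and the matching upper bound via Lemma~\ref{lem:keylemma} and Theorem~\ref{thm:mincut}) just makes explicit what the paper leaves implicit.
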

\begin{proof}
The proof follows essentially the same as Lemma~\ref{lem:LIN:maxflow} only by replacing $K_i$ with $K_i(z)$. The existence of the transfer function comes from the mincut achievability of $Rx''$ as discussed above.
\end{proof}

Therefore, we can find a mincut-achieving LTI network coding scheme of $\mathcal{N}(z)$ as follows: (i) Select $d_{ax}$ of \eqref{eqn:transfer} as the d.o.f. mincut of $\mathcal{N}(z)$. (ii) Find a mincut-achieving multicast network coding scheme for the linearized network $\mathcal{N}_{lin}'(z)$ of Figure~\ref{fig:networkunfoldinglin2} with two receivers. (iii) Apply $K_i$ obtained in the previous steps to the original network.

Furthermore, it is well known that when the network is acyclic, the transfer function always exists~\cite[Lemma~2]{Koetter_Algebraic}. Therefore, when the network $\mathcal{N}(z)$ is acyclic, the receiver $Rx''$ in $\mathcal{N}_{lin}'(z)$ which was introduced to guarantee the existence of the transfer function is redundant.



Fig.~\ref{fig:networkunfoldinglin} shows the linearized network of the example in Fig.~\ref{fig:networkunfolding}. By the above argument, any LTI scheme of $S$, $R$, $D$ that makes the d.o.f. capacity from $S'$ to $D'$ be $3$ achieves the mincut of the original network. For instance, $S=1$, $R=1$, $D=1$ achieve the mincut of both networks of Fig.~\ref{fig:networkunfolding} and Fig.~\ref{fig:networkunfoldinglin}.

Network linearization can also be extended to general information flows, multicast, broadcast, and unicast. Multicast problems will also posed as a multicast problems even after network linearization. However, broadcast and unicast problems will be posed as secrecy problems where eavesdroppers reflect unintended messages in the original problems. We refere Appendix~\ref{app:networklin} for further discussions.

\section{Preliminaries on Decentralized Control}
\label{sec:prelim2}
In the previous section, we introduced network linearization based on the internal states and circulation arcs. As we mentioned, the internal states idea came from linear system theory. Moreover, once we introduce the circulation arc as Fig.~\ref{fig:LN_ptop}, the whole system becomes a closed-loop system, and such closed-loop systems are the main interest of control theory.
Therefore, we can consider control theory from the communication(network coding) perspective.
First, we review several known facts on decentralized linear system theory --- when the system is stabilizable --- and introduce a few concepts to LTI communication networks.

\subsection{Decentralized Linear System}
\label{sec:decentrallinear}
Decentralized linear systems have multiple controllers, each of which
has access to its own observations and generates its own control inputs. Formally, the
decentralized linear system, $\mathcal{L}(A,B_i,C_i)$, is defined as
follows:\footnote{In this paper, we consider discrete-time systems
  since they are conceptually easier to connect to communication theory. We believe
that the underlying phenomena discussed here also exist in
continuous-time. Furthermore, we assume the matrices here are complex
since we will use the Jordan form which can be complex. However, if
the system were real we could prove corresponding results restricting the
controller design to be real without changing the stabilizability
condition.}
\begin{align}
&x[n+1]=A x[n]+B_1 u_1[n] + \cdots+ B_v u_v[n] \\
&y_1[n]= C_1 x[n]  \nonumber \\
&\vdots  \nonumber \\
&y_v[n]= C_v x[n] \nonumber
\end{align}
where $A \in \mathbb{C}^{m \times m}, B_i \in \mathbb{C}^{m \times
  q_i}$  and $C_i \in \mathbb{C}^{r_i \times m}$. Then, an interesting
question is under what conditions such systems are
stabilizable using only LTI controllers:
\begin{definition}[Stabilizability]
A decentralized linear system is called LTI-stabilizable if there exist linear time-invariant (LTI) controllers $\mathcal{K}_i$ (possibly with internal memories) that connect $y_i$ to $u_i$ whose resulting closed-loop system has only stable poles.
\end{definition}

The stabilizability condition for a decentralized linear system is given in \cite{Wang_Stabilization} using the concept of fixed modes.
\begin{definition}\cite[Definition 2]{Wang_Stabilization}
$\lambda$ is called a fixed mode of $\mathcal{L}(A,B_i,C_i)$ if $\lambda \in \bigcap_{K_i \in \mathbb{C}^{q_i \times r_i}} \sigma(A+\sum_{1 \leq i \leq v}B_i K_i C_i)$ where $\sigma(\cdot)$ is the set of eigenvalues of the matrix.
\label{def:fixed1}
\end{definition}

The intuition behind this definition is that if an eigenvalue is fixed
for all choices of (memoryless) controllers, this eigenvalue is either
unobservable or uncontrollable. Thus, if we have unstable fixed modes,
we cannot stabilize the plant.

\begin{theorem}\cite[Theorem 1]{Wang_Stabilization}
$\mathcal{L}(A,B_i,C_i)$ is stabilizable if and only if all of its
fixed modes are within the unit circle.
\label{thm:stability}
\end{theorem}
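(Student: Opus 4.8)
## Proof Proposal for Theorem~\ref{thm:stability}

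The plan is to prove both directions of the equivalence between stabilizability and the absence of unstable fixed modes, following the classical approach of Wang and Davison but organized around the algebraic characterization of fixed modes already discussed in the introduction via~\eqref{eqn:intro:1}. I will work in the $z$-transform (transfer function) domain so that LTI controllers with memory are represented by rational matrices $\mathcal{K}_i(z)$, and the closed-loop poles are the zeros of the relevant return-difference determinant.

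For the easy direction (\emph{necessity}: if $\mathcal{L}(A,B_i,C_i)$ is stabilizable then all fixed modes lie inside the unit circle), I would argue by contrapositive. Suppose $\lambda$ is a fixed mode with $|\lambda|\ge 1$. By Definition~\ref{def:fixed1}, $\lambda \in \sigma(A+\sum_i B_i K_i C_i)$ for \emph{every} tuple of memoryless gains $K_i$. The first step is to show that allowing dynamic (memoryful) LTI controllers does not help: given any LTI controllers $\mathcal{K}_i$ with state-space realizations $(A_i^{ctrl}, B_i^{ctrl}, C_i^{ctrl}, D_i^{ctrl})$, the closed-loop system is itself a decentralized linear system on an augmented state space, and one checks that $\lambda$ remains a fixed mode of the augmented system (the augmented $\widehat A - \lambda I$ picks up block structure that does not change the rank condition~\eqref{eqn:intro:1} at $\lambda$, because the controller dynamics are freely assignable and hence cannot be the obstruction). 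Therefore $\lambda$ is an eigenvalue of the closed-loop $\widehat A$ for every choice of controllers, so it is an uncontrollable-or-unobservable closed-loop mode that can never be moved; since $|\lambda|\ge 1$, the system is not stabilizable.

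For the hard direction (\emph{sufficiency}: if all fixed modes lie inside the unit circle, then $\mathcal{L}(A,B_i,C_i)$ is stabilizable), the plan is a constructive, sequential design. First, I would reduce to the case where the system is jointly controllable and observable (i.e. $(A,[B_1\cdots B_v])$ controllable and $(A,[C_1^\top\cdots C_v^\top]^\top)$ observable) by a Kalman-type decomposition; the uncontrollable/unobservable modes are automatically fixed modes and hence already stable by hypothesis, so they can be discarded. Then I would stabilize the system one controller at a time: pick a channel $i$; the key lemma is that, because not \emph{all} modes are fixed with respect to $K_i$ acting alone (the fixed modes of the whole system with respect to the family $\{1,\dots,v\}$ are a subset of those relative to $\{i\}$, but the non-fixed ones can be perturbed), a \emph{generic} choice of memoryless $K_i$ shifts all non-fixed modes to distinct, non-fixed locations for the remaining channels, strictly enlarging the set of modes that subsequent controllers can influence. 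After applying generic memoryless gains $K_1,\dots,K_v$ in turn, the remaining uncontrollable-or-unobservable modes of the resulting closed loop are exactly the fixed modes, which are stable; then a final round of standard (centralized-looking, but channel-by-channel) dynamic compensation — observer-plus-state-feedback at each controller applied to the now jointly-stabilizable-and-detectable residual system — places all movable poles inside the unit circle. Summing the orders of these compensators gives the required LTI controllers with memory.

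The main obstacle is the inductive genericity argument in the sufficiency proof: one must show that after a generic memoryless gain at channel $i$, the modes that were \emph{not} fixed become simultaneously controllable-and-observable through \emph{some} other channel, so that the set of fixed modes does not grow and the residual system becomes jointly stabilizable/detectable by the controllers acting together. This requires the polynomial-nonvanishing / Zariski-density style reasoning — very much in the spirit of Lemma~\ref{lem:keylemma} used earlier in the paper, where an algebraic variety of rank-dropping parameter choices has strictly smaller dimension than the parameter space, so a generic (indeed, almost any) choice avoids it. I would phrase the step precisely as: the set of gains $K_i$ for which a given non-fixed mode fails to be moved, or fails to become observable-and-controllable through the union of the other channels, is a proper algebraic subset, hence its complement is dense; a finite intersection of dense open sets is dense, so a good $K_i$ exists. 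Everything else (state-space augmentation bookkeeping, the final observer-based compensator design, translating back from $z$-domain to time domain) is routine.
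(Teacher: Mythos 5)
The paper does not actually prove Theorem~\ref{thm:stability}: it is quoted from \cite{Wang_Stabilization}, so your proposal has to be measured against the classical argument rather than anything in the text. Your necessity direction is the standard one and is fine in outline, but the load-bearing step is the lemma that a fixed mode (defined over \emph{memoryless} gains) remains an eigenvalue of the closed loop for \emph{every} dynamic LTI controller; the remark that ``the controller dynamics are freely assignable and hence cannot be the obstruction'' is not yet an argument. The usual way to make it precise is to show that a rank-deficient cut $\begin{bmatrix} A-\lambda I & B_V \\ C_{V^c} & 0 \end{bmatrix}$ of the plant extends to a rank-deficient cut of the plant augmented with the controller states, so that $\lambda$ is still a fixed mode of the augmented static problem; this is routine bookkeeping, but it must be done.

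The sufficiency direction has a genuine gap at the final step. You finish with ``observer-plus-state-feedback at each controller applied to the now jointly-stabilizable-and-detectable residual system.'' Joint stabilizability and detectability through the union of the channels is precisely what does \emph{not} suffice for decentralized stabilization---that is the phenomenon the theorem quantifies---so this step is circular: decentralized observer-based compensators, each using only its own $y_i$, cannot in general stabilize a jointly stabilizable and detectable plant. Relatedly, static output feedback never changes the jointly uncontrollable or jointly unobservable subspaces, so ``the remaining uncontrollable-or-unobservable modes of the resulting closed loop are exactly the fixed modes'' cannot be right in the joint sense: decentralized fixed modes can be jointly controllable and observable (that is exactly what makes them interesting, as in the paper's Fig.~\ref{fig:flowexample} example). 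What the classical proof establishes, and what your plan needs, is the stronger genericity statement that after almost every choice of static gains at the other channels, every unstable non-fixed mode becomes controllable \emph{and} observable through a \emph{single} channel (Corfmat--Morse \cite{Cormat_Decentralized} for strongly connected systems, or the Wang--Davison sequential lemma that closing one channel with a generic static gain does not enlarge the set of unstable fixed modes seen by the remaining channels); one then stabilizes with a dynamic compensator at that single channel and proceeds channel by channel, absorbing previously designed compensators into the plant and re-invoking genericity so that no new unstable fixed modes appear. Your Zariski-density reasoning in the spirit of Lemma~\ref{lem:keylemma} is the right tool for proving that lemma, but the lemma itself---not joint stabilizability---is the step that carries the proof, and as written your argument never reaches it.
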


Therefore, the stabilizability of linear systems is determined by the existence of unstable fixed modes, and the characterization of stabilizability reduces to the characterization of the fixed modes.

However, the characterization of fixed modes shown in Definition~\ref{def:fixed1} involves an intersection over an infinite number of sets. Therefore, Anderson~\textit{et al.} found the following algebraic characterization of fixed modes~\eqref{eqn:intro:1} which only involves minimization over a finite set~\cite{Anderson_Algebraic}. 

\begin{theorem}
$\lambda$ is a fixed mode of $\mathcal{L}(A,B_i,C_i)$ if and only if
\begin{align}
\min_{V \subseteq \{ 1,2, \cdots, v\}} \rank \begin{bmatrix} A-\lambda I & B_V \\ C_{V^c} & 0 \end{bmatrix} \geq \dim(A).
\end{align}
\label{def:fixed2}
\end{theorem}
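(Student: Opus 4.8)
The plan is to prove Theorem~\ref{def:fixed2} by routing the fixed-mode condition of Definition~\ref{def:fixed1} through the eigenvalue--singularity correspondence and then through the same linear-algebraic identity that powers Theorem~\ref{thm:LIN:thm1}. First I would rewrite the defining condition: since $\lambda\in\sigma(M)$ holds exactly when $M-\lambda I$ is singular, Definition~\ref{def:fixed1} says that $\lambda$ is a fixed mode precisely when $A-\lambda I+\sum_{1\le i\le v}B_iK_iC_i$ is singular for \emph{every} choice of the numerical gains $K_i\in\mathbb{C}^{q_i\times r_i}$. Writing $\widetilde A:=A-\lambda I$, the goal becomes a statement about the rank of $\widetilde A+\sum_i B_iK_iC_i$ over all admissible feedback, to be compared against $\dim(A)$ as displayed.

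Second, I would collapse this infinite family of numerical conditions into a single generic-rank statement, in the spirit of Lemma~\ref{lem:keylemma}. The determinant of $\widetilde A+\sum_i B_iK_iC_i$ is a polynomial in the entries of the $K_i$; because $\mathbb{C}$ is infinite, it vanishes for all $K_i$ if and only if it vanishes identically, i.e. if and only if the symbolic matrix obtained by treating the entries of the $K_i$ as the free dummy variables of Section~\ref{sec:prelim} is rank-deficient. Thus the fixed-mode condition is equivalent to a condition on $\rank(\widetilde A+\sum_i B_iK_iC_i)$ with the $K_i$ regarded as independent variables, and by Lemma~\ref{lem:keylemma} this symbolic rank equals the maximum of the rank over all numerical substitutions.

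Third, I would apply the purely linear-algebraic identity \eqref{eqn:mincut1} established inside the proof of Theorem~\ref{thm:LIN:thm1}. That identity was proved by induction on the number of summands and never used the network-specific form of the matrices, so it applies verbatim with $\widetilde A=A-\lambda I$ playing the role of $A$ and the controller index set $\{1,\dots,v\}$ playing the role of $\{0,\dots,v+1\}$:
\begin{align}
\rank\Bigl(A-\lambda I+\sum_{1\le i\le v}B_iK_iC_i\Bigr)=\min_{V\subseteq\{1,\dots,v\}}\rank\begin{bmatrix}A-\lambda I & B_V\\ C_{V^c}&0\end{bmatrix}. \nonumber
\end{align}
Feeding this into the previous step identifies the fixed-mode condition with the threshold condition on $\min_{V}\rank\begin{bmatrix}A-\lambda I & B_V\\ C_{V^c}&0\end{bmatrix}$ against $\dim(A)$ that appears in Theorem~\ref{def:fixed2}.

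The step I expect to be the main obstacle is pinning down the comparison against $\dim(A)$ exactly, together with the generic-rank bridge of the second paragraph. The bridge needs the field to be infinite (true for $\mathbb{C}$) and the guarantee, via Lemma~\ref{lem:keylemma}, that the symbolic rank coincides with the best numerical rank and is not lowered by restricting the $K_i$ to be constant (memoryless) rather than allowing $\mathbb{F}[z]$ entries. For the threshold itself, the useful bookkeeping fact is that the cut $V=\{1,\dots,v\}$ yields the block row $\begin{bmatrix}A-\lambda I & B\end{bmatrix}$, which has only $\dim(A)$ rows, so $\min_{V}\rank\begin{bmatrix}A-\lambda I & B_V\\ C_{V^c}&0\end{bmatrix}\le\dim(A)$ always. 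Getting the displayed inequality to come out in the stated direction is the delicate point, since it requires matching the condition that $\widetilde A+\sum_i B_iK_iC_i$ be singular for every $K_i$ to the exact position of this min-cut rank relative to its ceiling $\dim(A)$.
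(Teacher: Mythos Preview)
Your approach is correct and coincides with the paper's own route, which runs through Theorem~\ref{thm:equivalence1}: there the fixed-mode condition is reduced to a generic-rank statement via Lemma~\ref{lem:keylemma} and then the identity \eqref{eqn:mincut1} is invoked (wrapped as the mincut-maxflow theorem for the canonical network $\mathcal{N}_{cn}$, but the substance is exactly your direct application). Your worry about the inequality direction is well-founded: as printed, Theorem~\ref{def:fixed2} (and \eqref{eqn:intro:1}) have the comparison reversed---the characterization consistent with Theorem~\ref{thm:equivalence1}(5) and with your argument is that $\lambda$ is a fixed mode iff $\min_{V}\rank\begin{bmatrix}A-\lambda I & B_V\\ C_{V^c}&0\end{bmatrix}<\dim(A)$.
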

   
In other words, two characterization of fixed modes shown in Definition~\ref{def:fixed1} and Theorem~\ref{def:fixed2} are equivalent. In the following discussion, we will see this equivalence turns out to be a special case of the mincut-maxflow theorem for LTI networks.

\subsection{LTI Communication Networks at specific frequencies}
Since the channel gain of LTI networks are given in $z$-transform, write the network as $\mathcal{N}(z)$. We will also consider an LTI network, $\mathcal{N}(z)$, at a specific generalized frequency, $z=\lambda$. To indicate that the LTI network is considered at the generalized frequency $z=\lambda$, we write the network as $\mathcal{N}(\lambda)$. $\mathcal{N}(\lambda)$ implies all $z$ in the LTI network are replaced by $\lambda$. Then, the capacity definition is naturally generalized to $\mathcal{N}(\lambda)$.
\begin{definition}
For a given LTI network $\mathcal{N}(z)$, we say that the degree of freedom (d.o.f.) capacity of the network $\mathcal{N}(z)$ is $k$ at frequency $z=\lambda$ if its transfer matrix $G_{tx,rx}(\lambda,K_i)$ is rank $k$.
\end{definition}
Here we can see that the transfer matrix only makes sense at
$z=\lambda$ when it does not have a pole at $\lambda$. Thus, we
assume that $H_{i,j}$ has no pole at $z=\lambda$. Then, the algebraic mincut-maxflow theorem also
holds for $\mathcal{N}(\lambda)$ as before.
\begin{corollary}
Given the LTI network $\mathcal{N}(\lambda)$ with no poles at $\lambda$ in the $H_{ij}(z)$,
\begin{align}
&\rank(G_{tx,rx}(\lambda,K)) \nonumber \\
&= \min_{V \subseteq \{tx,1,\cdots,v,rx \}, V \ni tx, V \not\ni rx} \rank(H_{V,V^c}(\lambda)).\nonumber
\end{align}
\label{cor:mincut}
\end{corollary}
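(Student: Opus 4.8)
The plan is to notice that $\mathcal{N}(\lambda)$ is itself an LTI communication network in the sense of Section~\ref{sec:prelim}: it is the network whose channel gains are the \emph{constant} matrices $H_{i,j}(\lambda)$ (well defined precisely because no $H_{i,j}$ has a pole at $z=\lambda$) and whose relay gains $K_i$ are still the symbolic variables. The statement to be proved is then nothing more than Theorem~\ref{thm:mincut} applied to this particular network: its right-hand side becomes $\min_{V}\rank H_{V,V^c}(\lambda)$, where the ranks are now ordinary ranks of constant matrices, and its left-hand side is the rank of the transfer matrix of $\mathcal{N}(\lambda)$ furnished by Theorem~\ref{thm:transfer}. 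Two small points must be checked to make this rigorous: (a) the transfer matrix of $\mathcal{N}(\lambda)$ really is $G_{tx,rx}(z,K)$ evaluated at $z=\lambda$, and (b) Theorem~\ref{thm:mincut} is applicable to a network all of whose channel gains are field constants.

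For (a), both the transfer matrix of $\mathcal{N}(\lambda)$ and $G_{tx,rx}(z,K)$ are instances of the formula in Theorem~\ref{thm:transfer}, so it is enough to check that substituting $z=\lambda$ commutes with that formula. Every operation in the formula is entrywise (and hence commutes with evaluation) except for the inversion of $I-M(z,K)$, where $M$ gathers the blocks $H_{i,j}K_j$. The entries of $(I-M(z,K))^{-1}$ are rational functions whose common denominator is $\det(I-M(z,K))$; this determinant is a polynomial in the $H_{i,j}$ and the $K_\ell$, so it has no pole at $z=\lambda$, and it does not vanish at $z=\lambda$ because setting $K=0$ already makes it equal to $1$ (as in the invertibility argument in the proof of Theorem~\ref{thm:transfer}). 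Hence $(I-M(z,K))^{-1}$ is defined at $z=\lambda$ and its value there is $(I-M(\lambda,K))^{-1}$, so the transfer matrix of $\mathcal{N}(\lambda)$ equals $G_{tx,rx}(\lambda,K)$, which is the left-hand side of the corollary.

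For (b), I would simply observe that the entire chain of arguments proving Theorem~\ref{thm:mincut} --- network linearization together with Lemma~\ref{lem:keylemma}, Lemma~\ref{lem:LIN:rank}, Lemma~\ref{lem:LIN:maxflow}, Lemma~\ref{lem:LIN:mincut} and Theorem~\ref{thm:LIN:thm1} --- never uses anything about the channel gains beyond their being elements of the coefficient field, and the only appeal to an infinite field (in Lemma~\ref{lem:keylemma}) is to the rational function field, which is still available here since the relay gains $K_i$ remain symbolic. Thus Theorem~\ref{thm:mincut} applies verbatim to $\mathcal{N}(\lambda)$ and yields $\rank G_{tx,rx}(\lambda,K)=\min_{V}\rank H^{\mathcal{N}(\lambda)}_{V,V^c}$; since each $H^{\mathcal{N}(\lambda)}_{V,V^c}$ is the constant matrix $H_{V,V^c}(\lambda)$, whose rank over the rational function field equals its ordinary rank (rank being unchanged under field extension), this is exactly $\min_{V}\rank H_{V,V^c}(\lambda)$, completing the proof.

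The only delicate point I anticipate is the bookkeeping of step (a) --- confirming that evaluation at $z=\lambda$ commutes with the matrix inverse --- and, relatedly, keeping track of which field each ``$\rank$'' is taken over. An alternative that avoids even this is to rerun the network-linearization proof of Theorem~\ref{thm:mincut} with the symbol $z$ replaced everywhere by the scalar $\lambda$: every intermediate assertion from Lemma~\ref{lem:LIN:rank} onward is a rank identity over a field and is untouched by the substitution, so no genuinely new argument is needed.
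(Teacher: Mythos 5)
Your proposal is correct and takes essentially the same route as the paper: the paper's proof is exactly the observation that, since no $H_{i,j}(z)$ has a pole at $\lambda$, Theorem~\ref{thm:mincut} can be applied with the constant channel matrices $H_{i,j}(\lambda)$. Your points (a) and (b) merely spell out the evaluation-at-$z=\lambda$ and field-of-constants details that the paper leaves implicit, and they are verified correctly.
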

\begin{proof}
Since the $H_{i,j}(z)$ do not have any pole at $\lambda$, we can apply Theorem~\ref{thm:mincut} with the channel matrices $H_{i,j}(\lambda)$.
\end{proof}
Before we discuss the externalization of implicit communication in
decentralized linear systems, it is helpful to define a standard
network we will repeatedly encounter later.
\begin{figure}
\includegraphics[width = 3in]{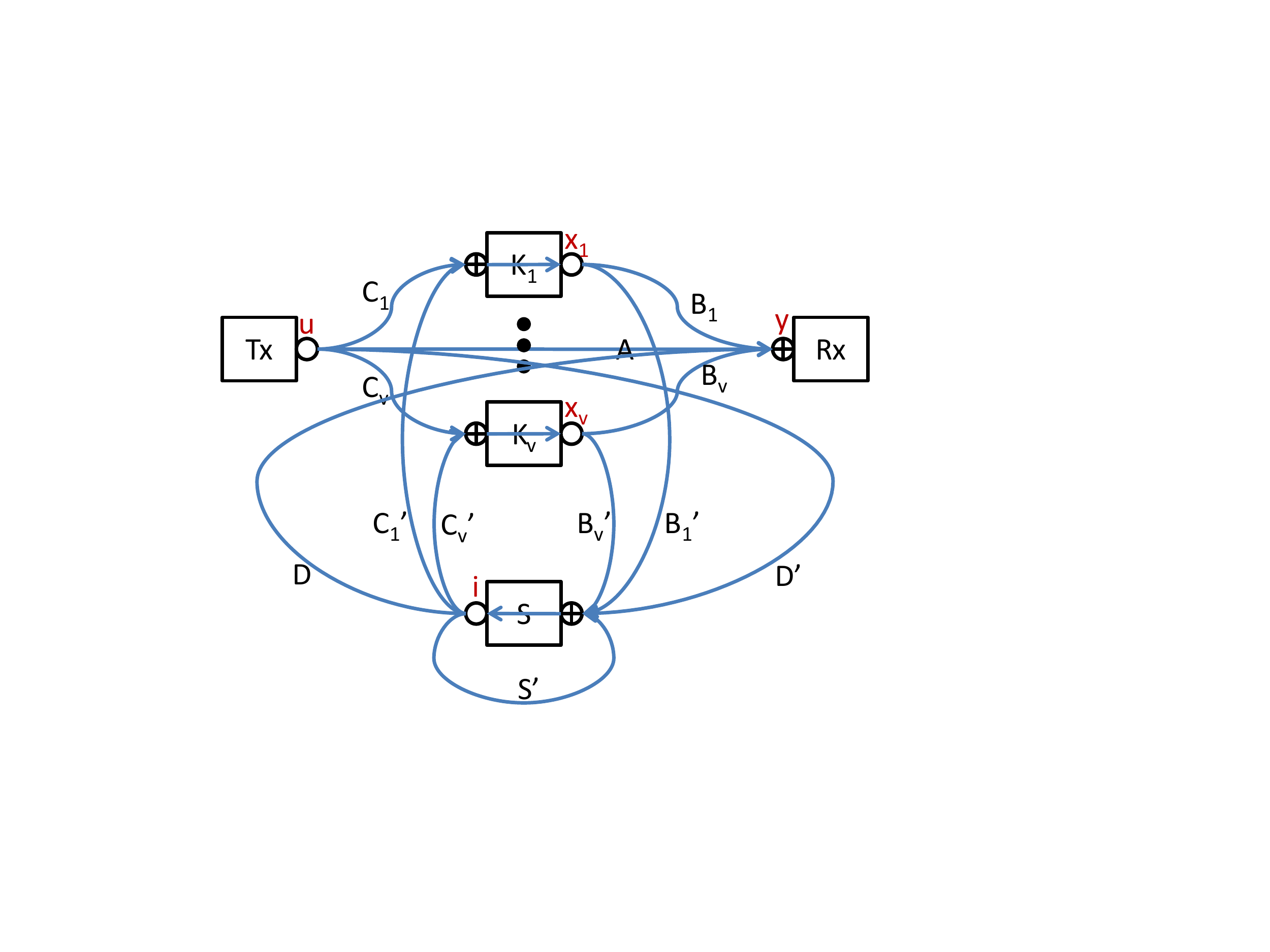}
\caption{Standard LTI Network}
\label{fig:standard}
\end{figure}
\begin{definition}
The LTI network shown in Fig.~\ref{fig:standard} is called a standard
LTI network, $\mathcal{N}_s(A;B_i,B_i';C_i,C_i';D,D';S,S')$.
\end{definition}
The transfer matrix and the channel matrices of the standard network are given as follows.
\begin{lemma}
In the standard network of Fig.~\ref{fig:standard}, the transfer
matrix from the transmitter to the receiver is given as
\begin{align}
G_{tx,rx}=&A+B_1 K_1 C_1 + \cdots + B_v K_v C_v   \nonumber \\
&+(D + B_1 K_1 C_1' + \cdots + B_v K_v C_v') \nonumber \\
&\cdot(S^{-1}-(S'+B_1' K_1 C_1'+\cdots + B_v' K_v C_v' ))^{-1} \nonumber \\
&\cdot(D' + B_1' K_1 C_1 + \cdots + B_v' K_v C_v). \nonumber
\end{align}
The channel matrices $H$ between the transmitter, the relays and the receiver are given for $1\leq i,j \leq v$:
\begin{align}
&H_{tx,rx}=A+D(S^{-1}-S')^{-1}D', \nonumber\\
&H_{tx,i}=C_i+C_i'(S^{-1}-S')^{-1}D', \nonumber\\
&H_{i,rx}=B_i+D(S^{-1}-S')^{-1}B_i', \nonumber\\
&H_{i,j}=C_j'(S^{-1}-S')^{-1}B_i'. \nonumber
\end{align}
Here, we just assume the appropriate inverse matrices exist.
\label{lem:LTI:trans}
\end{lemma}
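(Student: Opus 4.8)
My plan is to read the linear signal-flow relations directly off the block diagram of the standard network in Fig.~\ref{fig:standard}, and then obtain both the channel matrices and the transfer matrix by Gaussian elimination of the appropriate internal variables; nothing deeper than bookkeeping and one matrix-inversion manipulation is needed.

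First I would name the signals. Let $U$ be the vector at the transmitter's input vertices, $Y$ the vector at the receiver's output vertices, and for each relay $i$ let $Y_i$ be the signal at its output vertices and $X_i = K_i Y_i$ the signal it injects back into the channel. The only remaining signal is the channel's internal state, say $W$, which circulates through a self-loop belonging to ``nature.'' Reading Fig.~\ref{fig:standard}, the channel part imposes relations of the form
\begin{align}
S^{-1} W &= S' W + D' U + \sum_i B_i' X_i, \nonumber \\
Y &= A U + \sum_i B_i X_i + D W, \nonumber \\
Y_i &= C_i U + C_i' W, \nonumber
\end{align}
where the matrices $A,B_i,B_i',C_i,C_i',D,D',S,S'$ are exactly the edge labels, and the relay design freedom enters only through $X_i = K_i Y_i$.

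Second, the channel matrices $H_{tx,rx},H_{tx,i},H_{i,rx},H_{i,j}$ are by definition the transfer maps of nature's part alone, i.e.\ obtained by eliminating $W$ while treating the ports as free inputs and \emph{not} yet closing the loops $X_i = K_i Y_i$. Solving the first relation gives $W = (S^{-1}-S')^{-1}\bigl(D'U + \sum_i B_i' X_i\bigr)$, and substituting into the expressions for $Y$ and $Y_i$ and collecting the coefficient of each port produces the four claimed formulas; invertibility of $S^{-1}-S'$ is assumed as in the statement. For $G_{tx,rx}$ I would then close the relay loops: substituting $X_i = K_i(C_i U + C_i' W)$ into the first relation turns it into $S^{-1}W = \bigl(S' + \sum_i B_i' K_i C_i'\bigr) W + \bigl(D' + \sum_i B_i' K_i C_i\bigr)U$, hence $W = \bigl(S^{-1}-(S'+\sum_i B_i' K_i C_i')\bigr)^{-1}\bigl(D' + \sum_i B_i' K_i C_i\bigr)U$; plugging this back into $Y = AU + \sum_i B_i K_i(C_iU+C_i'W) + DW$ and reading off the coefficient of $U$ gives exactly the stated expression for $G_{tx,rx}$.

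The main obstacle is not conceptual but bookkeeping: correctly matching each matrix in the formulas to an edge label in Fig.~\ref{fig:standard} (the roles of the primed versus unprimed matrices, and of $S$ versus $S^{-1}$ in the self-loop), and keeping signs and the nesting order straight. As a consistency check I would also feed the four channel matrices into the general formula of Theorem~\ref{thm:transfer}; the two routes agree by the matrix-inversion (Woodbury) identity applied to $\bigl(S^{-1}-(S'+\sum_i B_i' K_i C_i')\bigr)^{-1}$, which expands it in terms of $(S^{-1}-S')^{-1}$ and $(I-[C_j'(S^{-1}-S')^{-1}B_i'K_j]_{ij})^{-1}$, and this Woodbury step is where the algebra is heaviest. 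Throughout I would simply assume, as the lemma does, that every inverse that appears is well-defined.
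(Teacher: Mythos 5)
Your proposal is correct and follows essentially the same route as the paper's proof: write the signal relations off the block diagram, solve for the channel's internal state ($W$, the paper's $i$), and substitute back — with the channel matrices obtained by the same elimination before closing the relay loops $X_i = K_i Y_i$ (a step the paper leaves as "easily checked"). The extra Woodbury-identity consistency check against Theorem~\ref{thm:transfer} is not needed but is a valid sanity check.
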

\begin{proof}
Assign $u$, $x_i$, $i$ and $y$ as we can see in Fig.~\ref{fig:standard}. Then, we can find the following relationships between these:
\begin{align}
&y=B_1 x_1 + \cdots + B_v x_v + Au + Di \label{eqn:std1} \\
&x_1= K_1 C_1 u + K_1 C_1' i \label{eqn:std2}\\
&\vdots \nonumber \\
&x_v= K_v C_v u + K_v C_v' i \nonumber \\
&i=S S' i + S B_1' x_1 + \cdots + S B_v' x_v + S D' u \label{eqn:std3}
\end{align}
By \eqref{eqn:std2} and \eqref{eqn:std3}, we have the following relation:
\begin{align}
&i= S S' i + ( S B_1' K_1 C_1 + \cdots + S B_v' K_v C_v )u +  ( S B_1' K_1 C_1' + \cdots + S B_v' K_v C_v' )i + S D' u \\
&(\Leftrightarrow) S^{-1} i = (S'+B_1' K_1 C_1' + \cdots + B_v' K_v C_v' )i + (D'+B_1' K_1 C_1 + \cdots + B_v' K_v C_v) u \\
&(\Leftrightarrow) i=(S^{-1}-(S+B_1'K_1 C_1'+ \cdots + B_v' K_v C_v'))^{-1} (D'+B_1' K_1 C_1 + \cdots + B_v' K_v C_v) u \label{eqn:std4}
\end{align}
By plugging \eqref{eqn:std2} and \eqref{eqn:std4} into \eqref{eqn:std1}, we get the transfer function from the transmitter to the receiver.\\
One can easily check the channel matrices between nodes.
\end{proof}

\section{Example: Information Flow in a Decentralized Linear System}

\begin{figure}

\begin{center}
\includegraphics[width = 3.5in]{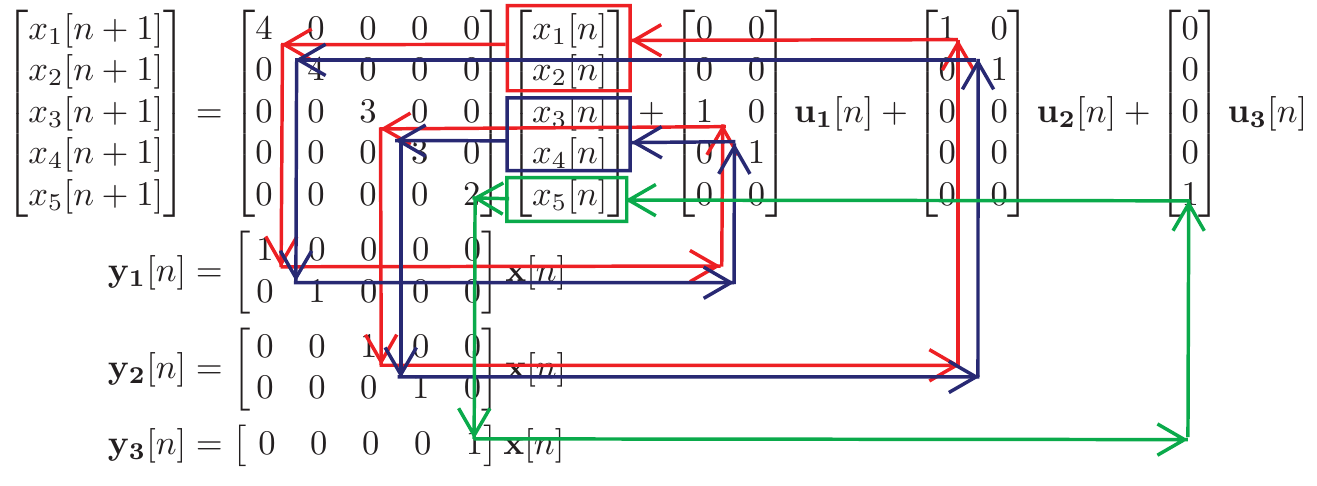}
\caption{An example of an implicit information flow in a decentralized linear system.}
\label{fig:flowexample}
\end{center}
\end{figure}

\begin{figure}
\begin{center}
\includegraphics[width = 2in]{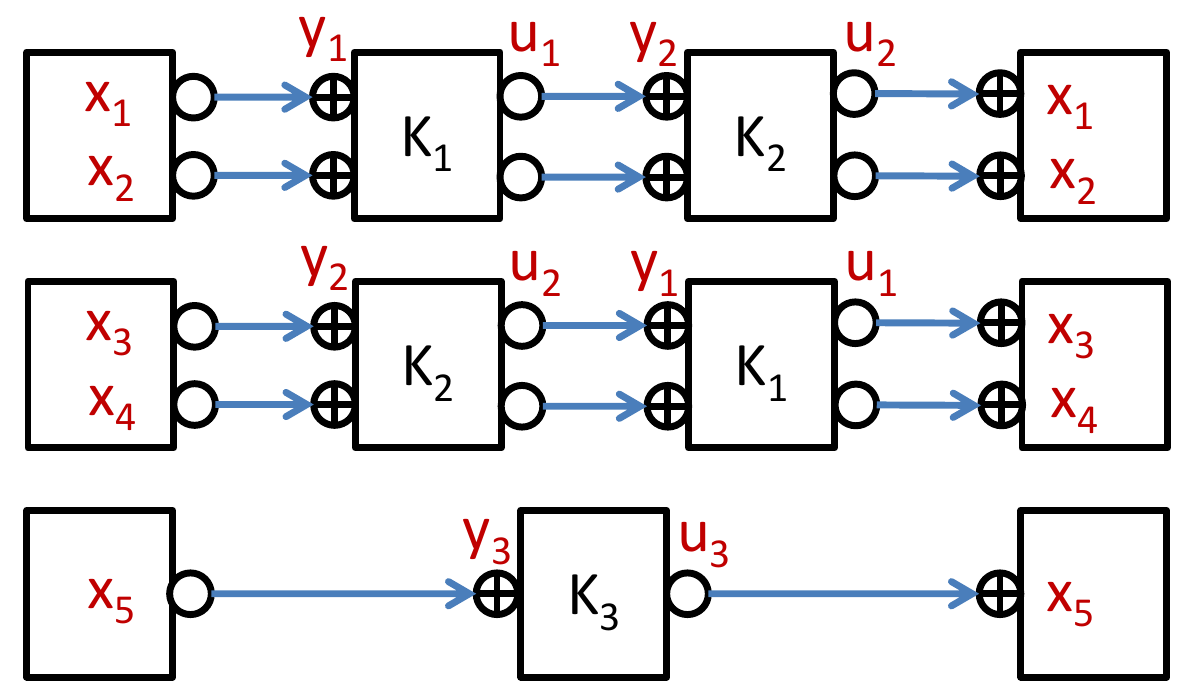}
\caption{Conceptual representations of the information flows within the example of Fig.~\ref{fig:flowexample}}
\label{fig:flowconcept}
\end{center}
\end{figure}
\label{sec:example}
Before we discuss a general algorithm to externalize the implicit
communication between controllers, it will be helpful to see the
information flows that we want to capture in an illustrative
example. By now, we have mounting evidence\footnote{We return to this
  point in the conclusion, but the evidence here has largely come from
  contexts in which the communication is explicitly present. On one
  side, papers like \cite{Kailath_Coding,Elia_Bode, Sahai_Anytime}
  construct feedback communication systems that use unstable states to
  encode desired messages. This provides strong evidence for the
  states acting as information sources. On the other side, papers like
  \cite{Baillieul_Feedback,Tatikonda_Control,Sahai_Anytime} talk about
  networked control systems in which the communication demands on the
  network come from the states. These argue persuasively for the
  states in a control system as being destinations of information
  flows since control and estimation are intimately linked
  together. The perspective on the Kalman filter presented in
  \cite{MitterNewtonKalman} suggests strongly that such information
  flows exist even when there is no explicit communication going on.}
that in linear systems, the unstable states themselves are the sources and, at
the same time, the destinations of information flows. Consider a
linear plant controlled by one controller. The states of the system
will be excited by the disturbance, {\em i.e.}~the states are
generating uncertainties. Then, the states will be observed by the
controller, {\em i.e.}~the uncertain information flows from the state
to the controller. Finally, the controller will compensate for the
disturbance, {\em i.e.}~the information flows back to the states.

When there is more than one controller, the situation becomes more
complicated since the controllers can implicitly communicate with each
other through the
plant~\cite{Witsenhausen_Counterexample,Pulkit_Witsen}. The example
shown in Fig.~\ref{fig:flowexample} (adapted from
\cite{Anderson_Timevarying}) illustrates this phenomenon.
As we can see, the states $x_1[n]$ and $x_2[n]$ are associated with
the eigenvalue $4$. However, the controller $\mathcal{K}_1$ can only
observe $x_1[n], x_2[n]$, the controller $\mathcal{K}_2$ can only
control $x_1[n], x_2[n]$, and the controller $\mathcal{K}_3$ can
neither observe nor control $x_1[n], x_2[n]$. Therefore, to stabilize
$x_1[n], x_2[n]$ the controller $\mathcal{K}_1$ intuitively has to
relay its observations to controller $\mathcal{K}_2$ through the
implicit channel provided by the states $x_3[n], x_4[n]$.

The red arrow of Fig.~\ref{fig:flowexample} shows the information flow
to stabilize $x_1[n], x_2[n]$. First, $x_1[n], x_2[n]$ is observed by
$\mathcal{K}_1$ through $\mathbf{y_1}[n]$. Then, $\mathcal{K}_1$
relays its observations to $\mathcal{K}_2$ by $\mathbf{u_1}[n]$
through the channel $x_3[n], x_4[n]$. $\mathcal{K}_2$ receives the
relayed signals through $\mathbf{y_2}[n]$, and finally controls the
states by $\mathbf{u_2}[n]$. Thus, we expect that the implicit
information flow to stabilize $x_1[n], x_2[n]$ should be roughly
representable as the first LTI network of
Fig.~\ref{fig:flowconcept}. We can see the same kind of information
flow to stabilize the states $x_3[n], x_4[n]$ as indicated by the blue
arrow. Meanwhile the state $x_5[n]$ can be stabilized by the
controller $\mathcal{K}_3$ as indicated by the green
arrow. Conceptually, these information flows can be represented as the
second and third LTI networks of Fig.~\ref{fig:flowconcept}.

Here, we can notice some interesting points. First, we are dividing
the states according to their associated eigenvalues. In this example,
the states are first divided into three sets $\{x_1[n], x_2[n]\}$,
$\{x_3[n], x_4[n]\}$ and $\{x_5[n] \}$, and the information flows for
these sets are considered separately. Moreover, in each information
flow the states associated with the same eigenvalue are considered as
both sources and destinations of the information. The
remaining states are considered as the channels that are available to
implicitly carry this information flow. The controllers themselves are
considered as relays. So in the standard LTI model of
Fig.~\ref{fig:standard}, the blocks ``$tx$'' and ``$rx$'' correspond
to the set of states in consideration and the remaining states are
included in the channel matrices, $A,B_i,\cdots,S'$. The ``$K_i$''
blocks correspond to the controllers.

We can also see the connection between stabilizability and
capacity. The eigenvalue $4$ has two associated states, $x_1[n]$ and
$x_2[n]$. Thus, we can think that this source has $2$ d.o.f.~to
transmit. This information can be successfully transferred since the
channel provided by the states $x_3[n]$ and $x_4[n]$ has
d.o.f.~capacity $2$, and so the eigenvalue $4$ is not a fixed
mode. However, if we remove the state $x_4[n]$ from the system, the
implicit channel's d.o.f.~capacity becomes $1$. Thus, a source with $2$
d.o.f.~cannot be transferred, and the eigenvalue $4$ becomes a fixed
mode.

Table~\ref{tbl:comparison} summarizes the relationship between decentralized control and relay communication problems which we have discussed so far and will make rigorous in following sections.
\begin{table}
\begin{tabular}{|l|l|}
\hline
 LTI Communication Networks & Decentralized Linear Systems \\
\hline\hline
Source & Unstable States associated with eigenvalue $\lambda$ \\
\hline
Destination  & Unstable States associated with eigenvalue $\lambda$ \\
\hline
Relays & Controllers  \\
\hline
Channels & Remaining States and $B_i$, $C_i$\\
\hline
Message & Unstable Subspace associated with eigenvalue $\lambda$ \\
\hline
Rate of Message & Number of Jordan blocks associated with eigenvalue $\lambda$\\
\hline
Capacity & Stabilizability (Enough implicit communication for unstable subspace) \\
\hline
\end{tabular}
\caption{The comparison between decentralized linear systems and LTI communication networks}
\label{tbl:comparison}
\end{table}

\section{Externalization of Implicit Communication}
\label{sec:external}
In this section, we discuss how to externalize the implicit
communication in decentralized linear systems. The main idea can be
considered as the reverse of the algebraic approach to network
coding. In \cite{Koetter_Algebraic}, Koetter and Medard considered
network coding as an algebraic problem. In other words, they found
that what is important about networks (graphical objects) in network
coding is their transfer functions (algebraic objects). What we do is
the opposite. First, we will find transfer functions which are closely
connected to the implicit information flows needed to stabilize linear systems. Then, we will find the LTI networks whose
transfer functions these are.

\subsection{Canonical-Form Externalization}
It turns out that what is important in externalization is the right
choice of transfer function. In section~\ref{sec:example} we saw that
the source and the destination of the information flows are the
states. Thus, the straightforward choice is the transfer function from
the states $x[n]$ to themselves. For that purpose, we introduce an
auxiliary input $u[n]$ and auxiliary output $y[n]$ to the closed loop
system in the following way.
\begin{align}
&x[n+1]=(A+B_1 K_1 C_1 +\cdots+ B_v K_v C_v ) x[n]+ u[n],  \nonumber \\
&y[n]= x[n]. \nonumber
\end{align}
It is clear that all the states ${x}[n]$ are directly controllable by
${u}[n]$ and observable by ${y}[n]$. Since the fixed modes show up as
poles in the transfer function, checking whether $\lambda$ is a fixed
mode involves checking whether the transfer function from $u[n]$ to
$y[n]$ has a fixed pole. However, checking poles is mathematically
troublesome since it results in division by zero. Thus, instead we
inspect the zeros of the formal transfer function from $y[n]$ to
$u[n]$.

Under the assumption that ${x}[0]={0}$, the formal transfer function
from ${y}[n]$ to ${u}[n]$ is given as
\begin{align}
{u}(z)=\underbrace{(z {I} - {A} - {B_1} {K_1} {C_1} - \cdots - {B_v} {K_v} {C_v})}_{:=G_{cn}(z,K)}{y}(z). \nonumber
\end{align}
Here, $G_{cn}(z,K)$ is a rational function whose dummy variables are not
only $z$ but also the elements of the $K_i$s.

By Lemma~\ref{lem:LTI:trans}, the standard network,
$\mathcal{N}_s(zI-A;-B_i,0;C_i,0;0,0;0,0)$, has $G_{cn}(z,K)$ as its
transfer function. Denote this standard network as
$\mathcal{N}_{cn}(z)$. The graphical representation of
$\mathcal{N}_{cn}(z)$ at the generalized frequency $z=\lambda$ is
shown in Fig.~\ref{fig:canonexternal}.

Then, we can easily derive the following theorem connecting the d.o.f.~capacity of the LTI network $\mathcal{N}_{cn}(z)$ with the stabilizability of the decentralized linear system $\mathcal{L}(A,B_i,C_i)$.
\begin{figure}
\includegraphics[width = 3in]{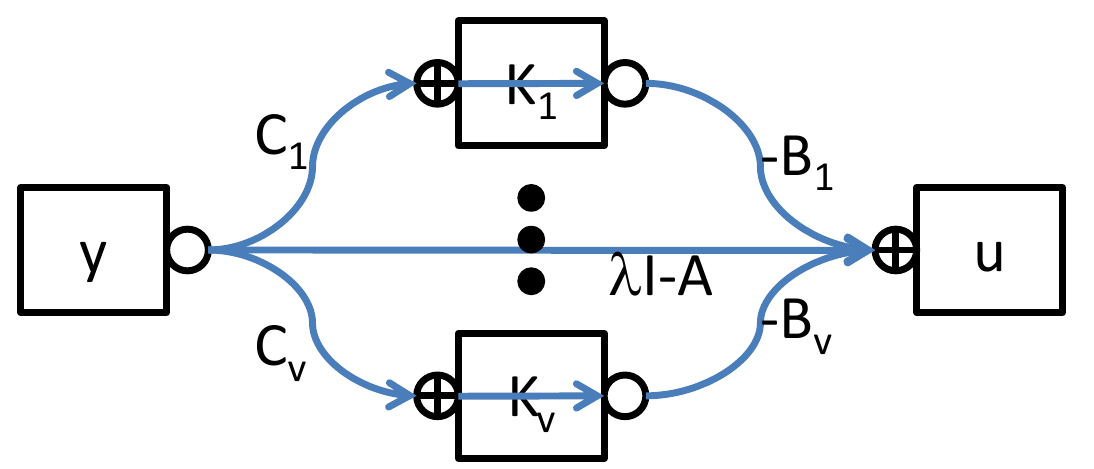}
\caption{The graphical representation of $\mathcal{N}_{cn}(\lambda)$}
\label{fig:canonexternal}
\end{figure}
\begin{theorem}[Capacity-Stabilizability Equivalence]
Given the above definitions, the following statements are equivalent.\\
(1) $\lambda$ is a fixed mode of the decentralized linear system $\mathcal{L}({A},{B_i},{C_i})$. \\
(2) $\rank\left({G_{cn}}\left(\lambda,K \right)\right) < \dim({A})$. \\
(3) (transfer matrix rank of LTI network $\mathcal{N}_{cn}(\lambda)$) $< \dim({A})$.\\
(4) (mincut rank of the LTI network $\mathcal{N}_{cn}(\lambda)$) $< \dim({A})$. \\
(5) $\min_{V \subseteq \{1,\cdots,v \}} \rank \begin{bmatrix} \lambda I-A & -B_V \\ C_{V^c} & 0 \end{bmatrix} < \dim(A)$.
\label{thm:equivalence1}
\end{theorem}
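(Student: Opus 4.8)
The plan is to prove the chain $(1) \Leftrightarrow (2) \Leftrightarrow (3) \Leftrightarrow (4) \Leftrightarrow (5)$, arranging things so that each link is almost immediate from a result already in hand. As a byproduct, $(1)\Leftrightarrow(5)$ will re-derive Anderson \emph{et al.}'s algebraic fixed-mode test (Theorem~\ref{def:fixed2}, up to the harmless sign on $B_{V}$) from the mincut-maxflow machinery, exactly as the introduction promised.

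For $(1) \Leftrightarrow (2)$: by Definition~\ref{def:fixed1}, $\lambda$ is a fixed mode precisely when $\det(\lambda I - A - \sum_i B_i K_i C_i) = 0$ for every memoryless choice $K_i \in \mathbb{C}^{q_i \times r_i}$, i.e.\ when $\rank(\lambda I - A - \sum_i B_i K_i C_i) < \dim(A)$ for all such $K_i$. Since the entries of $G_{cn}(\lambda,K) = \lambda I - A - \sum_i B_i K_i C_i$ are affine in the dummy variables of $K$ and $\mathbb{C}$ is infinite, the argument behind Lemma~\ref{lem:keylemma}, applied at the frequency $z=\lambda$, gives $\rank G_{cn}(\lambda,K) = \max_{K_i \in \mathbb{C}^{q_i \times r_i}} \rank(\lambda I - A - \sum_i B_i K_i C_i)$. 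Hence $\lambda$ is a fixed mode iff this maximal rank is below $\dim(A)$, which is statement (2).

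For $(2) \Leftrightarrow (3) \Leftrightarrow (4)$: by construction $\mathcal{N}_{cn}(z)$ is the standard network $\mathcal{N}_s(zI-A;-B_i,0;C_i,0;0,0;0,0)$, whose feedback blocks $D,D',B_i',C_i',S'$ are all trivial, so the Schur-complement term in Lemma~\ref{lem:LTI:trans} vanishes and the transfer matrix is exactly $(zI-A)+\sum_i(-B_i)K_iC_i = G_{cn}(z,K)$; as these entries are polynomial in $z$ there is no pole at $\lambda$, so the transfer matrix of $\mathcal{N}_{cn}(\lambda)$ is $G_{cn}(\lambda,K)$. Since the d.o.f.\ capacity of an LTI network is by definition the rank of its transfer matrix, (2) and (3) are the same statement. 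Then $(3)\Leftrightarrow(4)$ is Corollary~\ref{cor:mincut}: with no poles at $\lambda$, the algebraic mincut-maxflow theorem applies to $\mathcal{N}_{cn}(\lambda)$ and equates transfer-matrix rank with mincut rank.

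For $(4)\Leftrightarrow(5)$: read the channel matrices of $\mathcal{N}_{cn}(\lambda)$ off Lemma~\ref{lem:LTI:trans} with the trivial feedback blocks and $B_i \mapsto -B_i$, namely $H_{tx,rx}=\lambda I - A$, $H_{tx,i}=C_i$, $H_{i,rx}=-B_i$, $H_{i,j}=0$. For an admissible cut $V$ (with $tx\in V$, $rx\notin V$), put $V':=V\cap\{1,\dots,v\}$; after ordering rows by the outputs of $rx$ and of the relays in $V'^c$ and columns by the inputs of $tx$ and of the relays in $V'$, the cut matrix $H_{V,V^c}(\lambda)$ becomes $\begin{bmatrix} \lambda I - A & -B_{V'} \\ C_{V'^c} & 0 \end{bmatrix}$. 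As $V$ ranges over admissible cuts, $V'$ ranges over all subsets of $\{1,\dots,v\}$, so the mincut rank equals $\min_{V'\subseteq\{1,\dots,v\}}\rank\begin{bmatrix} \lambda I - A & -B_{V'} \\ C_{V'^c} & 0 \end{bmatrix}$, and being below $\dim(A)$ is precisely (5). Chaining the four equivalences finishes the proof. The only genuinely delicate point is the generic-rank step in $(1)\Leftrightarrow(2)$: one must be careful that the passage from ``singular for every complex $K_i$'' to ``symbolically rank-deficient'' is legitimate at a single fixed frequency (it needs the base field to be infinite) and that the memoryless-controller formulation of Definition~\ref{def:fixed1} lines up with the symbolic-$K$ picture used for the network; everything after that is bookkeeping with Lemma~\ref{lem:LTI:trans}, Corollary~\ref{cor:mincut}, and the definitions.
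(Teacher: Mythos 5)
Your proof is correct and follows essentially the same route as the paper: (1)$\Leftrightarrow$(2) via the generic-rank argument of Lemma~\ref{lem:keylemma}, (2)$\Leftrightarrow$(3) from the definitions of $G_{cn}$ and $\mathcal{N}_{cn}$, (3)$\Leftrightarrow$(4) from Corollary~\ref{cor:mincut}, and (4)$\Leftrightarrow$(5) by reading the cut matrices off the standard-network structure. The extra care you take at the $(1)\Leftrightarrow(2)$ step (infinite base field, memoryless $K_i$ versus symbolic $K$) and the explicit bookkeeping of the cut matrices are just more detailed versions of what the paper already does implicitly.
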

\begin{proof}
By the definition of fixed modes, (1) is equivalent to $\det( A + \sum_{1 \leq i \leq v} B_i K_i C_i - \lambda I)=0$ for all $K_i \in \mathbb{C}^{q_i \times r_i}$. By Lemma~\ref{lem:keylemma}, this is equivalent to $\det( A + \sum_{1 \leq i \leq v} B_i K_i C_i - \lambda I) = 0$ where each element of $K_i$ is considered as distinct dummy variables. Since $\det( A + \sum_{1 \leq i \leq v} B_i K_i C_i - \lambda I)=0$ means not full rank, this is again equivalent to $\rank( \lambda I - A - \sum_{1 \leq i \leq v} B_i K_i C_i ) < \dim(A)$, which is the statement (2). (2) and (3) are equivalent by the definitions of $G_{cn}(z,K)$ and $\mathcal{N}_{cn}(z)$. (3) and (4) are equivalent by the algebraic
mincut-maxflow theorem, Corollary~\ref{cor:mincut}. The equivalence of (4) and (5) follows from the definitions of the cutset matrices of $\mathcal{N}_{cn}(z)$.
\end{proof}
Remark 1: $y(z)$ is the signal assigned to the transmitter of
$\mathcal{N}_{cn}(z)$, and $u(z)$ is the signal assigned to the
receiver of $\mathcal{N}_{cn}(z)$. Thus, the LTI network connects the
states $x[n]$ to themselves, which complies with our discussion of
section~\ref{sec:example}.\\
Remark 2: The statement (1) of the theorem is directly connected to
stabilizability by Theorem~\ref{thm:stability}, and the statement
(3) of the theorem is about the d.o.f.~capacity of the network at the
frequency $z=\lambda$. Thus, this theorem reveals a fundamental
equivalence between stabilizability and capacity.\\
Remark 3: This externalization seems naive, and as we can see in
Fig.~\ref{fig:canonexternal} it gives only networks with a simple
topology that does not have any links between the relays. We call this
externalization as the canonical-form externalization because of its
simple topology. In the next section, we show another way of
externalizing the implicit communication which a different network
topology. The fact that different externalizations are possible is what allowed
to us discover that, in fact, any arbitrary network can be converted to
the canonical network of Fig.~\ref{fig:canonexternal}, which is the insight for network linearization as discussed in Section~\ref{sec:networklin}.\\
Remark 4: In fact, statement (5) is the algebraic characterization of
fixed modes shown in \cite{Anderson_Algebraic}. So in hindsight,
we can say that Anderson and Clements found the algebraic
mincut-maxflow theorem for the special network of
Fig.~\ref{fig:canonexternal}.\\
Remark 5: It is known that the rank of the channel matrix for a cut is
a submodular function~\cite{Savari_maxflow}. The complexity of
submodular function minimization is polynomial
time~\cite{Schrijver_Combinatorial}. Therefore, we can efficiently
check for fixed modes.

\begin{figure}
\begin{center}
\includegraphics[width = 2in]{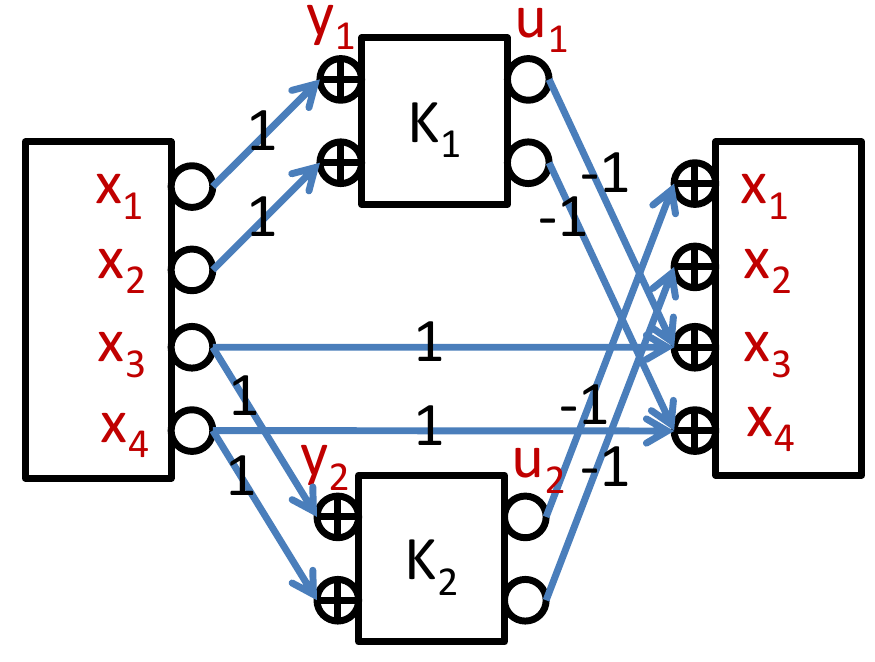}
\end{center}
\caption{Canonical-form externalization of the system of Fig.~\ref{fig:flowexample} for $\lambda=4$}
\label{fig:externalexcn}
\end{figure}

Now, we can try to externalize the implicit communication of the
example shown in
Fig.~\ref{fig:flowexample}. Fig.~\ref{fig:externalexcn} shows the
canonical-form externalization for eigenvalue $4$. If we look at the
figure, this externalization is not what we expected in
Fig.~\ref{fig:flowconcept}. Since the links between the relays are
missing, we cannot see any relaying behavior between two
controllers. Also, we cannot clearly see the fact that there are 2
degrees-of-freedom that must be communicated. This motivates us to
seek a more compact externalization where the eigenvalues are emphasized by using Jordan forms.

\subsection{Jordan-Form Externalization}
\label{sec:jordanex}
As we see in the above section, externalization is done for each
eigenvalue of $A$. For a general matrix $A$, there is no
clear correspondence between eigenvalues and particular states in the linear
system. Thus, we cannot but choose the transfer function from all the
states $x[n]$ to themselves. However, if ${A}$ is given in
Jordan normal form~\cite{Chen}, we can find a natural correspondence
between eigenvalues and states, and use this to reduce the dimension
of the transfer function. Moreover, by a similarity transform an
arbitrary linear system $\mathcal{L}(A,B_i,C_i)$ can be converted to
an equivalent linear system $\mathcal{L}(\widetilde{A},\widetilde{B_i},\widetilde{C_i})$ with the
matrix $A'$ in Jordan form~\cite{Chen}. Thus, without loss
of generality, assume that ${A}$ is in Jordan form. (This corresponds to examining the system in its natural coordinate system.)

For a Jordan-form ${A}$ matrix, there is no (internal) interaction between states
belonging to different Jordan blocks. Thus, as discussed in
section~\ref{sec:example}, to check if $\lambda$ is a fixed mode, it
is enough to examine the transfer matrix from the states associated
with Jordan blocks corresponding to the eigenvalue $\lambda$ to
themselves. For externalization, we can simply repeat the steps of
the above section.

To understand the core ideas, we first consider a diagonal $A$ matrix,
{\em i.e.} $A=\begin{bmatrix} \lambda I_{m_\lambda} & 0 \\
  0 & A' \end{bmatrix}$ where $A'$ is a diagonal matrix whose diagonal elements are not equal to $\lambda$. Because the matrix is diagonal, each Jordan block is just a $1 \times 1$ matrix and so $m_\lambda$ can
be thought of as the number of Jordan blocks associated with
$\lambda$. We will introduce auxiliary inputs and outputs that control
and observe the states corresponding to the eigenvalue $\lambda$. For
this, we define $B_\lambda$ and $C_\lambda$ as follows:
\begin{align}
C_{\lambda}=\begin{bmatrix} I_{m_\lambda} & 0 \end{bmatrix}, B_{\lambda}=\begin{bmatrix} I_{m_\lambda} & 0 \end{bmatrix}^T.
\label{eqn:jordan:def1}
\end{align}
Then, the closed loop system is given as
\begin{align}
&x[n+1]=(A+ \sum_{1 \leq i \leq v} B_i K_i C_i)x[n]+B_\lambda u_\lambda[n] \nonumber \\
&y_\lambda[n]= C_\lambda x[n]\nonumber
\end{align}
where $u_{\lambda}[n]$ and $y_{\lambda}[n]$ are $m_\lambda \times 1$ vectors. Let's set
\begin{align}
&\left( z{I}-{A} \right)=
\begin{bmatrix}
{A_{\lambda,1,1}}(z) & {A_{\lambda,1,2}}(z) \\
{A_{\lambda,2,1}}(z) & {A_{\lambda,2,2}}(z) \\
\end{bmatrix} \label{eqn:jordan:def2} \\
&{C_i} =
\begin{bmatrix}
{C_{i,\lambda,1}} & {C_{i,\lambda,2}}
\end{bmatrix}, 
 {B_i}=
\begin{bmatrix}
{B_{i,\lambda,1}} \\
{B_{i,\lambda,2}} \\
\end{bmatrix} \nonumber
\end{align}
where ${A_{\lambda,1,1}}(z)$ is a $m_\lambda \times m_\lambda$ matrix,
$B_{i,\lambda,1}$ is a $m_\lambda \times q_i$ matrix,
$C_{i,\lambda,1}$ is a $r_i \times m_\lambda$ matrix, and the others
are the proper implied dimensions. Here, by construction, we can see
$A_{\lambda,1,1}(\lambda)=0$, $A_{\lambda,1,2}(\lambda)=0$,
$A_{\lambda,2,1}(\lambda)=0$, and $A_{\lambda,2,2}(\lambda)$ is
invertible.

Then, we can see that the transfer function from $u_\lambda(z)$ to
$y_\lambda(z)$ is given as follows:
\begin{align}
&y_{\lambda}(z)= \begin{bmatrix} I & 0 \end{bmatrix}
\bigg(
\begin{bmatrix}
A_{\lambda,1,1}(z) & A_{\lambda,1,2}(z) \\
A_{\lambda,2,1}(z) & A_{\lambda,2,2}(z) \\
\end{bmatrix} \nonumber \\
&-
\sum_{1 \leq i \leq v}
\begin{bmatrix}
B_{i,\lambda,1} K_i C_{i,\lambda,1} & B_{i,\lambda,1} K_i C_{i,\lambda,2} \\
B_{i,\lambda,2} K_i C_{i,\lambda,1} & B_{i,\lambda,2} K_i C_{i,\lambda,2} \\
\end{bmatrix}
\bigg)^{-1}
\begin{bmatrix} I \\ 0 \end{bmatrix} u_\lambda(z) \label{eqn:jordan:1}
\end{align}
We need the following lemma to obtain the transfer function from $y_\lambda(z)$ to $u_\lambda(z)$.
\begin{lemma}
For a field $\mathbb{F}$ and $n_1, n_2 \in \mathbb{Z}^+$, let $y \in \mathbb{F}^{n_1 \times 1}$, $u \in \mathbb{F}^{n_1 \times 1}$, $A \in \mathbb{F}^{n_1 \times n_1}$, $B \in \mathbb{F}^{n_1 \times n_2}$, $C \in \mathbb{F}^{n_2 \times n_1}$, and $D \in \mathbb{F}^{n_2 \times n_2}$. Assume $D$ is invertible. Then, $\begin{bmatrix} A & B \\ C & D \end{bmatrix}$ is invertible iff $(A-BD^{-1}C)$ is invertible.\\
Moreover, if we assume $D$ and $\begin{bmatrix} A & B \\ C & D \end{bmatrix}$ are invertible,
\begin{align}
y=\begin{bmatrix} I_{n_1} & 0 \end{bmatrix} \begin{bmatrix} A & B \\ C & D \end{bmatrix}^{-1} \begin{bmatrix} I_{n_1} \\ 0 \end{bmatrix} u \label{eqn:pattern1}
\end{align}
implies
\begin{align}
u=(A-B D^{-1} C)y. \nonumber
\end{align}
\label{lem:ext:matrix}
\end{lemma}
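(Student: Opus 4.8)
The plan is to reduce everything to the Schur complement of $D$, namely $S := A - BD^{-1}C$. The lemma is really two facts about block $2\times 2$ matrices, and both follow from a single block-elimination identity, so I would establish that identity first and then read off the two claims in turn.

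First I would write down the factorization
\begin{align}
\begin{bmatrix} A & B \\ C & D \end{bmatrix}
= \begin{bmatrix} I_{n_1} & B D^{-1} \\ 0 & I_{n_2} \end{bmatrix}
\begin{bmatrix} A - BD^{-1}C & 0 \\ C & D \end{bmatrix}, \nonumber
\end{align}
which is valid because $D$ is invertible (the same move used for ranks in Lemma~\ref{lem:LIN:rank}). The first factor is unit upper-triangular, hence invertible with determinant $1$; the second is block lower-triangular, so its determinant is $\det(A-BD^{-1}C)\det(D)$. Taking determinants gives $\det\begin{bmatrix} A & B \\ C & D\end{bmatrix} = \det(A-BD^{-1}C)\det(D)$, and since $\det(D)\neq 0$ this shows the block matrix is invertible if and only if $S = A-BD^{-1}C$ is. That is the first assertion.

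For the transfer-function identity, assume now that both $D$ and the block matrix (hence $S$) are invertible. I would set $\begin{bmatrix} y \\ w \end{bmatrix} := \begin{bmatrix} A & B \\ C & D\end{bmatrix}^{-1}\begin{bmatrix} u \\ 0\end{bmatrix}$, so that the top block of this vector is exactly the $y$ appearing in \eqref{eqn:pattern1}. Multiplying back through by the block matrix gives the two equations $Ay + Bw = u$ and $Cy + Dw = 0$; the second yields $w = -D^{-1}Cy$, and substituting into the first gives $u = (A - BD^{-1}C)y = Sy$, as claimed. (Equivalently, one can extract the top-left block $S^{-1}$ of the inverse via the standard Schur-complement formula and observe $y = S^{-1}u$, but the elimination argument above avoids writing out the full inverse.)

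There is no genuine obstacle here; the only points to check are that the inverses invoked actually exist — guaranteed by the hypotheses, in the same spirit as the invertibility used in Theorem~\ref{thm:transfer} and Lemma~\ref{lem:LIN:rank} — and that the block dimensions are consistent, which they are since $A$ is $n_1\times n_1$, $D$ is $n_2\times n_2$, and $y,u \in \mathbb{F}^{n_1\times 1}$. The lemma will then be applied in \eqref{eqn:jordan:1} with the roles of $A$ and $D$ played by the $A_{\lambda,1,1}$-block and the (invertible at $z=\lambda$) $A_{\lambda,2,2}$-block, each corrected by the corresponding $\sum_i B_{i,\lambda,\cdot}K_iC_{i,\lambda,\cdot}$ terms, to produce a clean transfer function from $y_\lambda(z)$ to $u_\lambda(z)$.
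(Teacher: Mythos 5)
Your proof is correct and takes essentially the same route as the paper: both rest on the Schur complement of $D$ via the factorization $\begin{bmatrix} A & B \\ C & D\end{bmatrix}=\begin{bmatrix} I_{n_1} & BD^{-1} \\ 0 & I_{n_2}\end{bmatrix}\begin{bmatrix} A-BD^{-1}C & 0 \\ C & D\end{bmatrix}$. The only differences are cosmetic — the paper gets the first claim from the rank identity of Lemma~\ref{lem:LIN:rank} and the second by writing out the block inverse explicitly, whereas you use determinants and back-substitution of $w=-D^{-1}Cy$, which is equally valid.
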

\begin{proof}
By Lemma~\ref{lem:LIN:rank},
\begin{align}
\rank \begin{bmatrix} A & B \\ C & D \end{bmatrix} = n_2  + \rank ( A-BD^{-1}C ).
\end{align}
Therefore, the first statement of the lemma is true. For the second,
\begin{align}
y&=
\begin{bmatrix} I_{n_1} & 0 \end{bmatrix} \begin{bmatrix} A & B \\ C & D \end{bmatrix}^{-1} \begin{bmatrix} I_{n_1} \\ 0 \end{bmatrix} u \\
&=\begin{bmatrix}
I_{n_1} & 0
\end{bmatrix}\left(
\begin{bmatrix}
I_{n_1} & BD^{-1} \\
0 & I_{n_2}
\end{bmatrix}
\begin{bmatrix}
A-B D^{-1} C & 0 \\
C & D
\end{bmatrix}
\right)^{-1}
\begin{bmatrix}
I_{n_1} \\
0
\end{bmatrix} u \\
&=\begin{bmatrix}
I_{n_1} & 0
\end{bmatrix}
\begin{bmatrix}
(A-B D^{-1} C)^{-1} & 0 \\
-D^{-1} C (A-B D^{-1} C)^{-1} & D^{-1}
\end{bmatrix}
\begin{bmatrix}
I_{n_1} & -B D^{-1} \\
0 & I_{n_2}
\end{bmatrix}
\begin{bmatrix}
I_{n_1} \\
0
\end{bmatrix} u \nonumber \\
&=( A-B D^{-1}C )^{-1} u \nonumber
\end{align}
Here, the matrix inverses exist because of the assumption that $D$ is invertible, and the first statement of the lemma. Therefore, $u=(A-BD^{-1}C)y$.
\end{proof}

By Lemma~\ref{lem:ext:matrix} and matching \eqref{eqn:jordan:1} to the pattern given by \eqref{eqn:pattern1}, the transfer function from $y_\lambda(z)$ to $u_\lambda(z)$, $G_{jd,\lambda}(z,K)$, is given as
\begin{align}
&G_{jd,\lambda}(z,K)
=
(A_{\lambda,1,1}(z)- \sum_{1 \leq i \leq v} B_{i,\lambda,1}K_i C_{i,\lambda,1} ) \nonumber \\
&\quad+( A_{\lambda,1,2}(z)-\sum_{1 \leq i \leq v} B_{i,\lambda,1} K_i C_{i,\lambda,2} )\nonumber \\
&\quad\cdot( I - (I - A_{\lambda,2,2}(z) + \sum_{1 \leq i \leq v} B_{i,\lambda,2} K_i C_{i,\lambda,2}) )^{-1} \nonumber\\
&\quad\cdot( -A_{\lambda,2,1}(z) + \sum_{1 \leq i \leq v} B_{i,\lambda,2} K_i C_{i,\lambda,1}). \label{eqn:jordan:3}
\end{align}
By Lemma~\ref{lem:LTI:trans}, $G_{jd,\lambda}(z,K)$ corresponds to the transfer matrix of the standard LTI network,
\begin{align}
&\mathcal{N}_s(A_{\lambda,1,1}(z);-B_{i,\lambda,1},B_{i,\lambda,2};C_{i,\lambda,1},C_{i,\lambda,2}\nonumber\\
&;A_{\lambda,1,2}(z),-A_{\lambda,2,1}(z);I,I-A_{\lambda,2,2}(z)). \label{eqn:jordan:4}
\end{align}
Call this network $\mathcal{N}_{jd,\lambda}(z)$. When it is evaluated
at the generalized frequency $z=\lambda$,
$\mathcal{N}_{jd,\lambda}(z)$ can be simplified further as
$\mathcal{N}_s(0;-B_{i,\lambda,1},B_{i,\lambda,2};C_{i,\lambda,1},C_{i,\lambda,2};0,0;I,I-A_{\lambda,2,2}(\lambda))$.
\begin{figure}
\includegraphics[width = 3in]{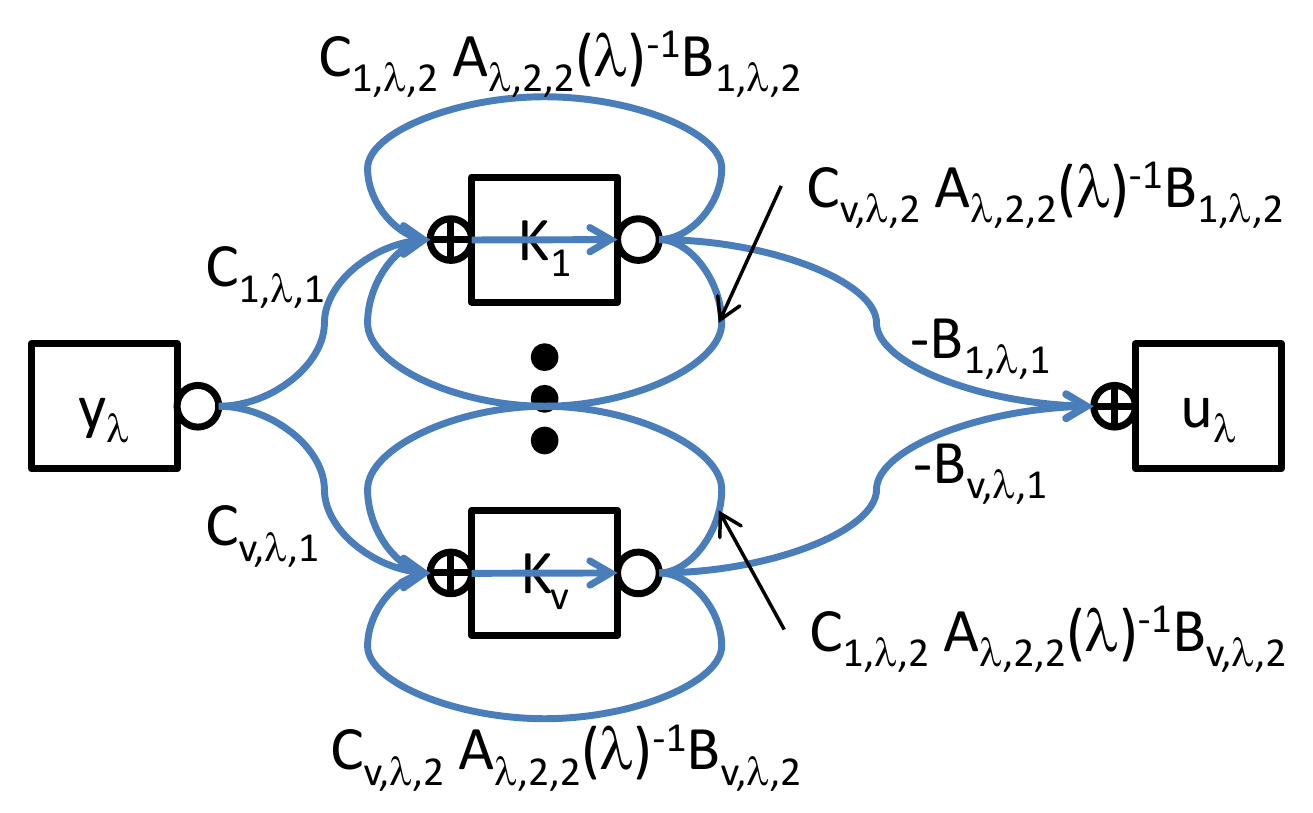}
\caption{The graphical representation of $\mathcal{N}_{jd,\lambda}(\lambda)$}
\label{fig:jordanexternal}
\end{figure}
Fig.~\ref{fig:jordanexternal} shows this network,
$\mathcal{N}_{jd,\lambda}(\lambda)$, and by Lemma~\ref{lem:LTI:trans}
the channel matrices are given as follows:
\begin{align}
&H_{tx,rx}(\lambda)=0, \nonumber \\
&H_{tx,i}(\lambda)=C_{i,\lambda,1}, \nonumber \\
&H_{i,rx}(\lambda)=-B_{i,\lambda,1}, \nonumber \\
&H_{i,j}(\lambda)=C_{j,\lambda,2}A_{\lambda,2,2}(\lambda)^{-1}B_{i,\lambda,2}. \label{eqn:jordan:5}
\end{align}
Now, we state a parallel proposition to Theorem~\ref{thm:equivalence1}.
\begin{proposition}
Given the above definitions, the following statements are equivalent.\\
(1) $\lambda$ is a fixed mode of the decentralized linear system $\mathcal{L}(A,B_i,C_i)$\\
(2) $\rank(G_{jd,\lambda}(\lambda,K)) < m_\lambda$\\
(3) (transfer matrix rank of LTI network $\mathcal{N}_{jd,\lambda}(\lambda)$) $< m_\lambda$\\
(4) (mincut rank of the LTI network $\mathcal{N}_{jd,\lambda}(\lambda)$) $< m_\lambda$\\
(5) \small{$\min_{V \subseteq \{1,\cdots,v \}} \rank
\begin{bmatrix}
0 & -B_{V,\lambda,1} \\
C_{V^c,\lambda,1} & C_{V^c,\lambda,2} A_{\lambda,2,2}(\lambda)^{-1} B_{V,\lambda,2}
\end{bmatrix}
 < m_\lambda$}
\label{prop:equivalence2}
\end{proposition}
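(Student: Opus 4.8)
The plan is to prove the cycle $(1)\Leftrightarrow(2)\Leftrightarrow(3)\Leftrightarrow(4)\Leftrightarrow(5)$ exactly in parallel with the proof of Theorem~\ref{thm:equivalence1}, since statements $(2)$--$(5)$ here are the Jordan-form counterparts of those in that theorem and the only genuinely new ingredient is the passage from the full matrix $\lambda I - A - \sum_i B_i K_i C_i$ to the reduced transfer function $G_{jd,\lambda}(\lambda,K)$. For $(2)\Leftrightarrow(3)$ I would simply invoke the definition of $G_{jd,\lambda}(z,K)$ together with Lemma~\ref{lem:LTI:trans}, which identifies $\mathcal{N}_{jd,\lambda}(z)$ as the standard network \eqref{eqn:jordan:4} having $G_{jd,\lambda}(z,K)$ as its transfer matrix. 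For $(3)\Leftrightarrow(4)$ I would apply the algebraic mincut-maxflow theorem at the frequency $z=\lambda$ (Corollary~\ref{cor:mincut}); this is legitimate because, $\lambda$ not being an eigenvalue of $A'$, the matrix $A_{\lambda,2,2}(\lambda)$ is invertible, so the channel matrices \eqref{eqn:jordan:5} --- and $G_{jd,\lambda}(z,K)$ itself --- have no pole at $z=\lambda$ and become honest matrices over $\mathbb{C}$ once evaluated there. For $(4)\Leftrightarrow(5)$ I would read off the cutset matrices of $\mathcal{N}_{jd,\lambda}(\lambda)$ from \eqref{eqn:jordan:5}: the cut determined by a relay subset $V$ has precisely the matrix displayed in statement $(5)$.

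The real content is $(1)\Leftrightarrow(2)$. I would start from Theorem~\ref{thm:equivalence1}, whose equivalence $(1)\Leftrightarrow(2)$ already tells us that $\lambda$ is a fixed mode of $\mathcal{L}(A,B_i,C_i)$ if and only if $\rank\big(\lambda I - A - \sum_{1\le i\le v} B_i K_i C_i\big) < \dim(A)$, with the entries of the $K_i$ treated as distinct dummy variables (the reduction from ``for all complex $K_i$'' to ``for dummy $K$'' being Lemma~\ref{lem:keylemma}). It then remains to show that this rank deficiency is equivalent to $\rank G_{jd,\lambda}(\lambda,K) < m_\lambda$. Partitioning $zI-A$, $B_i$, $C_i$ as in \eqref{eqn:jordan:def2}, at $z=\lambda$ the $(2,2)$-block of $\lambda I - A - \sum_i B_i K_i C_i$ is $A_{\lambda,2,2}(\lambda) - \sum_i B_{i,\lambda,2} K_i C_{i,\lambda,2}$; since its rank over the dummy variables is the maximum over all substitutions (Lemma~\ref{lem:keylemma}) and the substitution $K=0$ yields the invertible matrix $A_{\lambda,2,2}(\lambda)$, this block is invertible. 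Hence Lemma~\ref{lem:LIN:rank} applies and gives $\rank\big(\lambda I - A - \sum_i B_i K_i C_i\big) = (\dim(A)-m_\lambda) + \rank(\text{Schur complement})$, where the Schur complement of that $(2,2)$-block is exactly $G_{jd,\lambda}(\lambda,K)$ --- this is precisely the manipulation of \eqref{eqn:jordan:1} recorded in \eqref{eqn:jordan:3}, with Lemma~\ref{lem:ext:matrix} supplying the identification of the transfer function with the Schur complement. Since the Schur complement is $m_\lambda\times m_\lambda$, the full matrix drops rank below $\dim(A)$ exactly when $\rank G_{jd,\lambda}(\lambda,K) < m_\lambda$, which is $(2)$.

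The main obstacle --- and really the only step that is not pure bookkeeping --- is this Jordan-block reduction: one must make sure the $(2,2)$-block stays invertible once the dummy gains are introduced, so that the Schur-complement rank identity of Lemma~\ref{lem:LIN:rank} is genuinely available and the cutoff shifts by exactly $\dim(A)-m_\lambda$. A minor related caution is that $(3)\Leftrightarrow(4)$ is an assertion at a single frequency, so one should verify the no-pole-at-$\lambda$ condition before invoking Corollary~\ref{cor:mincut}; but this too follows from the invertibility of $A_{\lambda,2,2}(\lambda)$. Everything else is inherited verbatim from the argument for Theorem~\ref{thm:equivalence1}.
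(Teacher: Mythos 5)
Your proposal is correct and follows essentially the same route as the paper: both reduce $(1)\Leftrightarrow(2)$ to Theorem~\ref{thm:equivalence1} plus the Schur-complement identification of $G_{jd,\lambda}(\lambda,K)$ with respect to the invertible block $A_{\lambda,2,2}(\lambda)-\sum_i B_{i,\lambda,2}K_iC_{i,\lambda,2}$, and then obtain $(2)\Leftrightarrow(3)\Leftrightarrow(4)\Leftrightarrow(5)$ from Lemma~\ref{lem:LTI:trans}, Corollary~\ref{cor:mincut}, and the channel matrices \eqref{eqn:jordan:5}. The only cosmetic difference is that you invoke the rank identity of Lemma~\ref{lem:LIN:rank} directly (and explicitly justify invertibility of the $(2,2)$-block via Lemma~\ref{lem:keylemma}), whereas the paper phrases the same step through the invertibility statement of Lemma~\ref{lem:ext:matrix}.
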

\begin{proof}
By Theorem~\ref{thm:equivalence1} (2) and the fact that the dimension of $G_{cn}(\lambda,K)$ is $\dim(A)$, we know that the statement (1) is equivalent to $G_{cn}(\lambda,K)$ is rank deficient. Furthermore, in Lemma~\ref{lem:ext:matrix} by considering $(A_{\lambda,1,1}(\lambda)- \sum_{1 \leq i \leq v} B_{i,\lambda,1}K_i C_{i,\lambda,1} )$ as $A$, $(A_{\lambda,1,2}(\lambda)-\sum_{1 \leq i \leq v} B_{i,\lambda,1} K_i C_{i,\lambda,2})$ as $B$, $(A_{\lambda,2,1}(\lambda) - \sum_{1 \leq i \leq v} B_{i,\lambda,2} K_i C_{i,\lambda,1})$ as $C$, and $(A_{\lambda,2,2}(\lambda) - \sum_{1 \leq i \leq v} B_{i,\lambda,2} K_i C_{i,\lambda,2})$ as $D$, we can conclude that $G_{jd,\lambda}(\lambda,K)$ is full rank if and only if
\begin{align}
&\begin{bmatrix}
A_{\lambda,1,1}(z) & A_{\lambda,1,2}(z) \\
A_{\lambda,2,1}(z) & A_{\lambda,2,2}(z) \\
\end{bmatrix} -
\sum_{1 \leq i \leq v}
\begin{bmatrix}
B_{i,\lambda,1} K_i C_{i,\lambda,1} & B_{i,\lambda,1} K_i C_{i,\lambda,2} \\
B_{i,\lambda,2} K_i C_{i,\lambda,1} & B_{i,\lambda,2} K_i C_{i,\lambda,2} \\
\end{bmatrix}\\
&= \lambda I - A - \sum_{1 \leq i \leq v} B_i K_i C_i \\
&= G_{cn} (\lambda, K)
\end{align}
is full rank. Thus, $G_{cn}(\lambda,K)$ is rank deficient if and only if $G_{jd,\lambda}(\lambda,K)$ is rank deficient. Since the dimension of $G_{jd,\lambda}(\lambda,K)$ is $m_{\lambda}$, the statement (1) is equivalent to the statement (2).

The statement (2) and (3) are equivalent, since $G_{jd, \lambda}(\lambda, K)$ is the transfer function of $\mathcal{N}_{jd,\lambda}(\lambda)$. 

The statement (3) and (4) are equivalent by the mincut-maxflow theorem of Corollary~\ref{cor:mincut}. 

The equivalence of the statement (4) and (5) comes from the definitions of the channel matrices of $\mathcal{N}_{jd,\lambda}(\lambda)$ shown in \eqref{eqn:jordan:5}.
%
%
\end{proof}

This theorem can be generalized to arbitrary Jordan forms $A$ by
introducing auxiliary inputs and outputs from the states associated with
$\lambda$ to themselves. However, we can further reduce the dimension
of the transfer matrix by inspecting the information flow inside
nontrivial Jordan blocks.
\begin{figure}
\begin{center}
\includegraphics[width = 1.5in]{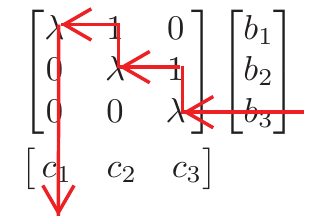}
\end{center}
\caption{Critical Information Flow and Transfer Function in Jordan block}
\label{fig:flowjordan}
\end{figure}

Let's consider the stabilizability condition for a single Jordan block $A$ matrix, 
${A}=\begin{bmatrix} \lambda & 1 & 0 \\ 0 & \lambda & 1 \\ 0 & 0 &
  \lambda \end{bmatrix}$, ${B}=\begin{bmatrix} b_1 \\ b_2 \\
  b_3 \end{bmatrix}$, ${C}= \begin{bmatrix} c_1 & c_2 &
  c_3 \end{bmatrix}$. It is well-known~\cite{Chen} that the observability
condition for this example is $c_1 \neq 0$ and the controllability
condition is $b_3\neq 0$. In other words,
as shown in Fig.~\ref{fig:flowjordan}, we can think of the critical information flow to stabilize a single Jordan block which flows from the right-bottom element  to the left-top element. To check whether a single Jordan block has a fixed mode or not, it is enough to consider the transfer function corresponding to this information flow. 

This observation for a single Jordan block can be generalized to multiple Jordan blocks. To decide whether $\lambda$
is a fixed mode or not, it is enough to examine the transfer function matrix from the right-bottom elements of the multiple Jordan blocks (corresponding to the
eigenvalue $\lambda$) to their left-top elements.

We will make this observation rigorous by introducing the following definitions. Since the definitions are notationally heavy, we recommend to see Appendix~\ref{app:jordanex} for a descriptive example. In Appendix~\ref{app:jordanex}, we consider the case when $A
=\begin{bmatrix} \lambda & 1 & 0 & 0 & 0& 0 \\ 0 & \lambda & 1 & 0 & 0& 0 \\ 0 & 0 & \lambda & 0 & 0& 0 \\ 0 & 0 & 0 & \lambda & 1 & 0 \\ 0 & 0 & 0 & 0 & \lambda & 0 \\ 0 & 0 & 0 & 0 & 0& \lambda' \end{bmatrix}$. Then, we can see that the 3rd and 5th rows and the 1st and 4th column in $\lambda I - A$ are all zeros. To reduce the system to the system considered in Proposition~\ref{prop:equivalence2}, we move these all zero columns and rows to left top side of the matrix by multiplying permutation matrices to $\lambda I - A$. To this end, we will define the permutation matrices $P_{L,\lambda}$, $P_{R,\lambda}$. 

Let $a_{i,j}$ be the $(i,j)$ element of ${A}\in \mathbb{C}^{m \times m}$. 
Since the locations of all zero columns and rows are related to the locations of Jordan blocks, we have to define the indexes which indicates the location of each Jordan block. The sequences $\kappa_{L,\lambda}$ and $\kappa_{R,\lambda}$ count the number of Jordan blocks associated with $\lambda$. The difference between two sequences is that $\kappa_{L,\lambda}$ increases at the right-bottom element of the Jordan block, while $\kappa_{R,\lambda}$ increases at the left-top.
\begin{align}
&\kappa_{L,\lambda}(0)=0 \\
&\mbox{For } 1 \leq i < m, \\
&\kappa_{L,\lambda}(i)=
\left\{
\begin{array}{ll}
\kappa_{L,\lambda}(i-1)+1 & \mbox{if } a_{i,i}=\lambda \mbox{ and } a_{i,i+1}=0 \\
\kappa_{L,\lambda}(i-1) & \mbox{otherwise} \\
\end{array}
\right. \\
&\kappa_{L,\lambda}(m)=
\left\{
\begin{array}{ll}
\kappa_{L,\lambda}(m-1)+1 & \mbox{if } a_{m,m}=\lambda  \\
\kappa_{L,\lambda}(m-1) & \mbox{otherwise} \\
\end{array}
\right.
\end{align}
\begin{align}
&\kappa_{R,\lambda}(0)=0 \\
&\kappa_{R,\lambda}(1)=
\left\{
\begin{array}{ll}
\kappa_{R,\lambda}(0)+1 & \mbox{if } a_{1,1}=\lambda \\
\kappa_{R,\lambda}(0) & \mbox{otherwise}
\end{array}
\right. \\
&\mbox{For } 1 < i \leq m, \\
&
\kappa_{R,\lambda}(i)=
\left\{
\begin{array}{ll}
\kappa_{R,\lambda}(i-1)+1 & \mbox{if } a_{i,i}=\lambda \mbox{ and } a_{i-1,i}=0 \\
\kappa_{R,\lambda}(i-1) & \mbox{otherwise}
\end{array}
\right.
\end{align}
Notice that these two sequences are just different ways of counting the number of Jordan blocks associated with the eigenvalue $\lambda$. If we denote by $m_{\lambda}$ the number of Jordan blocks associated with the eigenvalue $\lambda$, then $m_\lambda=\kappa_{L,\lambda}(m)=\kappa_{L,\lambda}(m)$. From the sequences $\kappa_{R,\lambda}$ and $\kappa_{L,\lambda}$, we also define $\iota_{R,\lambda}$ that indicates the left-top elements of the Jordan block associated with $\lambda$ and $\iota_{L,\lambda}$ that indicates the right-bottom elements.
\begin{align}
&\iota_{L,\lambda}(0)=0 \\
&\mbox{For }1\leq  i \leq m_{\lambda}\\
&\iota_{L,\lambda}(i)=\min\{ k \in \mathbb{N} :k > \iota_{L,\lambda}(i-1), \kappa_{L,\lambda}(k) > \kappa_{L,\lambda}(k-1) \}
\end{align}
Likewise,
\begin{align}
&\iota_{R,\lambda}(0)=0 \\
&\mbox{For }1\leq  i \leq m_{\lambda}\\
&\iota_{R,\lambda}(i)=\min\{ k \in \mathbb{N} :k > \iota_{R,\lambda}(i-1), \kappa_{R,\lambda}(k) > \kappa_{R,\lambda}(k-1) \}
\end{align}
We also define permutation maps and matrices for $ \lambda I-A$. The role of these permutation maps and matrices is to collect all zero rows and columns in $\lambda I - A$. 
The permutation maps $\pi_{L,\lambda}(i)$ and $\pi_{R,\lambda}(i)$ that map the set $\{1,\cdots,m \}$ to itself are defined as follows:
\begin{align}
&\pi_{L,\lambda}(i)=
\left\{
\begin{array}{ll}
\kappa_{L,\lambda}(i) & \mbox{if } \kappa_{L,\lambda}(i) > \kappa_{L,\lambda}(i-1) \\
i+\kappa_{L,\lambda}(m)-\kappa_{L,\lambda}(i) & \mbox{otherwise} 
\end{array}
\right. \\
&\pi_{R,\lambda}(i)=
\left\{
\begin{array}{ll}
\kappa_{R,\lambda}(i) & \mbox{if } \kappa_{R,\lambda}(i) > \kappa_{R,\lambda}(i-1) \\
i+\kappa_{R,\lambda}(m)-\kappa_{R,\lambda}(i) & \mbox{otherwise}
\end{array}
\right.
\end{align}
From the permutation map, we define the permutation matrices.
\begin{align}
P_{L,\lambda}=\begin{bmatrix}
e_{\pi_{L,\lambda}(1)} \\
\vdots \\
e_{\pi_{L,\lambda}(m)} \\
\end{bmatrix},
P_{R,\lambda}=\begin{bmatrix}
e_{\pi_{R,\lambda}(1)} \\
\vdots \\
e_{\pi_{R,\lambda}(m)} \\
\end{bmatrix}
\label{eqn:permutation}
\end{align}
where $e_i$ is the row vector with $1$ in $i$th position and $0$ in every other position.

Let's multiply these permutation matrices to $z I - A$.
\begin{align}
&{P_{L,\lambda}}^T \left( z{I}-{A} \right) {P_{R,\lambda}}=
\begin{bmatrix}
{A_{\lambda,1,1}}(z) & {A_{\lambda,1,2}}(z) \\
{A_{\lambda,2,1}}(z) & {A_{\lambda,2,2}}(z) \\
\end{bmatrix} \label{eqn:jordan:property2} 
\end{align}
where ${A_{\lambda,1,1}}(z)$ is a $m_\lambda \times m_\lambda$ matrix, ${A_{\lambda,1,2}}(z)$ is a $m_\lambda \times (m-m_\lambda)$ matrix, ${A_{\lambda,2,1}}(z)$ is a $(m-m_\lambda) \times m_\lambda$ matrix, ${A_{\lambda,2,2}}(z)$ is a $(m-m_\lambda) \times (m-m_\lambda)$ matrix.

Since the permutation matrices $P_{L,\lambda}$, $P_{R,\lambda}$ moves all zero columns and rows in $\lambda I - A$ to the left-top side of the matrix (see Appendix~\ref{app:jordanex} for an example), we can see $A_{\lambda,1,1}(\lambda)=0$, $A_{\lambda,1,2}(\lambda)=0$, $A_{\lambda,2,1}(\lambda)=0$, and $A_{\lambda,2,2}(\lambda)$ is invertible.

We also multiply the permutation matrices to $B_i$ and $C_i$, and define the following sub-matrices after this permutation.
\begin{align}
&{C_i} {P_{R,\lambda}}=
\begin{bmatrix}
{C_{i,\lambda,1}} & {C_{i,\lambda,2}}
\end{bmatrix}, \label{eqn:jordan:2}
{P_{L,\lambda}}^T  {B_i}=
\begin{bmatrix}
{B_{i,\lambda,1}} \\
{B_{i,\lambda,2}} \\
\end{bmatrix}
\end{align}
where $B_{i,\lambda,1}$ is a $m_\lambda \times q_i$ matrix, $B_{i,\lambda,2}$ is a $(m-m_\lambda) \times q_i$ matrix, $C_{i,\lambda,1}$ is a $r_i \times m_\lambda$ matrix, $C_{i,\lambda,2}$ is a $r_i \times (m-m_\lambda)$ matrix.

Furthermore, we will also define the auxiliary control and observation matrices $B_{\lambda}$, $C_{\lambda}$ as we did in \eqref{eqn:jordan:def1}.

We will introduce an auxiliary input that can control the right-bottom elements of the Jordan blocks and an auxiliary output that can observe the left-top elements of the Jordan blocks. The following matrices $B_{\lambda}$ and $C_{\lambda}$ correspond to the input and output matrices to the system for these auxiliary input and output.
\begin{align}
C_\lambda=
\begin{bmatrix}
e_{\iota_{R,\lambda}(1)} \\
\vdots \\
e_{\iota_{R,\lambda}(m_\lambda)} \\
\end{bmatrix}, B_\lambda=
\begin{bmatrix}
e_{\iota_{L,\lambda}(1)} \\
\vdots \\
e_{\iota_{L,\lambda}(m_\lambda)} \\
\end{bmatrix}^T.
\end{align}
From the construction of the permutation matrices, we can see that when they are applied to $C_{\lambda}$ and $B_{\lambda}$, the resulting matrices have nonzero elements only on the left or top side (just as we saw in \eqref{eqn:jordan:def1}). Formally, 
\begin{align}
C_{\lambda}P_{R,\lambda}=\begin{bmatrix} I_{m_\lambda \times m_\lambda} & 0 \end{bmatrix}, 
P_{L,\lambda}^{T} B_\lambda=\begin{bmatrix} I_{m_\lambda \times m_\lambda} \\ 0 \end{bmatrix}. \label{eqn:jordan:property1}
\end{align}

Finally, we get system equations which exactly parallel with the previous diagonal systems in \eqref{eqn:jordan:def1}, \eqref{eqn:jordan:def2}.

Now, we are ready to externalize the implicit communication based on the Jordan form matrix $A$. Just as the previous diagonal systems, we introduce the auxiliary input $u_\lambda[n] \in \mathbb{C}^{m_\lambda}$ and the auxiliary output $y_\lambda[n] \in \mathbb{C}^{m_\lambda}$. However, unlike the previous section, $u_\lambda[n]$ only controls the right-bottom elements of the Jordan blocks through $B_\lambda$ and $y_\lambda[n]$ only observes the left-top elements of the Jordan blocks through $C_\lambda$.
\begin{align}
&x[n+1]=(A+B_1 K_1 C_1+\cdots+B_v K_v C_v)x[n]+B_\lambda u_\lambda[n] \\
&y_\lambda[n]= C_\lambda x[n]
\end{align}
Then, the transfer function from $u_\lambda(z)$ to $y_\lambda(z)$ is given as follows:
\begin{align}
y_{\lambda}(z)&= {C_{\lambda}}( z {I} - {A} - \sum_{1 \leq i \leq v}{B_i}{K_i}{C_i} )^{-1}{B_{\lambda}} {u_{\lambda}}(z) \\
&= C_{\lambda} \left( {P_{L,\lambda}} {P_{L,\lambda}}^{T} \left(z{I} - {A} - \sum_{1 \leq i \leq v} {B_i}{K_i}{C_i} \right) {P_{R,\lambda}} {P_{R,\lambda}}^{T} \right)^{-1} B_\lambda u_{\lambda}(z) \\
&= C_{\lambda} {P_{R,\lambda}} \left( {P_{L,\lambda}}^T \left( z{I}-{A} \right) {P_{R,\lambda}} - \sum_{1 \leq i \leq v} {P_{L,\lambda}}^T  {B_i} {K_i} {C_i} {P_{R,\lambda}}  \right)^{-1} {P_{L,\lambda}}^{T} {B_{\lambda}} {u_{\lambda}}(z) \\
&= \begin{bmatrix} I & 0 \end{bmatrix}
\left(
\begin{bmatrix}
A_{\lambda,1,1}(z) & A_{\lambda,1,2}(z) \\
A_{\lambda,2,1}(z) & A_{\lambda,2,2}(z) \\
\end{bmatrix}
-
\sum_{1 \leq i \leq v}
\begin{bmatrix}
B_{i,\lambda,1} \\
B_{i,\lambda,2} \\
\end{bmatrix}
K_i
\begin{bmatrix}
C_{i,\lambda,1} & C_{i,\lambda,2}
\end{bmatrix}
\right)^{-1}
\begin{bmatrix} I \\ 0 \end{bmatrix} u_\lambda(z) \label{eqn:jordan:jd1}
\end{align}
where the last line uses \eqref{eqn:jordan:property1}, \eqref{eqn:jordan:property2}, \eqref{eqn:jordan:2}.

Since \eqref{eqn:jordan:1} and \eqref{eqn:jordan:jd1} are the same, \eqref{eqn:jordan:3}, \eqref{eqn:jordan:4}, \eqref{eqn:jordan:5} still hold. Thus, we can state the capacity-stabilizability equivalence theorem based on the Jordan form $A$.
\begin{theorem}(Capacity-Stabilizability Equivalence 2) Given the above definitions, the following statements are equivalent.\\
(1) $\lambda$ is the fixed mode of the decentralized linear system $\mathcal{L}(A,B_i,C_i)$\\
(2) $\rank(G_{jd,\lambda}(\lambda,K)) < m_\lambda$\\
(3) (transfer matrix rank of the LTI network $\mathcal{N}_{jd,\lambda}(\lambda)$) $< m_\lambda$\\
(4) (mincut rank of the LTI network $\mathcal{N}_{jd,\lambda}(\lambda)$) $< m_\lambda$\\
(5) $\min_{V \subset \{1,\cdots,v \}} \rank
\begin{bmatrix}
0 & -B_{V,\lambda,1} \\
C_{V^c,\lambda,1} & C_{V^c,\lambda,2} A_{\lambda,2,2}(\lambda)^{-1} B_{V,\lambda,2}
\end{bmatrix}
 < m_\lambda$
\label{thm:equivalence2}
\end{theorem}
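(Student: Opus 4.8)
The plan is to observe that Theorem~\ref{thm:equivalence2} has exactly the same statement as Proposition~\ref{prop:equivalence2}, now with $A$ an arbitrary Jordan matrix rather than a diagonal one, and that once the permutation machinery $P_{L,\lambda},P_{R,\lambda}$ is verified to do its job the proof of Proposition~\ref{prop:equivalence2} carries over essentially verbatim. So I would split the argument into a combinatorial part (the only genuinely new piece) and an algebraic part (recycled from Proposition~\ref{prop:equivalence2}).

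First I would establish the structural facts about the permuted pencil $P_{L,\lambda}^T(\lambda I-A)P_{R,\lambda}$. By direct inspection of a Jordan matrix, row $i$ of $\lambda I-A$ is identically zero precisely when $a_{i,i}=\lambda$ and $a_{i,i+1}=0$ (with the convention $a_{m,m+1}:=0$), i.e.\ exactly when $i$ indexes the bottom-right corner of a Jordan block with eigenvalue $\lambda$; likewise column $j$ is identically zero precisely when $j$ indexes the top-left corner of such a block. The sequences $\kappa_{L,\lambda},\kappa_{R,\lambda}$ count these corners and $\iota_{L,\lambda},\iota_{R,\lambda}$ list their positions, so by construction $\pi_{L,\lambda}$ maps exactly the bottom-right-corner indices onto $\{1,\dots,m_\lambda\}$ and $\pi_{R,\lambda}$ maps exactly the top-left-corner indices onto $\{1,\dots,m_\lambda\}$. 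Hence in \eqref{eqn:jordan:property2} the blocks $A_{\lambda,1,1}(\lambda)$, $A_{\lambda,1,2}(\lambda)$, $A_{\lambda,2,1}(\lambda)$ are all zero and $A_{\lambda,2,2}(\lambda)$ is the restriction of $\lambda I-A$ to the Jordan blocks with eigenvalue $\neq\lambda$, hence invertible; and \eqref{eqn:jordan:property1}, $C_\lambda P_{R,\lambda}=\begin{bmatrix}I&0\end{bmatrix}$ and $P_{L,\lambda}^TB_\lambda=\begin{bmatrix}I&0\end{bmatrix}^T$, are immediate from the definitions of $C_\lambda,B_\lambda$ and $\pi_{L,\lambda},\pi_{R,\lambda}$. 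This is the step I expect to demand the most care: it is not deep, but the bookkeeping with $\kappa,\iota,\pi$ must be carried out cleanly to be convincing, which is exactly why a worked example accompanies the construction.

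With these facts in hand the transfer-function computation leading to \eqref{eqn:jordan:jd1} is just the displayed chain of equalities before the theorem: insert $P_{L,\lambda}P_{L,\lambda}^T=I$ and $P_{R,\lambda}P_{R,\lambda}^T=I$ inside the matrix inverse, distribute the permutations, and use \eqref{eqn:jordan:property1}, \eqref{eqn:jordan:property2}, \eqref{eqn:jordan:2} to read off the $2\times2$ block form. Comparison with Lemma~\ref{lem:ext:matrix} and Lemma~\ref{lem:LTI:trans} then identifies $G_{jd,\lambda}(z,K)$ with the transfer matrix of $\mathcal{N}_{jd,\lambda}(z)$ and yields the channel matrices \eqref{eqn:jordan:5}, so equations \eqref{eqn:jordan:3}, \eqref{eqn:jordan:4}, \eqref{eqn:jordan:5} hold exactly as in the diagonal case.

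Finally I would close the equivalence loop exactly as in Proposition~\ref{prop:equivalence2}. For (1)$\Leftrightarrow$(2): by Theorem~\ref{thm:equivalence1}, $\lambda$ is a fixed mode iff $G_{cn}(\lambda,K)=\lambda I-A-\sum_{1\leq i\leq v}B_iK_iC_i$ is rank-deficient when the entries of the $K_i$ are treated as independent indeterminates; left- and right-multiplying by the invertible permutation matrices and using \eqref{eqn:jordan:property2}, \eqref{eqn:jordan:2} puts this matrix in the block form of Lemma~\ref{lem:ext:matrix} whose $(2,2)$ block is $D:=A_{\lambda,2,2}(\lambda)-\sum_{1\leq i\leq v}B_{i,\lambda,2}K_iC_{i,\lambda,2}$, and $\det D\not\equiv0$ because the substitution $K_i=0$ gives the invertible $A_{\lambda,2,2}(\lambda)$ (this is the one place where Lemma~\ref{lem:keylemma}-style genericity, rather than pointwise invertibility for every substitution of the $K_i$, is what is needed). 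By the first statement of Lemma~\ref{lem:ext:matrix}, applied over the field of rational functions in the $K_i$, the block matrix is singular iff its Schur complement is, and that Schur complement is precisely $G_{jd,\lambda}(\lambda,K)$, which is $m_\lambda\times m_\lambda$; this gives (1)$\Leftrightarrow$(2). Then (2)$\Leftrightarrow$(3) is the definition of $\mathcal{N}_{jd,\lambda}(\lambda)$; (3)$\Leftrightarrow$(4) is Corollary~\ref{cor:mincut}, legitimate because $A_{\lambda,2,2}(\lambda)$ being invertible means the channel matrices \eqref{eqn:jordan:5} have no pole at $\lambda$; and (4)$\Leftrightarrow$(5) follows by writing out the cutset channel matrix of $\mathcal{N}_{jd,\lambda}(\lambda)$ from \eqref{eqn:jordan:5} and simplifying using $H_{tx,rx}(\lambda)=0$, exactly as in Proposition~\ref{prop:equivalence2}.
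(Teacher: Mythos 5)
Your proposal is correct and follows essentially the same route as the paper: the paper establishes the permutation bookkeeping, \eqref{eqn:jordan:property1}--\eqref{eqn:jordan:property2}, and the identity of \eqref{eqn:jordan:jd1} with \eqref{eqn:jordan:1} in the text preceding the theorem, and then the proof is literally that of Proposition~\ref{prop:equivalence2}, which is the chain (1)$\Leftrightarrow$(2) via Lemma~\ref{lem:ext:matrix}, (2)$\Leftrightarrow$(3) by definition, (3)$\Leftrightarrow$(4) by Corollary~\ref{cor:mincut}, and (4)$\Leftrightarrow$(5) from \eqref{eqn:jordan:5}, exactly as you lay out. Your explicit remark that invertibility of $A_{\lambda,2,2}(\lambda)-\sum_i B_{i,\lambda,2}K_iC_{i,\lambda,2}$ over the rational-function field follows generically from the substitution $K_i=0$ is a welcome bit of added care, but it does not change the argument.
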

\begin{proof}
The same as Proposition~\ref{prop:equivalence2}.
\end{proof}

Remark 1: Notice that the condition (5) seems to be quite different from the statement (5) of Theorem~\ref{thm:equivalence1} that we saw before. However, by remembering that $A$ has Jordan block structure and using the following lemma, we can directly prove the equivalence between these two statements.
\begin{lemma}
For an invertible square matrix $A$,
\begin{align}
\rank \begin{bmatrix} 0 & 0 & B_0 \\ 0 & A & B_1 \\ C_0 & C_1 & D \end{bmatrix} = \rank A + \rank \begin{bmatrix} 0 & B_0 \\ C_0 & D-C_1 A^{-1} B_1 \end{bmatrix} \nonumber
\end{align}
\end{lemma}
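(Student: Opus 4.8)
The plan is to prove this purely by elementary block row and column operations, using the invertibility of $A$ to decouple the middle block-row and block-column from the rest, and then reading off a block-diagonal structure. (When the outer block $\begin{bmatrix} 0 & B_0 \\ C_0 & D\end{bmatrix}$ happens to be square, the identity is in fact an immediate consequence of Lemma~\ref{lem:LIN:rank} applied after a block permutation that moves $A$ into the bottom-right corner; the argument below also covers the general rectangular case, which is what is needed for the cutset matrices arising in Theorem~\ref{thm:equivalence1} and Theorem~\ref{thm:equivalence2}.)

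Concretely, write $M := \begin{bmatrix} 0 & 0 & B_0 \\ 0 & A & B_1 \\ C_0 & C_1 & D\end{bmatrix}$. First I would left-multiply $M$ by the invertible block-lower-triangular matrix $L := \begin{bmatrix} I & 0 & 0 \\ 0 & I & 0 \\ 0 & -C_1 A^{-1} & I\end{bmatrix}$, which clears the $C_1$ block and replaces the $(3,3)$ block by $D - C_1 A^{-1} B_1$ while leaving the $(1,3)$ and $(3,1)$ blocks $B_0, C_0$ untouched. Then I would right-multiply by the invertible block-upper-triangular matrix $R := \begin{bmatrix} I & 0 & 0 \\ 0 & I & -A^{-1} B_1 \\ 0 & 0 & I\end{bmatrix}$, which clears the $B_1$ block without disturbing anything else, so that $LMR = \begin{bmatrix} 0 & 0 & B_0 \\ 0 & A & 0 \\ C_0 & 0 & D - C_1 A^{-1} B_1\end{bmatrix}$. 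Since $L$ and $R$ are invertible, $\rank M = \rank(LMR)$. Permuting the block rows and the block columns consistently into the order (middle, first, last) converts $LMR$ into the block-diagonal matrix $\begin{bmatrix} A & 0 \\ 0 & N\end{bmatrix}$ with $N := \begin{bmatrix} 0 & B_0 \\ C_0 & D - C_1 A^{-1} B_1\end{bmatrix}$, and the rank of a block-diagonal matrix is the sum of the ranks of its diagonal blocks. Hence $\rank M = \rank A + \rank N$, which is the claimed identity.

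There is no genuinely hard step here; the content is bookkeeping. The two things to watch are (i) verifying that the row and column reductions really do leave the $B_0$ and $C_0$ blocks fixed — which holds precisely because those blocks share neither a block-row nor a block-column with the pivot block $A$ — and (ii) carrying out the final permutation identically on block-rows and block-columns so that $A$ lands squarely on the diagonal and $N$ emerges with the stated entries. The identity is just the $3\times 3$-block analogue of the Schur-complement rank formula of Lemma~\ref{lem:LIN:rank} with the pivot taken to be the middle block, and it is in exactly this form that one can reduce the cutset matrix of $\mathcal{N}_{cn}(\lambda)$, expressed in the permuted Jordan coordinates, to the lower-dimensional cutset matrix of $\mathcal{N}_{jd,\lambda}(\lambda)$, reconciling statement~(5) of Theorem~\ref{thm:equivalence1} with statement~(5) of Theorem~\ref{thm:equivalence2}.
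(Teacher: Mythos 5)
Your proof is correct and follows essentially the same route as the paper's: the paper simply permutes the block rows and columns so that $A$ sits in the bottom-right corner and then invokes Lemma~\ref{lem:LIN:rank}, which is exactly the Schur-complement elimination you carry out explicitly with the triangular factors $L$ and $R$ before permuting to the block-diagonal form $\begin{bmatrix} A & 0 \\ 0 & N \end{bmatrix}$. Your parenthetical observation that the identity also covers rectangular outer blocks is a harmless refinement, since the elimination argument (and indeed the proof of Lemma~\ref{lem:LIN:rank}) only uses invertibility of the pivot block $A$.
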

\begin{proof}
\begin{align}
\rank\begin{bmatrix} 0 & 0 & B_0 \\ 0 & A & B_1 \\ C_0 & C_1 & D \end{bmatrix}
=\rank\begin{bmatrix} 0 & B_0 & 0 \\ C_0 & D & C_1 \\ 0 & B_1 & A \end{bmatrix}
=\rank A + \rank\begin{bmatrix} 0 & B_0 \\ C_0 & D-C_1 A^{-1}B_1 \end{bmatrix}
\end{align}
where the first equality is due to the elementary row and column operations and the second equality is due to Lemma~\ref{lem:LIN:rank}.
\end{proof}

Remark 2: This externalization is minimal in the sense that the
dimensions of the transmitter input signal and the receiver output
signals are minimal. In other words, if we introduce an auxiliary input and output whose dimensions are smaller than the ones shown in this characterization, we cannot find the equivalent condition for fixed modes. The minimality of this characterization manifests as the absence of direct link between the transmitter and the receiver in $\mathcal{N}_{jd,\lambda}(\lambda)$.

Remark 3: It has to be mentioned that this theorem for $m_\lambda=1$ is already shown in \cite{lavaei2010time}. For this case, the condition (4) of the theorem reduces whether the mincut of the network is $0$ or not. Thus, it is equivalent to check the existence of the path from the source to the destination.

\begin{figure}
\includegraphics[width = 3in]{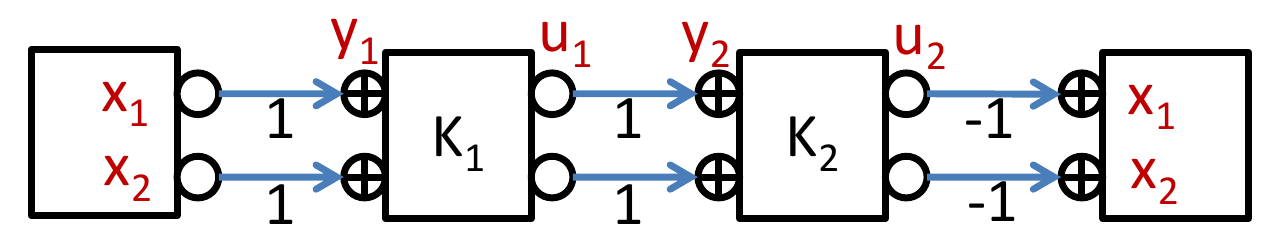}
\caption{Jordan-form externalization of the system of Fig.~\ref{fig:flowexample} for $\lambda=4$}
\label{fig:externalex}
\end{figure}

The LTI network of Fig.~\ref{fig:externalex} shows the Jordan-form
externalization of the Fig.~\ref{fig:flowexample} example for
$\lambda=4$. We can easily see that the LTI network of
Fig.~\ref{fig:externalex} agrees with the first LTI network of
Fig.~\ref{fig:flowconcept}. The information generated at
$x_1[n],x_2[n]$ is first observed by the controller $\mathcal{K}_1$,
then relayed to the controller $\mathcal{K}_2$, and finally returned
to $x_1[n],x_2[n]$. Here, the controller $\mathcal{K}_3$ is correctly
omitted since it does not affect the transfer function of the relevant
LTI network.

Until now, our discussion was limited to strictly proper systems where the impulse response from $u_i[n]$ to $y_j[n]$ is strictly causal. However, the capacity-stabilizability theorem can be easily extended to proper decentralized linear systems $\mathcal{L}(A,B_i,C_i,D_{ij})$ as shown in Appendix~\ref{app:proper}.

Before we close this section, for a sanity check we apply the result of this section to centralized systems which are already well-understood. Moreover, this will be helpful to clarify our mind in later sections.
\begin{corollary}[Stabilizability of Centralized Systems\cite{Chen}]
Let's consider the above system with a single controller, $v=1$. Then, the following conditions are equivalent.\\
(1) The centralized linear system $\mathcal{L}(A,B_1,C_1)$ is stabilizable.\\
(2) $(A,B_1)$ is controllable and $(A,C_1)$ is observable.\\
(3) $\rank(C_{1,\lambda,1}) \geq m_{\lambda}$ and\footnote{Here, the inequalities are actually equialities, $\rank(C_{1,\lambda,1}) = m_{\lambda}$ and $\rank(B_{1,\lambda,1}) = m_{\lambda}$, since the size of $C_{1,\lambda,1}$ and $B_{1,\lambda,1}$ is $m_{\lambda}$.} $\rank(B_{1,\lambda,1}) \geq m_{\lambda}$ for all unstable eigenvalues $\lambda$ of $A$.
\label{cor:observability}
\end{corollary}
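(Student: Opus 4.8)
The plan is to derive everything from the capacity--stabilizability equivalence (Theorem~\ref{thm:equivalence2}) together with Theorem~\ref{thm:stability}, treating $(1)\Leftrightarrow(3)$ as the substantive content and $(2)\Leftrightarrow(3)$ as the classical Popov--Belevitch--Hautus (PBH) test rewritten in the Jordan coordinates of Section~\ref{sec:jordanex}.

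First I would establish $(1)\Leftrightarrow(3)$. By Theorem~\ref{thm:stability}, $\mathcal{L}(A,B_1,C_1)$ is stabilizable iff it has no fixed mode $\lambda$ with $|\lambda|\geq 1$, and a fixed mode must lie in $\sigma(A)$ (take $K_1=0$), so it suffices to range over the unstable eigenvalues of $A$. Fix such a $\lambda$ and invoke the equivalence $(1)\Leftrightarrow(5)$ of Theorem~\ref{thm:equivalence2}. With $v=1$ the only cuts are $V=\emptyset$ and $V=\{1\}$. For $V=\{1\}$ we have $V^c=\emptyset$, the $C$-rows disappear, and the cutset matrix is $\begin{bmatrix}0 & -B_{1,\lambda,1}\end{bmatrix}$, of rank $\rank B_{1,\lambda,1}$; for $V=\emptyset$ the $B$-columns disappear and the matrix is $\begin{bmatrix}0\\ C_{1,\lambda,1}\end{bmatrix}$, of rank $\rank C_{1,\lambda,1}$. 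Hence statement $(5)$ of Theorem~\ref{thm:equivalence2} reads $\min\{\rank B_{1,\lambda,1},\,\rank C_{1,\lambda,1}\}<m_\lambda$, so $\lambda$ is \emph{not} a fixed mode exactly when $\rank B_{1,\lambda,1}\geq m_\lambda$ and $\rank C_{1,\lambda,1}\geq m_\lambda$ (and then these ranks in fact equal $m_\lambda$: $B_{1,\lambda,1}$ has $m_\lambda$ rows and $C_{1,\lambda,1}$ has $m_\lambda$ columns). Conjoining over all unstable eigenvalues gives $(1)\Leftrightarrow(3)$.

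Next I would prove $(2)\Leftrightarrow(3)$ through the PBH test written in the coordinates fixed by $P_{L,\lambda},P_{R,\lambda}$. Evaluating \eqref{eqn:jordan:property2} at $z=\lambda$ gives $P_{L,\lambda}^{T}(\lambda I-A)P_{R,\lambda}=\begin{bmatrix}0 & 0\\ 0 & A_{\lambda,2,2}(\lambda)\end{bmatrix}$ with $A_{\lambda,2,2}(\lambda)$ invertible, while \eqref{eqn:jordan:2} gives $P_{L,\lambda}^{T}B_1=\begin{bmatrix}B_{1,\lambda,1}\\ B_{1,\lambda,2}\end{bmatrix}$. Left-multiplying $[\lambda I-A\mid B_1]$ by $P_{L,\lambda}^{T}$, right-multiplying by $\begin{bmatrix}P_{R,\lambda} & 0\\ 0 & I\end{bmatrix}$, and clearing $B_{1,\lambda,2}$ against the invertible block $A_{\lambda,2,2}(\lambda)$ shows $\rank[\lambda I-A\mid B_1]=(m-m_\lambda)+\rank B_{1,\lambda,1}$, which is $m$ iff $\rank B_{1,\lambda,1}\geq m_\lambda$; dually $\rank\begin{bmatrix}\lambda I-A\\ C_1\end{bmatrix}=(m-m_\lambda)+\rank C_{1,\lambda,1}$. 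Since the PBH test need only be checked at $\lambda\in\sigma(A)$, and a stable eigenvalue never obstructs closed-loop pole placement, reading ``controllable''/``observable'' in $(2)$ as the Hautus conditions at the \emph{unstable} eigenvalues (equivalently ``stabilizable''/``detectable'', the notion relevant here) turns $(2)$ literally into the rank inequalities of $(3)$.

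I expect no real obstacle: this corollary is a consistency check, and all the needed machinery is the degenerate $v=1$ instance of what has already been built. The only points demanding care are the bookkeeping for the two empty cuts $V=\emptyset,\{1\}$ --- which $B$ or $C$ block vanishes, and how the resulting thin matrix is to be read --- and the mild reconciliation of ``controllable and observable'' with ``no unstable fixed mode'' noted above.
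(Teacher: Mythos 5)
Your proposal is correct and takes essentially the same route as the paper, whose proof merely cites the classical fact from \cite{Chen} and points at Theorem~\ref{thm:equivalence2}; your specialization of statement (5) to the two cuts $V=\emptyset$ and $V=\{1\}$, plus the PBH computation in the $P_{L,\lambda},P_{R,\lambda}$ coordinates using the invertibility of $A_{\lambda,2,2}(\lambda)$, is exactly the bookkeeping the paper leaves implicit. Your caveat that condition (2) must be read as the Hautus conditions at the unstable eigenvalues only (i.e.\ stabilizable/detectable rather than literally controllable/observable) is a correct and worthwhile observation about the statement as written.
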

\begin{proof}
This is a well-known fact in linear system theory~\cite{Chen}. Especially, the equivalence of (1) and (2) immediately follows from Theorem~\ref{thm:equivalence2}.
\end{proof}

\section{Control over LTI networks}
\label{sec:stablilizationoverLTI}
To clarify the previous discussion and reveal the further connection between network coding and decentralized linear control, we consider a stabilizability problem with an explicit communication network. Following the problem formulations in \cite{Tatikonda_Control,Sahai_Anytime,Sinopoli_Kalman,Pajic_topological}, we propose `control over LTI networks' problems.
The main advantage of these new problems is that the information for control can only flow explicitly through the communication network, while in general decentralized systems the information can also flow implicitly through the plant. Therefore, we can measure the minimum information flow to stabilize the system by simply measuring the capacity (or reliability) of the explicit communication network.

\subsection{Point-to-Point}
\label{sec:stablilizationoverLTI:ptop}
\begin{figure}
\includegraphics[width = 5in]{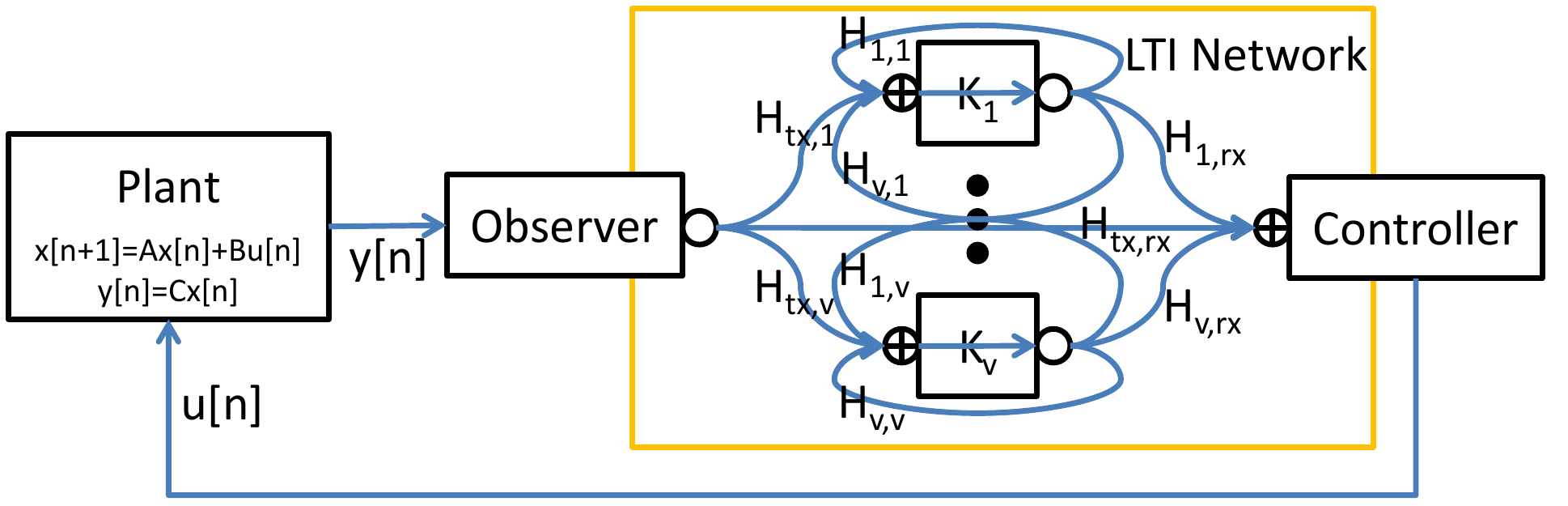}
\caption{Control over LTI Networks: Point-to-Point case}
\label{fig:ptop}
\end{figure}
The problem of control over LTI networks is shown in Fig.~\ref{fig:ptop}. The unstable plant is given as
\begin{align}
&x[n+1]=A x[n] + B u[n] + w[n] \\
&y[n]=Cx[n]
\end{align}
where $A \in \mathbb{C}^{m \times m}$, $B \in \mathbb{C}^{m \times q_{cn}}$ and $C \in \mathbb{C}^{r_{ob} \times m}$.
$x[n]$ is the state, $u[n]$ is the input to the system, $y[n]$ is the output from the system, and $w[n]$ is the disturbance.

The observer can observe the output $y[n]$, but cannot control the plant. On the other hand, the controller can control the plant through the input $u[n]$, but cannot observe the plant. Therefore, to stabilize the plant the observer has to communicate to the controller. The observer and the controller are connected by an LTI communication network, $\mathcal{N}_{ptop}(z)$, where the observer is the transmitter, the controller is the receiver, and the relays are connected by linear time-invariant channels. To make the problem physically meaningful, we assume that the channel matrices $H_{i,j}(z)$ between the relays are stable and causal. Here, we want to find the linear time-invariant observer, controller and relays that stabilize the plant. Therefore, by $z$-transform, every signal can be represented as a vector in $\mathbb{F}[z]$, and the operation of nodes (controller, observer, and relays) can be represented as a matrix in $\mathbb{F}[z]$. Denote the dimension of the input signal to the LTI network at the observer to be $q_{ob}$, and that of the output signal from the LTI network at the controller to be $r_{cn}$. Therefore, the dimensions of the observer and controller gain matrices are $q_{ob} \times r_{ob}$ and $q_{cn} \times r_{cn}$ respectively.
At the relay node $i$, denote the dimension of the input signal to the LTI network to be $q_i$ and that of the output signal from the LTI network to be $r_i$. Then, the dimension of the relay gain matrix, $K_i$, is $q_i \times r_i$. 

The goal of control and communication nodes is to stabilizing the plant.
\begin{definition}[Stabilizability over LTI networks]
Given the above definitions, we say the plant is \textbf{stabilizable over the LTI network} if there exist LTI observer, controller and relays that make $x[n]$, $y[n]$, $u[n]$, and all the inputs and outputs of the LTI network uniformly bounded for all uniformly bounded disturbances $w[n]$. For a given design, we say the plant is \textbf{stable over the LTI network} if $x[n]$, $y[n]$, $u[n]$, and all the inputs and outputs of the LTI network are uniformly bounded for all uniformly bounded disturbance $w[n]$.
\end{definition}
For a given matrix $A$, let $\sigma(A)$ be the set of eigenvalues of $A$. Let $m_{\lambda}$ be the number of Jordan blocks of $A$ associated with the eigenvalue $\lambda$. Then, the stabilizability condition is given as follows.
\begin{theorem}
The plant is stabilizable over the LTI network if and only if
for all $\lambda$ such that $\lambda \in \{ \lambda : |\lambda| \geq 1 \} \cap \sigma(A)$ the following conditions are satisfied:
\begin{align}
(i) &\begin{bmatrix} \lambda I - A \\ C \end{bmatrix} \mbox{ is full rank, i.e. $\lambda$ is observable.}\\
(ii) &\begin{bmatrix} \lambda I - A & B \end{bmatrix} \mbox{ is full rank, i.e. $\lambda$ is controllable.}\\
(iii) &m_\lambda \leq \mbox{(mincut rank of the LTI network $\mathcal{N}_{ptop}(\lambda)$)}
\end{align}
\label{thm:LTI:ptop}
\end{theorem}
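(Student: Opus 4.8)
The plan is to view the whole system — the unstable plant together with the LTI communication network $\mathcal{N}_{ptop}$ — as a single decentralized linear system and then invoke the capacity–stabilizability equivalence of Theorem~\ref{thm:equivalence2}. Concretely, I would take a minimal \emph{stable} state-space realization of each channel $H_{i,j}(z)$ of $\mathcal{N}_{ptop}$, collect all channel states into a vector $x_{ch}[n]$, and form the augmented plant with state $\xi[n]=(x[n],x_{ch}[n])$ and system matrix $\widetilde A=\mathrm{diag}(A,A_{ch})$, where $A_{ch}$ is Schur stable because the channels are assumed stable and causal. The $v+2$ designable ``controllers'' of this decentralized system are the node maps: the observer (from $y$ into the network), the $v$ relay maps $K_i$, and the controller (from the network output into $u$). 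The first step is to argue that ``the plant is stabilizable over the LTI network'' is equivalent to ``$\mathcal{L}(\widetilde A,\widetilde B_j,\widetilde C_j)$ has no fixed mode $\lambda$ with $|\lambda|\ge 1$'': since $w[n]$ excites every plant state, internal stability of the closed loop is exactly what makes $x,y,u$ and all network signals uniformly bounded for all bounded $w$ (the network is a stable system sitting inside a stabilized loop); well-posedness, i.e. existence of the loop transfer function and realizability of the chosen relay filters, is handled by the same argument used in Section~\ref{sec:linvsun}; and the reduction to static fixed modes uses Theorem~\ref{thm:stability} together with the classical fact that allowing memory in the node maps does not change the fixed-mode set.

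The second step is to compute the fixed modes of $\mathcal{L}(\widetilde A,\widetilde B_j,\widetilde C_j)$ for $|\lambda|\ge 1$. If $\lambda\notin\sigma(A)$ then $\lambda$ is not even an eigenvalue of $\widetilde A$ (as $A_{ch}$ is stable), hence not a fixed mode, and conditions (i),(ii) hold vacuously. If $\lambda\in\sigma(A)$, I would apply the Jordan-form externalization of Section~\ref{sec:jordanex} (after a similarity transform putting $A$ in Jordan form, which leaves $m_\lambda$ and the stability of $A_{ch}$ untouched) and identify the resulting network $\mathcal{N}_{jd,\lambda}(\lambda)$. The structural claim is that at $z=\lambda$ — where $\lambda I-A_{ch}$ is invertible, so the channel-state part of $A_{\lambda,2,2}(\lambda)$ is invertible and, by Lemma~\ref{lem:LTI:trans}, reproduces exactly the channel transfer matrices $H_{i,j}^{ptop}(\lambda)$ (these exist since stable channels have no pole at $\lambda$, so Corollary~\ref{cor:mincut} applies to $\mathcal{N}_{ptop}(\lambda)$) — this network is $\mathcal{N}_{ptop}(\lambda)$ with two extra hops glued on: a source hop into the observer node with channel matrix $C_{\lambda,1}$ (only the observer reads $y=Cx$, so $H_{tx,i}=0$ for every other node), a sink hop out of the controller node with channel matrix $-B_{\lambda,1}$ (only the controller writes $u$, so $H_{i,rx}=0$ for every other node), no direct $tx$–$rx$ link, plus a harmless backward path from the controller node to the observer node through the stable non-$\lambda$ plant modes that never lies on a minimum source–sink cut. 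Consequently
\begin{align}
\text{(mincut rank of }\mathcal{N}_{jd,\lambda}(\lambda)\text{)}
= \min\big\{\,\rank C_{\lambda,1},\ \text{(mincut rank of }\mathcal{N}_{ptop}(\lambda)\text{)},\ \rank B_{\lambda,1},\ m_\lambda\,\big\}. \nonumber
\end{align}

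The third step converts the Jordan-subblock rank conditions back to PBH form exactly as in the Jordan analysis behind Corollary~\ref{cor:observability}: $\rank C_{\lambda,1}=m_\lambda$ holds iff $\begin{bmatrix}\lambda I-A\\ C\end{bmatrix}$ is full rank, i.e. condition (i), and $\rank B_{\lambda,1}=m_\lambda$ iff $\begin{bmatrix}\lambda I-A & B\end{bmatrix}$ is full rank, i.e. condition (ii). With the displayed formula, the mincut rank of $\mathcal{N}_{jd,\lambda}(\lambda)$ equals $m_\lambda$ iff (i), (ii) and (iii) all hold at $\lambda$, and is $<m_\lambda$ otherwise; by Theorem~\ref{thm:equivalence2} the latter is precisely the statement that $\lambda$ is a fixed mode. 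Combining with the first step — stabilizable iff no fixed mode with $|\lambda|\ge 1$ — yields the claimed characterization. The step I expect to be the main obstacle is the structural identification in the second paragraph: verifying carefully which blocks of $\widetilde B_j,\widetilde C_j$ are nonzero for which node, that the stable channel states are eliminated at $z=\lambda$ by the Schur complement of Lemma~\ref{lem:LTI:trans} so as to reproduce $\mathcal{N}_{ptop}(\lambda)$ together with only the two boundary hops, and that the ``offset'' contributions from the non-$\lambda$ plant states (in the spirit of Lemmas~\ref{lem:LIN:maxflow} and \ref{lem:LIN:mincut}) together with the backward plant path cancel out and leave the clean mincut formula above. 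A secondary subtlety is the well-posedness and realizability of the node maps in the first step, which I would dispatch by the argument already used in Section~\ref{sec:linvsun} for mincut-achieving LTI schemes, now applied simultaneously at every unstable frequency.
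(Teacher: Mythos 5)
Your proposal is correct in substance, and its necessity half is essentially the paper's own argument: the paper also realizes the plant-plus-channels as one decentralized system with $\widetilde A=\mathrm{diag}(A,A_{ch})$, applies the Jordan-form externalization at each unstable $\lambda\in\sigma(A)$, and reads off conditions (i), (ii), (iii) from the cut $V=\{tx\}$ (giving $C_{\lambda,1}$), the cut $V^c=\{rx\}$ (giving $-B_{\lambda,1}$), and the one-to-one correspondence between the remaining cuts and the cuts of $\mathcal{N}_{ptop}(\lambda)$ --- your explicit mincut formula, including the observation that the backward plant path (controller to observer through the non-$\lambda$ modes) never enters a forward cut that separates observer from controller, is exactly the verification the paper leaves implicit, and it checks out (the extra $m_\lambda$ in your min is redundant since $C_{\lambda,1}$ has only $m_\lambda$ columns). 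Where you genuinely diverge is sufficiency: you close the loop abstractly, arguing ``no unstable fixed mode of the realized system $\Rightarrow$ stabilizable'' via the Wang--Davison theorem, whereas the paper gives a three-step constructive design (memoryless relay gains chosen off a union of algebraic varieties, with Gershgorin's circle theorem guaranteeing the possibly cyclic network stays stable and well-posed; then an observer gain; then a conventional centralized controller). The paper explicitly notes your route is available; it chose the constructive one because it exhibits the network-coding interpretation, produces memoryless node maps, and is reused verbatim in the multicast and broadcast extensions, while your route is shorter and handles well-posedness by delegating it to the classical decentralized-stabilization machinery rather than your Section~\ref{sec:linvsun}-style argument. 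Two small repairs: the realized system generally has feedthrough terms $D_{ij}$ (the channels are causal, not strictly causal), so you must invoke the proper-system externalization of Appendix~\ref{app:proper_jd} (Theorem~\ref{thm:equivalence2-3}), not Theorem~\ref{thm:equivalence2} --- the channel cut blocks then read $C_{V^c,\lambda,2}A_{\lambda,2,2}(\lambda)^{-1}B_{V,\lambda,2}+D_{V^c,V}$, which is precisely what reproduces $H^{ptop}_{i,j}(\lambda)$ at $z=\lambda$; and in the necessity step you should state (as the paper does, citing Wang--Davison) why the boundedness definition of stabilizability over the network, with disturbance entering only $x[n]$, still forces the realized system to have no unstable fixed modes, rather than asserting the equivalence of the two stability notions in passing.
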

\begin{proof}
For the necessity proof, we will use the realization idea. In other words, we will consider control over LTI networks as distributed linear systems and apply the concept of the fixed modes to check the stabilizability. For the sufficiency proof, we will give a constructive proof. We first design the relays in the LTI network so that it can accommodate enough information flow to stabilize the system. Then, we will design the observer and controller to connect the plant with the communication network, and stabilize it.

(1) Necessity Proof: An insightful reader may notice that `control over LTI networks' that we are considering is essentially the same as `decentralized linear systems' of Section~\ref{sec:decentrallinear}. The observer, controller, and relays in Figure~\ref{fig:ptop} can be thought as decentralized controllers. The state $x[n]$ and the internal states of the channels can be combined to one big state $x'[n]$. Then, the minimal realization procedure described in Appendix~\ref{app:realization} can convert `control over LTI networks' problems to the following decentralized linear system $\mathcal{L}_{re}(A_i',B_i',C_i',D_{ij}')$.
\begin{align}
&x'[n+1]=A'x'[n]+ \sum_{i=0}^{v+1} B_i'u_i[n] + \begin{bmatrix} I_m \\ 0 \end{bmatrix} w[n]\\
&y_i[n] = C_i' x'[n]+ \sum_{j=0}^{v+1} D_{ij}'u_j[n] \mbox{ for $0 \leq i \leq v+1$}
\end{align}
Here, the controller $0$ and $v+1$ of $\mathcal{L}_{re}(A_i',B_i',C_i',D_{ij}')$ corresponds to the observer and controller of the original problem respectively. The controllers $1$ to $v$ correspond to the relays in the original problem. The state $x'[n]$ can be written as $\begin{bmatrix}  x[n] \\ x_{ch}[n] \end{bmatrix}$ where $x[n]$ and $x_{ch}[n]$ are respectively the plant and the internal states of the network in the original problem. Then, the state transition matrix $A'$ is a block diagonal matrix $\begin{bmatrix} A & 0 \\ 0 & A_{ch}\end{bmatrix}$.

However, there are minor differences between `control over LTI networks' and `decentralized linear system control' problems. In `control over LTI networks' problems, we only want to stabilize the plant $x[n]$ not all internal states $x'[n]$. And the state disturbance $w[n]$ is also added to only $x[n]$ not to all internal states $x'[n]$. However, since we assume all the channel matrices are stable, the $A_{ch}$ which correspond to $x_{ch}[n]$ have only stable eigenvalues. The only possibly unstable states are $x[n]$. Therefore, by simply repeating the proof shown in \cite{Wang_Stabilization,Davison_Decentralized}, we can justify that the stabilizability of the realized system $\mathcal{L}_{re}(A_i',B_i',C_i',D_{ij}')$ is still a necessary condition for stabilizability over the LTI network.

Now, we can apply the Jordan form externalization\footnote{Exactly speaking, we have to apply the Jordan form externalization for proper systems shown in Appendix~\ref{app:proper_jd}.} of Section~\ref{sec:jordanex} for all unstable eigenvalues $\lambda$ of $A$. Figure~\ref{fig:ptop_real} shows the resulting LTI network from the Jordan form externalization with respect to $\lambda$. By Theorem~\ref{thm:equivalence2}, we know that $\lambda$ is not a fixed mode only if the mincut of the network in Figure~\ref{fig:ptop_real} is greater than $m_{\lambda}$. First, we can think of the cutset that only includes the transmitter $y_{\lambda}$. The channel matrix for this cut is $C_{\lambda,1}$ and so $\rank C_{\lambda,1} \geq m_{\lambda}$ is a necessary condition for stabilizability. By Corollary~\ref{cor:observability}, this is equivalent to the observability of $\lambda$ which is the condition (i) of the theorem. Likewise, we can think of the cutset that only excludes the receiver $u_{\lambda}$. The channel matrix for this cut is $-B_{\lambda,1}$ and so $\rank B_{\lambda,1} \geq m_{\lambda}$ is a necessary condition. This corresponds to the theorem's condition (ii), the controllability of $\lambda$. The remaining cuts have a one-to-one correspondence to the cuts of the LTI network of Figure~\ref{fig:ptop}. The conditions that these cuts are larger than $m_{\lambda}$ corresponds to the mincut condition of the LTI network, which is the condition (iii) of the theorem.

(2) Sufficiency Proof: For sufficiency, we can also apply the realization idea and use the same sufficiency proof for decentralized linear systems shown in \cite{Wang_Stabilization,Davison_Decentralized}. However, to reveal connections we will give a constructive proof based on network coding, and this style of proof will turn out to be useful in the extensions that we will consider later.

The proof consists of three steps: LTI network design, observer design, and controller design. Without loss of generality, we can assume that $A$ is given in a Jordan form. Then, we can use the notations of Section~\ref{sec:decentrallinear}. For for each unstable eigenvalue $\lambda$ of $A$, define the permutation matrices $P_{R,\lambda}$ and $P_{L,\lambda}$ in the same ways as \eqref{eqn:permutation}. Then, we can apply these permutations to the system input and output matrices $B$ and $C$, and denote the following sub-matrices.
\begin{align}
C \cdot P_{R,\lambda} = \begin{bmatrix} C_{\lambda,1} & C_{\lambda,2} \end{bmatrix}, P_{L,\lambda}^T \cdot B = \begin{bmatrix} B_{\lambda,1} \\ B_{\lambda,2} \end{bmatrix} \nonumber
\end{align}
where $B_{\lambda,1}$ is a $ m_{\lambda} \times q_{cn}$ matrix, and $C_{\lambda,1}$ is a $ r_{ob}\times m_{\lambda}$ matrix. 
We will design the controller, observer and relay gain matrices $K_{cn}, K_{ob}, K_i$. Each element in these gain matrices can be interpreted in two ways, ether as a variable in the form of $k_{i,j,k}$, or as constant in $\mathbb{F}[z]$ (a transfer function set in z-transform). Then, designing the controller gains can be understood as a procedure of plugging in constants in $\mathbb{F}[z]$ to variables. To distinguish these two meanings of $K_i$, as mentioned in Section~\ref{sec:prelim} we will write $K_i$ when it is considered as a variable, and just $K_i(z)$ when it is considered as a constant.\\
\\
(2-a) LTI network (relay) design: 
The goal of the relays is flowing enough information to stabilize all unstable eigenvalues $\lambda$. Denote the transfer function of the LTI network as $G_{ptop}(z,K)$. The goal of the relay gain design is finding $K_i(z) \in \mathbb{F}[z]^{q_i \times r_i}$ such that for all unstable eigenvalues $\lambda$, $\rank(G_{ptop}(\lambda,K))=\rank(G_{ptop}(\lambda,K(z)))$ i.e. achieving the maxflow. Here, because of the condition (iii), the maxflow at $z=\lambda$ is always greater or equal to $m_{\lambda}$ which is enough to stabilize.

Since the complex (or real) field is infinite, we can find the memoryless gain $K_i(z) \in \mathbb{C}^{q_i \times r_i}$ which achieves the maxflow. Rigorously speaking, for each $\lambda$, the algebraic variety that makes the rank of $G_{ptop}(\lambda,K)$ be smaller than its maximum rank has a strictly lower dimension than its underlying space. Therefore, there exists an infinite number of solutions that can achieve the maxflow for each $\lambda$~\cite[Lemma~1]{Koetter_Algebraic}. Moreover, even if we have to achieve the maxflow for different eigenvalues simultaneously, the algebraic variety which reduces the ranks of any of transfer function matrices just corresponds to a union. Therefore, the dimension is still strictly less than its underlying space, and an infinite number of solutions exist.

However, when the LTI network has cycles, just guaranteeing the rank condition from the transmitter to the receiver is not enough. Even though all the channel transfer functions are stable, by introducing the relay gains to the nodes, we can shift the stable poles to unstable poles. To prevent such situations, we will adapt the argument introduced by Wang \textit{et al.} in \cite{Wang_Stabilization}. As shown in \cite{Wang_Stabilization}, using Gershgorin's circle theorem~\cite{Richard} we can prove that as long as the relays gains are chosen small enough, the location of the poles does not move far from the original location. Formally, we can find $\epsilon > 0$ such that for all $|K_i(z)| < \epsilon$ such that $K_i(z) \in \mathbb{C}^{q_i \times r_i}$, all the poles of the LTI network are stable. Moreover, even if we restrict $K_i(z)$ to be the ones satisfying $|K_i(z)| < \epsilon$, the dimension of the algebraic variety remains the same. Therefore, The proof of \cite[Lemma~1]{Koetter_Algebraic} still holds, and the same argument above guarantees the existence of the mincut achieving $K_i(z)$ which keeps the whole LTI network stable.\\
\\
(2-b) Observer design: The goal of the observer design is simply connecting all the unstable states of the plant to the LTI network. Mathematically, finding $K_{ob}(z) \in \mathbb{C}^{q_{ob} \times r_{ob}}$ such that for all unstable eigenvalue $\lambda$, $\rank(G_{ptop}(\lambda, K(z)) K_{ob} C_{\lambda,1})=\rank(G_{ptop}(\lambda, K(z)) K_{ob}(z) C_{\lambda,1})$. Here, we can see since the elements of $K_{ob}$ are variables, $\rank(G_{ptop}(\lambda, K(z)) K_{ob} C_{\lambda,1}) = \min (\rank(G_{ptop}(\lambda, K(z))), \rank(C_{\lambda,1}))$. Therefore, by the relay design (2-a) and the condition (i) ---together with Corollary~\ref{cor:observability}--- we can conclude $\rank(G_{ptop}(\lambda, K(z)) K_{ob} C_{\lambda,1}) \geq m_{\lambda}$. Using the same algebraic variety argument as (2-a), we can prove the existence of such $K_{ob}(z)$. (Here, we do not need Gershgorin's circle theorem for stability.)\\
\\
(2-c) Controller design: The goal of the controller is to actually stabilize the plant based on the information it got. Once the design of the observer and the relays are fixed, from the controller's point of view the whole system can be viewed as follows in $z$-transform:
\begin{align}
&z x(z)=A x(z)+B u(z) \\
&y(z)=C'(z)x(z)
\end{align}
where $C'(z)=G_{ptop}(z,K(z))K_{ob}(z) C$. For each unstable eigenvalue $\lambda$ of $A$, let's apply the same permutation matrix $P_{R,\lambda}$ to $C'(z)$ and denote the following sub-matrices as $C'(z) \cdot P_{R,\lambda} = \begin{bmatrix} C'_{\lambda,1}(z) & C'_{\lambda,2}(z) \end{bmatrix}$. Then, we can easily see $C'_{\lambda,1}(z)=G_{ptop}(z,K(z)) K_{ob}(z) C_{\lambda,1}$. Moreover, a simple extension of Corollary~\ref{cor:observability} gives that in this new system, $\lambda$ is observable if and only if $\rank(C'_{\lambda,1}(\lambda)) \geq m_{\lambda}$. We already know this condition holds for all unstable eigenvalues $\lambda$. Moreover, by condition (ii) all unstable eigenvalues are controllable, and we can stabilize the system using a conventional controller design~\cite{Chen}.

This finishes the sufficiency proof.
\begin{figure}
\includegraphics[width = 4in]{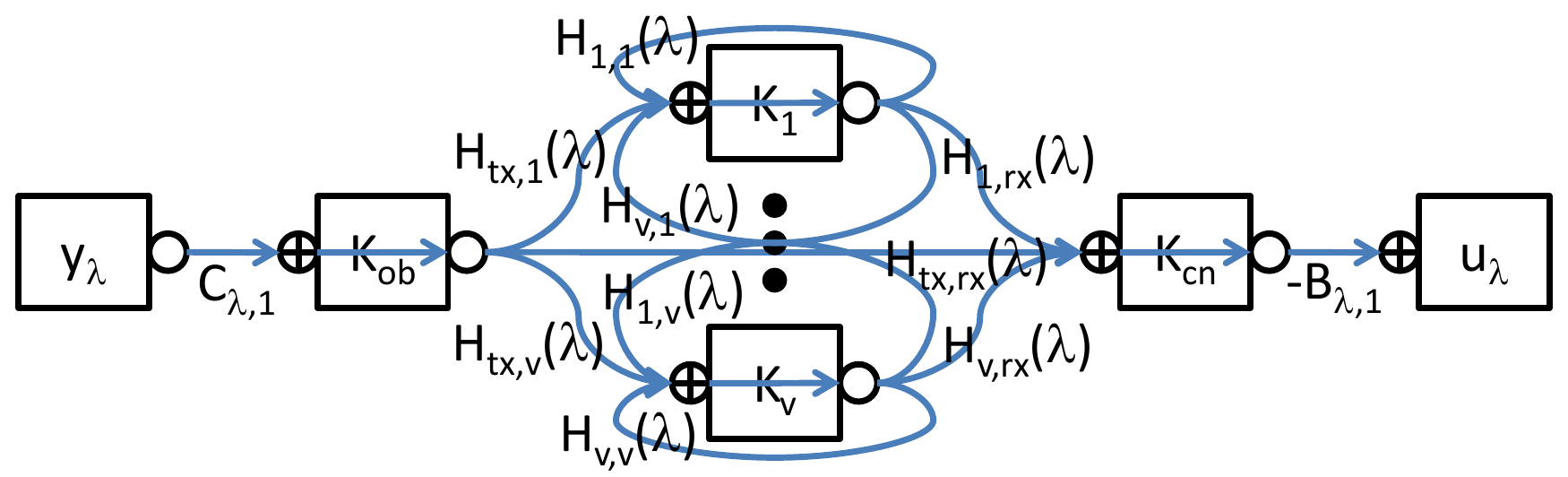}
\caption{Jordan form externalization of $\mathcal{L}_{re}(A_i',B_i',C_i',D_{ij}')$ at $z=\lambda$}
\label{fig:ptop_real}
\end{figure}
\end{proof}
In the proof of the theorem, we saw how the Jordan form externalization of implicit information flows discussed in Section~\ref{sec:jordanex} can be used to understand problems which have both control and communication aspects. Moreover, the connection between network coding and implicit information flows for control leads to a new controller design for stabilizing the plant. 

More importantly, the ideas used in the proof justifies our intuition on information flows in decentralized linear system shown in Section~\ref{sec:example}, especially Table~\ref{tbl:comparison}. We converted `control over LTI networks' problems to decentralized linear systems by considering the relays in LTI networks as controllers of decentralized systems and the channels as a part of the states and input-output matrices $B_i, C_i$. The goal of the observer and the relays was to send enough information about unstable states associated with $\lambda$. Therefore, the unstable states can be considered the source of information flows, and the unstable subspaces can be thought of as the message. The maxflow of the LTI network was compared with $m_{\lambda}$, the number of Jordan blocks associated with $\lambda$. Therefore, $m_{\lambda}$ can be considered the rate of the message. The controller stabilized the plant by controlling the unstable states based on its received information. Therefore, the unstable states can also be thought of as the destination of information flows. Theorem~\ref{thm:LTI:ptop} reveals that we can stabilize the system if and only if the LTI network has enough capacity to afford the information flows for control. Therefore, the capacity of LTI networks is deeply related to stabilizability of control systems. Moreover, the communication scheme that we used for the relays was linear network coding.

Another important point is the relationship between network linearization that we discussed in Section~\ref{sec:networklin} and control over LTI networks. By comparing Figure~\ref{fig:LN_ptop} and Figure~\ref{fig:ptop_real}, we can easily notice the similarity. The transmitter and  receiver in LTI communication networks correspond to the observer and  controller in control over LTI networks. These nodes are connected by relay nodes in both problems. Now we can see that what we did by introducing the circulation arc in network linearization (in Figure~\ref{fig:LN_ptop}) is essentially introducing an unstable plant to be stabilized through the LTI communication network. This insight will be helpful in the later generalization of control over LTI networks, and also the generalization of network linearization in Appendix~\ref{app:networklin}.

%
%
%
%
%

\subsection{Multicast}
\label{sec:stablilizationoverLTI:multicast}
\begin{figure}
\includegraphics[width = 5in]{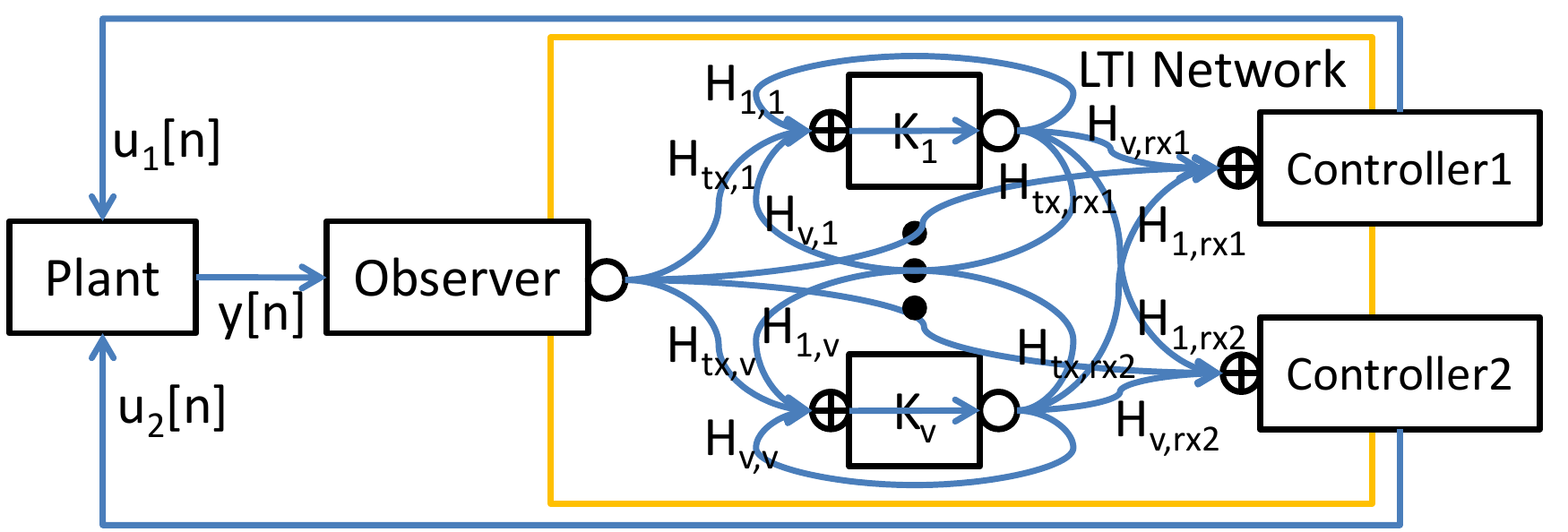}
\caption{Control over LTI Networks with multiple controllers: Multicast case}
\label{fig:multicast}
\end{figure}
Now, we understand that the distributed controllers communicate by network coding. However, it is known in the communication community that network coding is really helpful to improve the performance when the problem involves multiple transmitters and receivers. Therefore, we will extend the previous single-plant single-observer single-controller problems to the problem with multiple plants, observers, and controllers. We will see a close relationship and parallelism between control over LTI networks and network coding.

Arguably, the easiest and most well-understood problem among multi-user network coding problems is the multicast problem. In multicast problems, there are a single transmitter and multiple receivers, and all the receivers want to receive a common message from the transmitter. The worst mincut to all receivers is a trivial lower bound for the message rate in multicast problems. It is shown~\cite{Ahlswede_Network} that we can achieve this lower bound and network coding is necessary for this.

Let's find the counterpart of multicast problems in control over LTI networks. In the sufficiency proof of Thereom~\ref{thm:LTI:ptop}, we saw that the destination of the information flow for control is the controller.\footnote{Even if the ultimate destination of the information flow is the unstable states, in control over LTI network problems, only the controller can control the plant. The controller can be thought as a destination.} Therefore, the controller are the receivers, and so we have to increase the number of controllers to find the counterpart of multicast problems.

The situation that we will consider in this section is following. Consider control over LTI networks problem with two controllers as shown in Figure~\ref{fig:multicast}. Let's say we want to design the system so that the plant becomes stable by either one of the controllers --- but does not have to be stable when both controllers are active. To design such systems, we can introduce the multicast communication scheme for LTI network so that the observer sends enough information to stabilize the plant to both controllers.

For simplicity, let's limit our discussion to two controllers but all the results in this section can be easily generalized to multiple controllers. Figure~\ref{fig:multicast} shows the resulting problem, control over LTI networks with two controllers. Formally, the plant has two control inputs $u_1$ and $u_2$, i.e. the plant is given as
\begin{align}
&x[n+1]=Ax[n]+B_1u_1[n]+B_2u_2[n]+w[n]\\
&y[n]=Cx[n]
\end{align}
where $A \in \mathbb{C}^{m \times m}$, $B_1 \in \mathbb{C}^{m \times q_{cn 1}}$, $B_2 \in \mathbb{C}^{m \times q_{cn 2}}$ and $C \in \mathbb{C}^{r_{ob}\times m}$.
If the observations of the observer is decodable at the both controllers, it is possible to stabilize the plant by either one of two controllers. The following definition captures this idea.
\begin{definition}[Alternative Stabilizability]
Given the above definitions, we say that the plant is \textbf{alternatively stabilizable over the LTI network} if there exist `common' LTI observer and relays, and possibly different controllers that makes both the first plant
\begin{align}
&x[n+1]=Ax[n]+B_1 u_1[n] + w[n] \\
&y[n]=Cx[n]
\end{align}
and the second plant
\begin{align}
&x[n+1]=Ax[n]+B_2 u_2[n] + w[n] \\
&y[n]=Cx[n]
\end{align}
stable over the LTI network.
\end{definition}
The reason why this problem is different from just two separate problems with a single controller is that the same observer and relays have to be used for two different systems.

Let the LTI network that includes the observer, relays and controller $1$ be $\mathcal{N}_{mul1}(z)$. Likewise, the LTI network including the observer, relays and controller $2$ is denoted by $\mathcal{N}_{mul2}(z)$. The other notations and assumptions about the problem are the same as the point-to-point case. Then, the condition for alternative stabilizability is given as follows.
\begin{theorem}
Given the above definitions, the plant is alternatively stabilizable over the LTI network if and only if for all $\lambda$ such that $\lambda \in \{ \lambda : |\lambda| \geq 1 \} \cap \sigma(A)$ the following conditions are satisfied
\begin{align}
(i) &\begin{bmatrix}
\lambda I - A \\
C
\end{bmatrix}\mbox{ is full rank}\\
(ii) &\begin{bmatrix} \lambda I - A  & B_1 \end{bmatrix}\mbox{ and }\begin{bmatrix} \lambda I - A  & B_2 \end{bmatrix}\mbox{ are both full rank}\\
(iii)& m_\lambda \leq \mbox{(mincut rank of the LTI network $\mathcal{N}_{mul1}(\lambda)$)}\\
&m_\lambda \leq \mbox{(mincut rank of the LTI network $\mathcal{N}_{mul2}(\lambda)$)}
\end{align}
\label{thm:LTI:multicast}
\end{theorem}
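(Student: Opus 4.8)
The plan is to follow the same two-part template as the proof of Theorem~\ref{thm:LTI:ptop}, the only genuinely new ingredient being that one \emph{common} observer and relay design must serve two decoding scenarios simultaneously --- i.e., the relays must perform \emph{multicast} linear network coding rather than two independent unicast codes. Necessity is essentially a corollary of Theorem~\ref{thm:LTI:ptop}: if the plant is alternatively stabilizable, then by definition there exist common LTI observer and relays together with a controller $\mathcal{K}_1$ making the first plant (with $B_1$ only) stable over $\mathcal{N}_{mul1}(z)$, and a controller $\mathcal{K}_2$ making the second plant (with $B_2$ only) stable over $\mathcal{N}_{mul2}(z)$. Each of these is a point-to-point control-over-LTI-network problem, so Theorem~\ref{thm:LTI:ptop} applied to each gives exactly conditions (i), (ii) (for $B_1$ and $B_2$ respectively), and (iii) (for $\mathcal{N}_{mul1}$ and $\mathcal{N}_{mul2}$ respectively).

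For sufficiency I would give a constructive proof in three stages, paralleling (2-a)--(2-c) of Theorem~\ref{thm:LTI:ptop}. The first stage is the LTI-network (relay) design: choose relay gains $K_i(z) \in \mathbb{C}^{q_i \times r_i}$ so that for every unstable eigenvalue $\lambda$ of $A$ the transfer functions of \emph{both} $\mathcal{N}_{mul1}(\lambda)$ and $\mathcal{N}_{mul2}(\lambda)$ simultaneously attain their maxflow; by Corollary~\ref{cor:mincut} these maxflows equal the corresponding mincuts, which by (iii) are at least $m_\lambda$. Existence of such a common $K_i(z)$ is the multicast network-coding argument: the set of relay gains that fail to achieve the maxflow to receiver~$1$ at some unstable $\lambda$, together with those failing for receiver~$2$, is contained in a finite union of proper algebraic varieties (one per receiver per unstable eigenvalue), each of strictly lower dimension than the ambient space, so over the infinite field $\mathbb{C}$ the complement is nonempty (indeed dense), exactly as in \cite[Lemma~1]{Koetter_Algebraic}. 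When the network has cycles, intersect this region with the small-gain set $|K_i(z)| < \epsilon$ that Gershgorin's circle theorem guarantees keeps the whole network stable, as in \cite{Wang_Stabilization}; restricting to this set does not change the dimension of the bad varieties, so a good stable choice still exists.

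The second stage is the observer design: pick a common $K_{ob}(z)$ so that for $j=1,2$ and every unstable $\lambda$ the rank of $G_{mulj}(\lambda,K(z))\,K_{ob}\,C_{\lambda,1}$ is maximal, hence equal to $\min\{\rank G_{mulj}(\lambda,K(z)),\, \rank C_{\lambda,1}\}$, which is $\geq m_\lambda$ by the relay design together with condition (i) and Corollary~\ref{cor:observability}; again the failing $K_{ob}$ lie in a finite union of proper varieties, so a generic $K_{ob}(z)$ works. The third stage is the controller design, and here the two controllers may differ, which removes any conflict: for each $j$ we view the closed system seen by $\mathcal{K}_j$ as $z x(z) = A x(z) + B_j u_j(z)$, $y_j(z) = C_j'(z) x(z)$ with $C_j'(z) = G_{mulj}(z,K(z)) K_{ob}(z) C$, observe that $\lambda$ is observable in this reduced system because $\rank C'_{j,\lambda,1}(\lambda) \geq m_\lambda$, invoke condition (ii) for controllability of $(A,B_j)$, and apply a conventional centralized stabilizing design~\cite{Chen} to obtain $K_{cnj}(z)$.

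The main obstacle is the first stage: arguing that a single relay scheme achieves the mincut to \emph{both} controllers simultaneously at all relevant generalized frequencies while keeping a possibly cyclic network internally stable. This is precisely where linear network coding (rather than routing, or two separate codes) is essential, and the argument rests on combining the union-of-varieties argument of \cite{Ahlswede_Network,Koetter_Algebraic} with the Gershgorin small-gain argument of \cite{Wang_Stabilization}; once that common scheme is in hand, the remaining two stages and the entire necessity direction follow the point-to-point proof almost verbatim.
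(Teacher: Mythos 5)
Your proposal is correct and follows essentially the same route as the paper's proof: necessity by applying Theorem~\ref{thm:LTI:ptop} to each controller separately, and sufficiency by the same three-stage construction (common relay gains achieving both maxflows via a union-of-algebraic-varieties argument with the Gershgorin small-gain restriction, a common observer gain by the same argument, and separate conventional controller designs). No gaps; the reasoning matches the paper's argument step for step.
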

\begin{proof}
(1) Necessity Proof: Since the plant has to be stabilizable by both the controller $1$ and $2$, the conditions of Theorem~\ref{thm:LTI:ptop} has to be satisfied for both controllers, which corresponds to the condition (i), (ii), (iii) of the theorem.

(2) Sufficiency Proof: Just as the sufficiency proof of Theorem~\ref{thm:LTI:ptop}, we will give a three-step constructive proof. Since the only difference from that of Theorem~\ref{thm:LTI:ptop} is LTI network desing, we use the essentially definitions.\\
(2-a) LTI network design: Since we have to afford enough information flow for both controllers, we choose the relay gain matrices $K_i(z) \in \mathbb{C}^{q_i \times r_i}$ such that for all unstable eigenvalue $\lambda$, $\rank(G_{mul1}(\lambda,K(z))) \geq m_{\lambda}$ and $\rank(G_{mul2}(\lambda,K(z))) \geq m_{\lambda}$. The existence of such gain matrices can be proved in the same way as Theorem~\ref{thm:LTI:ptop} and using the condition (iii). In other words, the set that we cannot choose $K_i(z)$ is the union of two algebraic varieties: one that makes $G_{mul1}(\lambda,K_i)$ lose its rank and the other one that makes $G_{mul2}(\lambda,K_i)$ lose its rank. The dimension of their union is also strictly smaller than that of the underlying space. Therefore, almost all $K_i(z) \in \mathbb{C}^{q_i \times r_i}$ can achieve the maximum rank of both transfer functions.\\
\\
(2-b) Observer Design: For the observer design, we find $K_{ob}(z) \in \mathbb{C}^{r_{ob} \times q_{ob}}$ such that for all unstable eigenvalue $\lambda$, $\rank(G_{mul1}(\lambda,K(z)) K_{ob}(z) C_{\lambda,1})\geq m_{\lambda}$ and $\rank(G_{mul2}(\lambda,K(z)) K_{ob}(z) C_{\lambda,1})\geq m_{\lambda}$. The existence of such $K_{ob}(z)$ follows from the same way as Theorem~\ref{thm:LTI:ptop} and the union of two algebraic variety argument.\\
\\
(2-c) Controller Design: Now, at both controllers the plant is observable. We can simply use conventional controller designs to stabilize the system by both controllers.
\end{proof}
Like the point-to-point problem, memoryless observer and relays are enough for alternative stabilizablity. The generalization of this result to more than two controllers is trivial. We can simply add more controller conditions to the condition (ii) and (iii).

In fact, this theorem can be generalized to arbitrary decentralized linear systems. First, we define strong connectivity of decentralized linear systems~\cite{Cormat_Decentralized}.
\begin{definition}\cite{Cormat_Decentralized}
A proper decentralized linear system $\mathcal{L}(A,B_i,C_i)$ with $v$ decentralized controllers is called strongly connected if for all $V \subset \{1,\cdots, v \}$,
$C_V (zI -A )^{-1}B_{V^c}$ is nonzero.
\end{definition}
The strong connectivity of the decentralized system implies that for any cuts, the transfer function across this cut is not zero. In other word, we can always send some information for any cuts, and thereby every controller is connected with each other.

We generalize the alternative stabilizability definition to a set of decentralized linear systems.
\begin{definition}
Consider a set of $p$ decentralized linear systems with $v$ decentralized controllers,
\begin{align}
\{\mathcal{L}(A^{(1)},B_i^{(1)},C_i^{(1)}),\cdots, \mathcal{L}(A^{(p)},B_i^{(p)},C_i^{(p)}\}.
\end{align}
where for all $2 \leq i \leq v$ the dimensions of $B_i^{(1)}, \cdots, B_i^{(p)}$ are the same, and the dimensions of $C_i^{(1)},\cdots, C_i^{(p)}$ are also the same.\footnote{The dimension of $B_1^{(1)}, \cdots, B_1^{(p)}$ and the dimension of $C_1^{(1)},\cdots, C_1^{(p)}$ can be different.} This set of the decentralized systems is called \textbf{alternatively stabilizable} if there exist common LTI controllers $\mathcal{K}_2, \cdots, \mathcal{K}_v$ and possibly different\footnote{the design of the first controller $\mathcal{K}_{1}^{(i)}$ can be changed depending on which system is chosen.} controllers $\mathcal{K}_1^{(1)}, \cdots, \mathcal{K}_1^{(p)}$ such that for all $1\leq k \leq p$, all systems $\mathcal{L}(A^{(k)},B_i^{(k)},C_i^{(k)})$ with controllers $\mathcal{K}_1^{(k)},\mathcal{K}_2, \cdots, \mathcal{K}_v$ are stable simultaneously.
\end{definition}

The above definition implies that even if the decentralized system is arbitrary chosen from a given set, we can stabilize the system by changing only one controller (the controller $1$). We can relate this problem with the previous control over LTI network problem. We can consider the observer and relays of control over LTI networks as the controller $2$ to $v$ in decentralized systems. We can consider the multiple controllers as the controller $1$ in decentralized systems. Therefore, from the realization idea, we can see the alternative stabilization of decentralized linear systems includes that of control over LTI networks as a special case.

This generalized problem corresponds to robust networking~\cite{Koetter_Algebraic} in a network coding context. In robust networking, the communication network can be adversarially chosen from a given set, and we want to design the relay scheme that achieves the worst case mincut. In \cite{Koetter_Algebraic}, it is shown that robust networking is essentially the same as multicast problems, and the worst case mincut is achievable using the network coding.

Likewise, the alternative stabilizability of decentralized linear systems is essentially the same as that of control over LTI networks. If the systems are strongly connected, the alternative stabilizability condition is given as follows.
\begin{theorem}
Consider a set of decentralized linear systems with $v$ controllers
\begin{align}
\{\mathcal{L}(A^{(1)},B_i^{(1)},C_i^{(1)}),\cdots, \mathcal{L}(A^{(p)},B_i^{(p)},C_i^{(p)})\}
\end{align}
where each decentralized linear system is strongly connected.\footnote{Otherwise, controllers can be isolated from the remaining system.} Then, this set of the decentralized linear systems is alternatively stabilizable if and only if each decentralized linear system does not have unstable fixed modes.
\label{thm:LTI:multicast2}
\end{theorem}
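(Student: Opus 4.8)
The ``only if'' direction is immediate from Theorem~\ref{thm:stability}: if the set is alternatively stabilizable then, for each $k$, the system $\mathcal{L}(A^{(k)},B_i^{(k)},C_i^{(k)})$ closed with the controllers $\mathcal{K}_1^{(k)},\mathcal{K}_2,\dots,\mathcal{K}_v$ has only stable poles, hence it is LTI-stabilizable and therefore has no unstable fixed modes.

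For the converse I would give a constructive proof patterned on the sufficiency arguments of Theorems~\ref{thm:LTI:ptop} and~\ref{thm:LTI:multicast}, viewing the common controllers $\mathcal{K}_2,\dots,\mathcal{K}_v$ as the ``relays/observer'' of a robust-networking problem and the system-dependent controller $\mathcal{K}_1^{(k)}$ as the receiver-side decoding. First put each $A^{(k)}$ in Jordan form and, for every unstable eigenvalue $\lambda$ of $A^{(k)}$, apply the Jordan-form externalization of Section~\ref{sec:jordanex}, obtaining the LTI network $\mathcal{N}_{jd,\lambda}^{(k)}(\lambda)$ whose relays are the $v$ controllers. Since system $k$ has no unstable fixed mode, Theorem~\ref{thm:equivalence2} gives that the d.o.f.\ mincut of $\mathcal{N}_{jd,\lambda}^{(k)}(\lambda)$ is at least $m_\lambda$, equivalently that $G_{jd,\lambda}^{(k)}(\lambda,K)$ has full formal rank $m_\lambda$. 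The plan is then to choose a \emph{single} common memoryless design $K_2,\dots,K_v$ that keeps enough maxflow in every $\mathcal{N}_{jd,\lambda}^{(k)}(\lambda)$ when the remaining relay gain $K_1$ is still free, and to finish each system with a per-system $K_1^{(k)}$.

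The genericity step is the robust-networking argument of \cite{Koetter_Algebraic} reused here. For fixed $(k,\lambda)$, after one substitutes constant values for $K_2,\dots,K_v$ and leaves $K_1$ free, every cut matrix of $\mathcal{N}_{jd,\lambda}^{(k)}(\lambda)$ attains its maximal possible rank for all $(K_2,\dots,K_v)$ outside a proper algebraic subvariety; this subvariety is proper because the full network achieves its mincut over some choice of \emph{all} the gains (Theorem~\ref{thm:mincut} and Lemma~\ref{lem:keylemma}), and since every cut's maximal rank is then at least $m_\lambda$, outside the subvariety the reduced network keeps mincut $\ge m_\lambda$. Taking the finite union over cuts, over all unstable $\lambda$, and over all $k$ still leaves a proper subvariety, so --- $\mathbb{C}$ being infinite --- we may pick $K_2,\dots,K_v$ in $\mathbb{C}$ outside it (and, the complement of a proper subvariety being Zariski dense, we may in addition take these gains as small as we like). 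For such a choice, each residual single-controller system $(A^{(k)}+\sum_{i\ge 2}B_i^{(k)}K_iC_i^{(k)},\,B_1^{(k)},\,C_1^{(k)})$ has mincut at least $m_\lambda$ in the corresponding reduced network; looking at the cuts that isolate the transmitter $y_\lambda$, respectively the receiver $u_\lambda$, exactly as in the necessity proof of Theorem~\ref{thm:LTI:ptop}, forces every unstable mode to be observable and controllable from channel $1$, so a conventional centralized design yields $\mathcal{K}_1^{(k)}$ stabilizing system $k$; different $k$ may use different $\mathcal{K}_1^{(k)}$, which is what ``alternatively stabilizable'' permits.

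Two points carry the real weight. The first is the role of strong connectivity: it is what makes the bad set above a \emph{proper} subvariety rather than forcing the good set to be a thin slice. This is the decentralized ``connection through a single channel'' phenomenon (\cite{Cormat_Decentralized}) --- generic feedback at channels $2,\dots,v$ transfers every non-fixed mode into one that is controllable and observable from channel $1$ --- and without it a controller $j\ge 2$ could be isolated from the rest, forcing $K_j$ to take incompatible values across the $p$ systems. The second, which I expect to be the main obstacle, is the stability bookkeeping: because $\mathcal{K}_2,\dots,\mathcal{K}_v$ feed back into the plant (unlike the pure relays of control over LTI networks), fixing them moves the closed-loop eigenvalues, so one must reconcile the ``evaluated at $z=\lambda$'' rank statements with the actual perturbed eigenvalues of $A^{(k)}+\sum_{i\ge2}B_i^{(k)}K_iC_i^{(k)}$ --- either by keeping $K_2,\dots,K_v$ small and invoking Gershgorin's theorem (as in the proof of Theorem~\ref{thm:LTI:ptop}) so that stable modes stay stable and unstable ones stay near their original locations, or by the full ``connection through a single channel'' statement that generic feedback renders the residual system controllable and observable away from the (necessarily stable) fixed modes of $\mathcal{L}(A^{(k)},B_i^{(k)},C_i^{(k)})$.
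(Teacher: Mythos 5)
Your ``only if'' direction is fine, and your overall bookkeeping (finite union of bad algebraic varieties over cuts, unstable eigenvalues and the $p$ systems; small common gains; a per-system conventional design for $\mathcal{K}_1^{(k)}$) matches the paper. The problem is the step you yourself flag as the main obstacle, and your first proposed fix does not close it. The Jordan-form externalization together with Theorem~\ref{thm:equivalence2} and the Koetter--Medard genericity argument only certifies rank statements evaluated at $z=\lambda$ for $\lambda\in\sigma(A^{(k)})$: for generic constants $K_2,\dots,K_v$ the matrix $\lambda I - A^{(k)}-\sum_{i\ge 2}B_i^{(k)}K_iC_i^{(k)}-B_1^{(k)}K_1C_1^{(k)}$ retains full formal rank in $K_1$. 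But once $K_2,\dots,K_v$ are fixed, the residual plant $A'=A^{(k)}+\sum_{i\ge 2}B_i^{(k)}K_iC_i^{(k)}$ generically no longer has $\lambda$ in its spectrum; what stabilization by $\mathcal{K}_1^{(k)}$ actually requires is that every unstable eigenvalue $\mu$ of $A'$ be controllable and observable for $(A',B_1^{(k)},C_1^{(k)})$, and your rank conditions at $z=\lambda$ say nothing about $z=\mu\neq\lambda$. Gershgorin with small gains only keeps $\mu$ near $\lambda$ and keeps the stable spectrum stable; ``near'' does not transfer observability or controllability to the perturbed eigenvalue. So the purely network-coding route, as written, is incomplete: you would need genericity of the rank conditions at the gain-dependent perturbed eigenvalues, not frequency-by-frequency at the original ones.

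The missing statement is exactly the ``control through a single channel'' result of Corfmat and Morse, and your second fallback --- invoking it --- is precisely the paper's proof: it cites \cite[Corollary 1]{Cormat_Decentralized} (for a strongly connected system, almost all constant gains at channels $2,\dots,v$ make every non-fixed mode controllable and observable from channel $1$), uses Gershgorin to keep the stable eigenvalues stable, takes the union of the exceptional varieties over the $p$ systems, and finishes with a conventional centralized design for $\mathcal{K}_1^{(k)}$. With that fallback your argument coincides with the paper's; without it, the gap above is genuine. One further correction of emphasis: in your genericity step the properness of the bad variety comes from the no-unstable-fixed-mode hypothesis (a nonvanishing minor), not from strong connectivity; strong connectivity is what the Corfmat--Morse step needs --- it is what guarantees that generic static feedback at the other channels funnels every non-fixed mode to channel $1$, which is the content of the theorem's footnote.
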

\begin{proof}
The necessity is obvious since each system has to be stabilizable.

Let's prove the sufficiency. By \cite[Corollary 1]{Cormat_Decentralized}, we know that except a certain algebraic variety whose dimension is strictly smaller than that of the underlying space, almost all constant matrices $K_2(z), \cdots, K_v(z)$ make all unstable eigenvalues of $\mathcal{L}(A^{(1)}, B_i^{(1)},C_i^{(1)})$ to be observable and controllable at the controller $1$. Moreover, by Gershgorin's circle theorem~\cite{Richard}, there exists $\epsilon > 0$ such that for all $|K_i(z)| \leq \epsilon$ such that $K_i(z) \in \mathbb{C}^{q_i \times r_i}$, the stable eigenvalues of the system remain stable.

Using the union of algebraic variety argument, we can prove that there exist constant matrices $K_2(z) \in \mathbb{C}^{q_2 \times r_2}, \cdots, K_v(z) \in \mathbb{C}^{q_v \times r_v}$ such that for all systems $\{\mathcal{L}(A^{(1)},B_i^{(1)},C_i^{(1)}),\cdots, \mathcal{L}(A^{(p)},B_i^{(p)},C_i^{(p)})\}$, the unstable eigenvalues are observable and controllable at the controller $1$ and the stable eigenvalues remains stable. Then, knowing which system is chosen, the first controller can stabilize the system using a conventional design~\cite{Chen}.
\end{proof}
Just as the sufficiency of Theorem~\ref{thm:LTI:multicast}, memoryless controllers are enough for the controller $2$ to $v$. The underlying reason why this theorem holds is that the controllers $2$ to $v$ relays enough information for control to the controller $1$ by network coding.

\subsection{Broadcast}
\label{sec:stablilizationoverLTI:broadcast}
\begin{figure}
\includegraphics[width = 5in]{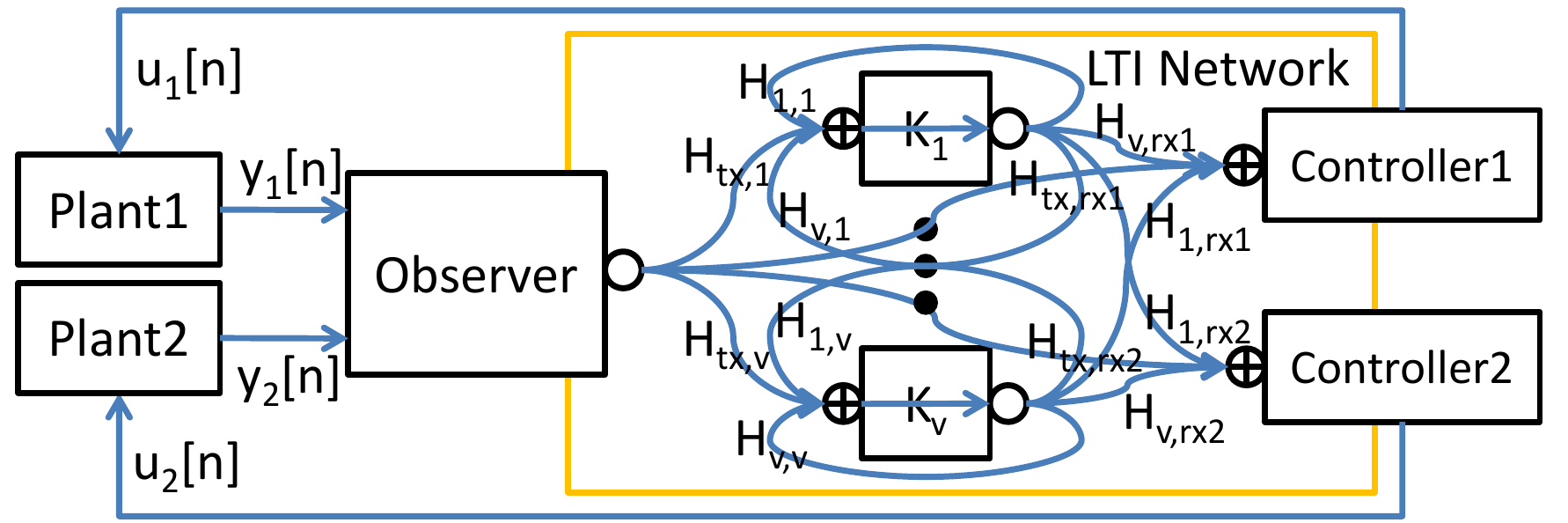}
\caption{Stabilization over LTI Network with multiple plants and multiple controllers: Broadcast case}
\label{fig:broadcast}
\end{figure}
Another well-understood problem in network coding is the broadcast. Like multicast problems, broadcast problems have a single transmitter and multiple receivers. However, unlike multicast problems, each receiver wants to receiver its own message which is independent from the other's. We can find a simple lower bound on the message rate using cutset bounds. The message rate to the receiver $1$ cannot exceed the cutset bound for the receiver $1$, and similar bounds hold for all receivers. We can also think of sum cutsets for augmented receivers. The sum of the message rates to the receiver $1$ and receiver $2$ cannot exceed the cutset bound for the augmented receiver $1$ and $2$. Likewise, we can think of the cutset bounds for the sum of all two messages, three messages, and so on. This cutset bound is also known to be achievable using network coding together with precoding at the transmitter~\cite{Li_Network,Koetter_Algebraic}.

In this section, we will find a counterpart of broadcast problems in control over LTI networks. As we saw in the previous section, multiple receivers in network coding problems correspond to multiple controllers. Now, we have to find the counterpart of multiple messages. In previous discussions, we found that   the unstable states correspond to the messages. Therefore, as a counterpart of independent messages, we introduce multiple plants which have orthogonal unstable states. Each controller can only control its designated plant. 

Consider the control over LTI network problems with two plants and two controllers as shown in Figure~\ref{fig:broadcast}. Obviously, we want to design the system so that both plants becomes stable. However, we will require an additional property of disturbance rejection. In other words, if we add disturbance only to the plant 1, the states of the plant 2 stay zero for all time. Likewise, if we add disturbance only to the plant 2, the states of the plant 1 stay zero for all time. In other words, the disturbance added to the plant 1 does not propagated to the plant 2, and vice versa.

For notational simplicity, we will only consider the two plants and two controllers case, but the results in this section can be easily generalized to multiple plants and multiple controllers. Figure~\ref{fig:broadcast} shows the resulting control over LTI network problem with two plants and two controllers. The plant models are given as follows:
\begin{align}
&x_1[n+1]=A_1 x_1[n] + B_1 u_1[n] + w_1[n]\\
&y_1[n]=C_1 x_1[n] 
\end{align}
\begin{align}
&x_2[n+1]=A_2 x_2[n] + B_2 u_2[n] + w_2[n]\\
&y_2[n]=C_2 x_2[n]
\end{align}
where $A_i \in \mathbb{C}^{m_i \times m_i}$, $B_i \in \mathbb{C}^{m_i \times q_{cn i}}$, and $C_i \in \mathbb{C}^{r_{ob i} \times m_i }$. As shown in Fig.~\ref{fig:broadcast}, the observer has both observations $y_1[n]$ and $y_2[n]$, but both controllers can only control their designated plants via $u_1[n]$ and $u_2[n]$. The basic assumptions and notations for the LTI network are the same as the multicast problem.

If just as broadcast problems the observation $y_1[n]$ (information about $x_1[n]$) is decodable separately from $y_2[n]$ at the controller $1$ and the observation $y_2[n]$ (information about $x_2[n]$) is decodable separately from $y_1[n]$ at the controller $2$, it is possible for the controllers to control their designated plants without causing any interference to the others. In other words, the controller $1$ can control the plant $1$ without causing any additional disturbance to the plant $2$, and likewise the controller $2$ can control the plant $2$ without causing any additional disturbance to the plant $1$. This notion is the following definition of independent stabilizablity.
\begin{definition}[Independent Stabilizability]
Given the above definitions, we say that the plants are \textbf{independently stabilizable over the LTI network} if there exist the LTI observer, controllers and relays that satisfy the following conditions:\\
(i) both of the plants are stable over the LTI network \\
(ii) If $w_1[n]=0$ for all $n$, then $x_1[n]=0$ for all $n$ regardless of $w_2[n]$\\
(iii) If $w_2[n]=0$ for all $n$, then $x_2[n]=0$ for all $n$ regardless of $w_1[n]$
\end{definition}
In Figure~\ref{fig:broadcast}, denote the LTI network including the observer, the relays and the controller $1$ as $\mathcal{N}_{br1}(z)$. Likewise, denote the LTI network that including the observer, the relays and the controller $2$ as $\mathcal{N}_{br2}(z)$. The LTI network that has the controller $1$ and $2$ as the augmented receiver is denoted as $\mathcal{N}_{br1,2}(z)$.

We put $m_{1,\lambda}$ be the number of the Jordan blocks of $A_1$ associated with the eigenvalue $\lambda$, and $m_{2,\lambda}$ be that for $A_2$. We also denote $m_{1,max}:=\max_{\lambda \in \mathbb{C},|\lambda| \geq 1} m_{1,\lambda}$ and $m_{2,max}:=\max_{\lambda \in \mathbb{C},|\lambda| \geq 1} m_{2,\lambda}$.


One may think since we have to prevent the disturbance propagation for independent stabilizability, the existence of separate paths from the observer to each controller is required for independent stabilizability. However, we do not need separate paths to each controller. For example, let the plant $1$ and $2$ be scalar plants. Let the observer have two dimensional input signal $\begin{bmatrix} u_{ob,1}[n] \\ u_{ob,2}[n] \end{bmatrix}$ to the network, the controller $1$ and $2$ have one dimensional $y_{cn1}[n]$ and $y_{cn2}[n]$ respectively, and their relation be given as
\begin{align}
\begin{bmatrix}
y_{cn1}[n] \\
y_{cn2}[n]
\end{bmatrix}
=
\begin{bmatrix}
2 & 1\\
1 & 2
\end{bmatrix}
\begin{bmatrix}
u_{ob,1}[n]\\
u_{ob,2}[n]
\end{bmatrix}
.
\end{align}
We further assume the network have no relays. In this example, one may think that it is impossible to independently stabilize the system since the communication channels to each controller interfere with each other. However, by simply introducing a precoding gain $\begin{bmatrix} 2 & 1 \\ 1 &2 \end{bmatrix}^{-1}$, we can orthogonalize the paths and independently stabilize the system.

This idea can be formalized for general cases. A sufficient condition and a necessary condition for the independent stabilizability are given as follows.
\begin{theorem}
Given the above definitions, the sufficient condition for the plants to be independently stabilizable is that for all $\lambda$ such that $\lambda \in \{\lambda:|\lambda| \geq 1 \} \cap ( \sigma(A_1) \cup \sigma(A_2) )$ the following conditions hold:
\begin{align}
(i) &\begin{bmatrix}
\lambda I - A_1 \\
C_1
\end{bmatrix} \mbox{ and }
\begin{bmatrix}
\lambda I - A_2 \\
C_2
\end{bmatrix}
\mbox{ are both full rank} \\
(ii) &\begin{bmatrix} \lambda I - A_1  & B_1 \end{bmatrix}\mbox{ and }\begin{bmatrix} \lambda I - A_2  & B_2 \end{bmatrix}\mbox{ are both full rank}\\
(iii) &\ m_{1,max}+m_{2,max} \leq \mbox{(mincut rank of the LTI network $\mathcal{N}_{br1,2}(\lambda)$)}\\
&\ m_{1,max} \leq \mbox{(mincut rank of the LTI network $\mathcal{N}_{br1}(\lambda)$)}\\
&\ m_{2,max} \leq \mbox{(mincut rank of the LTI network $\mathcal{N}_{br2}(\lambda)$)}
\end{align}
The necessary condition for the plants to be independently stabilizable is that for all $\lambda$ such that $\lambda \in \{\lambda : |\lambda| \geq 1\} \cap ( \sigma(A_1) \cup \sigma(A_2) )$ the following conditions hold:
\begin{align}
(i) &\begin{bmatrix}
\lambda I - A_1 \\
C_1
\end{bmatrix} \mbox{ and }
\begin{bmatrix}
\lambda I - A_2 \\
C_2
\end{bmatrix}
\mbox{ are both full rank} \\
(ii) &\begin{bmatrix} \lambda I - A_1  & B_1 \end{bmatrix}\mbox{ and }\begin{bmatrix} \lambda I - A_2  & B_2 \end{bmatrix}\mbox{ are both full rank}\\
(iii) &\ m_{1,\lambda}+m_{2,\lambda} \leq \mbox{(mincut rank of the LTI network $\mathcal{N}_{br1,2}(\lambda)$)}\\
&\ m_{1,\lambda} \leq \mbox{(mincut rank of the LTI network $\mathcal{N}_{br1}(\lambda)$)}\\
&\ m_{2,\lambda} \leq \mbox{(mincut rank of the LTI network $\mathcal{N}_{br2}(\lambda)$)}
\end{align}
\label{thm:LTI:broadcast}
\end{theorem}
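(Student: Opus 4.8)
The plan is to follow the same two-part template used for Theorem~\ref{thm:LTI:ptop} and Theorem~\ref{thm:LTI:multicast} --- a realization-based necessity argument and a three-step constructive sufficiency argument --- the one genuinely new ingredient being linear precoding at the observer to decouple the two plants, which is exactly the precoding idea that turns multicast into broadcast in network coding~\cite{Li_Network,Koetter_Algebraic}. The gap between the $m_{i,max}$ appearing in the sufficient condition and the $m_{i,\lambda}$ appearing in the necessary condition will come from insisting on a single memoryless observer/precoder that must serve all unstable eigenvalues at once.

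\textbf{Necessity.} First I would realize the two-plant problem as a single decentralized linear system $\mathcal{L}_{re}$ exactly as in the proof of Theorem~\ref{thm:LTI:ptop}, folding the two plant states and all channel internal states into one big state $x'[n]$ whose $A'$ is block diagonal with unstable part supported on $\mathrm{diag}(A_1,A_2)$. Since independent stabilizability in particular makes each plant stabilizable over its sub-network when the other disturbance is zero, applying Theorem~\ref{thm:LTI:ptop} to $\mathcal{N}_{br1}$ and $\mathcal{N}_{br2}$ separately yields conditions (i), (ii) and the individual mincut bounds $m_{j,\lambda}\le$ (mincut of $\mathcal{N}_{brj}(\lambda)$). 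For the sum bound I would merge controllers $1$ and $2$ into one augmented super-controller: conditions (ii) and (iii) of the definition force the closed-loop transfers $w_2\mapsto x_1$ and $w_1\mapsto x_2$ to vanish, so the $m_{1,\lambda}$ disturbance degrees of freedom of plant $1$ and the $m_{2,\lambda}$ of plant $2$ must be \emph{separately} recoverable at the augmented receiver, i.e. a total of $m_{1,\lambda}+m_{2,\lambda}$ degrees of freedom must cross every cut of the externalized network with augmented receiver. Formalizing this with the Jordan-form externalization of Section~\ref{sec:jordanex} (for proper systems, Appendix~\ref{app:proper_jd}) and Corollary~\ref{cor:mincut} together with Theorem~\ref{thm:equivalence2} gives $m_{1,\lambda}+m_{2,\lambda}\le$ (mincut of $\mathcal{N}_{br1,2}(\lambda)$).

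\textbf{Sufficiency.} I would give the usual three-step construction. (a) Relay design: choose constant gains $K_i(z)\in\mathbb{C}^{q_i\times r_i}$ that simultaneously achieve the mincut of $\mathcal{N}_{br1,2}(\lambda)$, $\mathcal{N}_{br1}(\lambda)$ and $\mathcal{N}_{br2}(\lambda)$ at every unstable $\lambda$; the forbidden set is a finite union of proper algebraic varieties, and Gershgorin's circle theorem (as in Theorem~\ref{thm:LTI:ptop} and \cite{Wang_Stabilization}) keeps the gains small enough that no stable network pole is pushed out. (b) Observer design with precoding: using condition (iii), pick $K_{ob}(z)$ that injects the plant-$1$ observations $y_1$ and plant-$2$ observations $y_2$ into an $(m_{1,max}+m_{2,max})$-dimensional network-input subspace $S_1\oplus S_2$ with $\dim S_j=m_{j,max}$, chosen generically so that at controller $j$ the $S_{3-j}$ component can be zero-forced and the residual $S_j$ component retains rank $\ge m_{j,\lambda}$ for every unstable eigenvalue $\lambda$ of $A_j$; this is precisely linear broadcast network coding, with the sum condition making room for the direct-sum split and the individual conditions guaranteeing each piece survives to its controller, and existence is again a union-of-varieties / generic-position argument over the finitely many unstable $\lambda$. (c) Controller design: after (b) each controller $j$ sees plant $j$ as observable and, by (ii), controllable, and decoupled from $w_{3-j}$, so a conventional centralized observer-based design~\cite{Chen} stabilizes it while meeting the disturbance-rejection requirements.

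\textbf{Main obstacle.} The crux is step (b): showing that the augmented mincut bound together with the two individual mincut bounds really lets a single memoryless observer precoder plus per-controller zero-forcing achieve the direct-sum separation of the two plants' information simultaneously over all unstable eigenvalues. This amounts to tracking the linear-algebraic structure of superposition/precoding at a fixed generalized frequency $z=\lambda$ and then intersecting finitely many genericity conditions. This same ``one precoder for all $\lambda$'' constraint is exactly what forces the sufficient condition to use $m_{j,max}$ while the necessary condition needs only the per-eigenvalue $m_{j,\lambda}$, so the two conditions need not coincide in general; closing the gap would require a frequency-dependent (memory-ful) precoder, which we do not pursue here.
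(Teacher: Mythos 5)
Your overall template (realization-based necessity plus a three-step constructive sufficiency) matches the paper, and your necessity argument is essentially the paper's: the paper simply applies Theorem~\ref{thm:LTI:ptop} to plant $1$ with controller $1$, plant $2$ with controller $2$, and the augmented plant $\mathrm{diag}(A_1,A_2)$ with the augmented controller, the latter having $m_{1,\lambda}+m_{2,\lambda}$ Jordan blocks at $\lambda$; your detour through the disturbance-rejection conditions to justify the sum bound is more elaborate than needed but not wrong-headed.

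The genuine gap is in your sufficiency step (b). You assert that a \emph{single memoryless} observer precoder, splitting the observer's input space into a direct sum $S_1\oplus S_2$ with $\dim S_j=m_{j,max}$, combined with per-controller zero-forcing, achieves the exact decoupling demanded by independent stabilizability. But the disturbance-rejection requirement is an identity of rational functions ($w_2\mapsto x_1$ must vanish for all $z$, not merely at the unstable eigenvalues), and condition (iii) leaves no room for receiver-side nulling: the mincut of $\mathcal{N}_{br1}(\lambda)$ may equal $m_{1,max}$, so controller $1$ may receive exactly $m_{1,max}$ dimensions and cannot zero-force $m_{2,max}$ interference dimensions without destroying its own signal. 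Transmitter-side nulling with a \emph{constant} precoder also fails generically, because the direction that is invisible to controller $1$ is the null space of a $z$-dependent matrix (e.g., with a $2\times 1$ row $\begin{bmatrix} g_{11}(z) & g_{12}(z)\end{bmatrix}$ the null direction $\begin{bmatrix} g_{12}(z) & -g_{11}(z)\end{bmatrix}^{T}$ varies with $z$), so no fixed vector lies in it identically. The paper's proof resolves exactly this point with a step absent from your plan: after squaring the channel down to dimension $m_{1,max}+m_{2,max}$ with constant pre/post processors $K_{cn1}'$, $K_{cn2}'$, $K_{ob}'$ (invertibility of $G'_{br1,2}(\lambda)$ being what the sum-mincut condition buys), it inserts the \emph{causal, stable, memory-ful} channel inverse $K_{ob}''(z)=z^{-d}\det\bigl(G'_{br1,2}(z)\bigr)G'_{br1,2}(z)^{-1}$ at the observer, making the end-to-end matrix diagonal as a rational matrix with diagonal entries $z^{-d}\det(G'_{br1,2}(z))$ that are nonzero at every unstable $\lambda$. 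Consequently your closing diagnosis is also off: the $m_{j,max}$ versus $m_{j,\lambda}$ gap is not caused by restricting to memoryless precoding (the paper's sufficiency proof already uses memory, and the paper explicitly remarks that memory at the observer is helpful here); it persists because this fixed-dimension inversion cannot orthogonalize the channel differently at different frequencies.
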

\begin{proof}
(1) Necessary condition: The plant $1$, the plant $2$, and their augmented plant have to be stabilizable by the controller $1$, the controller $2$, and their augmented controller. Therefore, by apply theorem~\ref{thm:LTI:ptop} to these systems, we get the necessary conditions.

(2) Sufficient condition: 

The proof is similar to that of Thereom~\ref{thm:LTI:multicast}, but here we need an additional step to remove the interference between the information flows to two controllers. For this, we will use an pre-and-post processing idea shown in \cite{Li_Network,Koetter_Algebraic}.

(2-a) LTI Network design: 

Let $G_{br1}(z,K)$ and $G_{br2}(z,K)$ be the transfer function matrices of $\mathcal{N}_{br1}(z)$ and $\mathcal{N}_{br2}(z)$ respectively. Then, we can see $G_{br1,2}(z,K):=\begin{bmatrix}G_{br1}(z,K) \\ G_{br2}(z,K) \end{bmatrix}$ is the transfer function matrics of $\mathcal{N}_{br1,2}(z)$. 
Using the same union of algebraic varieties argument of Theorem~\ref{thm:LTI:multicast}, by the condition (iii) we can prove that there exist $K_i(z) \in \mathbb{C}^{q_i \times r_i}$ such that for all unstable eigenvalue $\lambda$
\begin{align}
&\rank(G_{br1}(\lambda,K(z)) \geq m_{1,max}\\
&\rank(G_{br2}(\lambda,K(z)) \geq m_{2,max}\\
&\rank(G_{br1,2}(\lambda,K(z))) \geq m_{1,max}+m_{2,max} \label{eqn:multicast1}
\end{align}
and keep the stable eigenvalues stable.

(2-b) Pre-and-Post processors at Controller and Observer: Even if we design the relays so that they can flow enough information, information flows from the observer to the controllers can interfere with each other. To remove this interference, we introduce pre-and-post processors at the controllers and observer as shown in  \cite{Li_Network,Koetter_Algebraic}. 

First, let's make $G_{br1,2}(z,K(z))$ a square matrix by introducing pre-and-post processors $K_{cn1}'(z) \in \mathbb{C}^{m_{1,max}\times r_{cn1}}$, $K_{cn2}'(z) \in \mathbb{C}^{m_{2,max}\times r_{cn2}}$, $K_{ob}'(z) \in \mathbb{C}^{q_{ob} \times (m_{1,max}+m_{2,max})}$ as follows:
\begin{align}
G'_{br1,2}(z,K(z)):=\begin{bmatrix}K_{cn1}'(z) & 0 \\ 0 & K_{cn2}'(z) \end{bmatrix} G_{br1,2}(z,K(z)) K_{ob}'(z).
\end{align}
The resulting matrix $G'_{br1,2}(z,K(z))$ is a square matrix with dimension $(m_{1,max}+m_{2,max})$, and using algebraic variety argument and \eqref{eqn:multicast1} we can choose $K_{cn1}'(z)$, $K_{cn2}'(z)$, $K_{ob}'(z)$ so that for all unstable eigenvalue $\lambda$, $G_{br1,2}'(\lambda, K(z)) $ is invertible. 

Now, we can remove the interference by simply multiplying by the matrix inverse. To this end, denote
\begin{align}
K_{ob}''(z):=z^{-d} det(G'_{br1,2}(z,K(z))) G'_{br1,2}(z,K(z))^{-1}
\end{align}
Here, we introduce $z^{-d}$ to make $K_{ob}''(z)$ causal. Therefore, $d \in \mathbb{Z}^+$ has to be chosen large enough so that each element in $K_{ob}''(z)$ is causal. Furthermore, since we multiplied $det(G'_{br1,2}(z,K(z)))$, $K_{ob}''(z)$ does not have any additional pole other than the existing ones in $G'_{br1,2}(z,K(z))$. Thus, $K_{ob}''(z)$ is also stable. Let's multiple this matrix to $G'_{br1,2}(z,K(z))$ and denote
\begin{align}
G''_{br1,2}(z,K(z)) := G'_{br1,2}(z,K(z)) K_{ob}''(z).
\end{align}
In $G''_{br1,2}(z,K(z))$, the only non-zero entries are diagonal entries, and so we have $(m_{1,max}+m_{2,max})$ ``orthogonal" communication channels.

(2-c) Observer design: In the observer, we will use $m_{1,max}$ communication channels to send information about the plant $1$, and the remaining for the plant $2$. 
First, denote $C_{1,\lambda,1}$ and $C_{2,\lambda,1}$ for $C_1$ and $C_2$ in the same way we defined $C_{\lambda,1}$ for $C$ in Theorem~\ref{thm:LTI:multicast}. Using the algebraic variety argument as Theorem~\ref{thm:LTI:multicast} and the condition (i), we can show that there exist $K_{ob}'''(z) \in \mathbb{C}^{m_{1,max}\times q_{cn1}}$ and $K_{ob}''''(z) \in \mathbb{C}^{m_{2,max}\times q_{cn2}}$ such that for all unstable eigenvalue $\lambda$,
\begin{align}
&\rank(K_{ob}'''(z) C_{1,\lambda,1})\geq m_{1,\lambda} \\
&\rank(K_{ob}''''(z) C_{2,\lambda,1}) \geq m_{2,\lambda}.
\end{align}
Then, we will put the observer gain $K_{ob}(z)$ as
\begin{align}
K_{ob}(z)= K_{ob}'(z) K_{ob}''(z) \begin{bmatrix}
K_{ob}'''(z) & 0 \\
0 & K_{ob}''''(z)
\end{bmatrix}.
\end{align}

(2-d) Controller design: Once we fix the relay gain and observer gain matrices as above and introduce the gain matrix $K_{cn1}'(z)$ at the controller $1$, by the construction the controller $1$ will have the following observation about the state.
\begin{align}
&
\begin{bmatrix}
K_{cn1}'(z) & 0
\end{bmatrix}
G_{br1,2}(z,K(z)) K_{ob}'(z) K_{ob}''(z)
\begin{bmatrix}
K_{ob}'''(z) & 0 \\
0 & K_{ob}''''(z)
\end{bmatrix}
\begin{bmatrix}
y_1(z) \\
y_2(z)
\end{bmatrix}\\
&=
\begin{bmatrix}
K_{cn1}'(z) & 0
\end{bmatrix}
G_{br1,2}(z,K(z)) K_{ob}'(z) K_{ob}''(z)
\begin{bmatrix}
K_{ob}'''(z) & 0 \\
0 & K_{ob}''''(z)
\end{bmatrix}
\begin{bmatrix}
C_1 x_1(z) \\
C_2 x_2(z)
\end{bmatrix}\\
&= z^{-d} \det(G_{br1,2}'(z)) \begin{bmatrix} I & 0 \end{bmatrix}
\begin{bmatrix}
K_{ob}'''(z) & 0 \\
0 & K_{ob}''''(z)
\end{bmatrix}
\begin{bmatrix}
C_1 x_1(z) \\
C_2 x_2(z)
\end{bmatrix}\\
&= z^{-d} \det(G_{br1,2}'(z)) K_{ob}'''(z) C_1 x_1(z)
\end{align}
As we can see, the observation is orthogonal to the state of plant $2$. Moreover, since for all unstable eigenvalue $\lambda$, $\det(G_{br1,2}'(\lambda)) \neq 0$ and $K_{ob}'''(z) C_1$ can observe all unstable states of $x_1[n]$, the plant $1$ is observable. Therefore, by a conventional controller design, the controller $1$ can orthogonally stabilize the plant $1$. The same holds for the plant $2$ and controller $2$.
\end{proof}
The result can be easily generalized to multiple plants and multiple observers. Unlike Theorem~\ref{thm:LTI:ptop} and Theorem~\ref{thm:LTI:multicast}, the memories at the observer and the relies are actually helpful. The necessary and the sufficient condition coincide when all the unstable eigenvalues of $A_1$ and $A_2$ are the same, and this corresponds to the broadcast result of network coding. 

However, unlike broadcast problems in network coding, the augmentation idea of nodes and cutset bounds fail to give a tight necessary condition. The reason for this is in this problem we have an additional factor, the frequency $z$. According to the frequency where it is evaluated, the channel behaves significantly differently. Thus, there is no way to orthogonalize the channel simultaneously for all frequencies, and we cannot achieve the necessary condition obtained by the augmentation idea. 

For example, let's consider the plant $A_1=3$, $A_2=2$, $B_1=B_2=1$ and $C_1=C_2=1$. And the LTI network has no relays, the input signal dimension of the observer and the output signal dimension of the controllers are $1$, and $G_{br1,2}(z,K_i)=\begin{bmatrix} 3-6z^{-1} \\ 2-6z^{-1} \end{bmatrix}$. 
Here, since there are two scalar plants and the observer has only one dimensional input signal to the network, it ``seems impossible" to independently stabilize the systems. In fact, this system violates the sufficiency condition of Theorem~\ref{thm:LTI:broadcast} since $m_{1, max}=1$, $m_{2, max}=1$, and the mincut ranks of $\mathcal{N}_{br1}(3)$, $\mathcal{N}_{br2}(2)$ are both $1$. Therefore, Theorem~\ref{thm:LTI:broadcast} fails to guarantee independent stabilizability of the system.

However, the system still satisfies the necessary condition of Theorem~\ref{thm:LTI:broadcast} derived by a simple augmented system idea. We can easily check that the system parameters are $m_{1,3}=1$, $m_{2,3}=0$, (mincut rank of $\mathcal{N}_{br1,2}(3)$)=1, (mincut rank of $\mathcal{N}_{br1,2}(3)$)=1, (mincut rank of $\mathcal{N}_{br1,2}(3)$)=0, $m_{1,2}=0$, $m_{2,2}=1$, (mincut rank of $\mathcal{N}_{br1,2}(2)$)=1, (mincut rank of $\mathcal{N}_{br1,2}(2)$)=0, (mincut rank of $\mathcal{N}_{br1,2}(2)$)=1. These parameters satisfy the necessary condition of the theorem.

Therefore, for even for this simple system, the necessary and sufficient condition of Theorem~\ref{thm:LTI:broadcast} do not match.
Finding the tight characterization for the independent stabilizability will be an interesting further research direction.

\subsection{Multiple-Unicast}
\label{sec:stablilizationoverLTI:multiple}
\begin{figure}
\includegraphics[width = 5in]{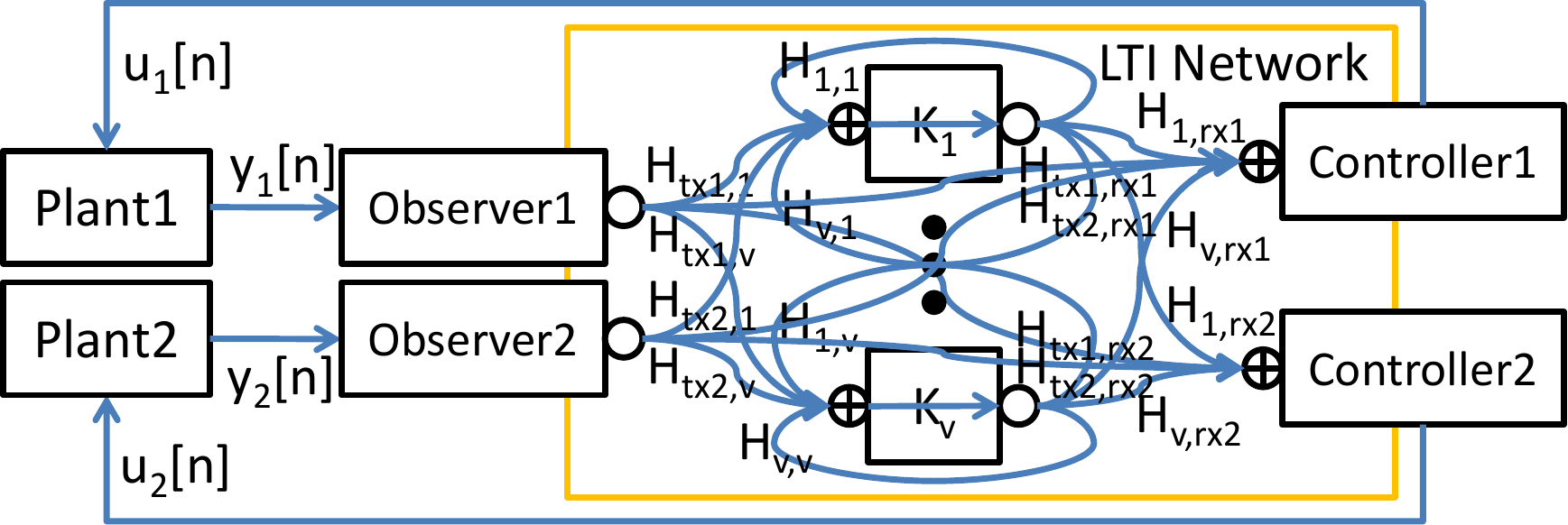}
\caption{Control over LTI Networks with multiple plants, multiple observers, and multiple controllers: Multiple-unicast case}
\label{fig:unicast}
\end{figure}
Multiple-unicast problems in network coding have multiple transmitter-receiver pairs which try to communicate their own individual messages. Unlike the previous problems, each transmitter only knows its own messages, and it is well-known that the cutset bound is not tight and the capacity region is open except several known cases~\cite{wang2010pairwise,wang2011two}.

Here, we try to convert multiple-unicast problem to the control over LTI network problems. The main difference between multiple-unicast and broadcast problems is the multiple transmitters. To capture this, we will introduce multiple observers\footnote{In section~\ref{sec:example}, we argued that the sources of the information flows for control are unstable states. However, when only explicit observers can directly observe the unstable states, the observers can be thought of as the sources of information.} to the previous control over LTI network problems. 

Figure~\ref{fig:unicast} shows the resulting problem. The only difference compared with  Figure~\ref{fig:multicast} is the multiple observers which do not share their observations directly. In this problem, we can easily prove that if there exists multiple unicast communication scheme from the observers to the controllers which accommodates enough information flow to stabilize the plants, we can independently stabilize the system.

\section{Conclusion}
In this paper, we take a unified approach to network coding and decentralized control by considering both problems as linear time-invariant systems. LTI-stabilizability of decentralized linear systems is
found to be equivalent to having sufficient capacity in the relevant LTI
networks. This equivalence can be exploited in both network coding and decentralized context.

In network coding, we found network linearization by introducing internal states and circulation arcs. The linearized network has not only equivalent mincut and maxflow to the original network, but also a simple topology, acyclic single-hop relay. These properties lead to a simple and elegant proof of an algebraic mincut-maxflow theorem.

In decentralized control, we gave an algorithm to make explicit communication networks
that represent the implicit communication required to stabilize the
plant. The stabilizability condition of decentralized systems is then
easily interpreted using mincut conditions on the corresponding
networks. Each eigenvalue is viewed separately, and the number of
Jordan blocks corresponding to that eigenvalue corresponds to the
number of degrees-of-freedom of implicit communication required to
stabilize that eigenvalue. The algebraic condition for fixed modes
that was reported in \cite{Anderson_Algebraic} and had, in our
opinion, remained mysterious for 30 years turns out to be a special
case of the algebraic mincut-maxflow theorem. This also confirms that
LTI controllers in decentralized control systems implicitly
communicate via linear network coding.

The connection to network coding becomes even more clear when we consider stabilization problems
with an explicit communication network. By introducing the concepts of alternative stabilizability and independent stabilizability, we successfully convert the network coding results to the equivalent stabilizability results.

Taking a step back, the general idea of implicit communication
(signaling) between decentralized controllers and information flow in
decentralized systems has been recognized since Witsenhausen's
counterexample~\cite{Witsenhausen_Counterexample}. However, in
Witsenhausen's counterexample the need for communication
between controllers is justified by the suboptimality of linear
controllers, {\em i.e.}~if the decentralized controllers want to
communicate with each other for efficient control of the system, they
would do so using nonlinear controllers for
signaling~\cite{Witsenhausen_Separation,Ho_Teams,Pulkit_Witsen}. However,
we showed here that even if we restrict controllers to be linear
time-invariant, the controllers still can communicate via linear
network coding. To an extent, this paper does for implicit
communication what \cite{Sinopoli_Kalman,Elia_Remote} did vis-a-vis
\cite{Sahai_Anytime, Sahai_Thesis} for explicit communication --- it
finds a way to discuss the issue within a linear framework. In fact,
the existence of implicit communication between linear controllers in
decentralized systems has been conjectured for a long time
\cite{Anderson_Timevarying,Cormat_Decentralized,Anderson_Transfer,Yuksel_Decentralized}. In
a sense, we hope that this paper clarifies these discussions.

\bibliographystyle{plain}
\bibliography{IEEEabrv,seyongbib}

\appendix
\subsection{Network Linearization for General Information Flow}
\label{app:networklin}
In this section, we will extend the network linearization of Section~\ref{sec:networklin} to general information flow cases -- multicast, broadcast and multiple-unicast. The main idea for this generalization is the relationship between network linearization and control over LTI networks discussed in Section~\ref{sec:stablilizationoverLTI}. 
%

\subsubsection{Multicast}
\begin{figure}[top]
\includegraphics[width = 3in]{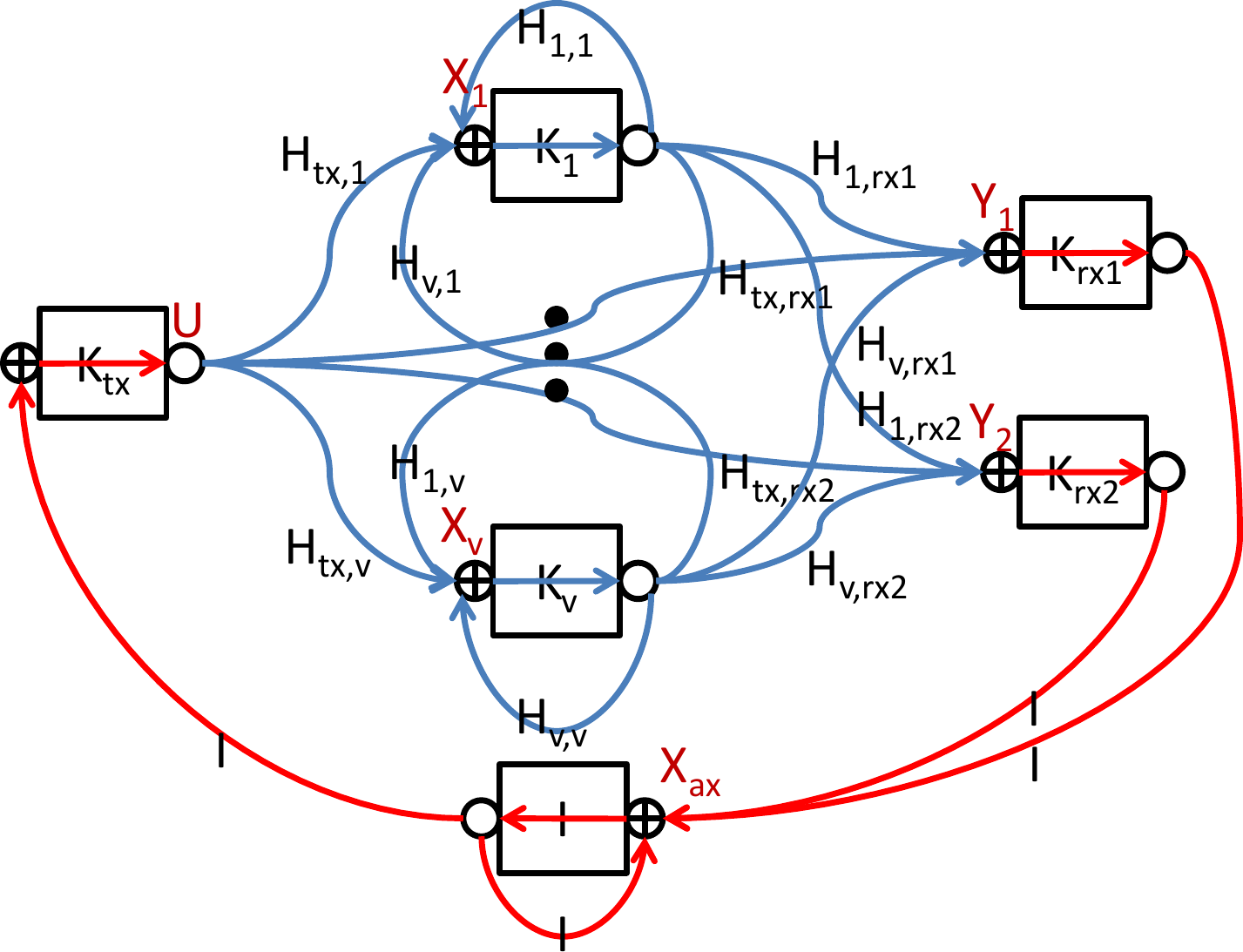}
\caption{Multicast LTI network $\mathcal{N}_{mul}(Z)$ with circulation arc added in}
\label{fig:LN_multicast}
\end{figure}
From the above discussion, we can expect that to linearize multicast problems, we have to introduce circulation arcs in a way that corresponds with Fig.~\ref{fig:LN_ptop}. Fig.~\ref{fig:LN_multicast} shows how the circulation arc has to be introduced. One circulation arc (which corresponds to an unstable plant as discussed in Section~\ref{sec:stablilizationoverLTI:ptop}) is connected to both receivers.

We will essentially use the same notation and assumptions as Section~\ref{sec:networklin}. Let the one-transmitter two-receiver LTI network of Fig.~\ref{fig:LN_multicast} without circulation arcs be $\mathcal{N}_{mul}(z)$. Denote the dimension of $Y_1$ as $d_{rx1}$ and $Y_2$ as $d_{rx2}$. Let the transfer function from the transmitter to the receiver $1$ of $\mathcal{N}_{mul}(z)$ be $G_{tx,rx1}(z,K)$, and the transfer function from the transmitter to the receiver $2$ be $G_{tx,rx2}(z,K)$. Here, the transfer function can be computed in the same way as Theorem~\ref{thm:transfer}.

Then, similar to Section~\ref{sec:networklin}, the following relation has to hold:
\begin{align}
&\begin{bmatrix}
X_{ax} \\
Y_1 \\
Y_2 \\
X_1 \\
\vdots \\
X_v
\end{bmatrix}
=
\begin{bmatrix}
I & K_{rx1} & K_{rx2} & 0 & \cdots & 0 \\
H_{tx,rx1}K_{tx} & 0 & 0 & H_{1,rx1} K_1 & \cdots & H_{v,rx1}K_v \\
H_{tx,rx2}K_{tx} & 0 & 0 & H_{1,rx2} K_1 & \cdots & H_{v,rx2}K_v \\
H_{tx,1} K_{tx} & 0 & 0 & H_{1,1} K_1 & \cdots & H_{v,1} K_v \\
\vdots & \vdots & \vdots & \vdots & \ddots & \vdots \\
H_{tx,v}K_{tx} & 0 & 0 & H_{1,v}K_1 & \cdots & H_{v,v}K_v
\end{bmatrix}
\begin{bmatrix}
X_{ax} \\
Y_1 \\
Y_2 \\
X_1 \\
\vdots \\
X_v
\end{bmatrix} \nonumber \\
&
(\Leftrightarrow)
\underbrace{\begin{bmatrix}
0 & -K_{rx1} & -K_{rx2} & 0 & \cdots & 0 \\
-H_{tx,rx1}K_{tx} & I & 0 & -H_{1,rx1} K_1 & \cdots & -H_{v,rx1}K_v \\
-H_{tx,rx2}K_{tx} & 0 & I & -H_{1,rx2} K_1 & \cdots & -H_{v,rx2}K_v \\
-H_{tx,1} K_{tx} & 0 & 0 & I-H_{1,1} K_1 & \cdots & -H_{v,1} K_v \\
\vdots & \vdots & \vdots & \vdots & \ddots & \vdots \\
-H_{tx,v}K_{tx} & 0 & 0 & -H_{1,v}K_1 & \cdots & I-H_{v,v}K_v
\end{bmatrix}}
_{:=G_{lin}(z,K)}
\begin{bmatrix}
X_{ax} \\
Y_1 \\
Y_2 \\
X_1 \\
\vdots \\
X_v
\end{bmatrix}
=
\begin{bmatrix}
0 \\ 0 \\ 0 \\ 0 \\ \vdots \\  0
\end{bmatrix} \nonumber
\end{align}
Then, we have
\begin{align}
G_{lin}(z,K)&=
\underbrace{\begin{bmatrix}
0 & 0 & 0 & 0 & \cdots & 0 \\
0 & I & 0 & 0 & \cdots & 0 \\
0 & 0 & I & 0 & \cdots & 0 \\
0 & 0 & 0 & I & \cdots & 0 \\
\vdots & \vdots & \vdots & \ddots & \vdots \\
0 & 0 & 0 & 0 & \cdots & I \\
\end{bmatrix}}_{:=A}
+
\underbrace{
\begin{bmatrix}
0 \\ H_{tx,rx1} \\ H_{tx,rx2} \\ H_{tx,1} \\ \vdots \\ H_{tx,v}
\end{bmatrix}}_{:=B_{tx}}
K_{tx}
\underbrace{
\begin{bmatrix}
-I & 0 & 0 & 0 & \cdots & 0
\end{bmatrix}}_{:=C_{tx}} \\
&+
\underbrace{
\begin{bmatrix}
0 \\ H_{1,rx1} \\ H_{1,rx2} \\ H_{1,1} \\ \vdots \\ H_{1,v}
\end{bmatrix}}_{:=B_1}
K_1
\underbrace{
\begin{bmatrix}
0 & 0 & 0 & -I & \cdots & 0
\end{bmatrix}}_{:=C_1}+\cdots+
\underbrace{
\begin{bmatrix}
0 \\ H_{v,rx1} \\ H_{v,rx2} \\ H_{v,1} \\ \vdots \\ H_{v,v}
\end{bmatrix}}_{:=B_v}
K_v
\underbrace{
\begin{bmatrix}
0 & 0 & 0 & 0 & \cdots & -I
\end{bmatrix}}_{:=C_v}\\
&+
\underbrace{
\begin{bmatrix}
I \\ 0 \\ 0 \\ 0 \\ \vdots \\ 0
\end{bmatrix}}_{:=B_{rx1}}
K_{rx1}
\underbrace{
\begin{bmatrix}
0 & -I & 0 & 0 & \cdots & 0
\end{bmatrix}}_{:=C_{rx1}}
+
\underbrace{
\begin{bmatrix}
I \\ 0 \\ 0 \\ \vdots \\ 0
\end{bmatrix}}_{:=B_{rx2}}
K_{rx2}
\underbrace{
\begin{bmatrix}
0 & 0 & -I & 0 & \cdots & 0
\end{bmatrix}}_{:=C_{rx2}}
\end{align}
Let
\begin{align}
&G_{tx',rx1'}(z,K):=A+\sum_{1 \leq i \leq v} B_i K_i C_i + B_{tx}K_{tx}C_{tx} + B_{rx1}K_{rx1}C_{rx1} \\
&G_{tx',rx2'}(z,K):=A+\sum_{1 \leq i \leq v} B_i K_i C_i + B_{tx}K_{tx}C_{tx} + B_{rx2}K_{rx2}C_{rx2}
\end{align}
and $d:=\dim \begin{bmatrix} Y_1 \\ Y_2 \\ X_1 \\ \vdots \\ X_v \end{bmatrix}$.
Let $\mathcal{N}_{lin}^{mul}(z)$ be the network shown in Fig.~\ref{fig:LN_multicastlin}.
Then, we can easily see $G_{tx',rx1'}(z,K)$ is the transfer function from $tx'$ to $rx_1'$ of $\mathcal{N}_{mul}^{lin}(z)$, and $G_{tx',rx2'}(z,K)$ is the transfer function from $tx'$ to $rx_2'$ of $\mathcal{N}_{mul}^{lin}(z)$.
\begin{figure}[top]
\includegraphics[width = 3in]{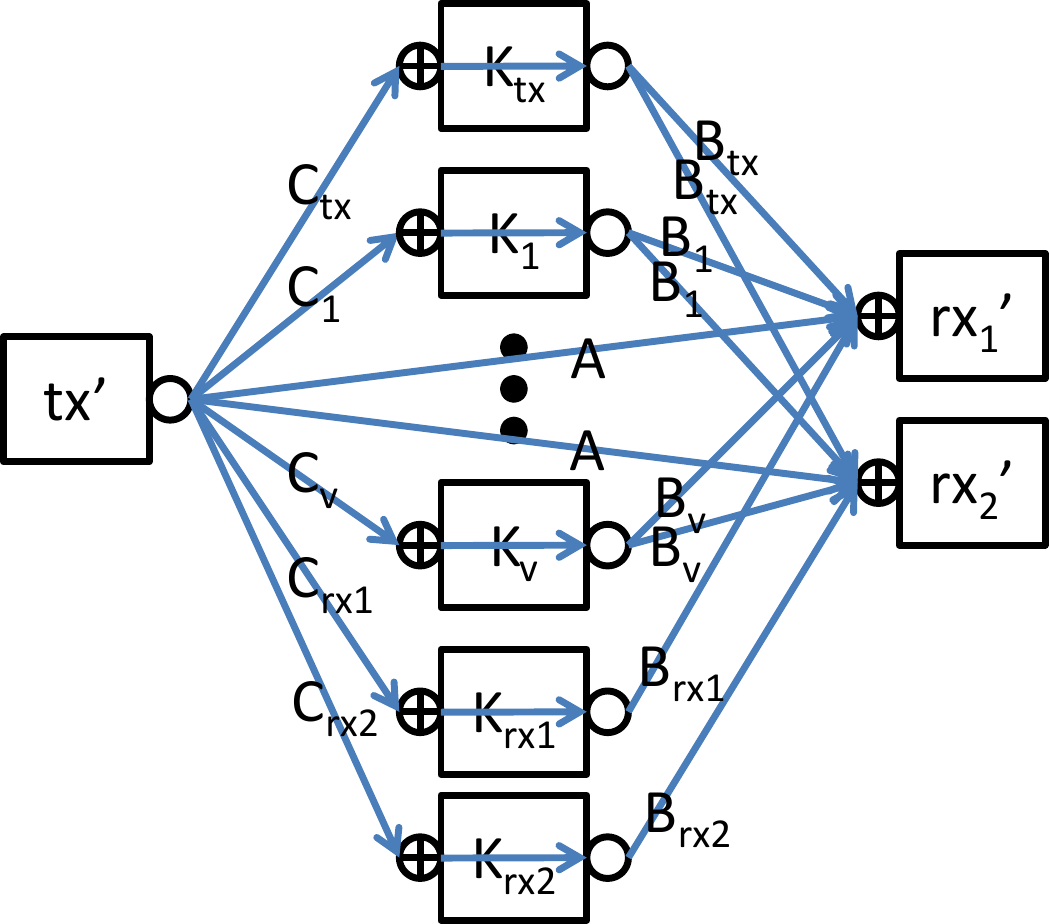}
\caption{Linearized LTI network of Multicast problem, $\mathcal{N}_{mul}^{lin}(z)$}
\label{fig:LN_multicastlin}
\end{figure}

Then, like Section~\ref{sec:networklin} we can show the equivalence between $\mathcal{N}_{mul}(z)$ and $\mathcal{N}_{mul}^{lin}(z)$.
\begin{theorem}
Let $K_{tx} \in \mathbb{F}[z]^{d_{tx}\times d_{ax}}$, $K_i \in \mathbb{F}[z]^{d_{i,in} \times d_{i,out}}$, $K_{rx1} \in \mathbb{F}[z]^{d_{ax} \times d_{rx1}}$ and $K_{rx2} \in \mathbb{F}[z]^{d_{ax} \times d_{rx2}}$. We also assume that
\begin{align}
\begin{bmatrix}
&I-H_{1,1}K_1 & \cdots & -H_{v,1} K_v \\
& \vdots & \ddots & \vdots \\
& -H_{1,v}K_1 & \cdots & I - H_{v,v} K_v
\end{bmatrix} \mbox{ is invertible.}
\end{align}
Then, for all $d_1, d_2 \in \mathbb{Z}^+$
\begin{align}
&(i) \rank (K_{rx1}(z) G_{tx,rx1}(z,K(z)) K_{tx}(z)) \geq d_1 \\
&(ii) \rank (K_{rx2}(z) G_{tx,rx2}(z,K(z)) K_{tx}(z)) \geq d_2
\end{align}
if and only if
\begin{align}
&(a) \rank G_{tx',rx1'}(z,K(z)) \geq d+d_1\\
&(b) \rank G_{tx',rx2'}(z,K(z)) \geq d+d_2
\end{align}
\label{thm:LIN:mul}
\end{theorem}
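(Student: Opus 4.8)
The plan is to reduce Theorem~\ref{thm:LIN:mul} to two independent rank identities, one per receiver, each proved by the same block-elimination argument already used for Lemma~\ref{lem:LIN:maxflow}. Specifically, I would show that for $j\in\{1,2\}$ and any substitution $K_\bullet(z)$ for which the relay block of the hypothesis is invertible,
\[
\rank G_{tx',rxj'}(z,K(z)) \;=\; d \;+\; \rank\bigl(K_{rxj}(z)\,G_{tx,rxj}(z,K(z))\,K_{tx}(z)\bigr).
\]
Once this is in hand, the stated equivalence is immediate: the $j=1$ identity makes (i) equivalent to (a), the $j=2$ identity makes (ii) equivalent to (b), so (i) and (ii) hold simultaneously iff (a) and (b) do, for every $d_1,d_2\in\mathbb{Z}^+$.

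To prove the identity for a fixed $j$, I would partition $G_{tx',rxj'}(z,K(z))$ along the block structure $(X_{ax}\,;\,Y_1\,;\,Y_2\,;\,X_1\,;\dots;\,X_v)$ so as to peel off the leading $X_{ax}$-block, writing it as $\begin{bmatrix} 0_{d_{ax}} & B' \\ C' & D \end{bmatrix}$, where $B'$ is the remainder of the $X_{ax}$-row ($-K_{rxj}$ in the $Y_j$-slot and $0$ elsewhere), $C'$ the remainder of the $X_{ax}$-column, and $D$ the $d\times d$ trailing block indexed by $(Y_1,Y_2,X_1,\dots,X_v)$. The key structural observation is that $D$ is block upper triangular: its $Y_1$- and $Y_2$-rows are $[\,I\ 0\ \ast\,]$ and $[\,0\ I\ \ast\,]$, while its $X_i$-rows form exactly the relay block $I-[H_{i,j}K_i]_{i,j}$ that the hypothesis assumes invertible. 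Hence $D$ is invertible with $\rank D=d$, and Lemma~\ref{lem:LIN:rank} gives $\rank G_{tx',rxj'} = \rank D + \rank(0-B'D^{-1}C') = d + \rank(B'D^{-1}C')$. The remaining step is to evaluate the Schur complement $B'D^{-1}C'$; using the explicit block-triangular form of $D^{-1}$ and the fact that $B'$ meets only the $Y_j$-block-row of $D^{-1}$, this is precisely the computation of steps (A)--(E) in the proof of Lemma~\ref{lem:LIN:maxflow} carried out for receiver $rxj$, and it simplifies to $K_{rxj}\bigl(H_{tx,rxj}+[H_{1,rxj}K_1\ \cdots\ H_{v,rxj}K_v](I-[H_{i,j}K_i])^{-1}[H_{tx,1};\dots;H_{tx,v}]\bigr)K_{tx}$, which by Theorem~\ref{thm:transfer} (applied to $\mathcal{N}_{mul}(z)$ and receiver $rxj$) equals $K_{rxj}\,G_{tx,rxj}\,K_{tx}$.

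I expect the main obstacle to be not any difficult mathematics but the bookkeeping forced by the multicast structure. Unlike the point-to-point $G_{lin}$, the matrix $G_{tx',rxj'}$ deliberately drops the cross-feedback term $B_{rx(3-j)}K_{rx(3-j)}C_{rx(3-j)}$ --- receiver $3-j$ is not a relay serving receiver $j$ --- while still keeping the $Y_{3-j}$ identity block of $A$. The care needed is to check explicitly that this leftover identity block is absorbed into the invertible trailing block $D$ and that, since $B'$ vanishes in the $Y_{3-j}$ slot, it contributes nothing to $B'D^{-1}C'$; this is what guarantees that the offset is exactly $d=\dim[Y_1;Y_2;X_1;\dots;X_v]$ and is unchanged by the second receiver. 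With that verified, the two identities follow, hence the theorem, and the extension to $p$ receivers goes through verbatim.
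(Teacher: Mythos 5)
Your proposal is correct and follows essentially the same route as the paper, whose proof of Theorem~\ref{thm:LIN:mul} is simply to repeat the argument of Lemma~\ref{lem:LIN:maxflow}: you carry out exactly that per-receiver block elimination, using the invertibility of the relay block to apply Lemma~\ref{lem:LIN:rank}, evaluating the Schur complement as $K_{rxj}G_{tx,rxj}K_{tx}$ via Theorem~\ref{thm:transfer}, and correctly noting that the untouched $Y_{3-j}$ identity block sits inside the invertible trailing block and does not disturb the offset $d$. Nothing further is needed.
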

\begin{proof}
Similar to Lemma~\ref{lem:LIN:maxflow}.
\end{proof}

Remark 1. The result of this theorem can be easily generalized to multiple receivers, which we omit for simplicity.

Remark 2. To apply this theorem to multicast problems and send a message with rate $r$, we can simply put $d_1=d_2=r$. Moreover, just as we did in Figure~\ref{fig:networkunfoldinglin2}, the condition that $\begin{bmatrix}
&I-H_{1,1}K_1 & \cdots & -H_{v,1} K_v \\
& \vdots & \ddots & \vdots \\
& -H_{1,v}K_1 & \cdots & I - H_{v,v} K_v
\end{bmatrix}$ is invertible can be included as a part of communication problem by introducing an additional receiver. Following the similar procedure of Section~\ref{sec:linvsun}, we can design an LTI multicast scheme.


Remark 3. Fig.~\ref{fig:butterfly} and Fig.~\ref{fig:butterfly_lin} shows the famous butterfly example in network coding~\cite{Ahlswede_Network} and its corresponding linearized network. Here, we can see the linearized network has more input and output vertices, but is \textbf{topologically} simpler --- a single-hop multicast network.

\begin{figure}[top]
\includegraphics[width = 1.5in]{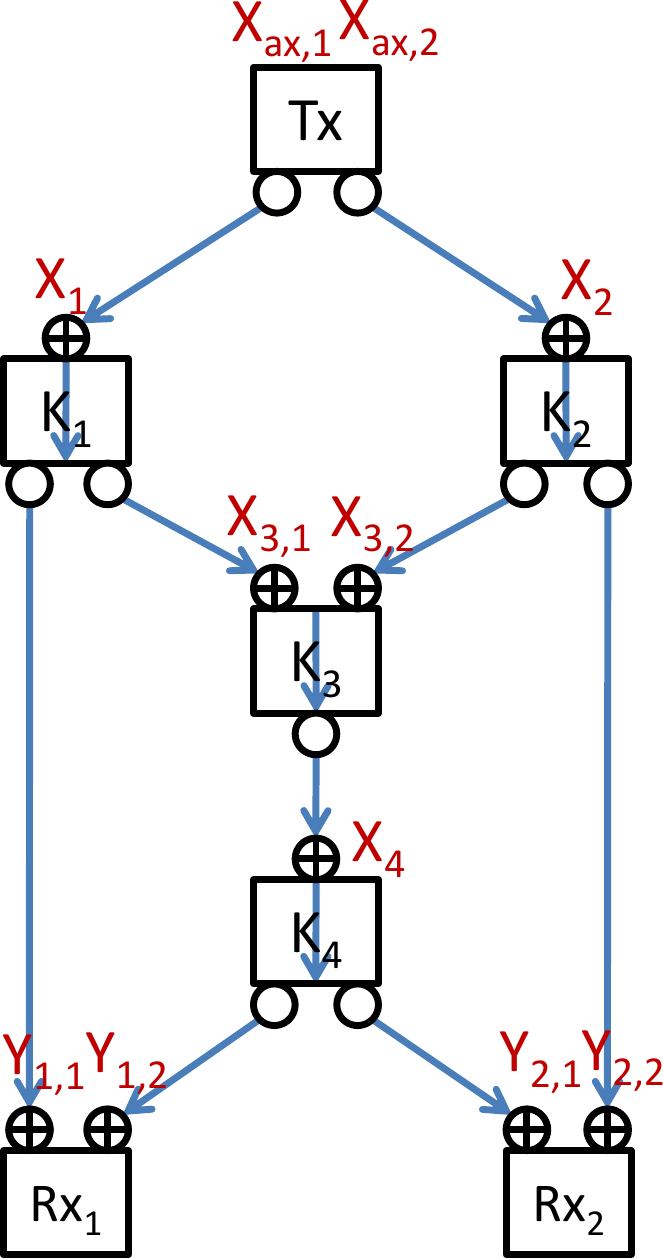}
\caption{Butterfly Example for Multicast. The gains of all edges are $1$.}
\label{fig:butterfly}
\end{figure}
\begin{figure}[top]
\includegraphics[width = 2in]{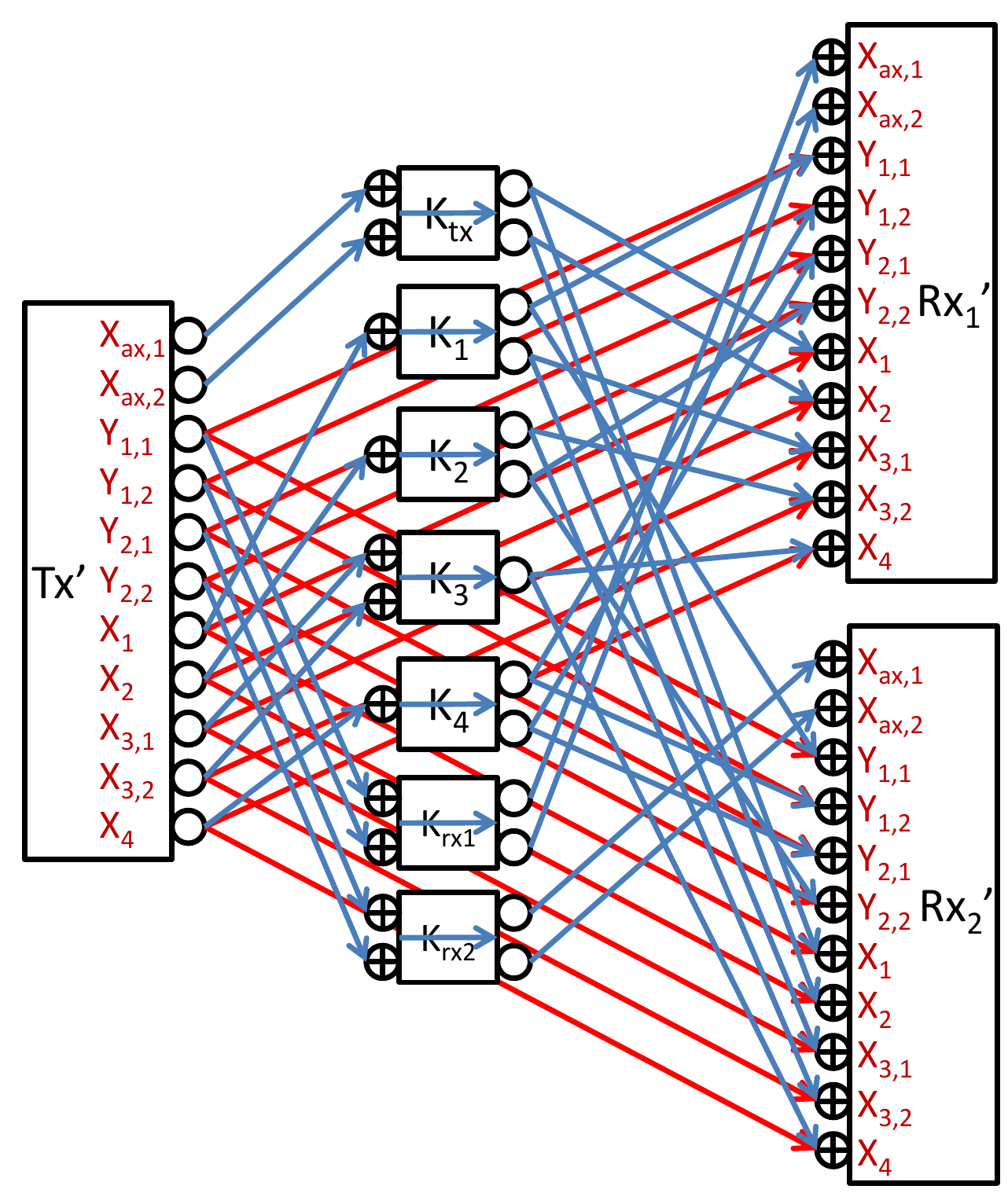}
\caption{Linearized Network for Butterfly Example of Fig.~\ref{fig:butterfly}. The gain of each edge from $Tx'$ to $K_{tx}$, $K_i$, $K_{rx1}$, $K_{rx2}$ is $-1$, and the gains for the other edges are all $1$.}
\label{fig:butterfly_lin}
\end{figure}

\subsubsection{Broadcast}
\begin{figure}[top]
\includegraphics[width = 3in]{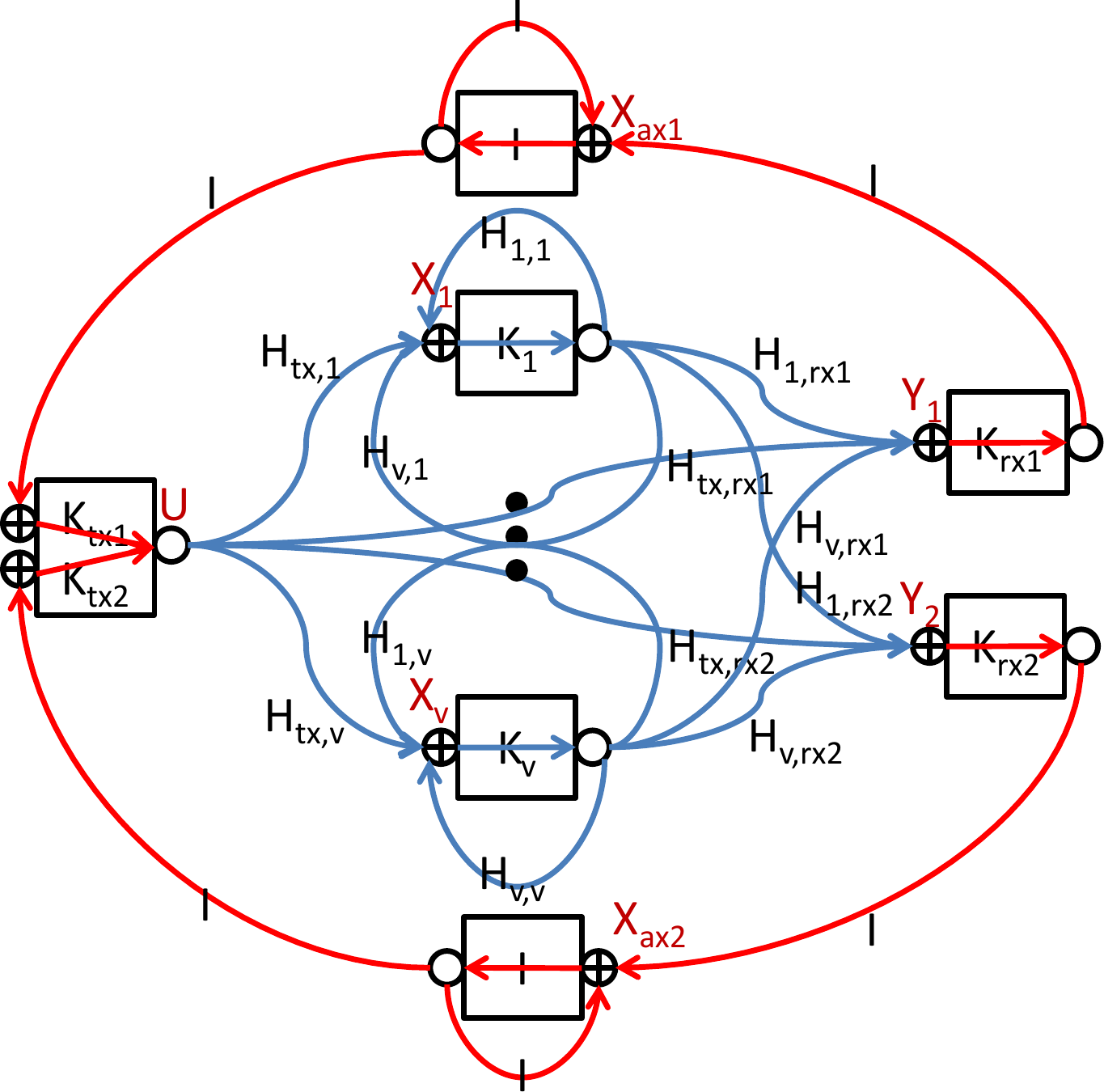}
\caption{Broadcast LTI network $\mathcal{N}_{br}(z)$ with circulation arcs added in}
\label{fig:LN_broadcast}
\end{figure}
Inspired by Figure~\ref{fig:broadcast}, we introduce circulation arcs as shown in Figure~\ref{fig:LN_broadcast} to linearize broadcast problems. We introduce two circulation arcs which correspond to two unstable plants of Figure~\ref{fig:broadcast}, and the two circulation arcs are connected to different receivers as two plants are controlled by different controllers in Figure~\ref{fig:broadcast}.

We basically use the same notations and assumptions of the previous section. Let the one-transmitter two-receiver LTI network of Fig.~\ref{fig:LN_broadcast} without circulation arcs be $\mathcal{N}_{br}(z)$. Denote the dimension of $X_{ax1}$ as $d_{ax1}$ and $X_{ax2}$ as $d_{ax2}$. Then, as we can see from the figure, $K_{tx1}$ is a $d_{tx} \times d_{ax1}$ matrix and $K_{tx2}$ is a $d_{tx} \times d_{ax2}$ matrix. Let the transfer function from the transmitter to the receiver $1$ of $\mathcal{N}_{mul}(z)$ be $G_{tx,rx1}(z,K)$, and the transfer function from the transmitter to the receiver $2$ be $G_{tx,rx2}(z,K)$.

Then, the following relation has to hold:
\begin{align}
&\begin{bmatrix}
X_{ax1} \\
X_{ax2} \\
Y_1 \\
Y_2 \\
X_1 \\
\vdots \\
X_v
\end{bmatrix}
=
\begin{bmatrix}
I & 0 & K_{rx1} & 0 & 0 & \cdots & 0 \\
0 & I & 0 & K_{rx2} & 0 & \cdots & 0 \\
H_{tx,rx1}K_{tx1} & H_{tx,rx1}K_{tx2} &  0 & 0 & H_{1,rx1} K_1 & \cdots & H_{v,rx1}K_v \\
H_{tx,rx2}K_{tx1} & H_{tx,rx2}K_{tx2} &  0 & 0 & H_{1,rx2} K_1 & \cdots & H_{v,rx2}K_v \\
H_{tx,1} K_{tx1} & H_{tx,1} K_{tx2} & 0 & 0 & H_{1,1} K_1 & \cdots & H_{v,1} K_v \\
\vdots & \vdots & \vdots & \vdots & \vdots & \ddots & \vdots \\
H_{tx,v} K_{tx1} & H_{tx,v} K_{tx2} & 0 & 0 & H_{1,v}K_1 & \cdots & H_{v,v}K_v
\end{bmatrix}
\begin{bmatrix}
X_{ax1} \\
X_{ax2} \\
Y_1 \\
Y_2 \\
X_1 \\
\vdots \\
X_v
\end{bmatrix} \nonumber \\
&
(\Leftrightarrow)
\underbrace{
\begin{bmatrix}
0 & 0 & -K_{rx1} & 0 & 0 & \cdots & 0 \\
0 & 0 & 0 & -K_{rx2} & 0 & \cdots & 0 \\
-H_{tx,rx1}K_{tx1} & -H_{tx,rx1}K_{tx2} &  I & 0 & -H_{1,rx1} K_1 & \cdots & -H_{v,rx1}K_v \\
-H_{tx,rx2}K_{tx1} & -H_{tx,rx2}K_{tx2} &  0 & I & -H_{1,rx2} K_1 & \cdots & -H_{v,rx2}K_v \\
-H_{tx,1} K_{tx1} & -H_{tx,1} K_{tx2} & 0 & 0 & I-H_{1,1} K_1 & \cdots & -H_{v,1} K_v \\
\vdots & \vdots & \vdots & \vdots & \vdots & \ddots & \vdots \\
-H_{tx,v} K_{tx1} & -H_{tx,v} K_{tx2} & 0 & 0 & -H_{1,v}K_1 & \cdots & I-H_{v,v}K_v
\end{bmatrix}
}
_{:=G_{br}^{lin}(z,K)}
\begin{bmatrix}
X_{ax1} \\
X_{ax2} \\
Y_1 \\
Y_2 \\
X_1 \\
\vdots \\
X_v
\end{bmatrix}
=
\begin{bmatrix}
0 \\ 0 \\ 0 \\ 0 \\ 0 \\ \vdots \\  0
\end{bmatrix} \nonumber
\end{align}
Thus, we have
\begin{align}
G_{br}^{lin}(z,K)&=
\underbrace{\begin{bmatrix}
0 & 0 & 0 & 0 & 0 & \cdots & 0 \\
0 & 0 & 0 & 0 & 0 & \cdots & 0 \\
0 & 0 & I & 0 & 0 & \cdots & 0 \\
0 & 0 & 0 & I & 0 & \cdots & 0 \\
0 & 0 & 0 & 0 & I & \cdots & 0 \\
\vdots & \vdots & \vdots & \vdots & \vdots & \ddots & \vdots \\
0 & 0 & 0 & 0 & 0 & \cdots & I \\
\end{bmatrix}}_{:=A}
+
\underbrace{
\begin{bmatrix}
0 \\ 0 \\ H_{tx,rx1} \\ H_{tx,rx2} \\ H_{tx,1} \\ \vdots \\ H_{tx,v}
\end{bmatrix}}_{:=B_{tx1}}
K_{tx1}
\underbrace{
\begin{bmatrix}
-I & 0 & 0 & 0 & 0 & \cdots & 0
\end{bmatrix}}_{:=C_{tx1}} \\
&
+
\underbrace{
\begin{bmatrix}
0 \\ 0 \\ H_{tx,rx1} \\ H_{tx,rx2} \\ H_{tx,1} \\ \vdots \\ H_{tx,v}
\end{bmatrix}}_{:=B_{tx2}}
K_{tx2}
\underbrace{
\begin{bmatrix}
0 & -I & 0 & 0 & 0 & \cdots & 0
\end{bmatrix}}_{:=C_{tx2}} \\
&+
\underbrace{
\begin{bmatrix}
0 \\ 0 \\ H_{1,rx1} \\ H_{1,rx2} \\ H_{1,1} \\ \vdots \\ H_{1,v}
\end{bmatrix}}_{:=B_1}
K_1
\underbrace{
\begin{bmatrix}
0 & 0 & 0 & 0 & -I & \cdots & 0
\end{bmatrix}}_{:=C_1}+\cdots+
\underbrace{
\begin{bmatrix}
0 \\ 0 \\ H_{v,rx1} \\ H_{v,rx2} \\ H_{v,1} \\ \vdots \\ H_{v,v}
\end{bmatrix}}_{:=B_v}
K_v
\underbrace{
\begin{bmatrix}
0 & 0 & 0 & 0 & 0 & \cdots & -I
\end{bmatrix}}_{:=C_v}\\
&+
\underbrace{
\begin{bmatrix}
I \\ 0 \\ 0 \\ 0 \\ 0 \\ \vdots \\ 0
\end{bmatrix}}_{:=B_{rx1}}
K_{rx1}
\underbrace{
\begin{bmatrix}
0 & 0 & -I & 0 & 0 & \cdots & 0
\end{bmatrix}}_{:=C_{rx1}}
+
\underbrace{
\begin{bmatrix}
0 \\ I \\ 0 \\ 0 \\ 0 \\ \vdots \\ 0
\end{bmatrix}}_{:=B_{rx2}}
K_{rx2}
\underbrace{
\begin{bmatrix}
0 & 0 & 0 & -I & 0 & \cdots & 0
\end{bmatrix}}_{:=C_{rx2}}
\end{align}

Let
\begin{align}
&G_{tx',rx11'}(z,K):=A+\sum_{1 \leq i \leq v} B_i K_i C_i + B_{tx1}K_{tx1}C_{tx1} + B_{rx1}K_{rx1}C_{rx1} \label{eqn:app:br1}\\
&G_{tx',rx22'}(z,K):=A+\sum_{1 \leq i \leq v} B_i K_i C_i + B_{tx2}K_{tx2}C_{tx2} + B_{rx2}K_{rx2}C_{rx2} \\
&G_{tx',rx12'}(z,K):=A+\sum_{1 \leq i \leq v} B_i K_i C_i + B_{tx2}K_{tx2}C_{tx2} + B_{rx1}K_{rx1}C_{rx1} \\
&G_{tx',rx12'}(z,K):=A+\sum_{1 \leq i \leq v} B_i K_i C_i + B_{tx1}K_{tx1}C_{tx1} + B_{rx2}K_{rx2}C_{rx2}
\end{align}
Let $\mathcal{N}_{lin}^{br}(z)$ be the network shown in Fig.~\ref{fig:LN_broadcastlin}.
Then, we can easily see $G_{tx',rx11'}(z,K),\cdots,G_{tx',rx12'}(z,K)$ corresponds to the transfer function from $tx'$ to $rx_{11}',\cdots,rx_{12}'$ of $\mathcal{N}_{lin}^{br}(z)$ respectively.

\begin{figure}[top]
\includegraphics[width = 3in]{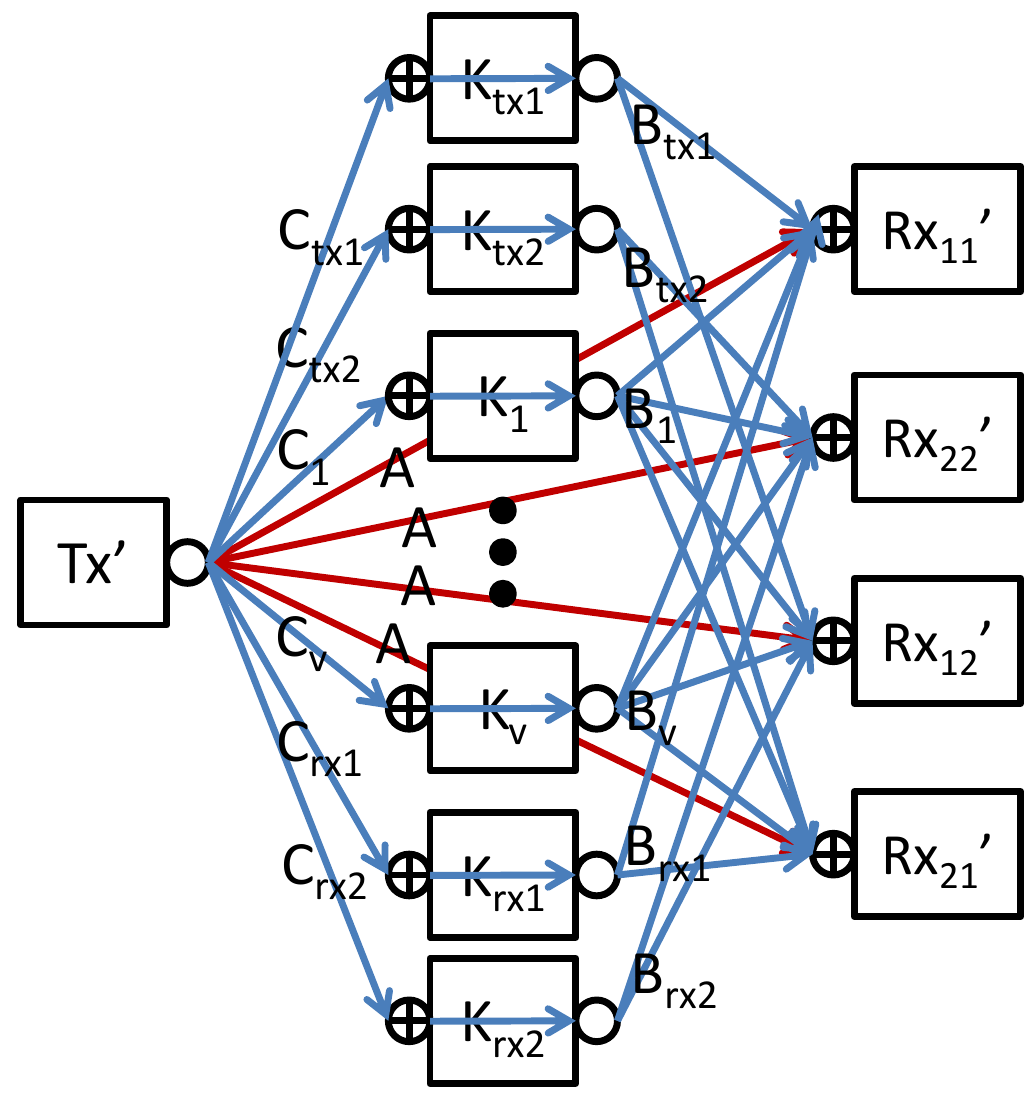}
\caption{Linearized LTI network of a Broadcast problem, $\mathcal{N}_{br}^{lin}(z)$}
\label{fig:LN_broadcastlin}
\end{figure}
Then, the relationship between $\mathcal{N}_{br}(z)$ and $\mathcal{N}_{br}^{lin}(z)$ is given as follows.
\begin{theorem}
Let $K_{tx1}(z) \in \mathbb{F}[z]^{d_{tx}\times d_{ax1}}$, $K_{tx2}(z) \in \mathbb{F}[z]^{d_{tx}\times d_{ax2}}$, $K_i(z) \in \mathbb{F}[z]^{d_{i,in} \times d_{i,out}}$, $K_{rx1}(z) \in \mathbb{F}[z]^{d_{ax1} \times d_{rx1}}$ and $K_{rx2}(z) \in \mathbb{F}[z]^{d_{ax2} \times d_{rx2}}$. We also assume that
\begin{align}
\begin{bmatrix}
&I-H_{1,1}K_1 & \cdots & -H_{v,1} K_v \\
& \vdots & \ddots & \vdots \\
& -H_{1,v}K_1 & \cdots & I - H_{v,v} K_v
\end{bmatrix} \mbox{ is invertible.}
\end{align}
Then, for all $d_1, d_2, d_3, d_4 \in \mathbb{Z}^+$, the following two conditions are equivalent.
\begin{align}
&(i) \rank K_{rx1}(z) G_{tx,rx1}(z,K(z)) K_{tx1}(z) \geq d_1 \\
&(ii) \rank K_{rx2}(z) G_{tx,rx2}(z,K(z)) K_{tx2}(z) \geq d_2 \\
&(iii) \rank K_{rx2}(z) G_{tx,rx2}(z,K(z)) K_{tx1}(z) \leq d_3 \\
&(iv) \rank K_{rx1}(z) G_{tx,rx1}(z,K(z)) K_{tx2}(z) \leq d_4
\end{align}
if and only if
\begin{align}
&(a) \rank G_{tx',rx11'}(z,K(z)) \geq d+d_1 \\
&(b) \rank G_{tx',rx22'}(z,K(z)) \geq d+d_2 \\
&(c) \rank G_{tx',rx12'}(z,K(z)) \leq d+d_3\\
&(d) \rank G_{tx',rx21'}(z,K(z)) \leq d+d_4
\end{align}
\label{thm:LIN:br}
\end{theorem}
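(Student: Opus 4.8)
The plan is to derive the theorem from four applications of the maxflow-equivalence argument already worked out in Lemma~\ref{lem:LIN:maxflow}. Concretely, I would first establish the four rank identities
\begin{align}
&\rank G_{tx',rx11'}(z,K(z)) = d + \rank\big(K_{rx1}(z)\,G_{tx,rx1}(z,K(z))\,K_{tx1}(z)\big), \nonumber\\
&\rank G_{tx',rx22'}(z,K(z)) = d + \rank\big(K_{rx2}(z)\,G_{tx,rx2}(z,K(z))\,K_{tx2}(z)\big), \nonumber\\
&\rank G_{tx',rx12'}(z,K(z)) = d + \rank\big(K_{rx1}(z)\,G_{tx,rx1}(z,K(z))\,K_{tx2}(z)\big), \nonumber\\
&\rank G_{tx',rx21'}(z,K(z)) = d + \rank\big(K_{rx2}(z)\,G_{tx,rx2}(z,K(z))\,K_{tx1}(z)\big). \nonumber
\end{align}
Given these, the equivalence of (i)--(iv) with (a)--(d) is immediate: one simply subtracts the common offset $d$ from each identity, so that a bound ``$\ge d+d_k$'' (resp.\ ``$\le d+d_k$'') on one side becomes the matching bound ``$\ge d_k$'' (resp.\ ``$\le d_k$'') on the other. (The cross terms match conditions (iii)--(iv) after pairing $G_{tx',rx21'}$ with (iii) and $G_{tx',rx12'}$ with (iv), which the computation below pins down unambiguously.)

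To obtain each identity I would first read off the support pattern of $G_{br}^{lin}(z,K)=A+\sum_i B_iK_iC_i+B_{tx1}K_{tx1}C_{tx1}+B_{tx2}K_{tx2}C_{tx2}+B_{rx1}K_{rx1}C_{rx1}+B_{rx2}K_{rx2}C_{rx2}$. By the definitions of the $B$'s and $C$'s, the term $B_{txi}K_{txi}C_{txi}$ is supported only on the block column indexed by $X_{axi}$, the term $B_{rxj}K_{rxj}C_{rxj}$ only on the block row indexed by $X_{axj}$, and $A,B_i,C_i$ live entirely on the $Y_1,Y_2,X_1,\dots,X_v$ coordinates. Hence in $G_{tx',rx11'}$ both the block row and the block column indexed by $X_{ax2}$ are identically zero; deleting them leaves precisely the point-to-point linearized matrix of Section~\ref{sec:networklin} on the internal states $Y_1,Y_2,X_1,\dots,X_v$, with circulation arc $X_{ax1}$, transmitter pre-processor $K_{tx1}$, and a single feedback receiver block $Y_1$ post-processed by $K_{rx1}$ (the $Y_2$ block is a passive, non-fed-back observation that rides inside the invertible core). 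The same bookkeeping shows $G_{tx',rx22'}$ loses its $X_{ax1}$ blocks, while $G_{tx',rx12'}$ (resp.\ $G_{tx',rx21'}$) has a zero $X_{ax2}$ (resp.\ $X_{ax1}$) block row together with a zero $X_{ax1}$ (resp.\ $X_{ax2}$) block column; deleting these turns the matrix into the ``cross-loop'' form that injects $X_{axi}$ into the transmitter through $K_{txi}$ and returns receiver $j$ to $X_{axj}$ through $K_{rxj}$.

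With these reductions done, each residual matrix has the $2\times2$ block form $\left[\begin{smallmatrix} 0 & B' \\ C' & D' \end{smallmatrix}\right]$, where the $0,B',C'$ blocks occupy the rows/columns of the surviving circulation arc and $D'$ is the remaining $d\times d$ block. The block $D'$ is invertible: it is block upper-triangular with identity blocks on the $Y_1,Y_2$ coordinates and, on the $X_1,\dots,X_v$ coordinates, exactly the matrix assumed invertible in the theorem statement --- which is also precisely what makes $G_{tx,rx1}(z,K(z))$ and $G_{tx,rx2}(z,K(z))$ well defined via Theorem~\ref{thm:transfer}. Applying Lemma~\ref{lem:LIN:rank} (its elementary-row-reduction proof carries over verbatim to a non-square top-left block) gives $\rank\left[\begin{smallmatrix} 0 & B' \\ C' & D' \end{smallmatrix}\right]=\rank D'+\rank(-B'D'^{-1}C')=d+\rank(B'D'^{-1}C')$, and evaluating the Schur complement $B'D'^{-1}C'$ exactly as in steps (C)--(E) of the proof of Lemma~\ref{lem:LIN:maxflow}, using the closed form of $G_{tx,rxj}$ from Theorem~\ref{thm:transfer}, yields $B'D'^{-1}C'=\pm K_{rxj}(z)\,G_{tx,rxj}(z,K(z))\,K_{txi}(z)$. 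Since rank ignores the overall sign, the desired identity follows. The main obstacle is purely this index bookkeeping: for each of the four matrices one must verify which circulation arc supplies the vanishing block row and which the vanishing block column, and keep the labels straight so that the Schur complement lands on $K_{rxj}G_{tx,rxj}K_{txi}$ rather than on the other cross term or a transpose. This is the only genuine novelty relative to Lemma~\ref{lem:LIN:maxflow}, where a single circulation arc leaves no ambiguity; once the block pattern is fixed, I would present $G_{tx',rx11'}$ and $G_{tx',rx12'}$ in full and invoke the evident $1\leftrightarrow2$ symmetry for the other two.
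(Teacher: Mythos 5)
Your proposal is correct and follows essentially the same route as the paper, whose entire proof is the one-line remark ``Similar to Lemma~\ref{lem:LIN:maxflow}'': you apply that lemma's Schur-complement rank argument (via Lemma~\ref{lem:LIN:rank}) four times, once per transmitter--receiver pairing, with the assumed invertibility of the relay block supplying the invertible $D$ needed in each application. Your extra care about the rectangular top-left zero block in the cross cases and about pairing $G_{tx',rx12'}$ and $G_{tx',rx21'}$ with conditions (iv) and (iii) is consistent with the paper's displayed definitions (which themselves contain a labeling typo) and merely fills in the bookkeeping the paper leaves implicit.
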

\begin{proof}
Similar to Lemma~\ref{lem:LIN:maxflow}.
\end{proof}

Remark 1. The result of this theorem can be easily generalized to multiple receivers. In three receiver case, we will see 9 conditions, and for general $n$ receiver case, we will see $n^2$ conditions.

Remark 2. To design a broadcast scheme which communicates a message with rate $r_1$ to receiver $1$ and at the same time another message with rate $r_2$ to receiver $2$, we can choose the problem parameters as $d_1=r_1, d_2=r_2, d_3=0, d_4=0$. Any scheme which satisfies the condition $(a)-(d)$, and the existence condition of transfer functions can be immediately applied to the original problem and give a broadcast communication scheme.

Remark 3. The linearized network of Figure~\ref{fig:LN_broadcastlin} can be understood as a two-receiver and two-eavesdropper secrecy problem. The receivers $rx11'$ and $rx22'$ want to receive $d+d_1$ and $d+d_2$ dimensional information about the messages (possibly, common) respectively. While at the same time, we do not want to give more than $d+d_3$ and $d+d_4$ dimensions about the message to the eavesdroppers $rx12'$ and $rx21'$.

The receivers $rx11'$ and $rx22'$ in the linearized network reflect that the desired messages have to be received in the original problem. The eavesdropper $rx12'$ and $rx21'$ in the linearized network reflects that the undesired messages has to be removable in the original problem.

\subsubsection{Multiple-Unicast}
\begin{figure}[top]
\includegraphics[width = 3in]{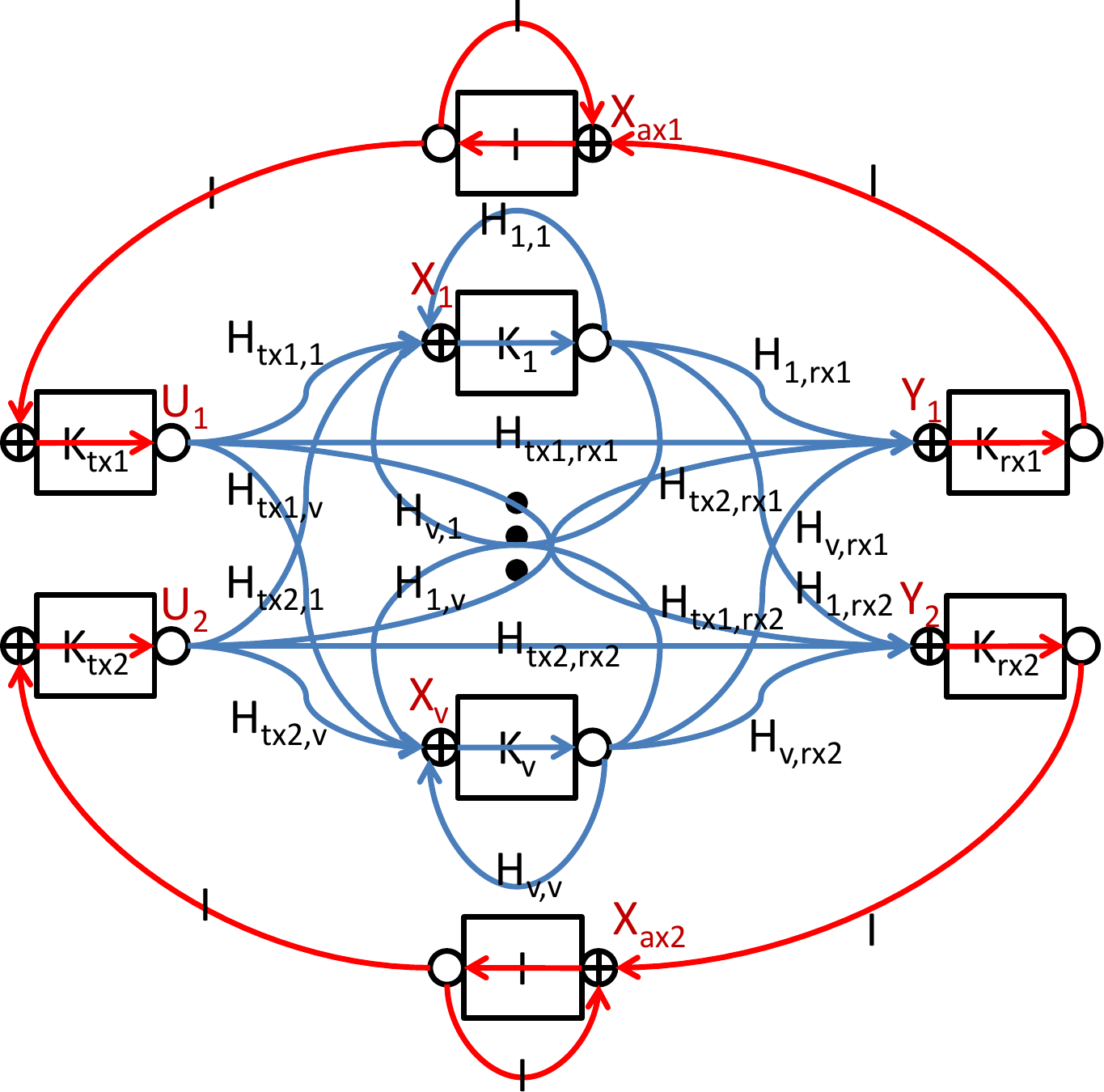}
\caption{Multiple Unicast LTI network $\mathcal{N}_{uni}(z)$ with circulation arc added in}
\label{fig:LN_unicast}
\end{figure}

As the only difference between Figure~\ref{fig:broadcast} and Figure~\ref{fig:unicast} is the observers, we introduce circulation arcs in the same way as the broadcast problems in Figure~\ref{fig:LN_broadcast}. Fig.~\ref{fig:LN_unicast} shows the multiple-unicast LTI network $\mathcal{N}_{uni}(z)$ with the circulation arcs. 

We essentially repeat the previous argument. Let's use the same notations and assumptions of the previous section. Denote the dimension of $U_1, U_2, Y_1, Y_2$ as $d_{tx1}, d_{tx2}, d_{rx1}, d_{rx2}$ respectively. The transfer functions between the transmitters and the receivers are denoted as $G_{tx1,rx1}(z,K)$, $G_{tx1,rx2}(z,K)$, $G_{tx2,rx1}(z,K)$, $G_{tx2,rx2}(z,K)$.

Then, we have the following relationship.
\begin{align}
&\begin{bmatrix}
X_{ax1} \\
X_{ax2} \\
Y_1 \\
Y_2 \\
X_1 \\
\vdots \\
X_v
\end{bmatrix}
=
\begin{bmatrix}
I & 0 & K_{rx1} & 0 & 0 & \cdots & 0 \\
0 & I & 0 & K_{rx2} & 0 & \cdots & 0 \\
H_{tx1,rx1}K_{tx1} & H_{tx2,rx1}K_{tx2} &  0 & 0 & H_{1,rx1} K_1 & \cdots & H_{v,rx1}K_v \\
H_{tx1,rx2}K_{tx1} & H_{tx2,rx2}K_{tx2} &  0 & 0 & H_{1,rx2} K_1 & \cdots & H_{v,rx2}K_v \\
H_{tx1,1} K_{tx1} & H_{tx2,1} K_{tx2} & 0 & 0 & H_{1,1} K_1 & \cdots & H_{v,1} K_v \\
\vdots & \vdots & \vdots & \vdots & \vdots & \ddots & \vdots \\
H_{tx1,v} K_{tx1} & H_{tx2,v} K_{tx2} & 0 & 0 & H_{1,v}K_1 & \cdots & H_{v,v}K_v
\end{bmatrix}
\begin{bmatrix}
X_{ax1} \\
X_{ax2} \\
Y_1 \\
Y_2 \\
X_1 \\
\vdots \\
X_v
\end{bmatrix} \nonumber \\
&
(\Leftrightarrow)
\underbrace{
\begin{bmatrix}
0 & 0 & -K_{rx1} & 0 & 0 & \cdots & 0 \\
0 & 0 & 0 & -K_{rx2} & 0 & \cdots & 0 \\
-H_{tx1,rx1}K_{tx1} & -H_{tx2,rx1}K_{tx2} &  I & 0 & -H_{1,rx1} K_1 & \cdots & -H_{v,rx1}K_v \\
-H_{tx1,rx2}K_{tx1} & -H_{tx2,rx2}K_{tx2} &  0 & I & -H_{1,rx2} K_1 & \cdots & -H_{v,rx2}K_v \\
-H_{tx1,1} K_{tx1} & -H_{tx2,1} K_{tx2} & 0 & 0 & I-H_{1,1} K_1 & \cdots & -H_{v,1} K_v \\
\vdots & \vdots & \vdots & \vdots & \vdots & \ddots & \vdots \\
-H_{tx1,v} K_{tx1} & -H_{tx2,v} K_{tx2} & 0 & 0 & -H_{1,v}K_1 & \cdots & I-H_{v,v}K_v
\end{bmatrix}
}
_{:=G_{uni}^{lin}(z,K)}
\begin{bmatrix}
X_{ax1} \\
X_{ax2} \\
Y_1 \\
Y_2 \\
X_1 \\
\vdots \\
X_v
\end{bmatrix}
=
\begin{bmatrix}
0 \\ 0 \\ 0 \\ 0 \\ 0 \\ \vdots \\  0
\end{bmatrix} \nonumber
\end{align}

Therefore, we have
\begin{align}
G_{uni}^{lin}(z,K)&=
\underbrace{\begin{bmatrix}
0 & 0 & 0 & 0 & 0 & \cdots & 0 \\
0 & 0 & 0 & 0 & 0 & \cdots & 0 \\
0 & 0 & I & 0 & 0 & \cdots & 0 \\
0 & 0 & 0 & I & 0 & \cdots & 0 \\
0 & 0 & 0 & 0 & I & \cdots & 0 \\
\vdots & \vdots & \vdots & \vdots & \vdots & \ddots & \vdots \\
0 & 0 & 0 & 0 & 0 & \cdots & I \\
\end{bmatrix}}_{:=A}
+
\underbrace{
\begin{bmatrix}
0 \\ 0 \\ H_{tx1,rx1} \\ H_{tx1,rx2} \\ H_{tx1,1} \\ \vdots \\ H_{tx1,v}
\end{bmatrix}}_{:=B_{tx1}}
K_{tx1}
\underbrace{
\begin{bmatrix}
-I & 0 & 0 & 0 & 0 & \cdots & 0
\end{bmatrix}}_{:=C_{tx1}} \\
&
+
\underbrace{
\begin{bmatrix}
0 \\ 0 \\ H_{tx2,rx1} \\ H_{tx2,rx2} \\ H_{tx2,1} \\ \vdots \\ H_{tx2,v}
\end{bmatrix}}_{:=B_{tx2}}
K_{tx2}
\underbrace{
\begin{bmatrix}
0 & -I & 0 & 0 & 0 & \cdots & 0
\end{bmatrix}}_{:=C_{tx2}} \\
&+
\underbrace{
\begin{bmatrix}
0 \\ 0 \\ H_{1,rx1} \\ H_{1,rx2} \\ H_{1,1} \\ \vdots \\ H_{1,v}
\end{bmatrix}}_{:=B_1}
K_1
\underbrace{
\begin{bmatrix}
0 & 0 & 0 & 0 & -I & \cdots & 0
\end{bmatrix}}_{:=C_1}+\cdots+
\underbrace{
\begin{bmatrix}
0 \\ 0 \\ H_{v,rx1} \\ H_{v,rx2} \\ H_{v,1} \\ \vdots \\ H_{v,v}
\end{bmatrix}}_{:=B_v}
K_v
\underbrace{
\begin{bmatrix}
0 & 0 & 0 & 0 & 0 & \cdots & -I
\end{bmatrix}}_{:=C_v}\\
&+
\underbrace{
\begin{bmatrix}
I \\ 0 \\ 0 \\ 0 \\ 0 \\ \vdots \\ 0
\end{bmatrix}}_{:=B_{rx1}}
K_{rx1}
\underbrace{
\begin{bmatrix}
0 & 0 & -I & 0 & 0 & \cdots & 0
\end{bmatrix}}_{:=C_{rx1}}
+
\underbrace{
\begin{bmatrix}
0 \\ I \\ 0 \\ 0 \\ 0 \\ \vdots \\ 0
\end{bmatrix}}_{:=B_{rx2}}
K_{rx2}
\underbrace{
\begin{bmatrix}
0 & 0 & 0 & -I & 0 & \cdots & 0
\end{bmatrix}}_{:=C_{rx2}}
\end{align}
Use the same definitions of \eqref{eqn:app:br1} for $G_{tx1,rx1}(z,K), \cdots, G_{tx2,rx2}(z,K)$. These transfer functions are the transfer functions of $\mathcal{N}_{uni}^{lin}(z)$ as before.

Then, Theorem~\ref{thm:LIN:br} essentially holds for multiple unicast problems as well.
\begin{theorem}
Let $K_{tx1}(z) \in \mathbb{F}[z]^{d_{tx1}\times d_{ax1}}$, $K_{tx2}(z) \in \mathbb{F}[z]^{d_{tx2}\times d_{ax2}}$, $K_i(z) \in \mathbb{F}[z]^{d_{i,in} \times d_{i,out}}$, $K_{rx1}(z) \in \mathbb{F}[z]^{d_{ax1} \times d_{rx1}}$ and $K_{rx2}(z) \in \mathbb{F}[z]^{d_{ax2} \times d_{rx2}}$. We also assume that
\begin{align}
\begin{bmatrix}
&I-H_{1,1}K_1 & \cdots & -H_{v,1} K_v \\
& \vdots & \ddots & \vdots \\
& -H_{1,v}K_1 & \cdots & I - H_{v,v} K_v
\end{bmatrix} \mbox{ is invertible.}
\end{align}
Then, for all $d_1, d_2, d_3, d_4 \in \mathbb{Z}^+$, the following two conditions are equivalent.
\begin{align}
&(i) \rank K_{rx1}(z) G_{tx1,rx1}(z,K(z)) K_{tx1}(z) \geq d_1 \\
&(ii) \rank K_{rx2}(z) G_{tx2,rx2}(z,K(z)) K_{tx2}(z) \geq d_2 \\
&(iii) \rank K_{rx2}(z) G_{tx1,rx2}(z,K(z)) K_{tx1}(z) \leq d_3 \\
&(iv) \rank K_{rx1}(z) G_{tx2,rx1}(z,K(z)) K_{tx2}(z) \leq d_4
\end{align}
if and only if
\begin{align}
&(a) \rank G_{tx',rx11'}(z,K(z)) \geq d+d_1 \\
&(b) \rank G_{tx',rx22'}(z,K(z)) \geq d+d_2 \\
&(c) \rank G_{tx',rx12'}(z,K(z)) \leq d+d_3\\
&(d) \rank G_{tx',rx21'}(z,K(z)) \leq d+d_4
\end{align}
\label{thm:LIN:uni}
\end{theorem}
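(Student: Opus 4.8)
The plan is to mirror the proof of the Maxflow Equivalence Lemma (Lemma~\ref{lem:LIN:maxflow}), applying it four times — once for each receiver $rx_{11}', rx_{22}', rx_{12}', rx_{21}'$ of $\mathcal{N}_{uni}^{lin}(z)$. The key observation is that each of $G_{tx',rx11'}(z,K(z)),\,G_{tx',rx22'}(z,K(z)),\,G_{tx',rx12'}(z,K(z)),\,G_{tx',rx21'}(z,K(z))$ is exactly the ``$G_{lin}$-type'' closed-loop matrix of the point-to-point sub-network obtained by retaining only one transmitter-side circulation arc (through $K_{tx1}$ or $K_{tx2}$) and only one receiver-side arc (through $K_{rx1}$ or $K_{rx2}$), with the other arc's feedback/feedforward edges deleted. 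First I would write each $G_{tx',rx_{ab}'}(z,K(z))$ in block form $\begin{bmatrix} \widehat A & \widehat B \\ \widehat C & D \end{bmatrix}$ where $D$ is the block indexed by $(Y_1,Y_2,X_1,\dots,X_v)$; one checks from the defining formulas that $D$ is the same matrix for all four corners, namely $\begin{bmatrix} I & 0 & -H_{1,rx1}K_1 & \cdots \\ 0 & I & -H_{1,rx2}K_1 & \cdots \\ 0 & 0 & I-H_{1,1}K_1 & \cdots \\ \vdots & & & \ddots \end{bmatrix}$, which is invertible — and of rank exactly $d$, its dimension — by the standing hypothesis that $\begin{bmatrix} I-H_{1,1}K_1 & \cdots \\ \vdots & \ddots \end{bmatrix}$ is invertible (so, unlike in Lemma~\ref{lem:LIN:maxflow}, no appeal to Lemma~\ref{lem:keylemma} is needed here; we work directly with the fixed $\mathbb{F}[z]$-matrices $K_i(z)$).

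For each corner I would then apply Lemma~\ref{lem:LIN:rank} to peel off $D$, obtaining $\rank G_{tx',rx_{ab}'}(z,K(z)) = \rank D + \rank(\widehat A - \widehat B D^{-1}\widehat C) = d + \rank(\widehat A - \widehat B D^{-1}\widehat C)$, and carry out the same elementary row/column manipulations as steps (C)--(E) of Lemma~\ref{lem:LIN:maxflow} — together with the transfer-function formula of Theorem~\ref{thm:transfer} (in the form of Lemma~\ref{lem:LTI:trans}) — to recognize $\widehat A - \widehat B D^{-1}\widehat C$ as the corresponding end-to-end transfer function conjugated by its precoder and postcoder. Concretely this gives the four rank identities $\rank G_{tx',rx11'}(z,K(z)) = d + \rank\!\big(K_{rx1}(z)\,G_{tx1,rx1}(z,K(z))\,K_{tx1}(z)\big)$, $\rank G_{tx',rx22'}(z,K(z)) = d + \rank\!\big(K_{rx2}(z)\,G_{tx2,rx2}(z,K(z))\,K_{tx2}(z)\big)$, and similarly the two cross terms equal $d$ plus the ranks of $K_{rx1}(z)\,G_{tx2,rx1}(z,K(z))\,K_{tx2}(z)$ and $K_{rx2}(z)\,G_{tx1,rx2}(z,K(z))\,K_{tx1}(z)$ — the ``leakage'' channels, exactly as in the secrecy interpretation of Remark 3 of the broadcast subsection.

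Since each identity is an exact equality with the same offset $d$, the inequalities translate directly: $(a)\Leftrightarrow(i)$ and $(b)\Leftrightarrow(ii)$ for the ``$\geq d+d_k$ vs.\ $\geq d_k$'' pairs, and the two cross-term identities give $(c)$ and $(d)$ equivalent to the two ``$\leq d+d_k$ vs.\ $\leq d_k$'' conditions (iii) and (iv); conjoining the four equivalences yields the statement. This is literally the proof of Theorem~\ref{thm:LIN:br}: the only structural change for multiple-unicast is that two independent transmitters feed arc~1 and arc~2 (rather than one transmitter split across both arcs), which merely renames $H_{tx,\cdot}\mapsto H_{tx1,\cdot},H_{tx2,\cdot}$ in $G_{uni}^{lin}(z,K)$ and leaves every rank computation intact.

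I expect the only real difficulty to be notational: tracking, across the four corners and the two transmitter/receiver indices, exactly which end-to-end transfer function $G_{txi,rxj}$ each Schur complement collapses to, and matching the cross-term labels $rx_{12}',rx_{21}'$ to conditions (iii), (iv) in a consistent way (the parallel broadcast statement contains an evident labeling slip — $G_{tx',rx12'}$ is written twice — which one must be careful not to carry over). There is no genuine analytic obstacle beyond what is already in Lemma~\ref{lem:LIN:maxflow}; the hypothesis is used in exactly one place, the invertibility of $D$, which is assumed rather than derived because — as noted after Theorem~\ref{thm:LIN:mul} — it can instead be absorbed into the network by adjoining an extra receiver, following the construction of Section~\ref{sec:linvsun}.
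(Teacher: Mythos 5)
Your proposal is correct and follows essentially the same route the paper intends: its proof of Theorem~\ref{thm:LIN:uni} is simply ``Similar to Lemma~\ref{lem:LIN:maxflow}'', i.e.\ exactly the fourfold application of the Schur-complement rank identity (Lemma~\ref{lem:LIN:rank}) with the common invertible block indexed by $(Y_1,Y_2,X_1,\dots,X_v)$, yielding the exact offset-$d$ identities you state. Your added observations --- that the standing invertibility hypothesis replaces the appeal to Lemma~\ref{lem:keylemma} since the $K_i(z)$ are now fixed, and that the cross-corner labels must be matched carefully to conditions (iii)--(iv) --- are accurate refinements of that same argument, not a different approach.
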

\begin{proof}
Similar to Lemma~\ref{lem:LIN:maxflow}.
\end{proof}

Remark 1. The linearized problem of this theorem is essentially the same as that of broadcast problems. Compared with Theorem~\ref{thm:LIN:br}, the only difference is that $B_{tx1}$ and $B_{tx2}$ of $G_{uni}^{lin}(z,K)$ are different in multiple-unicast problems while they are the same in broadcast problems.

Remark 2. Like the broadcast problem, to design a two-unicast scheme which communicates a rate $r_1$ message to receiver $1$ and a rate $r_2$ message to receiver $2$, we have to choose $d_1=r_1$, $d_2=r_2$, $d_3=0$, $d_4=0$. The linearized network of Figure~\ref{fig:LN_unicast} can be understood as a two-receiver and two-eavesdropper secrecy problem.

\subsection{Jordan Form Externalization Example}
\label{app:jordanex}
In this section, we show how the Jordan form externalization of the implicit communication works by an explicit example. Let
\begin{align}
&A=
\begin{bmatrix}
\begin{matrix}
\lambda & 1 & 0 \\
0 & \lambda & 1 \\
0 & 0 & \lambda
\end{matrix} & {0} & {0} \\
{0} & \begin{matrix} \lambda & 1 \\ 0 & \lambda  \end{matrix} & {0} \\
{0} & {0} & \lambda' \\
\end{bmatrix} \\
&C_i=\begin{bmatrix}
C_{i,1} & C_{i,2} &  C_{i,3} & C_{i,4} &  C_{i,5} & C_{i,6}
\end{bmatrix} \\
&B_i=\begin{bmatrix}
B_{i,1} \\
B_{i,2} \\
B_{i,3} \\
B_{i,4} \\
B_{i,5} \\
B_{i,6} \\
\end{bmatrix}
\end{align}
where $\lambda \neq \lambda'$, $B_{i,j}$ are row vectors, $C_{i,j}$ are column vectors. We will externalize at the frequency $z=\lambda$. 

As mentioned in Section~\ref{sec:jordanex}, we will move the third and fifth rows and the first and fourth columns of $\lambda I - A$ to the left-top of the matrix. For this, we will define the permutation matrices $P_{L,\lambda}$ and $P_{R,\lambda}$.

The definitions of Section~\ref{sec:jordanex} is given as follows:
\begin{align}
\kappa_{L,\lambda}(0)=0, \kappa_{L,\lambda}(1)=0, \kappa_{L,\lambda}(2)=0,\kappa_{L,\lambda}(3)=1, \kappa_{L,\lambda}(4)=1, \kappa_{L,\lambda}(5)=2, \kappa_{L,\lambda}(6)=2
\end{align}
\begin{align}
\kappa_{R,\lambda}(0)=0, \kappa_{R,\lambda}(1)=1, \kappa_{R,\lambda}(2)=1,\kappa_{R,\lambda}(3)=1, \kappa_{R,\lambda}(4)=2, \kappa_{R,\lambda}(5)=2, \kappa_{R,\lambda}(6)=2
\end{align}
\begin{align}
m_{\lambda}=2
\end{align}
\begin{align}
\iota_{L,\lambda}(0)=0, \iota_{L,\lambda}(1)=3, \iota_{L,\lambda}(2)=5
\end{align}
\begin{align}
\iota_{R,\lambda}(0)=0, \iota_{R,\lambda}(1)=1, \iota_{L,\lambda}(2)=4
\end{align}
\begin{align}
\pi_{L,\lambda}(1)=3, \pi_{L,\lambda}(2)=4, \pi_{L,\lambda}(3)=1, \pi_{L,\lambda}(4)=5, \pi_{L,\lambda}(5)=2, \pi_{L,\lambda}(6)=6
\end{align}
\begin{align}
\pi_{R,\lambda}(1)=1, \pi_{R,\lambda}(2)=3, \pi_{R,\lambda}(3)=4, \pi_{R,\lambda}(4)=2, \pi_{R,\lambda}(5)=5,
\pi_{R,\lambda}(6)=6
\end{align}
\begin{align}
P_{L,\lambda}=
\begin{bmatrix}
0 & 0 & 1 & 0 & 0 & 0 \\
0 & 0 & 0 & 1 & 0 & 0 \\
1 & 0 & 0 & 0 & 0 & 0 \\
0 & 0 & 0 & 0 & 1 & 0 \\
0 & 1 & 0 & 0 & 0 & 0 \\
0 & 0 & 0 & 0 & 0 & 1 \\
\end{bmatrix},
P_{R,\lambda}=
\begin{bmatrix}
1 & 0 & 0 & 0 & 0 & 0 \\
0 & 0 & 1 & 0 & 0 & 0 \\
0 & 0 & 0 & 1 & 0 & 0 \\
0 & 1 & 0 & 0 & 0 & 0 \\
0 & 0 & 0 & 0 & 1 & 0 \\
0 & 0 & 0 & 0 & 0 & 1 \\
\end{bmatrix}
\end{align}

By multiplying $P_{L,\lambda}^T$ and $P_{R,\lambda}$ to the left and right side of $(zI-A)$, we get the following:
\begin{align}
P_{L,\lambda}^{T} (zI-A) P_{R,\lambda}
&=
\begin{bmatrix}
0 & 0 & 1 & 0 & 0 & 0 \\
0 & 0 & 0 & 1 & 0 & 0 \\
1 & 0 & 0 & 0 & 0 & 0 \\
0 & 0 & 0 & 0 & 1 & 0 \\
0 & 1 & 0 & 0 & 0 & 0 \\
0 & 0 & 0 & 0 & 0 & 1 \\
\end{bmatrix}^{T}
\begin{bmatrix}
\begin{matrix}
z-\lambda & -1 & 0 \\
0 & z-\lambda & -1 \\
0 & 0 & z-\lambda
\end{matrix} & {0} & {0} \\
{0} & \begin{matrix} z-\lambda & -1 \\ 0 & z-\lambda  \end{matrix} & {0} \\
{0} & {0} & z-\lambda' \\
\end{bmatrix}
\begin{bmatrix}
1 & 0 & 0 & 0 & 0 & 0 \\
0 & 0 & 1 & 0 & 0 & 0 \\
0 & 0 & 0 & 1 & 0 & 0 \\
0 & 1 & 0 & 0 & 0 & 0 \\
0 & 0 & 0 & 0 & 1 & 0 \\
0 & 0 & 0 & 0 & 0 & 1 \\
\end{bmatrix} \\
&=
\begin{bmatrix}
0 & 0 & z-\lambda & 0 & 0 & 0 \\
0 & 0 & 0 & 0 & z-\lambda & 0 \\
z-\lambda & -1 & 0 & 0 & 0 & 0 \\
0 & z-\lambda & -1 & 0 & 0 & 0 \\
0 & 0 & 0 & z-\lambda & -1 & 0 \\
0 & 0 & 0 & 0 & 0 & z-\lambda' \\
\end{bmatrix}
\begin{bmatrix}
1 & 0 & 0 & 0 & 0 & 0 \\
0 & 0 & 1 & 0 & 0 & 0 \\
0 & 0 & 0 & 1 & 0 & 0 \\
0 & 1 & 0 & 0 & 0 & 0 \\
0 & 0 & 0 & 0 & 1 & 0 \\
0 & 0 & 0 & 0 & 0 & 1 \\
\end{bmatrix} \\
&=
\begin{bmatrix}
0 & 0 & 0 & z-\lambda & 0 & 0 \\
0 & 0 & 0 & 0 & z-\lambda & 0 \\
z-\lambda & 0 & -1 & 0 & 0 & 0 \\
0 & 0 & z-\lambda & -1 & 0 & 0 \\
0 & z-\lambda & 0 & 0 & -1 & 0 \\
0 & 0 & 0 & 0 & 0 & z-\lambda'
\end{bmatrix}
\end{align}

Here, we can notice that the $2 \times 2$ left-top sub-matrix is a zero matrix. Furthermore, $P_{L,\lambda}^T(\lambda I - A) P_{R,\lambda}$ is a diagonal matrix.

$A_{\lambda,1,1}(z),A_{\lambda,1,2}(z),A_{\lambda,2,1}(z),A_{\lambda,2,2}(z)$ are defined as
\begin{align}
&A_{\lambda,1,1}(z)=\begin{bmatrix} 0 & 0 \\ 0 & 0 \end{bmatrix}, A_{\lambda,1,2}(z)=\begin{bmatrix} 0 & z-\lambda & 0 & 0 \\  0 & 0 & z-\lambda & 0 \end{bmatrix} \\
&A_{\lambda,2,1}(z)=\begin{bmatrix}
z-\lambda & 0 \\
0 & 0 \\
0 & z-\lambda \\
0 & 0 \\
\end{bmatrix}, A_{\lambda,2,2}(z)=\begin{bmatrix} -1 & 0 & 0 & 0 \\ z-\lambda & -1 & 0 & 0 \\ 0 & 0 & -1 & 0 \\ 0 & 0 & 0 & z-\lambda' \end{bmatrix}.
\end{align}

We also multiply $P_{R,\lambda}$ and $P_{L,\lambda}$ to $C_i$ and $B_i$ respectively.
\begin{align}
C_i P_{R,\lambda}=\begin{bmatrix} C_{i,1} & C_{i,4} & C_{i,2} &  C_{i,3} & C_{i,5} & C_{i,6} \end{bmatrix}
\end{align}
\begin{align}
P_{L,\lambda}^T B_i=\begin{bmatrix} B_{i,3} \\ B_{i,5} \\ B_{i,1} \\ B_{i,2} \\ B_{i,4} \\ B_{i,6} \end{bmatrix}
\end{align}

Therefore, $C_{i,\lambda,1},C_{i,\lambda,2},B_{i,\lambda,1},B_{i,\lambda,2}$ are defined as follows.
\begin{align}
C_{i,\lambda,1}=\begin{bmatrix} C_{i,1} & C_{i,4} \end{bmatrix}, C_{i,\lambda,2}=\begin{bmatrix}C_{i,2} &  C_{i,3} & C_{i,5} & C_{i,6}\end{bmatrix}
\end{align}
\begin{align}
B_{i,\lambda,1}=
\begin{bmatrix}
B_{i,3} \\ B_{i,5}
\end{bmatrix},
B_{i,\lambda,2}
=
\begin{bmatrix}
B_{i,1} \\ B_{i,2} \\ B_{i,4} \\ B_{i,6}
\end{bmatrix}
\end{align}

We also introduce auxiliary inputs and outputs which access to each Jordan block. For this, we define $C_{\lambda}$ and $B_{\lambda}$ as follows. 
\begin{align}
C_{\lambda}=\begin{bmatrix}
1 & 0 & 0 & 0 & 0 & 0 \\
0 & 0 & 0 & 1 & 0 & 0 \\
\end{bmatrix}
,
B_{\lambda}=\begin{bmatrix}
0 & 0 \\ 0 & 0 \\ 1 & 0 \\ 0 & 0 \\ 0 & 1 \\ 0 & 0
\end{bmatrix}
\end{align}

With these definitions, we can construct the network $\mathcal{N}_{jd.\lambda}$. The channel matrices of $\mathcal{N}_{jd,\lambda}(\lambda)$ are given as follows:
\begin{align}
&H_{tx,rx}(\lambda)=0 \\
&H_{tx,i}(\lambda)=\begin{bmatrix} C_{i,1} & C_{i,4} \end{bmatrix} \\
&H_{i,rx}(\lambda)=\begin{bmatrix} B_{i,3} \\ B_{i,5}  \end{bmatrix} \\
&H_{i,j}(\lambda)=
\begin{bmatrix}C_{i,2} &  C_{i,3} & C_{i,5} & C_{i,6}\end{bmatrix}
\begin{bmatrix}
-1 & 0 & 0 & 0 \\
0 & -1 & 0 & 0 \\
0 & 0 & -1 & 0 \\
0 & 0 & 0 & \lambda-\lambda' \\
\end{bmatrix}^{-1}
\begin{bmatrix}B_{i,1} \\ B_{i,2} \\ B_{i,4} \\ B_{i,6}\end{bmatrix}
\end{align}

\subsection{Externalization of Implicit Communication in Proper Systems}
\label{app:proper}
In this section, we extend the discussion of Section~\ref{sec:external} to proper systems. The extension of  fixed modes to proper systems can be found in \cite{Davison_Decentralized}. Formally, the proper decentralized linear system, $\mathcal{L}(A,B_i,C_i,D_{ij})$, is defined as follows:
\begin{align}
&x[n+1]=Ax[n]+\sum^{v}_{i=1}B_i u_i[n]\\
&y_i[n]=C_i x[n]+\sum^{v}_{j=1}D_{ij}u_j[n]
\end{align}
Then, the definition of fixed modes can be extended to proper decentralized systems as follows.
\begin{definition}\cite[Definition 2]{Davison_Decentralized}
$\lambda$ is called a fixed mode of $\mathcal{L}(A,B_i,C_i,D_{ij})$ if
\begin{align}
\lambda \in \bigcap_{(K_1,\cdots,K_i) \in \mathcal{K}}  \sigma(
A+\begin{bmatrix} B_1 K_1 & \cdots & B_v K_v \end{bmatrix}\left( I -
\begin{bmatrix}
D_{11} K_1  & \cdots & D_{1v} K_v \\
\vdots & \ddots & \vdots \\
D_{v1} K_1 & \cdots & D_{vv} K_v
\end{bmatrix}
\right)^{-1} \begin{bmatrix} C_1 \\ \vdots \\ C_v \end{bmatrix}
)
\end{align}
where $\sigma(\cdot)$ implies the set of the eigenvalues of the matrix and $\mathcal{K}=\{ (K_1,\cdots,K_v) : K_i \in \mathbb{C}^{q_i \times r_i}, I-
\begin{bmatrix}
D_{11} K_1  & \cdots & D_{1v} K_v \\
\vdots & \ddots & \vdots \\
D_{v1} K_1 & \cdots & D_{vv} K_v
\end{bmatrix}
\mbox{ is invertible} \}$.
\end{definition}
As before, the stabilizability condition is charaterized by the fixed modes of the system.
\begin{theorem}\cite[Theorem 3]{Davison_Decentralized}
$\mathcal{L}(A,B_i,C_i,D_{ij})$ is stabilizable if and only if all of its fixed modes are within the unit circle.
\end{theorem}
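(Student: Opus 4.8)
The plan is to prove the two implications separately, reducing the proper case to the same kind of rank/Schur-complement bookkeeping used in Section~\ref{sec:external}. The only genuine novelty relative to the strictly proper Theorem~\ref{thm:stability} is the algebraic loop created by the feedthrough terms: one must track the well-posedness set $\mathcal{K}$ and remember that the closed-loop state matrix
\[
A_{cl}(K) := A + \begin{bmatrix} B_1 K_1 & \cdots & B_v K_v \end{bmatrix}\Bigl( I - \bigl[ D_{ij} K_j \bigr] \Bigr)^{-1} \begin{bmatrix} C_1 \\ \vdots \\ C_v \end{bmatrix}
\]
is a rational (not polynomial) function of the gains, defined on the Zariski-open set where $I-[D_{ij}K_j]$ is invertible.

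\textbf{Necessity.} Suppose $\lambda$ with $|\lambda|\ge 1$ is a fixed mode. I would take an arbitrary tuple of LTI controllers $\mathcal{K}_i$, possibly dynamic, with minimal realizations $(F_i,G_i,H_i,E_i)$, and rewrite the closed loop as a \emph{memoryless} output feedback of an augmented proper plant $\widehat{\mathcal L}(\widehat A,\widehat B_i,\widehat C_i,\widehat D_{ij})$ whose state stacks $x$ with the controller states $\xi_i$ and whose $i$th channel now also carries the update of $\xi_i$. The key lemma is that appending purely local controllable-and-observable dynamics to a single channel neither creates nor destroys fixed modes: this follows from the proper analogue of the rank test (the content of Appendix~\ref{app:proper_jd}) via a Schur-complement identity of the type in Lemma~\ref{lem:LIN:rank}, since the appended block is the invertible $(zI-F_i)$. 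Hence $\lambda$ is still a fixed mode of $\widehat{\mathcal L}$, so $\lambda\in\sigma(A_{cl}(\widehat K))$ for the induced memoryless gain, i.e. the original closed loop has a pole at $\lambda$ and is unstable; since the controllers were arbitrary, the system is not stabilizable.

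\textbf{Sufficiency.} Suppose no fixed mode lies on or outside the unit circle. I would argue constructively, mirroring the sufficiency proof of Theorem~\ref{thm:LTI:ptop}. Put $A$ in Jordan form, and view controllers $2,\dots,v$ as ``relays'' and controller $1$ as the ``destination''. First choose constant gains $K_2(z),\dots,K_v(z)$ so that: (i) $I-[D_{ij}K_j]$ is invertible; (ii) in the partially-closed system seen by controller $1$, every unstable eigenvalue $\lambda$ is controllable and observable; (iii) every stable eigenvalue stays inside the unit disk. Condition (iii) is obtained by taking the gains small and invoking Gershgorin's circle theorem exactly as in \cite{Wang_Stabilization}. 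Conditions (i)--(ii), for a fixed $\lambda$, fail only on a proper algebraic subvariety of gain space — this is precisely where the no-fixed-mode hypothesis enters, via \cite[Corollary~1]{Cormat_Decentralized} (equivalently Proposition~\ref{prop:equivalence2} read at $z=\lambda$) — and the finite union over the unstable $\lambda$'s is still proper, so a common good choice exists even inside the small Gershgorin ball. Once the system seen by controller $1$ is centrally stabilizable, a conventional well-posed design of $\mathcal{K}_1$ closes the loop \cite{Chen}.

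\textbf{Main obstacle.} The substance is entirely in sufficiency, and specifically in the genericity step for the \emph{proper} system: one must simultaneously avoid the algebraic-loop singularity $\det(I-[D_{ij}K_j])=0$, make every unstable eigenvalue jointly controllable/observable through channel $1$, and not push any stable eigenvalue out of the disk. Each is a ``stay off a thin set, inside a small ball'' requirement, and showing they can be met at once is the union-of-varieties-plus-Gershgorin argument; the one place that needs care is reconciling the rational (rather than polynomial) dependence of $A_{cl}$ on $K$ with the dimension count — noting that the bad set is still contained in the zero set of a nonzero polynomial after clearing denominators. Everything else reduces to identities already established, in the spirit of Lemma~\ref{lem:LIN:rank} and the Jordan-form externalization of Section~\ref{sec:jordanex}.
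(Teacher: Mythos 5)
The paper itself does not prove this statement: it is quoted directly from \cite[Theorem 3]{Davison_Decentralized}, so there is no in-paper argument to match your proposal against; what follows assesses your sketch against the standard proof you are implicitly reconstructing. Your necessity outline (reduce dynamic LTI compensation to static output feedback of an augmented proper plant and show the fixed mode survives augmentation) is the standard route and is essentially sound, with one small slip: you lean on invertibility of $(zI-F_i)$ at $z=\lambda$, but a compensator may have an eigenvalue at $\lambda$; the correct bookkeeping bounds the rank of the augmented cut matrix by the rank of the original cut matrix plus $\sum_i \dim F_i$ without needing that block to be invertible, and then the argument goes through.

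The genuine gap is in sufficiency. Your construction funnels everything to controller $1$: choose generic small constant gains $K_2(z),\dots,K_v(z)$ so that every unstable eigenvalue becomes controllable and observable through channel $1$, citing \cite[Corollary~1]{Cormat_Decentralized}. That corollary requires \emph{strong connectivity} of the decentralized system --- an assumption the paper states explicitly when it uses exactly this argument in Theorem~\ref{thm:LTI:multicast2}, and which the present theorem does not impose. A concrete failure: take two decoupled scalar plants $x_1[n+1]=2x_1[n]+u_1[n]$, $y_1[n]=x_1[n]$ and $x_2[n+1]=2x_2[n]+u_2[n]$, $y_2[n]=x_2[n]$ with $D_{ij}=0$. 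There are no fixed modes, yet no constant gain at channel $2$ --- and certainly no gain small enough to satisfy your Gershgorin step (iii) --- can make the second unstable mode controllable or observable from channel $1$, because the two channels are completely decoupled; that mode must be stabilized by channel $2$ itself, which your recipe never does. The standard proof of \cite[Theorem 3]{Davison_Decentralized} (and of the strictly proper case in \cite{Wang_Stabilization}) therefore proceeds channel by channel: at stage $i$ one stabilizes, by a centralized design on channel $i$, the unstable modes that are controllable and observable from that channel in the current partially closed loop, and shows that the remaining unstable modes that are not fixed modes stay reachable from the later channels; alternatively one first decomposes the system into strongly connected subsystems and only then applies the Corfmat--Morse genericity argument within each. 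Your union-of-varieties-plus-Gershgorin handling of well-posedness ($\det(I-[D_{ij}K_j])\neq 0$) is fine, but as written the sufficiency argument proves a strictly weaker statement than the theorem.
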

Then, we can externalize information flows to stabilize the proper system as before.

\subsection{Canonical Externalization I}
We will introduce the gain $K_i$ to the $i$th controller, and the auxiliary input $u[n]$ and output $y[n]$ (which can access to all states and observations, $x[n], y_1[n], \cdots, y_v[n]$) to the system. Then, the system equation can be written as follows:
\begin{align}
&\begin{bmatrix}
x[n+1] \\
y_1[n] \\
\vdots \\
y_{v}[n]
\end{bmatrix}
=
\begin{bmatrix}
A & B_1 K_1 & \cdots & B_{v} K_{v}  \\
C_1 & D_{11} K_1 & \cdots & D_{1v} K_{v} \\
\vdots & \vdots & \ddots & \vdots \\
C_{v} & D_{v 1} K_1 & \cdots  & D_{v v} K_{v} \\
\end{bmatrix}
\begin{bmatrix}
x[n] \\
y_1[n] \\
\vdots \\
y_{v}[n]
\end{bmatrix}+u[n] \\
&y[n]=\begin{bmatrix}
x[n] \\
y_1[n] \\
\vdots \\
y_{v}[n] \\
\end{bmatrix}
\end{align}
Then, the transfer function from $y(z)$ to $u(z)$, $G_{cn I}(z,K)$ , is given as follows.
\begin{align}
G_{cnI}(z,K_i)&=
\begin{bmatrix}
zI & 0 & \cdots & 0 \\
0 & I & \cdots & 0 \\
\vdots & \vdots & \ddots & \vdots \\
0 & 0 & \cdots & I
\end{bmatrix}
-
\begin{bmatrix}
A & B_1 K_1 & \cdots & B_{v} K_{v}  \\
C_1 & D_{11} K_1 & \cdots & D_{1v} K_{v} \\
\vdots & \vdots & \ddots & \vdots \\
C_{v} & D_{v 1} K_1 & \cdots  & D_{v v} K_{v} \\
\end{bmatrix} \\
&=
\underbrace{
\begin{bmatrix}
zI-A & 0 & \cdots & 0 \\
-C_1 & I & \cdots & 0 \\
\vdots & \vdots & \ddots & \vdots \\
-C_{v} & 0 & \cdots & I
\end{bmatrix}}_{:=A_{cnI}(z)}
+
\underbrace{\begin{bmatrix}
B_1 \\ D_{11} \\ \vdots \\ D_{v 1}
\end{bmatrix}}_{:=B_{cnI,1}}
K_1
\underbrace{\begin{bmatrix}
0 & -I & \cdots & 0
\end{bmatrix}}_{:=C_{cnI,1}}
+
\cdots
+
\underbrace{
\begin{bmatrix}
B_{v} \\ D_{1 v} \\ \vdots \\ D_{v v}
\end{bmatrix}}_{:=B_{cnI,v}}
K_{v}
\underbrace{
\begin{bmatrix}
0 & 0 & \cdots & -I
\end{bmatrix}}_{:=C_{cnI,v}}
\end{align}
By Lemma~\ref{lem:LTI:trans}, the standard network, $\mathcal{N}_s (A_{cnI}(z);B_{cnI,i},0;C_{cnI,i},0;0,0;0,0)$, has $G_{cnI}(z,K)$ as a transfer function. Denote this network as $\mathcal{N}_{cnI}(z)$.  Then, we can prove the similar theorem as before.
\begin{theorem}
Given the above definitions, the following statements are equivalent.
\begin{align}
&(1)\ \lambda \mbox{ is a fixed mode of the decentralized linear system }\mathcal{L}(A,B_i,C_i,D_{ij})\nonumber \\
&(2)\ \rank(G_{cnI}(\lambda,K)) < \dim(A_{cnI}) \nonumber \\
&(3)\ (\mbox{transfer matrix rank of the LTI network }\mathcal{N}_{cnI}(\lambda)) < \dim(A_{cnI}) \nonumber \\
&(4)\ (\mbox{mincut rank of the LTI network }\mathcal{N}_{cnI}(\lambda)) < \dim(A_{cnI}) \nonumber \\
&(5)\ \min_{V \subseteq \{1,\cdots,v \}} \rank \begin{bmatrix} A_{cnI}(\lambda) & B_{cnI,V} \\ C_{cnI,V^c} & 0 \end{bmatrix} < \dim(A_{cnI}) \nonumber
\end{align}
\end{theorem}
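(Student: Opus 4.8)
The plan is to establish the chain of equivalences $(1)\Leftrightarrow(2)\Leftrightarrow(3)\Leftrightarrow(4)\Leftrightarrow(5)$ in the same spirit as Theorem~\ref{thm:equivalence1}, the only genuinely new ingredient being the proper-system reduction $(1)\Leftrightarrow(2)$, where the feedthrough terms $D_{ij}$ force us to handle the invertibility of $I-[D_{ij}K_j]$.

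For $(1)\Leftrightarrow(2)$, first observe that
\[
G_{cnI}(z,K)=
\begin{bmatrix}
zI-A & -B_1K_1 & \cdots & -B_vK_v \\
-C_1 & I-D_{11}K_1 & \cdots & -D_{1v}K_v \\
\vdots & \vdots & \ddots & \vdots \\
-C_v & -D_{v1}K_1 & \cdots & I-D_{vv}K_v
\end{bmatrix},
\]
whose lower-right block $M:=I-[D_{ij}K_j]$ has size $\sum_i r_i$. Whenever $M$ is invertible, Lemma~\ref{lem:LIN:rank} gives
\[
\rank G_{cnI}(z,K)=\rank M+\rank\!\Bigl(zI-A-\begin{bmatrix}B_1K_1 & \cdots & B_vK_v\end{bmatrix}M^{-1}\begin{bmatrix}C_1\\ \vdots\\ C_v\end{bmatrix}\Bigr).
\]
Since $\rank M\le\sum_i r_i$ and the second summand is at most $m$, at $z=\lambda$ the matrix $G_{cnI}(\lambda,K)$ attains its full size $\dim(A_{cnI})=m+\sum_i r_i$ exactly when $M$ is invertible and $\lambda\notin\sigma\bigl(A+\begin{bmatrix}B_1K_1 & \cdots & B_vK_v\end{bmatrix}M^{-1}\begin{bmatrix}C_1\\ \vdots\\ C_v\end{bmatrix}\bigr)$. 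Now regard the entries of the $K_i$ as the dummy variables $K$. By Lemma~\ref{lem:keylemma}, $\rank G_{cnI}(\lambda,K)$ equals $\max_{K(z)}\rank G_{cnI}(\lambda,K(z))$ over numerical gains; moreover the maximum-rank locus and the locus $\{\det M\ne 0\}$ are each nonempty Zariski-open subsets of a space over the infinite field $\mathbb{C}$, hence they intersect, so the maximum is attained at some constant $K$ lying in the admissible set $\mathcal{K}$ of Appendix~\ref{app:proper}. Consequently $\rank G_{cnI}(\lambda,K)<\dim(A_{cnI})$ as a formal expression iff $\lambda\in\sigma\bigl(A+\begin{bmatrix}B_1K_1 & \cdots & B_vK_v\end{bmatrix}M^{-1}\begin{bmatrix}C_1\\ \vdots\\ C_v\end{bmatrix}\bigr)$ for every $K\in\mathcal{K}$, which is exactly the definition that $\lambda$ is a fixed mode of $\mathcal{L}(A,B_i,C_i,D_{ij})$; that is, $(1)\Leftrightarrow(2)$.

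For the remaining equivalences I would invoke the machinery already in place. Statement $(2)\Leftrightarrow(3)$ holds because, by Lemma~\ref{lem:LTI:trans}, $G_{cnI}(z,K)$ is exactly the transfer matrix of the standard network $\mathcal{N}_{cnI}(z)=\mathcal{N}_s(A_{cnI}(z);B_{cnI,i},0;C_{cnI,i},0;0,0;0,0)$, and since the entries of $A_{cnI}(z)$ are affine in $z$, $\mathcal{N}_{cnI}(\lambda)$ has no pole at $z=\lambda$. Statement $(3)\Leftrightarrow(4)$ is the algebraic mincut--maxflow theorem at the fixed frequency $z=\lambda$, Corollary~\ref{cor:mincut}. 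Finally $(4)\Leftrightarrow(5)$ follows by reading off the cutset matrices of $\mathcal{N}_{cnI}(\lambda)$: exactly as in \eqref{eqn:cutset}, a cut indexed by $V\subseteq\{1,\dots,v\}$ (the relays here being the controllers) has channel matrix $\begin{bmatrix}A_{cnI}(\lambda) & B_{cnI,V}\\ C_{cnI,V^c} & 0\end{bmatrix}$, because the relay-to-relay channel matrices $H_{i,j}$ of $\mathcal{N}_{cnI}$ all vanish and the direct $tx$--$rx$ link carries $A_{cnI}(\lambda)$.

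The main obstacle will be the proper part of $(1)\Leftrightarrow(2)$: carefully tracking $M=I-[D_{ij}K_j]$ so that the passage between ``$K$ as dummy variables'' and ``$K$ as an admissible numerical gain in $\mathcal{K}$'' does not lose rank-maximality and so that the Schur complement stays legitimate. In the strictly proper setting of Theorem~\ref{thm:equivalence1} this point is invisible because there $M\equiv I$; here one must observe that the relevant bad loci are proper subvarieties over $\mathbb{C}$ and hence have nonempty complement. Everything downstream of $(2)$ is a verbatim repeat of the proof of Theorem~\ref{thm:equivalence1}.
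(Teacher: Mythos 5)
Your proposal is correct and takes essentially the same route the paper intends (its own proof is simply ``similar to Theorem~\ref{thm:equivalence1}''): Lemma~\ref{lem:keylemma} together with the Schur-complement rank identity of Lemma~\ref{lem:LIN:rank} and a Zariski-openness argument to handle the admissible set $\mathcal{K}$ for $(1)\Leftrightarrow(2)$, then Lemma~\ref{lem:LTI:trans}, Corollary~\ref{cor:mincut}, and the cutset matrices of $\mathcal{N}_{cnI}(\lambda)$ for the remaining steps. One inessential blemish: your ``exactly when'' sentence asserts that full rank of $G_{cnI}(\lambda,K)$ at a fixed numerical $K$ forces $M=I-[D_{ij}K_j]$ to be invertible, which is false pointwise, but you never use that direction---your argument only needs the Schur identity on $\mathcal{K}$ plus the fact that the maximum-rank locus meets $\{\det M\neq 0\}$.
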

\begin{proof}
Similar to theorem~\ref{thm:equivalence1}.
\end{proof}

\subsection{Canonical Externalization II}
Like the discussion of section~\ref{sec:external}, we only need the auxiliary input and output to be connected to the unstable states. Thus, we can reduce the dimension of the auxiliary input and output by allowing them only to access the state $x[n]$. Now, the system equation is given as follows:
\begin{align}
&\begin{bmatrix}
x[n+1] \\
y_1[n] \\
\vdots \\
y_{v}[n]
\end{bmatrix}
=
\begin{bmatrix}
A & B_1 K_1 & \cdots & B_{v} K_{v}  \\
C_1 & D_{11} K_1 & \cdots & D_{1v} K_{v} \\
\vdots & \vdots & \ddots & \vdots \\
C_{v} & D_{v 1} K_1 & \cdots  & D_{v v} K_{v} \\
\end{bmatrix}
\begin{bmatrix}
x[n] \\
y_1[n] \\
\vdots \\
y_{v}[n]
\end{bmatrix}+
\begin{bmatrix} I \\ 0 \\ \vdots \\ 0 \end{bmatrix}
u[n] \\
&y[n]=
\begin{bmatrix}
I & 0 & \cdots &  0
\end{bmatrix}
\begin{bmatrix}
x[n] \\
y_1[n] \\
\vdots \\
y_{v}[n] \\
\end{bmatrix}
\end{align}
The transfer function from $u(z)$ to $y(z)$ is the following.
\begin{align}
y(z)=&
\begin{bmatrix}
I & 0 & \cdots & 0
\end{bmatrix}
\left(
\begin{bmatrix}
zI & 0 & \cdots & 0 \\
0 & I & \cdots & 0 \\
\vdots & \vdots & \ddots & \vdots \\
0 & 0 & \cdots & I
\end{bmatrix}
-
\begin{bmatrix}
A & B_1 K_1 & \cdots & B_{v} K_{v}  \\
C_1 & D_{11} K_1 & \cdots & D_{1v} K_{v} \\
\vdots & \vdots & \ddots & \vdots \\
C_{v} & D_{v 1} K_1 & \cdots  & D_{v v} K_{v} \\
\end{bmatrix}
\right)^{-1}
\begin{bmatrix}
I \\ 0 \\ \vdots \\ 0
\end{bmatrix}
u(z) \\
\end{align}
By Lemma~\ref{lem:ext:matrix}, the transfer function from $y(z)$ to $u(z)$, $G_{cn II}(z,K)$, is given as follows:
\begin{align}
G_{cn II}(z,K)&=
(zI-A)-
\begin{bmatrix}
-B_1 K_1 & \cdots & - B_v K_v
\end{bmatrix}
\left(I-
\begin{bmatrix}
D_{11} K_1 &  \cdots  & D_{1 v} K_{v} \\
\vdots & \ddots & \vdots \\
D_{v 1} K_1 & \cdots  & D_{v v} K_{v}
\end{bmatrix}
\right)^{-1}
\begin{bmatrix}
-C_{1} \\
\vdots \\
-C_{v}
\end{bmatrix}\nonumber \\
&=\underbrace{(zI-A)}_{:=A_{cnII}(z)} +
(\underbrace{-B_1}_{:=B_{cnII,1}} K_1 \underbrace{\begin{bmatrix} I & \cdots & 0 \end{bmatrix}}_{:=C_{cnII,1}'} - \cdots  \underbrace{-B_v}_{:=B_{cnII,v}} K_v \underbrace{\begin{bmatrix} 0 & \cdots & I \end{bmatrix}}_{:=C_{cnII,v}'} )\nonumber \\
&\cdot
\left(\underbrace{I}_{:=S_{cnII}^{-1}}
-\left(
\underbrace{\begin{bmatrix} D_{11} \\ \vdots \\ D_{v 1} \end{bmatrix}}_{:=B_{cnII,1}'}
K_1 \begin{bmatrix} I & \cdots & 0 \end{bmatrix}
+\cdots+
\underbrace{\begin{bmatrix} D_{1 v} \\ \vdots \\ D_{v v} \end{bmatrix}}_{:=B_{cnII,v}'}
K_v \begin{bmatrix} 0 & \cdots & I \end{bmatrix}
\right)\right)^{-1}
\underbrace{\begin{bmatrix}
C_1 \\
\vdots \\
C_v
\end{bmatrix}}_{:=D_{cnII}'}
\end{align}
Then, by Lemma~\ref{lem:LTI:trans}, we can see that $G_{cn II}(z,K)$ is the transfer function of the standard network\\
$\mathcal{N}_s(A_{cnII}(z);B_{cnII,i},B_{cnII,i}';0,C_{cnII,i}';0,D_{cnII}';S_{cnII},0)$. Denote this network as $\mathcal{N}_{cnII}(z)$. Furthermore, by lemma~\ref{lem:LTI:trans} the channel between the nodes and the channel for the cut $V=\{tx,i_1,\cdots, i_k \}$ are given as follows:
\begin{align}
&H_{tx,rx}(z)=zI-A\\
&H_{tx,i}= C_i\\
&H_{i,rx}=-B_i\\
&H_{i,j}= D_{ji}\\
&H_{V,V^c}(z)=
\begin{bmatrix}
zI-A & -B_{i_1} & \cdots & -B_{i_k} \\
C_{i_{k+1}} & D_{i_{k+1},i_1} & \cdots & D_{i_{k+1},i_k} \\
\vdots & \vdots & \ddots & \vdots \\
C_{i_{v}} & D_{i_{v},i_1} & \cdots & D_{i_{v},i_k} \\
\end{bmatrix}
\end{align}
Then, we can give the capacity-stabilizability equivalence theorem as before.
\begin{theorem}
Given the above definitions, the following statements are equivalent.
\begin{align}
&(1)\ \lambda \mbox{ is a fixed mode of the decentralized linear system }\mathcal{L}(A,B_i,C_i,D_{ij})\nonumber \\
&(2)\ \rank(G_{cnII}(\lambda,K)) < \dim(A) \nonumber \\
&(3)\ (\mbox{transfer matrix rank of the LTI network }\mathcal{N}_{cnII}(\lambda)) < \dim(A) \nonumber \\
&(4)\ (\mbox{mincut rank of the LTI network }\mathcal{N}_{cnII}(\lambda)) < \dim(A) \nonumber \\
&(5)\ \min_{V \subseteq \{1,\cdots,v \}} \rank \begin{bmatrix} \lambda I-A & -B_V \\ C_{V^c} & D_{V^c,V} \end{bmatrix} < \dim(A) \nonumber
\end{align}
\label{thm:equivalence2-2}
\end{theorem}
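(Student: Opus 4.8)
The plan is to prove Theorem~\ref{thm:equivalence2-2} by establishing the chain of equivalences $(1) \Leftrightarrow (2) \Leftrightarrow (3) \Leftrightarrow (4) \Leftrightarrow (5)$, transcribing the argument already used for Theorem~\ref{thm:equivalence1} with the single new ingredient being the feedthrough matrices $D_{ij}$, which introduce the inverse $(I - [D_{ij}K_j])^{-1}$ into the relevant transfer function $G_{cnII}$. Since $A_{cnII}(z) = zI - A$, we have $\dim(A_{cnII}) = \dim(A)$, so all five inequalities are comparisons against the same number $m = \dim(A)$, which keeps the bookkeeping parallel to the strictly proper case.

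First, for $(1) \Leftrightarrow (2)$, I would start from the proper-system fixed-mode definition attributed to \cite{Davison_Decentralized} in Appendix~\ref{app:proper}: $\lambda$ is a fixed mode iff the determinant $\det\big(A + [B_1 K_1 \,\cdots\, B_v K_v](I - [D_{ij}K_j])^{-1}[C_1 ; \cdots ; C_v] - \lambda I\big)$ vanishes for every admissible numerical tuple $(K_i) \in \mathcal{K}$. Inspecting the definition of $G_{cnII}$ shows that the matrix inside this determinant equals $-G_{cnII}(\lambda,K)$ up to the harmless overall sign, so the fixed-mode condition says $G_{cnII}(\lambda,K)$ is rank-deficient for all admissible $K$. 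By Lemma~\ref{lem:keylemma}, the rank of $G_{cnII}(\lambda,K)$ with the $K$-entries treated as formal variables equals the maximal rank over numerical substitutions, which is exactly statement $(2)$ negated. Then, $(2) \Leftrightarrow (3)$ is immediate: Lemma~\ref{lem:LTI:trans} applied to the standard network $\mathcal{N}_{cnII}(z) = \mathcal{N}_s(A_{cnII}(z);B_{cnII,i},B_{cnII,i}';0,C_{cnII,i}';0,D_{cnII}';S_{cnII},0)$ identifies $G_{cnII}(z,K)$ as its transmitter-to-receiver transfer matrix, so the two statements coincide at $z=\lambda$. For $(3) \Leftrightarrow (4)$, note that the channel matrices $H_{tx,rx}(z) = zI-A$, $H_{tx,i} = C_i$, $H_{i,rx} = -B_i$, $H_{i,j} = D_{ji}$ are polynomial or constant and hence have no poles at $\lambda$, so Corollary~\ref{cor:mincut} gives the equality of transfer-matrix rank and mincut rank at the frequency $\lambda$. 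Finally, $(4) \Leftrightarrow (5)$ follows from the cutset formula displayed just before the theorem: a cut $V = \{tx,i_1,\dots,i_k\}$ has channel matrix exactly of the block form $\left[\begin{smallmatrix} \lambda I - A & -B_V \\ C_{V^c} & D_{V^c,V}\end{smallmatrix}\right]$, so minimizing $\rank H_{V,V^c}(\lambda)$ over all admissible cuts of $\mathcal{N}_{cnII}(\lambda)$ is precisely the minimization in statement $(5)$.

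The step I expect to be the main obstacle is $(1) \Leftrightarrow (2)$. Unlike the strictly proper case, the fixed-mode quantifier ranges only over the admissible set $\mathcal{K}$ on which $I - [D_{ij}K_j]$ is invertible, so I must argue that restricting the numerical substitutions in Lemma~\ref{lem:keylemma} to this set does not lower the maximal rank: this works because $G_{cnII}(z,K)$ is a well-defined rational function in the $K$ variables (the inverse exists over the rational function field since $I - [D_{ij}K_j]$ is the identity at $K=0$), and the locus of inadmissible substitutions together with the rank-dropping locus is a proper subvariety whose complement remains nonempty and dense, so the maximum over admissible substitutions equals the formal rank. I would also check that the rational entries produced by $(I - [D_{ij}K_j])^{-1}$ do not break the dimension-of-variety counting that underlies Lemma~\ref{lem:keylemma}; once this is granted, the remainder is a mechanical copy of the proof of Theorem~\ref{thm:equivalence1}.
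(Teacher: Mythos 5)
Your proposal is correct and follows essentially the same route as the paper, whose proof of this theorem is simply ``Similar to Theorem~\ref{thm:equivalence1}'': identify $-G_{cnII}(\lambda,K)$ with the matrix in Davison's fixed-mode definition, invoke Lemma~\ref{lem:keylemma}, then pass through Lemma~\ref{lem:LTI:trans}, Corollary~\ref{cor:mincut}, and the cutset formula for $\mathcal{N}_{cnII}(\lambda)$. Your explicit treatment of the admissibility set $\mathcal{K}$ (the invertibility of $I-[D_{ij}K_j]$, handled by noting it is generic since it holds at $K=0$ and folding its complement into the proper-subvariety argument) is exactly the detail the paper leaves implicit, and it is handled correctly.
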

\begin{proof}
Similar to Theorem~\ref{thm:equivalence1}.
\end{proof}
Here, it has to be mentioned that the equivalence of (1) and (5) was already shown in \cite{Davison_Decentralized}.

\subsection{Jordan Form Externalization}
\label{app:proper_jd}
Like section~\ref{sec:jordanex}, we can minimize the dimension of the auxiliary input and output by using the Jordan form. Without loss of generality, we assume that $A$ is in Jordan form and use the same notations of section~\ref{sec:jordanex}. Then, the system equation with the auxiliary input $u_\lambda[n]$ and output $y_\lambda[n]$ is given as follows:
\begin{align}
&\begin{bmatrix}
x[n+1] \\
y_1[n] \\
\vdots \\
y_{v}[n]
\end{bmatrix}
=
\begin{bmatrix}
A & B_1 K_1 & \cdots & B_{v} K_{v}  \\
C_1 & D_{11} K_1 & \cdots & D_{1v} K_{v} \\
\vdots & \vdots & \ddots & \vdots \\
C_{v} & D_{v 1} K_1 & \cdots  & D_{v v} K_{v} \\
\end{bmatrix}
\begin{bmatrix}
x[n] \\
y_1[n] \\
\vdots \\
y_{v}[n]
\end{bmatrix}+
\underbrace{
\begin{bmatrix} C_{\lambda} \\ 0 \\ \vdots \\ 0 \end{bmatrix}
}_{:=C_{\lambda}'}
u_{\lambda}[n] \\
&y_{\lambda}[n]=
\underbrace{
\begin{bmatrix}
B_{\lambda} & 0 & \cdots &  0
\end{bmatrix}
}_{:=B_{\lambda}'}
\begin{bmatrix}
x[n] \\
y_1[n] \\
\vdots \\
y_{v}[n] \\
\end{bmatrix}
\end{align}
We also expand the dimension of the permutation matrices $P_{L,\lambda}$ and $P_{R,\lambda}$.
\begin{align}
P_{L,\lambda}':=\begin{bmatrix} P_{L,\lambda} & 0 & \cdots & 0 \\ 0 & I & \cdots & 0 \\
\vdots & \vdots & \ddots & \vdots \\ 0 & 0 & \cdots & I \end{bmatrix} \\
P_{R,\lambda}':=\begin{bmatrix} P_{R,\lambda} & 0 & \cdots & 0 \\ 0 & I & \cdots & 0 \\
\vdots & \vdots & \ddots & \vdots \\ 0 & 0 & \cdots & I
\end{bmatrix}
\end{align}
The transfer function from $u_\lambda(z)$ to $y_\lambda(z)$ is the following.
\begin{align}
y_{\lambda}(z)&=
C_\lambda'(
\begin{bmatrix}
zI & 0 & \cdots & 0 \\
0 & I & \cdots & 0 \\
\vdots & \vdots & \ddots & \vdots \\
0 & 0 & \cdots & I
\end{bmatrix}
-
\begin{bmatrix}
A & B_1 K_1 & \cdots & B_v K_v \\
C_1 & D_{11} K_1 & \cdots & D_{1v} K_v \\
\vdots & \vdots & \ddots & \vdots \\
C_v & D_{v1} K_v & \cdots & D_{vv} K_v
\end{bmatrix}
)^{-1}
B_\lambda'
u_\lambda(z) \\
&=
C_\lambda'(P'_{L,\lambda} {P'_{L,\lambda}}^T
(
\begin{bmatrix}
zI-A & -B_1 K_1 & \cdots & -B_v K_v \\
-C_1 & I-D_{11} K_1 & \cdots & -D_{1v} K_v \\
\vdots & \vdots & \ddots & \vdots \\
-C_v & -D_{v1} K_v & \cdots & I-D_{vv} K_v
\end{bmatrix}
)
P_{R,\lambda}' {P_{R,\lambda}'}^T
)^{-1}
B_\lambda'
u_\lambda(z) \\
&=
C_\lambda' {P_{R,\lambda}'} ( {P'_{L,\lambda}}^T
(
\begin{bmatrix}
zI-A & -B_1 K_1 & \cdots & -B_v K_v \\
-C_1 & I-D_{11} K_1 & \cdots & -D_{1v} K_v \\
\vdots & \vdots & \ddots & \vdots \\
-C_v & -D_{v1} K_v & \cdots & I-D_{vv} K_v
\end{bmatrix}
)
P_{R,\lambda}'
)^{-1}
{P'_{L,\lambda}}^T
B_\lambda'
u_\lambda(z) \\
&=
C_\lambda' {P_{R,\lambda}'}
\begin{bmatrix}
P_{L,\lambda}^T(zI-A)P_{R,\lambda} & -P_{L,\lambda}^T B_1 K_1 & \cdots & -P_{L,\lambda}^T B_v K_v \\
-C_1 P_{R,\lambda} & I-D_{11}K_1 & \cdots & -D_{1v} K_v \\
\vdots & \vdots & \ddots & \vdots \\
-C_v P_{R,\lambda} & -D_{v1} K_v & \cdots & I-D_{vv}K_v \\
\end{bmatrix}^{-1}
{P'_{L,\lambda}}^T
B_\lambda'
u_\lambda(z)\\
&=
\begin{bmatrix}
I & 0 & 0 & \cdots & 0
\end{bmatrix}
\begin{bmatrix}
A_{\lambda,1,1}(z) & A_{\lambda,1,2}(z) & -B_{1,\lambda,1}K_1 & \cdots & -B_{v,\lambda,1}K_v\\
A_{\lambda,2,1}(z) & A_{\lambda,2,2}(z) & -B_{1,\lambda,2}K_1 & \cdots & -B_{v,\lambda,2}K_v\\
-C_{1,\lambda,1} & -C_{1,\lambda,2} & I-D_{11}K_1 & \cdots & -D_{1v} K_v \\
\vdots & \vdots & \vdots & \ddots & \vdots \\
-C_{v,\lambda,1} & -C_{v,\lambda,2} & -D_{v1}K_v & \cdots & I-D_{vv} K_v \\
\end{bmatrix}^{-1}
\begin{bmatrix}
I \\
0 \\
0 \\
\vdots \\
0
\end{bmatrix}u_\lambda(z)
\end{align}
By Lemma~\ref{lem:ext:matrix}, the transfer matrix from $y_\lambda(z)$ to $u_\lambda(z)$, $G_{jd}(z)$ , is given as
\begin{align}
&G_{jd}(z)=
A_{\lambda,1,1}(z)-
\begin{bmatrix}
A_{\lambda,1,2}(z) & -B_{1,\lambda,1}K_1 & \cdots & -B_{v,\lambda,1}K_v
\end{bmatrix}
\begin{bmatrix}
A_{\lambda,2,2}(z) & -B_{1,\lambda,2}K_1 & \cdots & -B_{v,\lambda,2}K_v \\
-C_{1,\lambda,2} & I-D_{11}K_1 & \cdots & -D_{1v}K_v \\
\vdots & \vdots & \ddots & \vdots \\
-C_{v,\lambda,2} & -D_{v1}K_1 & \cdots & I-D_{vv}K_v \\
\end{bmatrix}^{-1}
\begin{bmatrix}
A_{\lambda,2,1}(z) \\
-C_{1,\lambda,1} \\
\vdots \\
-C_{v,\lambda,1}
\end{bmatrix} \nonumber \\
&=
\underbrace{A_{\lambda,1,1}(z)}_{:=A_{jd}(z)}
+(
\underbrace{\begin{bmatrix} A_{\lambda,1,2}(z) & 0 & \cdots & 0 \end{bmatrix}}_{:=D_{jd}(z)}
\underbrace{-B_{1,\lambda,1}}_{:=B_{jd,1}} K_1 \underbrace{\begin{bmatrix} 0 & I & \cdots & 0 \end{bmatrix}}_{:=C'_{jd,1}}
- \cdots
\underbrace{-B_{v,\lambda,1}}_{:=B_{jd,v}} K_v \underbrace{\begin{bmatrix} 0 & 0 & \cdots & I \end{bmatrix}}_{:=C'_{jd,v}}
)\\
&\cdot (
\underbrace{I}_{:=S_{jd}^{-1}}
-
(
\underbrace{
\begin{bmatrix}
I-A_{\lambda,2,2}(z) & 0 & \cdots & 0 \\
C_{1,\lambda,2} & 0 & \cdots & 0 \\
\vdots & \vdots & \ddots & \vdots \\
C_{v,\lambda,2} & 0 & \cdots & 0
\end{bmatrix}
}_{:=S_{jd}'(z)}
+
\underbrace{
\begin{bmatrix}
B_{1,\lambda,2} \\ D_{11} \\ \vdots \\ D_{v1}
\end{bmatrix}}_{:=B_{jd,1}'}
K_1
\begin{bmatrix}
0 & I & \cdots & 0
\end{bmatrix}
+
\cdots
+
\underbrace{
\begin{bmatrix}
B_{v,\lambda,2} \\ D_{1v} \\ \vdots \\ D_{vv}
\end{bmatrix}}_{:=B_{jd,v}'}
K_v
\begin{bmatrix}
0 & 0 & \cdots & I
\end{bmatrix}
)
)^{-1}\\
&\cdot \underbrace{\begin{bmatrix}
-A_{\lambda,2,1}(z) \\
C_{1,\lambda,1} \\
\vdots \\
C_{v,\lambda,1}
\end{bmatrix}}_{:=D_{jd}'(z)}
\end{align}
Then, we can easily check that $G_{jd}(z)$ is the transfer function of the standard network
\begin{align}
\mathcal{N}_s(A_{jd}(z);B_{jd,i},B_{jd,i}';0,C_{jd,i}';D_{jd}(z),D_{jd}'(z);S_{jd},S_{jd}'(z)).
\end{align}
Moreover, we have
\begin{align}
(S_{jd}^{-1}-S_{jd}')^{-1}=
\begin{bmatrix}
A_{\lambda,2,2}(z) & 0 & \cdots & 0 \\
-C_{1,\lambda,2}(z) & I & \cdots & 0 \\
\vdots & \vdots & \ddots & \vdots \\
-C_{v,\lambda,2}(z) & 0 & \cdots & I \\
\end{bmatrix}^{-1}
=
\begin{bmatrix}
A_{\lambda,2,2}(z)^{-1} & 0 & \cdots & 0 \\
C_{1,\lambda,2} A_{\lambda,2,2}(z)^{-1} & I & \cdots & 0 \\
\vdots & \vdots & \ddots & \vdots \\
C_{v,\lambda,2} A_{\lambda,2,2}(z)^{-1} & 0 & \cdots & I
\end{bmatrix}.
\end{align}
Thus, by lemma~\ref{lem:LTI:trans} the channel matrix between the nodes and the channel matrix for the cut $V=\{tx,i_1,\cdots, i_k \}$ is given as follows:
\begin{align}
&H_{tx,rx}(\lambda)=0 \\
&H_{tx,i}(\lambda)=C_{i,\lambda,1}\\
&H_{i,rx}(\lambda)=-B_{i,\lambda,1}\\
&H_{i,j}(\lambda)=C_{j,\lambda,2}A_{\lambda,2,2}(\lambda)^{-1}B_{i,\lambda,2}+D_{ji}\\
&H_{V,V^c}(\lambda):=
\begin{bmatrix}
0 & -B_{i_1,\lambda,1} & \cdots & -B_{i_k,\lambda,1} \\
C_{i_{k+1},\lambda,1} & C_{i_{k+1},\lambda,2}A_{\lambda,2,2}(\lambda)^{-1}B_{i_1,\lambda,2}+D_{i_{k+1}i_1} & \cdots & C_{i_{k+1},\lambda,2}A_{\lambda,2,2}(\lambda)^{-1}B_{i_k,\lambda,2}+D_{i_{k+1}i_k} \\
\vdots & \vdots & \ddots & \vdots \\
C_{i_v,\lambda,1} & C_{i_v,\lambda,2}A_{\lambda,2,2}(\lambda)^{-1}B_{i_1,\lambda,2}+D_{i_v i_1} & \cdots & C_{i_v,\lambda,2}A_{\lambda,2,2}(\lambda)^{-1}B_{i_k,\lambda,2}+D_{i_v i_k} \\
\end{bmatrix}
\end{align}
Then, we can write a similar theorem as before.
\begin{theorem}
Given the above definitions, the following statements are equivalent.
\begin{align}
&(1)\ \lambda \mbox{ is the fixed mode of the decentralized linear system }\mathcal{L}(A,B_i,C_i,D_{ij})\nonumber \\
&(2)\ \rank(G_{jd}(\lambda,K)) < m_\lambda \nonumber \\
&(3)\ (\mbox{transfer matrix rank of the LTI network }\mathcal{N}_{jd}(\lambda)) < m_\lambda \nonumber \\
&(4)\ (\mbox{mincut rank of the LTI network }\mathcal{N}_{jd}(\lambda)) < m_\lambda \nonumber \\
&(5)\ \min_{V \subseteq \{1,\cdots,v \}} \rank
\begin{bmatrix}
0 & -B_{V,\lambda,1} \\
C_{V^c,\lambda,1} & C_{V^c,\lambda,2} A_{\lambda,2,2}(\lambda)^{-1}B_{V,\lambda,2}+D_{V^c,V}
\end{bmatrix}
< m_\lambda \nonumber
\end{align}
\label{thm:equivalence2-3}
\end{theorem}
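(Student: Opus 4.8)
The plan is to follow the proof of Proposition~\ref{prop:equivalence2} essentially verbatim, using the proper-system canonical externalization of Theorem~\ref{thm:equivalence2-2} as the bridge between the fixed-mode condition and the reduced Jordan-form transfer matrix $G_{jd}(z,K)$. The lower part of the equivalence chain is formal: $(2)\Leftrightarrow(3)$ holds because $G_{jd}(z,K)$ was constructed as the transfer matrix of the standard network $\mathcal{N}_{jd}(z)$ via Lemma~\ref{lem:LTI:trans}; $(3)\Leftrightarrow(4)$ is the algebraic mincut-maxflow theorem at the generalized frequency $z=\lambda$, Corollary~\ref{cor:mincut}, which applies because $A_{\lambda,2,2}(\lambda)$ is invertible and hence the channel matrices $H_{i,j}(\lambda)$ displayed above have no pole at $\lambda$; and $(4)\Leftrightarrow(5)$ is simply reading off the explicit cutset matrix $H_{V,V^c}(\lambda)$ recorded just before the theorem. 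Thus the only real content is $(1)\Leftrightarrow(2)$.

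For $(1)\Leftrightarrow(2)$ I would first reduce, exactly as in Theorem~\ref{thm:equivalence1}, the fixed-mode condition to a statement about dummy variables: $\lambda$ is a fixed mode iff the relevant determinant vanishes for \emph{generic} $K_i$, which by Lemma~\ref{lem:keylemma} is the same as it vanishing when the entries of the $K_i$ are treated as independent indeterminates. Here one checks that restricting to the set $\mathcal{K}$ on which $I-\left[D_{ij}K_j\right]$ is invertible is harmless, since its complement lies in a proper subvariety. By Theorem~\ref{thm:equivalence2-2} this is equivalent to the big pencil from the canonical externalization II, evaluated at $z=\lambda$, being singular; call this pencil $M(\lambda,K)$. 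Next I would apply the dilated permutation matrices $P_{L,\lambda}'$ and $P_{R,\lambda}'$ to $M(\lambda,K)$ exactly as in the derivation of $G_{jd}(z)$ in this appendix, putting it into $2\times2$ block form with $(1,1)$ block of size $m_\lambda\times m_\lambda$ and $(2,2)$ block
\[
D(\lambda,K)=\begin{bmatrix} A_{\lambda,2,2}(\lambda) & -B_{1,\lambda,2}K_1 & \cdots & -B_{v,\lambda,2}K_v \\ -C_{1,\lambda,2} & I-D_{11}K_1 & \cdots & -D_{1v}K_v \\ \vdots & \vdots & \ddots & \vdots \\ -C_{v,\lambda,2} & -D_{v1}K_v & \cdots & I-D_{vv}K_v \end{bmatrix}.
\]
The key observation is that $D(\lambda,K)$ is invertible: by Lemma~\ref{lem:keylemma} its rank equals the maximum over substitutions $K_i(z)$, and setting all $K_i(z)=0$ makes it block-diagonal with diagonal blocks $A_{\lambda,2,2}(\lambda), I, \dots, I$, which is invertible because the permutation construction forces $A_{\lambda,2,2}(\lambda)$ to be invertible. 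Applying the Schur-complement rank identity Lemma~\ref{lem:LIN:rank} then gives $\rank M(\lambda,K)=\rank D(\lambda,K)+\rank(\text{Schur complement of }D)$, and the Schur complement is exactly $G_{jd}(\lambda,K)$ by the computation already carried out above. Since $D(\lambda,K)$ has full rank, $M(\lambda,K)$ is singular iff $G_{jd}(\lambda,K)$ is rank deficient, i.e. $\rank G_{jd}(\lambda,K)<m_\lambda$, which is statement $(2)$.

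The main obstacle I anticipate is the careful bookkeeping in this permutation/Schur-complement step: verifying that dilating $P_{L,\lambda},P_{R,\lambda}$ so that they act trivially on the $y_i$ coordinates indeed produces the block $D(\lambda,K)$ above, and that its Schur complement reproduces $G_{jd}(\lambda,K)$ term by term --- in particular that the $D_{ij}$ terms surface correctly in the channel gains $H_{i,j}(\lambda)=C_{j,\lambda,2}A_{\lambda,2,2}(\lambda)^{-1}B_{i,\lambda,2}+D_{ji}$. A secondary subtlety is the genericity argument for the feedback-form definition of fixed modes, where one must confirm that the open dense restriction to invertible $I-\left[D_{ij}K_j\right]$ does not alter the ``rank deficient for generic $K_i$'' equivalence. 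Once these are settled, the theorem follows just as Proposition~\ref{prop:equivalence2} and Theorem~\ref{thm:equivalence1} did.
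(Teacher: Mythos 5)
Your proposal is correct and follows essentially the same route the paper intends: the paper's proof is deferred to ``similar to Theorem~\ref{thm:equivalence1}'' (i.e.\ the argument of Proposition~\ref{prop:equivalence2}), which is exactly your chain --- genericity via Lemma~\ref{lem:keylemma}, the proper-system canonical externalization, the permuted Schur-complement step with the invertible block containing $A_{\lambda,2,2}(\lambda)$ (the paper uses Lemma~\ref{lem:ext:matrix}, you use the equivalent Lemma~\ref{lem:LIN:rank}), and Corollary~\ref{cor:mincut} plus the recorded cutset matrices for $(3)\Leftrightarrow(4)\Leftrightarrow(5)$. The only nitpicks are cosmetic: at $K=0$ your block $D(\lambda,0)$ is block lower-triangular rather than block-diagonal (still invertible), and the ``big pencil singular'' statement is literally the Canonical Externalization~I theorem, reachable from Theorem~\ref{thm:equivalence2-2} by one more application of Lemma~\ref{lem:LIN:rank}.
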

\begin{proof}
Similar to theorem~\ref{thm:equivalence1}.
\end{proof}

\subsection{Realization of Closed LTI Network}
\label{app:realization}
\begin{figure}
\includegraphics[width = 1in]{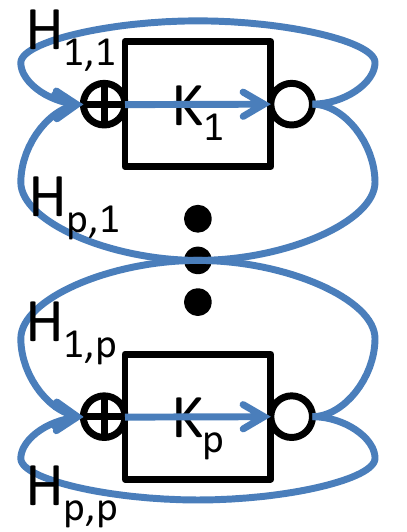}
\caption{General Closed LTI Network}
\label{fig:closedLTI}
\end{figure}
In this section, we will discuss how to realize the problem of Figure~\ref{fig:ptop} to a decentralized linear system form.
First, we can notice that the system of Figure~\ref{fig:ptop} can be thought as a special case of the closed LTI network of Figure~\ref{fig:closedLTI}. We can put $p$ of Figure~\ref{fig:closedLTI} as $v+2$, and consider the relay $i$ of Figure~\ref{fig:ptop} as the node $i$ of Figure~\ref{fig:closedLTI}, the observer as the node $v+1$, and the controller as the node $v+2$. Then, by connecting the node $v+1$ with the node $v+2$ with $H_{(v+2)(v+1)}(z)$ which is equivalent to the plant of Figure~\ref{fig:ptop}, the two problems are equivalent. Therefore, we can focus on the realization of the closed LTI network of Fig.~\ref{fig:closedLTI}.

As we can see in Figure~\ref{fig:ptop}, for $1 \leq i,j \leq p$ the input of node $i$ is connected to the output of node $j$ by the channel $H_{ij}(z)$. When $i=j$, it corresponds to a self-loop. In other words, $y_j(z)=H_{ij}(z) u_i(z)$ where $u_i(z)$ is the input of the node $i$ and $y_j(z)$ is the output of the node $j$. Since this relationship can be considered as a centralized input-output system, it can be realized by the usual realization method shown in \cite[chapter 7]{Chen}. Let's say the resulting linear system is given as follows:
\begin{align}
&x_{ij}[n+1]=A_{ij}x_{ij}[n]+B_{ij}u_i[n] \\
&y_j[n]=C_{ij}x_{ij}[n]+D_{ij}u_i[n]
\end{align}
Let the dimension of $u_i[n]$ be $q_i$, the dimension of $y_i[n]$ be $r_i$ and the dimension of $x_{ij}[n]$ be $m_{ij}$. Then, the dimensions of the other matrices are uniquely determined. When there is no connection between the nodes, simply $m_{ij}$ becomes $0$.

The main idea for the realization of a closed LTI network is to augment the states $x_{ij}[n]$. Denote $x[n]$, $A$, $B_{i}$ and $C_i$ as follows:\\
$x[n]:=\begin{bmatrix} x_{11}[n+1] \\ \vdots \\ x_{1p}[n+1] \\ x_{21}[n+1] \\ \vdots \\ x_{pp}[n+1] \end{bmatrix}$\\
$A:=diag(A_{11},\cdots,A_{1p},A_{21},\cdots, A_{pp})$\\
$B_i:=\begin{bmatrix} 0_{(\sum_{1 \leq j < i} \sum_{1 \leq k \leq p} m_{jk}) \times q_i} \\ B_{i1} \\ \vdots \\ B_{ip} \\
0_{(\sum_{i < j \leq p} \sum_{1 \leq k \leq p} m_{jk}) \times q_i} \end{bmatrix}$\\
$C_{ij}':= \begin{bmatrix} 0_{r_j \times \sum_{1 \leq k < j} m_{ik}} & C_{ij} & 0_{r_j \times \sum_{j < k \leq p} m_{ik}} \end{bmatrix}$\\
$C_i:=\begin{bmatrix}C_{1i}' & \cdots & C_{pi}' \end{bmatrix}$.

Then, we can easily check that the decentralized linear system
\begin{align}
&x[n+1]=Ax[n]+ \sum_i B_i u_i[n] \\
&y_i[n]=C_ix[n]+ \sum_{i,j}D_{ij} u_i[n]
\end{align}
is the realization of the closed LTI network of Fig.~\ref{fig:closedLTI}.
\end{document}